\let\oldmarginpar\marginpar
\renewcommand\marginpar[1]{\-\oldmarginpar[\raggedleft\footnotesize #1]{\raggedright\footnotesize #1}}
\newcommand{\ves}[1]{\mathcal{X}_{#1}}
\newcommand{\dfs}[1]{\mathcal{F}^{#1}}
\renewcommand{\vec}[1]{\mathbf#1}
\newcommand{\metric}[2]{\langle#1,#2\rangle}
\newcommand{\ltwo}[4]{\metric{#1}{#2}_{L^2(#4)}}
\newcommand{\bigltwo}[4]{\bigl\langle{#1},{#2}\bigr\rangle_{L^2(#4)}}
\newcommand{\ltwonorm}[3]{|#1|_{L^2(#3)}}
\newcommand{\bigltwonorm}[3]{\bigl|#1\bigr|_{L^2(#3)}}
\newcommand{\dualprod}[2]{\langle#1|#2\rangle}
\newcommand{\bigdualprod}[2]{\big\langle{#1}\big|{#2}\big\rangle}
\newcommand{\pslnot}{\boldsymbol{\Psi}_{\mathrm{SL}}}
\newcommand{\pdlnot}{\boldsymbol{\Psi}_{\mathrm{DL}}}
\newcommand{\psl}[1]{\pslnot}
\newcommand{\pdl}[1]{\pdlnot}
\newcommand{\tpsl}[1]{\widetilde{\boldsymbol{\Psi}}_{\mathrm{SL}}}
\newcommand{\jump}[1]{\llbracket #1\rrbracket}
\newcommand{\mean}[1]{\{#1\}}
\newcommand{\ppoint}{X}
\newcommand{\hhat}[1]{#1}
\newcommand{\bdryform}{\varphi}
\newcommand{\supp}{\operatorname{supp}} 
\newcommand{\traceadapt}{\widetilde{\gamma}}
\newcommand{\mywedge}{{\mathsf{\Lambda}}}
\newcommand{\pcomp}[1]{\mywedge^{#1}}
\newcommand{\hzero}{H}
\begin{document}
\title*{Differential Forms and Boundary Integral Equations for Maxwell-Type Problems}
\author{Stefan Kurz and Bernhard Auchmann}
\institute{Stefan Kurz \at Robert Bosch GmbH, Corporate Sector Research and Advance Engineering, \\D-70465 Stuttgart, Germany, \email{stefan.kurz2@de.bosch.com}
\and Bernhard Auchmann \at CERN, TE/MPE, CH-1211 Geneva 23, Switzerland, \email{bernhard.auchmann@cern.ch}}
%
%
\maketitle
\abstract{We present boundary-integral equations for Maxwell-type problems in a differential-form setting. Maxwell-type problems are governed by the differential equation $(\delta\mathrm{d}-k^2)\omega = 0$, where $k\in\mathbb{C}$ holds, subject to some restrictions. This problem class generalizes $\textbf{curl}\,\textbf{curl}$- and $\mathrm{div}\,\textbf{grad}$-types of problems in three dimensions. The goal of the paper is threefold: 1) Establish the Sobolev-space framework in the full generality of differential-form calculus on a smooth manifold of arbitrary dimension and with Lipschitz boundary. 2) Introduce integral transformations and fundamental solutions, and derive a representation formula for Maxwell-type problems. 3) Leverage the power of differential-form calculus to gain insight into properties and inherent symmetries of boundary-integral equations of Maxwell-type.}
\begin{acknowledgement}
The authors would like to thank Ralf Hiptmair for feedback and valuable suggestions.
\end{acknowledgement}
\tableofcontents
\section{Introduction}\label{sec:introduction}
It is the goal of this paper to express the theory of boundary-integral equations for Maxwell-type problems in the language of differential-form calculus. Maxwell-type problems are governed by the differential equation
\begin{equation*}
(\delta\mathrm{d}-k^2)\omega = 0,
\end{equation*}
where $k\in\mathbb{C}$ fulfills either $k=0$ or $0\le\arg k<\pi,k\ne 0$ \cite[eq.\ (9.13)]{mclean}. The exterior derivative $\mathrm{d}$ and coderivative $\delta$ will be defined in Section~\ref{sec:basics}. This problem class generalizes $\textbf{curl}\,\textbf{curl}$- and $\mathrm{div}\,\textbf{grad}$-types of problems in three dimensions. It encompasses electro- and magnetostatics (potential problems), eddy-current and diffusion-type problems, as well as scattering problems.\par
In the authors' view, differential-form calculus features a range of advantages over classical vector analysis, that are particularly interesting in the field of boun\-dary-integral equations. We give four examples: (i) Being independent of dimension, operators of the same class act upon fields on the domain and on the boundary. (ii) For a comprehensive treatment of the subject, only two families of functional spaces are required on the domain and on the boundary, respectively. The two families are related via Hodge duality. (iii) Involved computations with cross-products of normal vectors and tangent vectors are replaced by more elegant tools. (iv) A discretization of the functional spaces in terms of discrete differential forms is readily available and, in fact, an integral part of the differential-form setting. In this context {\sc Hiptmair} writes in \cite[p.\ 239ff.]{df5}: "Suitable finite elements for electromagnetic fields should be introduced and understood as discrete differential forms. ... Finite elements that lack an interpretation as discrete differential forms have to be used with great care.". For establishing spaces of discrete differential forms on two-dimensional surfaces we also point to \cite[Sec.\ 4.1.]{Buffa2007}.\par
The reader will find that, in many ways, the theory and proofs outlined in this paper are reminiscent of vector-analysis literature. This is not surprising, since a major part of our work consisted in translating classical proofs to the more general differential-form setting. In other places, presumably well-known subjects may look strangely unfamiliar. Study of the theory from the viewpoint of differential-form calculus reveals structural layers that are often hidden or obscured by the nature of vector analysis. For examples we point to the definition of generalized integral transforms, the image spaces of Sobolev spaces under the Hodge operator, or the symmetry of Calder\'on projectors under dual transformations. We hope that, with this work, we can help to spark the curiosity for differential-form calculus in the community, and do our share to lay the groundwork for future progress in the field. After all, G.-C. Rota wrote \cite[p.\ 46]{rota}, "Exterior algebra is not meant to prove old facts, it is meant to disclose a new world."\par
From a historical perspective, the idea to generalize Maxwell's equations, using $p$-forms in $n$-dimensional Euclidean space, was first put forward in a seminal paper by {\sc Weyl} in 1952 \cite{weyl1952}. Comparable work for the static case, that is, for potential problems, was accomplished by {\sc Kress} in 1972 \cite{kress1972}. Related work about higher dimensional electromagnetic scattering on Lipschitz domains in $\mathbb{R}^n$ was published by {\sc Jawerth} and {\sc Mitrea} in 1995 \cite{jawerth1995}. Recently, {\sc Pauly} has published a series of papers, where the low frequency asymptotics for generalized Maxwell equations have been examined under rather general assumptions \cite{pauly2006}.\par
In Section~\ref{sec:prelim} we give a concise summary of relevant topics of differential-form calculus. The section also includes contributions on topics such as integral transformations, and fundamental solutions of Helmholtz-type equations. So-called translation isomorphisms are introduced, that carry the differential-form setting in three-dimensional Euclidean space over to the classical vector-analysis setting. Section~\ref{sec:sobolevspacesdf} presents a differential-form based Sobolev-space framework that sets the scene for the discussion of Maxwell-type problems, their solutions, and boundary data. The section builds upon a 2004 work by {\sc Weck}~\cite{weck2004}. Translation isomorphisms are used to establish the link with Sobolev spaces in classical calculus. Section~\ref{sec:represent} is devoted entirely to the representation formula for Maxwell-type problems. The results generalize the Kirchhoff and Stratton-Chu formulae. In Section~\ref{sec:bdryintop} we introduce boundary-integral operators and establish some of the properties that are required to prove the well-posedness of boundary-value problems. Finally, Section~\ref{sec:BIE} studies properties of the Calder\'on projector and reveals a powerful symmetry with respect to dual transformations.\par
In our notation, we seek to strike a balance between readability on the one hand, and the addition of information that helps to interpret the compact differential-form notation on the other hand. If in doubt, we tend to favor the former over the latter, assuming that the generality and elegance of differential-form calculus best serve the readers' interest. For example, operators in Section~\ref{sec:prelim} are defined for forms of arbitrary degrees, and on (Riemannian) manifolds of arbitrary dimension. We therefore do not generally distinguish in our notation between, for example, the Hodge operators acting on forms of various degrees on a domain $\Omega$, and the Hodge operators acting upon the traces of said forms on the boundary $\Gamma$. The metric tensor which applies in the definition of each operator is clear from the context. 
A generalization that we did not adopt is to introduce graded Sobolev spaces on the entire exterior algebra of differential forms. We have opted for spaces of homogeneous degree and highlight the degree in the notation. 
All along the text, the relationship to results of classical vector analysis is established in framed paragraphs, to keep the paradigms separate in the main body of the paper. 
\section{Differential Forms - Preliminaries}\label{sec:prelim}
In this section, we intend to summarize important results of differential-form calculus. For an introduction we recommend, for example, \cite{jaenich_va_eng,Lang1995}. Throughout the paper, $n$ denotes the dimension of the problem domain; the degree of forms is frequently denoted by $p$ and $q$, which are always related by $q=n-p$.\par
Powers of minus one followed by operators, as in $(-1)^{pq}\mathrm{op}_1\,\mathrm{op}_2\,\boldsymbol{\phi}$, are to be read as follows: The degree $p$ refers to the differential form that the sequence of operators acts upon from the left. In this example, $p$ is the degree of the form $\boldsymbol{\phi}$. $n$ is always the dimension of the problem domain, even if operators and forms on the domain boundary are considered; and $q=n-p$ following the above rule.
\subsection{Basic Definitions}\label{sec:basics}
We introduce differential forms on a smooth, orientable Riemannian manifold $(M,\mathrm{g})$ of finite dimension $n$, where $\mathrm{g}$ denotes the metric tensor. We have $\mathbb{R}^3$, or a subset thereof, with Euclidean metric in mind. Throughout this section $V$ denotes a vector space over a field $\mathbb{F}$, where $\mathbb{F}$ may be either $\mathbb{R}$ or $\mathbb{C}$.\par
\marginpar{Simple $p$-vectors, $p$-vectors} A simple $p$-vector may be thought of as an ordered $p$-tuple of vectors that belong to a vector space $V$. The $p$-tuple is interpreted as a $p$-parallelepiped with oriented volume. An elementary permutation in the tuple changes the orientation. A change of orientation is indicated by a change of sign of the simple $p$-vector. More precisely, a simple $p$-vector is an equivalence class of ordered $p$-tuples of vectors that (i) span the same subspace of $V$; (ii) span $p$-parallelepipeds of identical oriented volume. $p$-vectors are linear combinations of simple $p$-vectors. They form a vector space $\mywedge^pV$ of dimension $\binom{n}{p}$, $0\leq p\leq n$. Up to dimension $n=3$ all $p$-vectors are simple $p$-vectors. We find $\mywedge^1V=V$, $\mywedge^p V=\emptyset$ for $p>n$ and for $p<0$, and we set $\mywedge^0V=\mathbb{F}$.\par 
Alternatively, $p$-vectors are defined in \cite{greub2} via an isomorphism that identifies $\mywedge^p V$ with the vector space of skew-symmetric tensors of rank $p$ over $V$.\par
\marginpar{Basis of $\mywedge^p V$, multi-indices}Let $(\vec{e}_i \,|\, 1\leq i\leq n)$ denote an ordered basis of $V$. We pick in each $\mywedge^p V$ an ordered basis
\[
(\vec{e}_J\,|\,J\in\mathcal{J}^n_p),
\]
where $J=j_1j_2\dots j_p$ is a multiindex,
\[
\mathcal{J}^n_p=\{J=j_1j_2\dots j_p\,|\,1\le j_1<j_2<\dots<j_p\le n\},
\]
and $\vec{e}_J$ is the equivalence class that contains the $p$-tupel $(\vec{e}_{j_1},\vec{e}_{j_2},\ldots,\vec{e}_{j_p})$.\par
\marginpar{Exterior product, exterior algebra}The exterior product, or wedge product, is a bilinear mapping 
\[
\wedge: \mywedge^k V\times \mywedge^\ell V \rightarrow \mywedge^{k+\ell}V: (\vec{v},\vec{w})\mapsto \vec{v}\wedge \vec{w},
\] defined by the following properties:
\begin{alignat*}{3}
\text{(i)}&\quad&& \text{$\wedge$ is associative, $(\vec{u}\wedge\vec{v})\wedge\vec{w} = \vec{u}\wedge(\vec{v}\wedge\vec{w})$,\quad $\vec{u}\in\mywedge^j V$;}\\
\text{(ii)}&\quad&&\text{$\wedge$ is graded anticommutative, $\vec{v}\wedge\vec{w}=(-1)^{k\ell}\vec{w}\wedge\vec{v}$ for} \\
&\quad&&\text{$\vec{v}\in\mywedge^k V$ and $\vec{w}\in\mywedge^\ell V$;}\\
\text{(iii)}&\quad&&1\wedge\vec{v}=\vec{v}\;\text{ for all }\vec{v}\in \mywedge^k V.
\end{alignat*}
To compute the exterior product we first relate the basis vectors of $V$ to those of $\mywedge^p V$. Let $K=k_1k_2\ldots k_p$ be an arbitrary $p$-index, and $\sigma(K)$ a permutation of $K$. Then we define
\begin{equation}\label{extprodaux}
\vec{e}_{k_1}\wedge\cdots\wedge\vec{e}_{k_p}=\begin{cases}+\vec{e}_{\sigma(K)}&\sigma(K)\in\mathcal{J}^n_p,\quad\sigma\text{ even},\\
-\vec{e}_{\sigma(K)}&\sigma(K)\in\mathcal{J}^n_p,\quad\sigma\text{ odd},\\
0 & \text{otherwise}.
\end{cases}
\end{equation} 
Next we define for basis vectors $\vec{e}_I\in\mywedge^kV$, $\vec{e}_J\in\mywedge^\ell V$
\begin{alignat*}{2}
\vec{e}_I\wedge\vec{e}_J&=(\vec{e}_{i_1}\wedge\dots\wedge\vec{e}_{i_k})\wedge(\vec{e}_{j_1}\wedge\dots\wedge\vec{e}_{j_\ell})\\
&=\vec{e}_{i_1}\wedge\dots\wedge\vec{e}_{i_k}\wedge\vec{e}_{j_1}\wedge\dots\wedge\vec{e}_{j_\ell},
\end{alignat*}
where $I\in\mathcal{J}^n_k$, $J\in\mathcal{J}^n_\ell$. The right hand side is defined by \eqref{extprodaux}, or zero for $k+\ell>n$, respectively. Finally, the exterior product extends by linearity to the entire spaces. We say that $\mywedge^p V$ is the $p$-th exterior power of $V$. The direct sum $\mywedge V= \mywedge ^0V\oplus \mywedge ^1V\oplus\dots \oplus\mywedge ^{n-1}V\oplus \mywedge ^n V$ is again a vector space. The pair $(\mywedge V,\wedge)$ has the structure of a graded algebra. It is called the exterior algebra over $V$.\par
\marginpar{Tangent and cotangent vectors}Recall that a tangent vector on a manifold can be interpreted as a directional-derivative operator. Coordinates $(x^1,...,x^n)$ on a patch $U\subset M$ induce a canonical coordinate basis $(\partial_{x^1},...,\partial_{x^n})$ of the tangent space $T_\ppoint M$. Here $TM$ is the tangent bundle over $M$, and $T_\ppoint M$ is a fibre in point $\ppoint\in U$. Cotangent vectors, on the other hand, are elements of the dual space $T^*_\ppoint M$, the cotangent space. In particular, the differential of a scalar function $\lambda$ on $M$ taken at a point $\ppoint$ is a cotangent vector. The action of a cotangent vector $(\mathrm{d}\lambda)_\ppoint\in T^*_\ppoint M$ on a vector $\vec{v}\in T_\ppoint M$ equals the directional derivative of $\lambda$ in direction of $\vec{v}$, $(\mathrm{d}\lambda)_\ppoint (\vec{v})=\vec{v} (\lambda)$. The canonical coordinate basis of the cotangent space in a point $\ppoint\in U$ reads $(\mathrm{d}x^1,...,\mathrm{d}x^n)$, and we see that $\mathrm{d}x^i(\partial_{x^j})=\partial_{x^j}(x^i)=\delta^i_j$, with $\delta^i_j$ the Kronecker delta.\par
\marginpar{Smooth $p$-vector fields and differential $p$-forms} A vector field $\vec{v}$ is a section of $TM$. The space of smooth vector fields (component functions are $C^\infty$) is denoted $\ves{1}(M)$. The space of smooth differential 1-forms is denoted $\dfs{1}(M)$; its elements are smooth sections of the cotangent bundle. Note that the definitions of $\ves{1}(M)$ and $\dfs{1}(M)$ require smoothness of the manifold $M$.\par
A $p$-vector field is a section of the $p$-th exterior power of the tangent bundle $\mywedge^p TM$, whose fibres are the spaces $\mywedge^p T_\ppoint M$. The space of smooth $p$-vector fields is denoted $\ves{p}(M)$, and the space of smooth differential $p$-forms $\dfs{p}(M)$.\par
\marginpar{Basis representation}Coordinate bases of $\ves{p}(M)$ and $\dfs{p}(M)$ are given in $U\subset M$ by $(\partial_{x^J} \,| \,J\in\mathcal{J}^n_p)$ and $(\mathrm{d}{x^J}\, |\, J\in\mathcal{J}^n_p)$, respectively. Hence the basis representation
\[
\boldsymbol{\omega}=\sum_{J\in\mathcal{J}^n_p}\omega_J\,\mathrm{d}x^J\in\dfs{p}(M),
\]
where $\omega_J\in C^\infty(M)$ are the component functions.\par
In the sequel, we will encounter function spaces of differential forms. A generic function space $\mathcal{H}$ of $p$-forms defined on $M$ will be denoted $\mathcal{H}\pcomp{p}(M)$. For instance, $\dfs{p}=C^\infty\pcomp{p}(M)$.\par
\marginpar{Duality product}We denote by 
\[
\dualprod{\,\cdot\,}{\,\cdot\,}_\ppoint: \mywedge ^p T^*_\ppoint M\times \mywedge ^p T_\ppoint M\rightarrow \mathbb{F}
\]
the algebraic duality product at a point $\ppoint\in M$.\par
\marginpar{Exterior and interior product/contraction}The exterior product above extends naturally to $p$-forms and $p$-vector fields. An alternative notation is given by
\[
\mathrm{j}:\dfs{\ell}(M)\times\dfs{k}(M)\rightarrow\dfs{\ell+k}(M):(\omega,\eta)\mapsto \mathrm{j}_\eta\omega=\eta\wedge\omega,
\]
and, analogously,
\[
\mathrm{j}:\ves{\ell}(M)\times\ves{k}(M)\rightarrow\ves{\ell+k}(M):(\vec{v},\vec{w})\mapsto \mathrm{j}_\vec{w} \vec{v}=\vec{w}\wedge \vec{v}.
\]
The interior products or contractions
\[
\mathrm{i}:\ves{k}(M)\times\dfs{\ell}(M)\rightarrow\dfs{\ell-k}(M):(\vec{v},\omega)\mapsto\left\{\begin{array}{lll} \mathrm{i}_\vec{v}\omega, \quad&\text{for}\quad &\ell\ge k,\\ 0,\quad&\text{for}\quad&\ell<k,\end{array}\right.
\]
and
\[
\mathrm{i}:\dfs{\ell}(M)\times\ves{k}(M)\rightarrow\ves{k-\ell}(M):(\omega,\vec{v})\mapsto \left\{\begin{array}{lll}\mathrm{i}_ \omega\vec{v}, \quad&\text{for}\quad &\ell\le k,\\0,\quad&\text{for}\quad&\ell > k\end{array}\right.
\]
are defined by dualities
\[
\dualprod{\mathrm{i}_\vec{v}\omega}{\vec{w}}_\ppoint=\dualprod{\omega}{\mathrm{j}_\vec{v} \vec{w}}_\ppoint,\quad\dualprod{\phi}{\mathrm{i}_\omega \vec{v}}_\ppoint =\dualprod{\mathrm{j}_\omega\phi}{\vec{v}}_\ppoint,
\]
for all $\vec{w}\in\ves{\ell-k}(M)$, $\phi\in\dfs{k-\ell}(M)$, and $\ppoint\in M$. The use of the notation $\mathrm{i}_\vec{v}$, $\mathrm{j}_\vec{v}$ as a shorthand for $\mathrm{i}(\vec{v},\cdot)$, $\mathrm{j}(\vec{v},\cdot)$ is a mere matter of convenience.\par
General properties of interior and exterior products are
\begin{alignat*}{3}
\text{(i)}&\quad&&\mathrm{i}_\vec{v}\mathrm{i}_\vec{w}=(-1)^{ij}\mathrm{i}_\vec{w}\mathrm{i}_\vec{v},\quad&&\mathrm{j}_\vec{v}\mathrm{j}_\vec{w}=(-1)^{ij}\mathrm{j}_\vec{w}\mathrm{j}_\vec{v},\\
\text{(ii)}&\quad&&\mathrm{i}_\vec{u}\mathrm{i}_\vec{u} = 0,\quad&&\mathrm{j}_\vec{u}\mathrm{j}_\vec{u}=0, \\
\text{(iii)}&\quad&&\mathrm{i}_\vec{u}(\omega\wedge\eta)=\mathrm{i}_\vec{u}\omega\wedge\eta + &&(-1)^{k}\omega\wedge\mathrm{i}_\vec{u}\eta,
\end{alignat*}
where $\vec{v}\in\ves{i}(M)$, $\vec{w}\in\ves{j}(M)$, $\omega\in\dfs{k}(M)$, $\eta\in\dfs{\ell}(M)$, and $ \vec{u}\in\ves{1}(M)$. Equivalent dual properties to (i) and (ii) hold for differential forms.\par
\marginpar{Pushforward, pullback}Consider the differential
\[ 
\mathrm{D}\varphi:T_\ppoint M\rightarrow T_{\varphi(\ppoint)}N:\vec{w}\mapsto\mathrm{D}\varphi(\vec{w})
\]
of a smooth map $\varphi:M\rightarrow N$ from manifold $M$ to manifold $N$. It naturally extends to $p$-vectors $\vec{v},\vec{w}\in\mywedge^pT_\ppoint M$ by the exterior compound
\[
\mathrm{D}\varphi(\vec{v}\wedge \vec{w})=\mathrm{D}\varphi(\vec{v})\wedge\mathrm{D}\varphi(\vec{w}).
\]
In case $\varphi$ is a diffeomorphism, $\mathrm{D}\varphi$ induces a pushforward operator
\[ 
\varphi_*:\ves{p}(M)\rightarrow\ves{p}(N):\vec{v}\mapsto\varphi_*\vec{v},
\]
by defining $\varphi_*\vec{v}_\ppoint= \mathrm{D}\varphi(\vec{v}_\ppoint)$ in all points $\ppoint\in M$. The pullback operator
\[ 
\varphi^*:\dfs{p}(N)\rightarrow\dfs{p}(M):\omega\mapsto\varphi^*\omega
\]
is defined by duality at every point $\ppoint\in M$,
\[
\dualprod{\varphi^*\omega}{\vec{v}}_\ppoint=\dualprod{\omega}{\mathrm{D}\varphi(\vec{v}_\ppoint)}_{\varphi\ppoint},
\]
where $\omega\in\dfs{p}(N)$, $\vec{v}\in\ves{p}(M)$. Note that $\varphi$ is not required to be a diffeomorphism in this case.\par
\marginpar{Exterior derivative}The exterior derivative is a linear map
\[
\mathrm{d}:\dfs{p}(M)\rightarrow\dfs{p+1}(M):\omega\mapsto\mathrm{d}\omega
\]
defined by the following properties:
\begin{alignat*}{3}
\text{(i)}&\quad&& \text{For $\gamma\in C^\infty(M)$, $\mathrm{d}\gamma$ is the differential of $\gamma$;}\\
\text{(ii)}&\quad&&\mathrm{d}\mathrm{d} = 0, \;\text{(first Poincar\'e lemma);}\\
\text{(iii)}&\quad&&\mathrm{d}(\omega\wedge\eta)=\mathrm{d}\omega\wedge\mathrm{\eta}+(-1)^{p}\omega\wedge\mathrm{d}\eta,\;\text{for}\;\omega\in\dfs{p}(M).
\end{alignat*}
The exterior derivative commutes with the pullback, i.e., $\mathrm{d}\varphi^*=\varphi^*\mathrm{d}$ holds. Its action on $\vec{v}=\vec{v}_1\wedge\ldots\wedge\vec{v}_{p+1}\in\mywedge^{p+1}T_\ppoint M$ is given by
\[
\dualprod{\mathrm{d}\boldsymbol{\omega}}{\vec{v}}_\ppoint=\sum_{i=1}^{p+1}(-1)^{i-1}\dualprod{\nabla_{\vec{v}_i}\boldsymbol{\omega}}{\vec{v}_1\wedge\ldots\wedge\hat{\vec{v}}_i\wedge\ldots\wedge\vec{v}_{p+1}}_\ppoint,
\]
where $\nabla_{\vec{v}_i}\boldsymbol{\omega}$ denotes the directional derivative of $\boldsymbol{\omega}$ at $\ppoint$ in direction $\vec{v}_i$, and $\hat{\vec{v}}_i$ is omitted.\par
\marginpar{Stokes' theorem}The integration of differential $p$-forms can be reduced to multivariate integration of scalar functions in coordinates. Stokes' theorem on an embedded $p$-dimensional compact submanifold $S$, $\varphi:S\rightarrow M$, reads for $\omega\in \dfs{p-1}(M)$
\[
\int_{\varphi(S)}\mathrm{d}\omega=\int_{\partial \varphi(S)}\omega,
\]
where $\partial$ is the boundary operator. It shows that the following operator equations are equivalent:
\[
\mathrm{d}\mathrm{d}=0\quad\text{and}\quad\partial\partial=0.
\]
Moreover, with the above properties of the exterior derivative, we find the integra\-tion-by-parts rule
\[
\int_{\varphi(S)}\mathrm{d}\omega\wedge\mathrm{\eta}+(-1)^{k}\int_{\varphi(S)}\omega\wedge\mathrm{d}\eta=\int_{\partial \varphi(S)}\omega\wedge\eta,
\]
for $\omega\in\dfs{k}(M)$, and $\eta\in\dfs{p-k-1}(M)$.\par
\marginpar{Orientation, volume form, and volume vector-field} An orientation of $M$ is given by a consistent orientation of all tangent spaces of $M$. An orientation of $T_\ppoint M$ is given by a fixed ordering of the vectors of a basis. Hence, a non-vanishing $n$-vector field $\vec{m}\in\ves{n}(M)$ defines an orientation on $M$, and $-\vec{m}$ represents the opposite orientation. Likewise, a volume form $\mu\in\dfs{n}(M)$ with $\dualprod{\mu}{\vec{m}}_\ppoint>0$ everywhere represents the same orientation of $M$. Henceforth we require without loss of generality that $\dualprod{\mu}{\vec{m}}_\ppoint=1$ holds everywhere.\par
\marginpar{Poincar\'e isomorphism} The Poincar\'e isomorphism \cite[p.\ 151]{greub2} is defined using the above volume form and volume vector-field (remember $q=n-p$)
\[
\mathrm{p}:\dfs{q}(M)\rightarrow\ves{p}(M):\eta\mapsto \mathrm{i}_\eta \vec{m}.
\]
The inverse is given by
\[
\mathrm{p}^{-1}:\ves{p}(M)\rightarrow\dfs{q}(M):\vec{v}\mapsto (-1)^{pq}\mathrm{i}_\vec{v} \mu.
\]
An important property of the Poincar\'e isomorphism is
\begin{eqnarray}\label{poincare}\dualprod{\omega}{\mathrm{p}\eta}\mu=(-1)^{pq}\omega\wedge\eta,\end{eqnarray}
where $\omega\in\dfs{p}(M)$, $\eta\in\dfs{q}(M)$, and the point-wise duality product defines a smooth scalar function on $M$.\par
\marginpar{Riesz isomorphism, inner product, norm}We prefer defining the Riemannian metric tensor as a Riesz isomorphism between smooth vector fields and 1-forms, $\mathrm{g}:\ves{1}(M)\rightarrow\dfs{1}(M)$. The definition is extended to $p$-vector fields and $p$-forms
\[
\mathrm{g}:\ves{p}(M)\rightarrow\dfs{p}(M)
\]
by the exterior compound, $\mathrm{g}(\vec{v}\wedge \vec{w})=\mathrm{g}\vec{v}\wedge\mathrm{g}\vec{w}$. The Riesz isomorphism defines a point-wise inner product 
\[
\mywedge^pT_\ppoint M\times \mywedge ^pT_\ppoint M\rightarrow\mathbb{F}: (\vec{v},\vec{w})\mapsto\metric{\vec{v}}{\vec{w}}_\ppoint=\dualprod{\mathrm{g}\vec{v}}{\overline{\vec{w}}}_\ppoint 
\]
in point $\ppoint\in M$, and, analogously,
\[
\mywedge ^pT^*_\ppoint M\times \mywedge ^pT^*_\ppoint M\rightarrow\mathbb{F}: (\omega,\eta)\mapsto\metric{\omega}{\eta}_\ppoint=\dualprod{ {\omega}}{\mathrm{g}^{-1}\overline{\eta}}_\ppoint,
\]
where the bar denotes complex conjugation of the component functions in the case $\mathbb{F}=\mathbb{C}$. The metric tensor is required to be smooth in the sense that on all coordinate patches $U$ the functions $g_{ij}:U\rightarrow\mathbb{F}$, defined by $\ppoint\mapsto\metric{\partial_{x^i}}{\partial_{x^j}}_\ppoint$, are smooth.\par
The inner products induce the corresponding point-wise norms on $\mywedge^pT_\ppoint M$ and $\mywedge ^pT^*_\ppoint M$
\[
|\vec{v}|_\ppoint=\sqrt{\metric{\vec{v}}{\vec{v}}}_\ppoint\quad\text{and}\quad|\omega|_\ppoint=\sqrt{\metric{\omega}{\omega}}_\ppoint.
\]
In what follows, duality product $\dualprod{\cdot}{\cdot}$, inner product $\metric{\cdot}{\cdot}$, and norm $|\cdot|$, acting on smooth $p$-forms and $p$-vector fields, denote point-wise evaluations that define smooth scalar functions on $M$.\par
\marginpar{Unit volume form, Hodge operator}We select a unit volume form, i.e., $|\mu|=1$ everywhere. This connects volume form and metric, which would otherwise be unrelated. The Hodge operator can then be defined by
\[
*:\dfs{p}(M)\rightarrow\dfs{q}(M):\phi\mapsto(-1)^{pq}\mathrm{p}^{-1}\mathrm{g}^{-1}\phi,
\]
and we see that $*\phi=\mathrm{i}(\mathrm{g}^{-1}\phi,\mu)$. The Hodge operator is an isomorphism. It fulfills $**=(-1)^{pq}$, and its inverse is given by $*^{-1}=(-1)^{pq}*=(-1)^{pq}\mathrm{g}\,\mathrm{p}$. The standard textbook definition
\begin{eqnarray}
\label{hodge}\dualprod{\omega}{\mathrm{g}^{-1}\phi}\mu=\omega\wedge*\phi,
\end{eqnarray}
where $\phi\in\dfs{p}(M)$, can be linked to the explicit definition by setting $\eta=*\phi$ in \eqref{poincare}.\par
\marginpar{$\mathcal{D}\pcomp{p}(M)$} Let $\mathcal{D}\pcomp{p}(M)\subset\dfs{p}(M)$ denote the space of test forms, that are smooth, compactly supported $p$-forms on $M\setminus\partial M$ \cite{jaenich_va_eng}. For $\omega,\eta\in\mathcal{D}\pcomp{p}(M)$ we define the inner product
\begin{equation*}
\ltwo{\boldsymbol{\omega}}{\boldsymbol{\eta}}{p}{M}=\int_M\metric{\omega}{\eta}\boldsymbol{\mu}=\int_{M}\boldsymbol{\omega}\wedge *\overline{\boldsymbol{\eta}},
\end{equation*}
and the norm $\ltwonorm{\omega}{p}{M}^2=\ltwo{\boldsymbol{\omega}}{\boldsymbol{\omega}}{p}{M}$. \marginpar{$L^2\pcomp{p}(M)$}Completion of $\mathcal{D}\pcomp{p}(M)$ with respect to this norm yields the Hilbert space $L^2\pcomp{p}(M)$ of square integrable $p$-forms.\par
\marginpar{Coderivative}At this point we introduce the coderivative\,\footnote{When comparing with \cite{weck2004} please note that the coderivative is defined there with an additional minus sign.}
\begin{equation}\label{defcoderivative}
\delta:\dfs{p}(M)\rightarrow\dfs{p-1}(M):\omega\mapsto\delta\omega=(-1)^p*^{-1}\mathrm{d}*\omega,
\end{equation}
and note the property
\[
\delta\delta=0.
\]
The coderivative is defined to be the adjoint operator of the exterior derivative in the $L^2$ inner product,
\[
\ltwo{\mathrm{d}\boldsymbol{\omega}}{\boldsymbol{\eta}}{p+1}{M}=\ltwo{\boldsymbol{\omega}}{\delta\boldsymbol{\eta}}{p}{M},
\]
where the intersection of the support of $\omega\in\dfs{p}(M)$ and $\eta\in\dfs{p+1}(M)$ must be compact in $M\setminus\partial M$.\par
\marginpar{Laplace-Beltrami operator}Eventually, we define the Laplace-Beltrami operator
\begin{equation}\label{deflaplacebeltrami}
\boldsymbol{\Delta}:\dfs{p}(M)\rightarrow\dfs{p}(M):\omega\mapsto\boldsymbol{\Delta}\omega=(\mathrm{d}\delta + \delta\mathrm{d})\omega.
\end{equation}
\begin{framed}
We can now link up with the notation of classical vector analysis. The scalar and vector fields of 3D Euclidean vector analysis are recovered by the translation isomorphisms,
\begin{subequations}\label{deftranslation}
\begin{alignat}{3}
\Upsilon^0:\quad&\mathrm{Id}_{\dfs{0}(M)},\label{translation0}\\
\Upsilon^1:\quad&\dfs{1}(M)\rightarrow\ves{1}(M)&\;:\quad&&{\omega}&\mapsto \mathrm{g}^{-1}{\omega},\label{translation1}\\
\Upsilon^2:\quad&\dfs{2}(M)\rightarrow\ves{1}(M)&\;:\quad&&{\omega}&\mapsto \mathrm{p}{\omega},\label{translation2}\\
\Upsilon^3:\quad&\dfs{3}(M)\rightarrow\dfs{0}(M)&\;:\quad&&{\omega}&\mapsto *{\omega}.\label{translation3}
\end{alignat}
\end{subequations}
Note that the usual definition $\Upsilon^2:\omega\mapsto\mathrm{g}^{-1}*\omega$ coincides with the above definition since for $n=3$ we find $*^{-1}=*=\mathrm{g}\mathrm{p}$ and therefore $\mathrm{g}^{-1}*=\mathrm{p}$.\par
The differential operators of vector analysis are derived from the exterior derivative by
\begin{alignat*}{1}
\textbf{grad} &= \Upsilon^1\;\mathrm{d}\;(\Upsilon^0)^{-1},\\
\textbf{curl} &= \Upsilon^2\;\mathrm{d}\;(\Upsilon^1)^{-1},\\
\mathrm{div} &= \Upsilon^3\;\mathrm{d}\;(\Upsilon^2)^{-1}.
\end{alignat*}
The identities $\textbf{curl}\,\textbf{grad}=\vec{0}$ and $\mathrm{div}\,\textbf{curl}=0$ follow directly from the first Poincar\'e lemma $\mathrm{d}\mathrm{d}=0$.\par
Integration of a 1-form $\alpha\in\dfs{1}(M)$ along an oriented curve $C$ that is embedded in $M$ by $\varphi :C\rightarrow M$ is related to the integration of the tangential component of a vector:
\[
\int_{\varphi(C)}\alpha=\int_{\varphi(C)} \vec{a}\cdot \vec{t} \,\mathrm{d}s,
\]
where $\vec{a}=\Upsilon^1\alpha$, $\mathrm{d}s$ is the metric arc-length differential, and $\vec{t}$ is the unit tangent vector field to $C$. We have used the dot-product notation for the inner product.\par
Similarly,
\[
\int_{\varphi(A)}\beta=\int_{\varphi(A)} \vec{b}\cdot \vec{n} \,\mathrm{d}a,
\]
where $A$ is an inner-oriented surface, embedded in $M$, $\vec{b}=\Upsilon^2\beta$, $\mathrm{d}a$ is the metric area differential, and $\vec{n}$ is the unit normal vector field to $A$. If $(\vec{v}_1,\vec{v}_2)$ defines an orientation of $T_\ppoint A$ in $\ppoint\in A$, $\vec{v}_1,\vec{v}_2\in T_\ppoint A$, then $\vec{n}_\ppoint$ is chosen such that $(\varphi_*\vec{v}_1,\varphi_*\vec{v}_2,\vec{n}_\ppoint)$ is consistent with the orientation of $T_\ppoint M$.\par
Eventually we find 
\[
\int_{\varphi(V)}\omega = \int_{\varphi(V)} w\,\mathrm{d} v,
\]
where $w=\Upsilon^3\omega$, $\mathrm{d}v$ is the metric volume differential, and $V$ is an inner-oriented domain that is embedded in $M$, $\varphi(V)\subset{M}$. We note that $\mathrm{d}v=\mu$ in \eqref{hodge}.\par
Stokes' theorem with $\omega\in \dfs{p-1}(M)$, and the embedded $p$-dimensional submanifold $S$, $\varphi:S\rightarrow M$, 
\[
\int_{\varphi(S)}\mathrm{d}\omega=\int_{\partial \varphi(S)}\omega
\]
encompasses the classical theorems for $n=3$, $p=1,2,3$,
\begin{alignat*}{3}
&\int_{\varphi(C)}\textbf{grad}\;f\cdot \vec{t}\; \mathrm{d}s&&=\left.f\right|_{\partial \varphi(C)},&&\quad\text{fundamental theorem of calculus},\\
&\int_{\varphi(A)}\textbf{curl}\;\vec{v}\cdot \vec{n}\;\mathrm{d}a&&=\int_{\partial \varphi(A)}\vec{v}\cdot \vec{t}\;\mathrm{d}s,&&\quad\text{Stokes' theorem},\\
&\int_{\varphi(V)}\mathrm{div}\;\vec{w}\;\mathrm{d}v&&=\int_{\partial \varphi(V)}\vec{w}\cdot \vec{n}\;\mathrm{d}a,&&\quad\text{Gauss' theorem}.
\end{alignat*}
The coderivative corresponds to classical operators:
\begin{alignat*}{1}
 \Upsilon^2\;\delta\;(\Upsilon^3)^{-1}&=-\textbf{grad} ,\\
\Upsilon^1\;\delta\;(\Upsilon^2)^{-1}&= \textbf{curl} ,\\
\Upsilon^0\;\delta\;(\Upsilon^1)^{-1}&=-\mathrm{div} .
\end{alignat*}
Eventually, we recover the scalar and vector Laplace operators from
\begin{alignat*}{3}
\Upsilon^0\;\boldsymbol{\Delta}\;(\Upsilon^0)^{-1}&=-\mathrm{div}\;\textbf{grad}&&=-\Delta_\mathrm{s} ,\\
\Upsilon^1\;\boldsymbol{\Delta}\;(\Upsilon^1)^{-1}&= -\textbf{grad}\;\mathrm{div}+\textbf{curl}\;\textbf{curl}&&=-\boldsymbol{\Delta}_\mathrm{v} ,\\
\Upsilon^2\;\boldsymbol{\Delta}\;(\Upsilon^2)^{-1}&=\textbf{curl}\;\textbf{curl} -\textbf{grad}\;\mathrm{div}&&=-\boldsymbol{\Delta}_\mathrm{v},\\
\Upsilon^3\;\boldsymbol{\Delta}\;(\Upsilon^3)^{-1}&=-\mathrm{div}\;\textbf{grad}&&=-\Delta_\mathrm{s} ,
\end{alignat*}
where $\Delta_\mathrm{s}$ and $\Delta_\mathrm{v}$ denote the classical scalar and vector Laplace operators, respectively.
\end{framed}
\subsection{Integral Transformations}\label{sec:inttrafo}
Motivated by the notion of electromagnetic fields and sources, we distinguish bet\-ween the source manifold $M$ and the observation manifold $N$ of an integral transformation. Forms in the domain of the transformation are defined on the source manifold; the image under the transformation is defined on the observation manifold. The transformation kernels are expressed in terms of double forms. In this section, source- and observation manifolds are both modeled by the Euclidean space $M=N=\mathbb{R}^n$, though. The source manifold will be chosen differently in Section~\ref{sec:SLDL}, as the boundary of a domain $\Omega\subset M$. At this point we will take full advantage of the differential-form based framework.\par
The parallel transport of a vector $\vec{v}\in T_\ppoint M$ from $\ppoint$ to $\ppoint^\prime$ is denoted by $\Gamma_\ppoint ^{\ppoint^\prime}\vec{v}=\vec{v}^\prime\in T_{\ppoint^\prime}M$. Note that in Euclidean space parallel transport is path independent. Therefore, a vector $\vec{v}$ anchored at $\ppoint$ can be uniquely extended throughout the entire manifold $M$. The resulting vector field is called covariantly constant, and the space of covariantly constant vector fields is denoted by $\ves{1}^{\text{const}}(M)$. The parallel transport $\Gamma_\ppoint^{\ppoint^\prime}$ carries over naturally to $p$-vectors and $p$-covectors, hence $\ves{p}^{\text{const}}(M)$ and $\dfs{\text{const},p}(M)$. These spaces are closed with respect to the Euclidean metric and the Hodge operator in the sense that $\dfs{\text{const},p}(M)=\mathrm{g}\bigl(\ves{p}^{\text{const}}(M)\bigr)$ and $\dfs{\text{const},q}(M)=*\bigl(\dfs{\text{const},p}(M)\bigr)$, respectively. Eventually, Cartesian coordinates refer to orthonormal covariantly constant basis fields.\par
Let $f:M\times M\rightarrow\mathbb{F}:(\ppoint,\ppoint^{\prime})\mapsto f(\ppoint,\ppoint')$ denote a scalar kernel function. With the help of this kernel, we define the integral transformation $\mathcal{T}$ for scalar functions $\sigma$ by
\begin{alignat*}{1}
(\mathcal{T}\sigma)(\ppoint^{\prime})&=\int_M f(\ppoint,\ppoint^\prime)\sigma(\ppoint)\,\mu=\int_M f(\ppoint,\ppoint ^\prime)*\sigma(\ppoint)\\
&=\ltwo{\sigma(\ppoint)}{\overline{f}(\ppoint,\ppoint^{\prime})}{0}{M},
\end{alignat*}
provided the integral exists. $\mu$ denotes the unit volume form on $M$. The generalization to $p$-forms $\omega\in\dfs{p}(M)$ is obtained by requiring that
\begin{align}
\dualprod{\mathcal{T}\boldsymbol{\omega}}{\vec{v}}_{\ppoint^\prime}&=\int_M f(\ppoint,\ppoint^{\prime})\,\dualprod{\boldsymbol{\omega}}{\vec{v}}_{\ppoint}\,\mu
=\int_M f(\ppoint,\ppoint^{\prime})\,\boldsymbol{\omega}(\ppoint)\wedge*\mathrm{g}\vec{v},\label{deftransform1}
\end{align} 
for $\vec{v}\in\ves{p}^{\text{const}}(M)$, which amounts to integration by Cartesian components. For the second equality we used \eqref{hodge}. Equation \eqref{deftransform1} provides a preliminary definition for the integral transformation of $p$-forms. For the definition of layer potentials and boundary integral operators we wish to have an equivalent explicit definition, which is provided next.\par
\marginpar{Double forms}At this point it is useful to introduce double forms, a concept that goes back to \textsc{de Rham} \cite[pp.\ 30-33]{rham}. Applications in the context of electromagnetic Green kernels have been reported in \cite{tucker2008,df32}.\par
A double form of bi-degree $(p,r)$ over the product manifold $M\times N$ may be regarded for each point in $M$ as a $p$-covector valued $r$-form on $N$, or for each point in $N$ as a $r$-covector valued $p$-form on $M$ \cite[Sec.\ 3]{tucker2008}. Given a pair of points $(\ppoint,\ppoint^{\prime})\in M\times N$ on the source and observation manifold, respectively, a double form evaluates to a tensor product of covectors. The covector in the observation point $\ppoint '$ is marked by a prime. In the presence of double forms, we need to distinguish between operators acting on the source and on the observation side of the tensor product. Unary and binary operators acting on the observation side are either evaluated in $\ppoint^\prime$ or marked with a prime.\par
We call double forms of bi-degree $(p,p)$ double $p$-forms. In electrical engineering literature, the vector proxies of double $p$-forms are often called dyadics \cite{Tai1994}.\par
\marginpar{Identity double $p$-form}The identity double $p$-form is defined for $(\ppoint,\ppoint^\prime)\in M\times M$ by
\begin{equation}\label{defip}
\boxed{\dualprod{\boldsymbol{I}_p(\ppoint,\ppoint^{\prime})}{\Gamma_\ppoint^{\ppoint^\prime}\vec{v}}_{\ppoint^\prime}=(\mathrm{g}\,\vec{v})_\ppoint,}
\end{equation}
where $\vec{v}\in\mywedge^pT_\ppoint(M)$. It naturally extends to $\vec{v}\in\ves{p}(M)$.\par
\marginpar{Integral transformations of $p$-forms}The integral transformation of $p$-forms $\omega\in\dfs{p}(M)$ is defined by
\begin{empheq}[box=\fbox]{alignat=1}
\label{deftransform2}
(\mathcal{T}\boldsymbol{\omega})(\ppoint^\prime)=\ltwo{\boldsymbol{\omega}(\ppoint)}{\overline{f}(\ppoint,\ppoint^{\prime})I_p(\ppoint,\ppoint^{\prime})}{p}{M}.
\end{empheq}
The conjugation of $f$ compensates for the conjugation which is inherent to the $L^2$ inner product. The double $p$-form $\overline{f}(\ppoint,\ppoint^{\prime})\boldsymbol{I}_p(\ppoint,\ppoint^{\prime})$ is the extension of the scalar kernel $\overline{f}(\ppoint,\ppoint^{\prime})$ to $p$-forms.\par
To show that \eqref{deftransform1} and \eqref{deftransform2} are equivalent we rewrite \eqref{deftransform1} with \eqref{defip},
\[
\dualprod{\mathcal{T}\boldsymbol{\omega}}{\vec{v}}_{\ppoint^\prime}=
\int_M f(\ppoint,\ppoint^{\prime})\boldsymbol{\omega}(\ppoint)\wedge*\dualprod{\boldsymbol{I}_p(\ppoint,\ppoint^{\prime})}{\vec{v}}_{\ppoint^{\prime}},
\]
where we took into account $\vec{v}\in\ves{p}^{\text{const}}(M)$. By linearity, we obtain \eqref{deftransform2}, and vice versa.\par
A basis representation of the identity double $p$-form on a coordinate patch $U\subset M$ is given by 
\begin{eqnarray}\label{coordid}
{\boldsymbol{I}_p(\ppoint,\ppoint^{\prime})=\det(g_{JK})\bigl(\Gamma^{\ppoint^{\prime}}_\ppoint\mathrm{d}x^J\bigr)\otimes \mathrm{d}x^K,}
\end{eqnarray}
where we adopt the summation convention for the multi-indices $J,K\in\mathcal{J}^n_p$. $(g_{JK})$ denotes the $p\times p$ submatrix of $(g_{jk})$. Equation \eqref{coordid} can be seen by picking a basis vector $\vec{v}=\partial_{x^I}\in\mywedge^pT_\ppoint M$, $p>0$, $\ppoint\in U\subset M$ in \eqref{defip}. With $\boldsymbol{I}_p(\ppoint,\ppoint^{\prime})$ according to \eqref{coordid} we obtain from \eqref{defip}
\[
\dualprod{\boldsymbol{I}_p(\ppoint,\ppoint^{\prime})}{\Gamma^{\ppoint^{\prime}}_\ppoint\partial_{x^I}}_{\ppoint^\prime}=\mathrm{det}(g_{JK})\delta^J_I\mathrm{d}x^K=\mathrm{det}(g_{IK})\mathrm{d}x^K=(\mathrm{g}\,\partial_{x^I})_\ppoint.
\] 
By convention, we set $I_0(\ppoint,\ppoint^\prime)=1$, and use $I_p$ and $f$ as shorthands for $I_p(\ppoint,\ppoint^\prime)$ and $f(\ppoint,\ppoint^\prime)$.\par
In Cartesian coordinates, $n=3$, we obtain the simple expressions
\begin{align*}
\boldsymbol{I}_0&=1,\\
\boldsymbol{I}_1&=\mathrm{d}x^{\prime}\otimes \mathrm{d}x+\mathrm{d}y^{\prime}\otimes \mathrm{d}y+\mathrm{d}z^{\prime}\otimes \mathrm{d}z,\\
\boldsymbol{I}_2&=\mathrm{d}x^{\prime}\!\wedge\!\mathrm{d}y^{\prime}\otimes \mathrm{d}x\!\wedge\!\mathrm{d}y+\mathrm{d}y^{\prime}\!\wedge\!\mathrm{d}z^{\prime}\otimes \mathrm{d}y\!\wedge\!\mathrm{d}z+\mathrm{d}z^{\prime}\!\wedge\!\mathrm{d}x^{\prime}\otimes \mathrm{d}z\!\wedge\!\mathrm{d}x,\\
\boldsymbol{I}_3&=\mathrm{d}x^{\prime}\!\wedge\!\mathrm{d}y^{\prime}\!\wedge\!\mathrm{d}z^{\prime}\otimes \mathrm{d}x\!\wedge\!\mathrm{d}y\!\wedge\!\mathrm{d}z.
\end{align*}
\begin{lemma}%
The identity double $p$-form fulfills the following properties
\begin{subequations}\label{ipprop}
\begin{alignat}{3}
\text{(i)}&&\quad*\boldsymbol{I}_p&\,=\;&&(*^{-1})^{\prime}\boldsymbol{I}_q,\label{ipprop1}\\
\text{(ii)}&&\quad\mathrm{d}\boldsymbol{I}_p&\,=\;&&\mathrm{d}^{\prime}\boldsymbol{I}_p=0.\label{ipprop2}\\
\intertext{For convolution-type scalar kernels $f(\ppoint,\ppoint^\prime)$, i.e., for scalar kernels $f:M\times M\rightarrow\mathbb{F}$ that depend only on the Euclidean distance $d(\ppoint,\ppoint^\prime)$ between $\ppoint$ and $\ppoint^\prime$ it also holds that}
\text{(iii)}&&\quad\mathrm{d}(f\boldsymbol{I}_p)&\,=\;&&\delta^{\prime}(f\boldsymbol{I}_{p+1}),\label{ipprop4}\\
\text{(iv)}&& \quad\delta(f\boldsymbol{I}_p)&\,=\;&&\mathrm{d}^{\prime}(f\boldsymbol{I}_{p-1}),\label{ipprop6}\\
\text{(v)}&& \quad\boldsymbol{\Delta}(f\boldsymbol{I}_p)&\,=\;&&\boldsymbol{\Delta}^{\prime}(f\boldsymbol{I}_p).\label{ipprop5}
\end{alignat}
\end{subequations}
\end{lemma}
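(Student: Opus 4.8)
The plan is to carry out every computation in Cartesian coordinates on $\mathbb{R}^n$. There $g_{jk}=\delta_{jk}$, so the $p\times p$ submatrix $(g_{JK})$ equals the identity when $J=K$ and is singular otherwise; hence $\det(g_{JK})=\delta_{JK}$ (Kronecker on multi-indices), the parallel transport acts trivially on components, and \eqref{coordid} collapses to the diagonal form
\[
\boldsymbol{I}_p=\sum_{J\in\mathcal{J}^n_p}(\Gamma^{\ppoint^\prime}_\ppoint\mathrm{d}x^J)\otimes\mathrm{d}x^J,
\]
as confirmed by the explicit expressions for $\boldsymbol{I}_0,\dots,\boldsymbol{I}_3$. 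Write $\mathrm{d}x^{\prime J}:=\Gamma^{\ppoint^\prime}_\ppoint\mathrm{d}x^J$ for the observation-side basis form. Property (ii) is then immediate: the component functions of $\boldsymbol{I}_p$ are constant, so both the source derivative $\mathrm{d}$ (acting on the unprimed factor) and the observation derivative $\mathrm{d}^\prime$ (acting on the primed factor) annihilate it via $\mathrm{d}\,\mathrm{d}=0$. For (i), I would apply the source Hodge operator to each $\mathrm{d}x^J$ and the observation $(*^{-1})^\prime$ to each $\mathrm{d}x^{\prime K}$, reducing both sides to sums of terms $\mathrm{d}x^{\prime J^c}\otimes\mathrm{d}x^J$ over complementary multi-indices $J^c$. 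The two sides agree once one checks the sign identity $\epsilon_{K^c,K}=(-1)^{pq}\epsilon_{K,K^c}$ relating the complementation signs, which follows from graded anticommutativity $\mathrm{d}x^{K^c}\wedge\mathrm{d}x^K=(-1)^{pq}\mathrm{d}x^K\wedge\mathrm{d}x^{K^c}$ together with $*^{-1}=(-1)^{pq}*$.

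For the convolution-kernel identities I would first record the single analytic input: since $f$ depends only on $d(\ppoint,\ppoint^\prime)=|\ppoint-\ppoint^\prime|$, one has $\partial_{x^i}f=-\partial_{x^{\prime i}}f$, so $\mathrm{d}f=-\mathrm{d}^\prime f$ in Cartesian coordinates. I would also derive from \eqref{defcoderivative} the standard Cartesian expression $\delta\omega=-\sum_i\mathrm{i}_{\partial_{x^i}}\partial_{x^i}\omega$. To prove (iv), expand $\delta(f\boldsymbol{I}_p)=\sum_J\mathrm{d}x^{\prime J}\otimes\delta(f\,\mathrm{d}x^J)$, apply the $\delta$-formula and the antisymmetry to turn $-\partial_{x^i}f$ into $\partial_{x^{\prime i}}f$, and reindex by setting $M=J\setminus\{i\}$. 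The crucial point is that the permutation sign $(-1)^{a-1}$ from the contraction $\mathrm{i}_{\partial_{x^i}}\mathrm{d}x^J=(-1)^{a-1}\mathrm{d}x^{J\setminus i}$ (with $i=j_a$ at position $a$) is exactly cancelled by the sign in $\mathrm{d}x^{\prime J}=(-1)^{a-1}\mathrm{d}x^{\prime i}\wedge\mathrm{d}x^{\prime M}$; after cancellation the sum becomes $\sum_M\sum_i(\partial_{x^{\prime i}}f)\,(\mathrm{d}x^{\prime i}\wedge\mathrm{d}x^{\prime M})\otimes\mathrm{d}x^M=\mathrm{d}^\prime(f\boldsymbol{I}_{p-1})$, where terms with $i\in M$ drop out automatically. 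Identity (iii) then follows without further work: both $f$ and $\boldsymbol{I}_p$ are symmetric under exchanging the source and observation slots, so swapping primed and unprimed in (iv) and shifting $p\mapsto p+1$ yields $\mathrm{d}(f\boldsymbol{I}_p)=\delta^\prime(f\boldsymbol{I}_{p+1})$.

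Finally, (v) is a purely formal consequence of (iii) and (iv). Writing $\boldsymbol{\Delta}=\mathrm{d}\delta+\delta\mathrm{d}$ and using that source operators commute with observation operators (they act on different tensor factors), I would compute
\[
\mathrm{d}\delta(f\boldsymbol{I}_p)=\mathrm{d}\,\mathrm{d}^\prime(f\boldsymbol{I}_{p-1})=\mathrm{d}^\prime\mathrm{d}(f\boldsymbol{I}_{p-1})=\mathrm{d}^\prime\delta^\prime(f\boldsymbol{I}_p)
\]
by (iv) then (iii), and symmetrically
\[
\delta\mathrm{d}(f\boldsymbol{I}_p)=\delta\delta^\prime(f\boldsymbol{I}_{p+1})=\delta^\prime\delta(f\boldsymbol{I}_{p+1})=\delta^\prime\mathrm{d}^\prime(f\boldsymbol{I}_p)
\]
by (iii) then (iv). Adding the two gives $\boldsymbol{\Delta}(f\boldsymbol{I}_p)=(\mathrm{d}^\prime\delta^\prime+\delta^\prime\mathrm{d}^\prime)(f\boldsymbol{I}_p)=\boldsymbol{\Delta}^\prime(f\boldsymbol{I}_p)$.

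I expect the sign bookkeeping to be the only real obstacle. In (i) it is confined to the complementation-sign identity, and in (iv) to the cancellation of the contraction and wedge-reordering signs; both rely essentially on the matched (diagonal) index structure of $\boldsymbol{I}_p$, so the argument would break for a general double form. Everything else is formal, given the Cartesian representation and the gradient antisymmetry of convolution kernels.
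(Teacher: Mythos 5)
Your proposal is correct, but it proves the lemma by a genuinely different route than the paper. The paper argues coordinate-free through the defining duality \eqref{defip}: property (i) is obtained by testing $*\boldsymbol{I}_p$ against parallel-transported $p$-vectors and invoking the duality between the Hodge operator on forms and on multivectors; property (iii) is proved first, via the same duality together with the contraction identity $\mathrm{i}(\vec{u},\boldsymbol{\omega})=(-1)^{p-1}*^{-1}(\mathrm{g}\vec{u}\wedge *\boldsymbol{\omega})$ and the antisymmetry $(\mathrm{d}^{\prime}f)_{\ppoint^{\prime}}=-\Gamma^{\ppoint^{\prime}}_{\ppoint}(\mathrm{d}f)_{\ppoint}$, which is exactly your relation $\mathrm{d}f=-\mathrm{d}^\prime f$; and (iv) is then deduced by the prime-swap symmetry of $f$ and $\boldsymbol{I}_p$. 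You do the mirror image: you prove (iv) directly by an explicit Cartesian multi-index computation, using the flat-space formula $\delta=-\sum_i\mathrm{i}_{\partial_{x^i}}\partial_{x^i}$ and the cancellation of the contraction sign $(-1)^{a-1}$ against the wedge-reordering sign, and you get (iii) by the same symmetry argument the paper uses in the other direction; your sign identity $\epsilon_{K^c,K}=(-1)^{pq}\epsilon_{K,K^c}$ for (i) and your treatment of $\det(g_{JK})$ as a multi-index Kronecker delta in orthonormal coordinates are both correct, and legitimate here since the section fixes $M=N=\mathbb{R}^n$ with Euclidean metric and one global Cartesian chart. For (v) both arguments coincide formally, except that you make explicit what the paper leaves implicit, namely the commutation of primed and unprimed operators ($\mathrm{d}\,\mathrm{d}^\prime=\mathrm{d}^\prime\mathrm{d}$, $\delta\,\delta^\prime=\delta^\prime\delta$), which holds in the standard de Rham double-form convention since they act on different tensor factors — the same convention the paper uses when it pulls $(\boldsymbol{\Delta}-\overline{k}^2)^{\prime}$ through an integral. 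The trade-off: the paper's duality argument avoids all multi-index bookkeeping and exploits the defining property \eqref{defip} directly, so it degrades more gracefully if one perturbs the setting; your computation is more elementary and self-contained and makes every sign auditable, but, as you correctly observe yourself, it hinges on the diagonal Cartesian form of $\boldsymbol{I}_p$ from \eqref{coordid} and would not survive for a general double form.
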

We give an example for (i) in Cartesian coordinates, for $n=3$, $p=1$:
\begin{align*}
*\boldsymbol{I}_1&=*(\mathrm{d}x^{\prime}\otimes \mathrm{d}x+\mathrm{d}y^{\prime}\otimes \mathrm{d}y+\mathrm{d}z^{\prime}\otimes \mathrm{d}z)\\
&=\mathrm{d}x^{\prime}\otimes \mathrm{d}y\!\wedge\!\mathrm{d}z+\mathrm{d}y^{\prime}\otimes \mathrm{d}z\!\wedge\!\mathrm{d}x+\mathrm{d}z^{\prime}\otimes \mathrm{d}x\!\wedge\!\mathrm{d}y\\
&=(*^{-1})^{\prime}(\mathrm{d}y^{\prime}\!\wedge\!\mathrm{d}z^{\prime}\otimes \mathrm{d}y\!\wedge\!\mathrm{d}z+\mathrm{d}z^{\prime}\!\wedge\!\mathrm{d}x^{\prime}\otimes \mathrm{d}z\!\wedge\!\mathrm{d}x+\mathrm{d}x^{\prime}\!\wedge\!\mathrm{d}y^{\prime}\otimes \mathrm{d}x\!\wedge\!\mathrm{d}y)\\
&=(*^{-1})^{\prime}I_2.
\end{align*}
\begin{proof}
(i) is shown by observing that for $\vec{v}\in\mywedge^pT_\ppoint M$
\begin{align*}
\dualprod{*\boldsymbol{I}_p}{\Gamma_\ppoint^{\ppoint^\prime}\vec{v}}_{\ppoint^\prime}&=(*\mathrm{g}\vec{v})_\ppoint
=(-1)^{pq}(\mathrm{g}*\vec{v})_\ppoint\\
&=(-1)^{pq}\dualprod{\boldsymbol{I}_q}{*^{\prime}\Gamma_\ppoint^{\ppoint^\prime}\vec{v}}_{\ppoint^\prime}
=(-1)^{pq}\dualprod{*^{\prime}\boldsymbol{I}_q}{\Gamma_\ppoint^{\ppoint^\prime}\vec{v}}_{\ppoint^\prime},
\end{align*}
where we used \eqref{defip}. The Hodge operator on $p$-forms is dual to the Hodge operator on $p$-vectors. More precisely, for a $p$-vector $\vec{v}$ and a $q$-vector $\vec{w}$ we find $\dualprod{*\mathrm{g}\vec{v}}{\vec{w}}=\dualprod{\mathrm{g}\vec{v}}{*\vec{w}}=\dualprod{\mathrm{g}*\vec{v}}{**\vec{w}}=(-1)^{pq}\dualprod{\mathrm{g}*\vec{v}}{\vec{w}}$, since the Hodge map is an isometry. (ii) holds, because $\boldsymbol{I}_p$ is covariantly constant. This can be seen from the fact that its Cartesian component functions are constant. To show (iii-v), let $f$ denote a convolution-type kernel, so that for fixed $(\ppoint,\ppoint^{\prime})$ we find
\begin{equation}\label{ipprop3}
(\mathrm{d}^{\prime}f)_{\ppoint^{\prime}}=-\Gamma^{\ppoint^{\prime}}_{\ppoint}(\mathrm{d}f)_{\ppoint}.
\end{equation}
Let $\vec{v}\in\mywedge^p T_\ppoint M$ and consider
\begin{align*}
\bigdualprod{\mathrm{d}(f\boldsymbol{I}_p)}{\Gamma_\ppoint^{\ppoint^\prime}\vec{v}}_{\ppoint^\prime}&=(\mathrm{d}f\wedge \mathrm{g}\vec{v})_\ppoint
=\bigdualprod{\boldsymbol{I}_{p+1}}{\Gamma_\ppoint^{\ppoint^\prime}(\mathrm{g}^{-1}\mathrm{d}f\wedge\vec{v})}_{\ppoint^\prime}\\
&=-\bigdualprod{\boldsymbol{I}_{p+1}}{(\mathrm{g}^{-1}\mathrm{d})^{\prime}f\wedge\Gamma_\ppoint^{\ppoint^\prime}\vec{v}}_{\ppoint^\prime}
=-\bigdualprod{\mathrm{i}^\prime\bigl((\mathrm{g}^{-1}\mathrm{d})^{\prime}f,\boldsymbol{I}_{p+1}\bigr)}{\Gamma_\ppoint^{\ppoint^\prime}\vec{v}}_{\ppoint^\prime},
\end{align*}
where we used \eqref{defip}, \eqref{ipprop2}, and \eqref{ipprop3}. Therefore,
\begin{align*}
\mathrm{d}(f\boldsymbol{I}_p)&=-\mathrm{i}^\prime\bigl((\mathrm{g}^{-1}\mathrm{d})^{\prime}f,\boldsymbol{I}_{p+1}\bigr)
=(-1)^{p+1}(*^{-1})^{\prime}(\mathrm{d}^{\prime}f\wedge\,*^{\prime}\boldsymbol{I}_{p+1})\\
&=(-1)^{p+1}(*^{-1}\,\mathrm{d}\,*)^{\prime}(f\boldsymbol{I}_{p+1})=\delta^{\prime}(f\boldsymbol{I}_{p+1}),
\end{align*}
where we took into account \eqref{defcoderivative}, \eqref{ipprop2}, and that for a 1-form $\boldsymbol{\phi}=\mathrm{g}\vec{u}$ and $p$-form $\boldsymbol{\omega}$ we have $\mathrm{i}(\vec{u},\boldsymbol{\omega})=(-1)^{p-1}*^{-1}(\boldsymbol{\phi}\wedge\,*\boldsymbol{\omega})$. (iv) results from (iii), by exchanging unprimed with primed points, because $f$ and $\boldsymbol{I}_p$ are symmetric in $\ppoint$ and $\ppoint^{\prime}$. Eventually, (v) is a consequence of (iii) and (iv).
\qed\end{proof}
\subsection{Fundamental Solution of the Helmholtz Equation}
In this section, $M$ is defined in the same way as in Section~\ref{sec:inttrafo}. That is, source- and observation manifolds are both modeled by the Euclidean space $M=\mathbb{R}^n$.\par
Let $\delta(\ppoint,\ppoint^{\prime})$ denote the $n$-dimen\-sional Dirac delta distribution of source and observation point. Using this kernel, the transformation \eqref{deftransform2} reduces to the identity map from source- to observation-manifold, $\mathcal{T}\boldsymbol{\omega}=\mathrm{Id}\,\boldsymbol{\omega}=\boldsymbol{\omega}$, as can be seen from \eqref{deftransform1}.\par
\marginpar{Fundamental solution}Moreover, let $g_n(\ppoint,\ppoint^\prime)$ denote the standard fundamental solution of convolution type of the scalar Helmholtz equation,
\begin{equation}\label{green0}
(\Delta-k^2)g_n(\ppoint,\ppoint^{\prime})=\delta(\ppoint,\ppoint^{\prime}).
\end{equation}
With boundary-integral equations in mind, we restrict ourselves to dimension $n\ge 2$. We find, e.g., in \cite[Ch.\ 2.2, 2.3]{kythe1996}\footnote{Be aware that in \cite{kythe1996} the factor $r$ is missing in the denominator of the first case, and that $\nabla^2=\Delta_\mathrm{s}=-\Delta$, where $\Delta$ is the Laplace-Beltrami operator according to \eqref{deflaplacebeltrami}.}, \cite[Ch.\ 8, 9]{mclean}
\begin{equation}\label{defgreen}
g_n(\ppoint,\ppoint^{\prime})=\begin{cases}
\frac{i}{4}\left(\frac{k}{2\pi r}\right)^{(n/2)-1}H^{(1)}_{(n/2)-1}(kr) &\quad 0\le\arg k<\pi,k\ne 0,\\[0.5ex]
\frac{1}{2\pi}\ln \frac{r_0}{r},\quad r_0>0&\quad k=0,\,n=2,\\[0.5ex]
\frac{1}{(n-2)S_n(1)}r^{2-n}&\quad k=0,\,n\ge 3,
\end{cases}
\end{equation}
where $r=d(\ppoint,\ppoint^{\prime})$ is the Euclidean distance between $\ppoint$ and $\ppoint^\prime$, $H^{(1)}$ denotes the Hankel function of the first kind, and $S_n(1)$ is the Euclidean measure of the unit sphere in $n$ dimensions. For $n=3$
\[
g_3(\ppoint,\ppoint^{\prime})=\frac{\exp(ikr)}{4\pi r},\quad 0\le\arg k<\pi.
\]
\marginpar{Green operator, Green kernel}The Green operator $\mathcal{G}$ related to the Helmholtz operator $\boldsymbol{\Delta}-k^2$ for $p$-forms is the integral transformation \eqref{deftransform2} with the scalar kernel $\overline{f}=\overline{g}_n$. The extension of this kernel to $p$-forms, the Green kernel, is denoted by
\begin{equation}\label{deffundamentalp}
\boxed{\boldsymbol{G}_p=\overline{g}_n\,\boldsymbol{I}_p.}
\end{equation}
\begin{lemma}The following properties of the Green operator and the Green kernel hold:\\
(i) The Green operator provides a right inverse of the Helmholtz operator,
\begin{equation}\label{greenprop2}
{(\boldsymbol{\Delta}-k^2)\,\mathcal{G}=\mathrm{Id}.}
\end{equation}
(ii) Locally, the expression takes the form
\begin{equation}\label{greenprop1}
{(\boldsymbol{\Delta}-\overline{k}^2)\boldsymbol{G}_p=\delta(\ppoint,\ppoint^{\prime})\boldsymbol{I}_p,}
\end{equation}
i.e.\ $\boldsymbol{G}_p$ is a fundamental solution of the adjoint Helmholtz operator on $p$-forms.
\end{lemma}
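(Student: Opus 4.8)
The plan is to reduce both identities to the scalar Helmholtz equation \eqref{green0}, exploiting that the identity double $p$-form $\boldsymbol{I}_p$ is covariantly constant. The auxiliary fact I would establish first is the reduction $\boldsymbol{\Delta}(f\boldsymbol{I}_p)=(\boldsymbol{\Delta}f)\boldsymbol{I}_p$ for a scalar kernel $f$, where $\boldsymbol{\Delta}f$ on the right denotes the \emph{scalar} Laplace--Beltrami operator. By the coordinate representation \eqref{coordid} the Cartesian component functions of $\boldsymbol{I}_p$ are constant, and in flat Euclidean space $\mathrm{d}$ and $\delta$, hence $\boldsymbol{\Delta}=\mathrm{d}\delta+\delta\mathrm{d}$, have constant-coefficient expressions in Cartesian coordinates; therefore $\boldsymbol{\Delta}$ acts component-wise on the source side and the constant factor $\boldsymbol{I}_p$ passes through untouched. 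Since $\boldsymbol{I}_p$ is covariantly constant on the observation side as well, the same reduction holds for the primed operator $\boldsymbol{\Delta}^\prime$; for the convolution-type kernels $g_n$ and $\overline{g}_n$ relevant below the source and observation versions coincide, which is the content of the symmetry \eqref{ipprop5}.

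With this reduction, statement (ii), i.e.\ \eqref{greenprop1}, is immediate. Recalling that $g_n$, and hence $\overline{g}_n$, depends only on $r=d(\ppoint,\ppoint^\prime)$ by \eqref{defgreen} and is thus of convolution type, and that $\boldsymbol{G}_p=\overline{g}_n\boldsymbol{I}_p$ by \eqref{deffundamentalp}, I would apply the reduction to $f=\overline{g}_n$ to get $(\boldsymbol{\Delta}-\overline{k}^2)\boldsymbol{G}_p=\bigl((\boldsymbol{\Delta}-\overline{k}^2)\overline{g}_n\bigr)\boldsymbol{I}_p$. Complex-conjugating \eqref{green0}---legitimate since $\boldsymbol{\Delta}$ has real coefficients and the Dirac distribution is real---gives $(\boldsymbol{\Delta}-\overline{k}^2)\overline{g}_n=\delta(\ppoint,\ppoint^\prime)$, whence $(\boldsymbol{\Delta}-\overline{k}^2)\boldsymbol{G}_p=\delta(\ppoint,\ppoint^\prime)\boldsymbol{I}_p$. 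The appearance of $\overline{k}$ rather than $k$ is forced by the conjugate kernel in the definition of $\boldsymbol{G}_p$.

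For statement (i), \eqref{greenprop2}, I would start from the defining formula $(\mathcal{G}\boldsymbol{\omega})(\ppoint^\prime)=\ltwo{\boldsymbol{\omega}(\ppoint)}{\boldsymbol{G}_p}{p}{M}$ obtained from \eqref{deftransform2} with kernel $\overline{f}=\overline{g}_n$, keeping in mind that the operator in \eqref{greenprop2} acts on the observation variable $\ppoint^\prime$ only. I would move $(\boldsymbol{\Delta}-k^2)$ under the $\ppoint$-integral onto the kernel; the conjugation built into the $L^2$ inner product turns $\overline{g}_n$ back into $g_n$, so that by the reduction together with \eqref{green0} the operator produces $(\boldsymbol{\Delta}-k^2)(g_n\boldsymbol{I}_p)=\delta(\ppoint,\ppoint^\prime)\boldsymbol{I}_p$---this is precisely why $k$, and not $\overline{k}$, occurs here. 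Finally the Dirac-delta kernel collapses the integral transformation to the identity map, exactly as observed immediately after \eqref{green0}, giving $(\boldsymbol{\Delta}-k^2)\mathcal{G}\boldsymbol{\omega}=\boldsymbol{\omega}$ for every $\boldsymbol{\omega}$.

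The main obstacle is analytic rather than algebraic: because $g_n$ is singular at $r=0$, the reduction identity, the differentiation under the integral sign, and the reproducing property of the delta must all be read distributionally. I would therefore justify these steps in the sense of distributions---or, equivalently, by the classical excision-of-a-small-ball argument used for the scalar fundamental solution---testing against covariantly constant $p$-vectors $\vec{v}$ through \eqref{deftransform1} and \eqref{defip}, so that the double-form bookkeeping collapses to the scalar computation component by component. The only remaining care is notational: tracking which variable each operator acts on and the placement of the complex conjugations, both controlled by \eqref{ipprop5} and the definition of the $L^2$ product.
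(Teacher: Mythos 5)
Your proposal is correct, and it rests on exactly the two pillars the paper uses: the Weitzenb\"ock-type componentwise reduction of $\boldsymbol{\Delta}$ in Cartesian coordinates (covariant constancy of $\boldsymbol{I}_p$, so that $\boldsymbol{\Delta}(f\boldsymbol{I}_p)=(\boldsymbol{\Delta}_0 f)\boldsymbol{I}_p$, which is \eqref{weitzenboeck1} read at the kernel level) and the scalar equation \eqref{green0}, with the same conjugation bookkeeping explaining why $k$ appears in \eqref{greenprop2} and $\overline{k}$ in \eqref{greenprop1}. The one genuine difference is the direction of deduction between the two parts: you prove (ii) first, directly and distributionally at the kernel level, and then obtain (i) by differentiating under the integral and invoking the reproducing property of the Dirac kernel; the paper proceeds the other way, proving (i) first by testing $\mathcal{G}\boldsymbol{\omega}$ against covariantly constant $p$-vectors via \eqref{deftransform1} and \eqref{weitzenboeck1}, and then deriving (ii) only in weak form, by pairing $(\boldsymbol{\Delta}-\overline{k}^2)\boldsymbol{G}_p$ with an arbitrary test form, swapping primed and unprimed Laplacians through \eqref{ipprop5}, and applying the conjugate of (i). Both orderings are sound; your route makes (ii) the primitive statement and is slightly more self-contained, at the price of having to interpret the kernel identity pointwise in the sense of distributions (which you correctly flag and would handle by the excision argument), whereas the paper's weak formulation of (ii) sidesteps any pointwise reading of the singular kernel, delegating all the singular analysis to the scalar result \eqref{green0} already contained in (i).
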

\begin{proof}
We use the notation $\boldsymbol{\Delta}_p$ for the Laplace-Beltrami operator in order to emphasize that the operator is acting on $p$-forms. For flat Euclidean space and Cartesian coordinates we find for multi-indices $J\in\mathcal{J}^n_p$ and by using the summation convention,
\[
\boldsymbol{\Delta}_p\boldsymbol{\omega}=(\boldsymbol{\Delta}_0\omega_J)dx^J.
\]
That is, the Laplace-Beltrami operator acting on a $p$-form can be computed in Cartesian coordinates by applying scalar Laplace-Beltrami operators to its component functions. This is a particular case of the so-called Weitzenb{\"o}ck identities \cite{kotiuga2007}. From this result we can infer that in Euclidean space, for $\vec{v}\in\ves{p}^{\text{const}}(M)$,
\begin{equation}\label{weitzenboeck1}
\boldsymbol{\Delta}_0\dualprod{\boldsymbol{\omega}}{\vec{v}}=\dualprod{\boldsymbol{\Delta}_p\boldsymbol{\omega}}{\vec{v}}.
\end{equation}
From \eqref{deftransform1}, \eqref{green0}, and \eqref{weitzenboeck1} we derive
\begin{align*}
\dualprod{(\boldsymbol{\Delta}_p-k^2)\mathcal{G}\boldsymbol{\omega}}{\vec{v}}&=(\boldsymbol{\Delta}_0-k^2)\dualprod{\mathcal{G}\boldsymbol{\omega}}{\vec{v}}\\
&=\int_M(\boldsymbol{\Delta}_0-k^2)^{\prime}g_n(\ppoint,\ppoint^{\prime})\dualprod{\boldsymbol{\omega}}{\vec{v}}\,\mu\\
&=\int_M\delta(\ppoint,\ppoint^{\prime})\dualprod{\boldsymbol{\omega}}{\vec{v}}\,\mu=\dualprod{\mathrm{Id}\,\boldsymbol{\omega}}{\vec{v}},
\end{align*}
which proves (i). By using \eqref{deftransform2}, \eqref{ipprop5}, and \eqref{greenprop2} we prove (ii):
\begin{align*}
\int_M\boldsymbol{\omega}\wedge*\bigl((\boldsymbol{\Delta}-\overline{k}^2)\boldsymbol{G}_p\bigr)&=
\int_M\boldsymbol{\omega}\wedge*\bigl((\boldsymbol{\Delta}-\overline{k}^2)^{\prime}\boldsymbol{G}_p\bigr)\\
&=(\boldsymbol{\Delta}-\overline{k}^2)^{\prime}\int_M\boldsymbol{\omega}\wedge*\boldsymbol{G}_p\\
&=(\boldsymbol{\Delta}-\overline{k}^2)\,\overline{\mathcal{G}}\boldsymbol{\omega}=\mathrm{Id}\,\boldsymbol{\omega}
=\int_M\boldsymbol{\omega}\wedge*\delta(\ppoint,\ppoint^{\prime})\boldsymbol{I}_p.
\end{align*}
\qed\end{proof}
\subsection{Single-Layer and Double-Layer Potentials}\label{sec:SLDL}
In this section $\Omega$ denotes a bounded open subset of Euclidean space $M$ with smooth boundary $\partial\Omega$. We shall regard the boundary $\Gamma$ as a smooth manifold embedded into $\overline{\Omega}$ by $\iota:\Gamma\rightarrow\overline{\Omega}:\iota\Gamma=\partial\Omega$. We introduce integral transformations from the boundary $\Gamma$ to the domain $\Omega$, where $\Omega$ plays the role of the observation manifold, and $\Gamma$ that of the source manifold. The theory is extended to the case of Lipschitz boundaries in subsequent sections.
\marginpar{Tangential trace}The tangential trace for smooth forms is defined by pullback,
\begin{empheq}[box=\fbox]{equation}\label{tangtrac}
\mathrm{t}:\dfs{p}(\Omega)\rightarrow\dfs{p}(\Gamma): \boldsymbol{\omega}\mapsto\iota^*\boldsymbol{\omega},
\end{empheq}
\marginpar{Normal trace}and the normal trace is given by
\begin{empheq}[box=\fbox]{equation}\label{normtrac}
\mathrm{n}:\dfs{p}(\Omega)\rightarrow\dfs{p-1}(\Gamma): \boldsymbol{\omega}\mapsto\hhat{*}^{-1}\,\mathrm{t}\,*\boldsymbol{\omega}.
\end{empheq}
In contrast to the tangential trace, the normal trace is a metric-dependent concept.\par
\marginpar{Single- and double-layer potential}We define the single- and double-layer potentials to be the integral transformations from $\Gamma$ to $\Omega$
\begin{subequations}\label{deflayerpotentials}
\begin{empheq}[box=\fbox]{alignat=5}
\psl{p}&:\dfs{p}(\Gamma)\rightarrow\dfs{p}(\Omega)&&:\hhat{\boldsymbol{\omega}}\mapsto\ltwo{\hhat{\boldsymbol{\omega}}}{\mathrm{t}\,\boldsymbol{G}_p}{p}{\Gamma},\label{defsl}\\[0.2\baselineskip]
\pdl{p}&:\dfs{p}(\Gamma)\rightarrow\dfs{p}(\Omega)&&:\hhat{\boldsymbol{\omega}}\mapsto-*\mathrm{d}\,\psl{q-1}(\hhat{*}^{-1}\hhat{\boldsymbol{\omega}}).\label{defdl}
\end{empheq}
\end{subequations}
Recall that the $L^2$ inner product in \eqref{defsl} is $p$-form valued, because it contains a double $p$-form as second factor.
\begin{remark}
An alternative definition of the double-layer potential reads
\begin{equation*}
\pdl{p}:\dfs{p}(\Gamma)\rightarrow\dfs{p}(\Omega):\hhat{\boldsymbol{\omega}}\mapsto\delta^{\prime}\ltwo{\boldsymbol{\omega}}{\mathrm{n}\,\boldsymbol{G}_{p+1}}{p}{\Gamma}.
\end{equation*}
The equivalence can be seen from
\begin{align}\label{dldef2}
\delta^{\prime}\ltwo{\boldsymbol{\omega}}{\mathrm{n}\,\boldsymbol{G}_{p+1}}{p}{\Gamma}
&=(-1)^{p(q-1)}\delta^{\prime}\ltwo{\boldsymbol{\omega}}{\hhat{*}\hhat{*}\mathrm{n}\,\boldsymbol{G}_{p+1}}{p}{\Gamma}\nonumber\\
&=(-1)^{q-1}(\delta*)^{\prime}\ltwo{\hhat{*}^{-1}\boldsymbol{\omega}}{\mathrm{t}\,\boldsymbol{G}_{q-1}}{q-1}{\Gamma}\nonumber\\
&=-(*\mathrm{d})^{\prime}\ltwo{\hhat{*}^{-1}\boldsymbol{\omega}}{\mathrm{t}\,\boldsymbol{G}_{q-1}}{q-1}{\Gamma},
\end{align}
where we used $\hhat{*}\,\mathrm{n}=\mathrm{t}\,*$ and \eqref{ipprop1}, as well as that for $p$-forms there holds $\delta\,*=(-1)^{p+1}*\mathrm{d}$. 
\end{remark}
\begin{lemma}\label{lemma:lpotprop}
The single- and double-layer potentials exhibit the following properties in $\Omega$:
\begin{subequations}\label{lpotprop}
\begin{alignat}{3}
\text{(i)}\quad&&\delta\pslnot-\pslnot\hhat{\delta}&=0,\label{lpotprop1}\\
\text{(ii)}\quad&&(\Delta-k^2)\pslnot&=0,\label{lpotprop2}\\
\text{(iii)}\quad&&*\pdlnot&=(-1)^{p+1}\mathrm{d}\,\pslnot\hhat{*},\label{lpotprop3}\\
\text{(iv)}\quad&&\delta\pdlnot&=0,\label{lpotprop5}\\
\text{(v)}\quad&&\mathrm{d}\,\pdlnot-\pdlnot\hhat{\mathrm{d}}&=k^2*^{-1}\pslnot\hhat{*},\label{lpotprop4}\\
\text{(vi)}\quad&&*\,\mathrm{d}\,\pdlnot&=(-1)^p\mathrm{d}\,\pslnot\hhat{*}\,\hhat{\mathrm{d}}+k^2\pslnot\hhat{*},\label{lpotprop7}\\
\text{(vii)}\quad&&(\delta\mathrm{d}-k^2)\pdlnot&=0.\label{lpotprop6}
\end{alignat}
\end{subequations}
\end{lemma}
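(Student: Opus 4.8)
The plan is to establish the seven identities \eqref{lpotprop1}--\eqref{lpotprop6} in sequence, exploiting the fact that each property either follows directly from the definitions \eqref{deflayerpotentials} combined with the Green-kernel properties, or is a formal consequence of earlier items in the list. Throughout I would work on the observation manifold $\Omega$ and treat the source integration over $\Gamma$ as fixed, so that operators like $\mathrm{d}$, $\delta$, $*$ without a prime act on the observation variable $\ppoint'$, while the hatted operators $\hhat{\delta}$, $\hhat{\mathrm{d}}$, $\hhat{*}$ act on the source side over $\Gamma$. The key technical engine is the set of kernel identities \eqref{ipprop4}--\eqref{ipprop5}, which relate unprimed derivatives of $f\boldsymbol{I}_p$ to primed derivatives (and shift the degree), together with the defining relation $(\boldsymbol{\Delta}-\overline{k}^2)\boldsymbol{G}_p=\delta(\ppoint,\ppoint')\boldsymbol{I}_p$ from the Green-kernel lemma.

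First I would prove (i): applying $\delta$ on the observation side to $\psl{p}\hhat{\boldsymbol{\omega}}=\langle\hhat{\boldsymbol{\omega}},\mathrm{t}\,\boldsymbol{G}_p\rangle_{L^2(\Gamma)}$ commutes past the source integral, so $\delta\pslnot\hhat{\boldsymbol{\omega}}=\langle\hhat{\boldsymbol{\omega}},\delta'\,\mathrm{t}\,\boldsymbol{G}_p\rangle$; using \eqref{ipprop6} to rewrite $\delta'(\overline{g}_n\boldsymbol{I}_p)=\mathrm{d}(\overline{g}_n\boldsymbol{I}_{p-1})$ as an \emph{unprimed} exterior derivative, and pulling the unprimed $\mathrm{d}$ through the tangential trace via $\mathrm{d}\,\mathrm{t}=\mathrm{t}\,\mathrm{d}$, lets me move it onto $\hhat{\boldsymbol{\omega}}$ by the $L^2$-adjointness of $\mathrm{d}$ and $\hhat{\delta}$ on $\Gamma$, producing $\pslnot\hhat{\delta}\hhat{\boldsymbol{\omega}}$. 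Property (ii) is then immediate from the Green-kernel lemma: since $(\boldsymbol{\Delta}-k^2)$ acting on the observation side of $\mathrm{t}\,\boldsymbol{G}_p$ vanishes away from $\Gamma$ (the Dirac source lives on $\Gamma$, not in the interior of $\Omega$), the single-layer potential is annihilated by the Helmholtz operator in $\Omega$. Property (iii) is purely formal: it is just the definition \eqref{defdl} rewritten, using $**=(-1)^{pq}$ to move the outer Hodge star, and I would record it mainly because it feeds (vi).

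Next, (iv) should follow by applying $\delta$ to (iii) and invoking $\delta\,\mathrm{d} = \delta\mathrm{d}$ together with $\delta\delta=0$ after converting Hodge stars, or more directly from the alternative double-layer representation in the Remark, where $\pdlnot=\delta'\langle\cdots\rangle$ makes $\delta\pdlnot\propto\delta'\delta'(\cdots)=0$. For (v), the commutator $\mathrm{d}\,\pdlnot-\pdlnot\hhat{\mathrm{d}}$, I expect to differentiate the alternative representation and use \eqref{ipprop4}/\eqref{ipprop6} to trade a source derivative for an observation derivative; the curvature-like remainder term is where the factor $k^2$ enters, coming from the full Laplace--Beltrami relation rather than the individual first-order kernel identities --- this is the step I anticipate as the main obstacle, since the $k^2$ cannot appear from the degree-shifting identities alone but must be extracted from $(\boldsymbol{\Delta}-\overline{k}^2)\boldsymbol{G}_p=\delta\,\boldsymbol{I}_p$ after the Dirac term is discarded in $\Omega$. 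Property (vi) is obtained by applying $*\,\mathrm{d}$ to (iii) and substituting (v) to replace $\mathrm{d}\pdlnot$, with the Hodge-star bookkeeping producing the stated signs. Finally (vii) combines (iv) and (v): writing $(\delta\mathrm{d}-k^2)\pdlnot$ and using (v) to replace $\mathrm{d}\pdlnot$ by $\pdlnot\hhat{\mathrm{d}}+k^2*^{-1}\pslnot\hhat{*}$, the first piece is killed by $\delta\pdlnot=0$ from (iv) after commuting $\delta$ appropriately, while the $k^2$ terms cancel against the explicit $-k^2\pdlnot$, using (i) to handle $\delta*^{-1}\pslnot\hhat{*}$. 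The recurring care point throughout will be the sign powers $(-1)^{pq}$, $(-1)^{p}$ generated each time a Hodge star crosses a derivative via $\delta* = (-1)^{p+1}*\mathrm{d}$ and $\delta\,*=\pm*\mathrm{d}$, and ensuring the hatted boundary operators are consistently adjointed on $\Gamma$ rather than $\Omega$.
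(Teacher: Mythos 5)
Your proposal is correct and follows essentially the same route as the paper's proof: (i)--(ii) via the kernel identities \eqref{ipprop4}--\eqref{ipprop5} and the fundamental-solution property of $\boldsymbol{G}_p$ with the Dirac term vanishing for interior observation points, (iii)--(iv) directly from the definitions \eqref{deflayerpotentials}, (v) by splitting off the Helmholtz operator (the paper writes $\delta\mathrm{d}=(\Delta-k^2)-\mathrm{d}\delta+k^2$ at the potential level and uses (i)--(ii), which is the same mechanism as your kernel-level plan, with the $k^2$ remainder emerging exactly where you anticipate), and (vi)--(vii) as formal consequences of (iii)--(v). One cosmetic slip: in (vii) the leftover term $k^2\,\delta *^{-1}\pslnot\hhat{*}$ is not handled by (i) but by commuting $\delta$ past the Hodge star via $\delta\,*=(-1)^{p+1}*\,\mathrm{d}$ and recognizing the definition \eqref{defdl} of $\pdlnot$ --- a routine step once (iv) and (v) are in place, so this does not affect the validity of your approach.
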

\begin{remark}
\begin{enumerate}
\item[]
\item Property (v) shows that the double-layer potential respects the de Rham sequence for $k=0$.
\item Properties (iii) and (vi) will be used in the proof of the jump relations in Section~\ref{sec:interfac}.
\end{enumerate}
\end{remark}
\begin{proof}
(i) is shown by
\begin{align*}
\delta\psl{p}\hhat{\boldsymbol{\omega}}
&=\delta^{\prime}\ltwo{\hhat{\boldsymbol{\omega}}}{\mathrm{t}\,\boldsymbol{G}_p}{p}{\Gamma}
=\ltwo{\hhat{\boldsymbol{\omega}}}{\mathrm{t}\,\mathrm{d}\,\boldsymbol{G}_{p-1}}{p}{\Gamma} \nonumber\\
&=\ltwo{\hhat{\boldsymbol{\omega}}}{\hhat{\mathrm{d}}\,\mathrm{t}\,\boldsymbol{G}_{p-1}}{p}{\Gamma}
=\ltwo{\hhat{\delta}\,\hhat{\boldsymbol{\omega}}}{\mathrm{t}\,\boldsymbol{G}_{p-1}}{p-1}{\Gamma}\\
&=\psl{{p-1}}\hhat{\delta}\omega,
\end{align*}
and (ii) is seen from
\begin{align*}
(\Delta-k^2)\psl{p}\hhat{\boldsymbol{\omega}}
&=(\Delta-k^2)^{\prime}\ltwo{\hhat{\boldsymbol{\omega}}}{\mathrm{t}\,\boldsymbol{G}_p}{p}{\Gamma}\nonumber\\
&=\ltwo{\hhat{\boldsymbol{\omega}}}{\mathrm{t}(\Delta-\overline{k}^2)\boldsymbol{G}_p}{p}{\Gamma}\\
&=\ltwo{\hhat{\boldsymbol{\omega}}}{\delta(\ppoint,\ppoint^{\prime})\,\mathrm{t}\,\boldsymbol{I}_p}{p}{\Gamma}\nonumber
=0,
\end{align*}
where we used \eqref{ipprop4} and \eqref{ipprop5}. (iii) and (iv) are direct consequences of the definitions \eqref{deflayerpotentials}. (vi) is obtained from combining (iii) and (v). (v) 
is shown by
\begin{align*}
\!\!\!\!\!\!\!\nonumber\mathrm{d}\pdl{p}\
&=-\mathrm{d}*\mathrm{d}\,\psl{q-1}\hhat{*}^{-1}
=(-1)^{q-1}*\delta\mathrm{d}\,\psl{q-1}\hhat{*}^{-1}\\
\nonumber&=(-1)^{q-1}*\bigl((\Delta-k^2)-\mathrm{d}\delta+k^2\bigr)\psl{q-1}\hhat{*}^{-1}\\
&=(-1)^q*\mathrm{d}\,\psl{q-2}\hhat{\delta}\,\hhat{*}^{-1}
+(-1)^{q-1}k^2*\psl{q-1}\hhat{*}^{-1}\\
\nonumber&=-*\mathrm{d}\,\psl{q-2}\hhat{*}^{-1}\hhat{\mathrm{d}}
+(-1)^{(p+1)(q-1)}k^2*\psl{q-1}\hhat{*}\\
&=\pdl{p+1}\hhat{\mathrm{d}}+k^2*^{-1}\psl{q-1}\hhat{*},
\end{align*}
and (vii) is seen from
\begin{align*}
\!\!\!\!\!\!\!\nonumber\delta\mathrm{d}\,\pdl{p}
&=\delta k^2*^{-1}\psl{q-1}\hhat{*}\\
&=(-1)^{p+1}k^2*^{-1}\mathrm{d}\,\psl{q-1}\hhat{*}\\
\nonumber&=-k^2*\mathrm{d}\,\psl{q-1}\hhat{*}^{-1}
=k^2\pdl{p}.
\end{align*}
\qed\end{proof}
\section{Sobolev Spaces of Differential Forms}\label{sec:sobolevspacesdf}
Throughout this section we adopt the following notation. $\Omega$ denotes a bounded open subset of a smooth orientable Riemannian manifold $M$ with Lipschitz boundary $\partial\Omega$. We require that $\Omega$ is homeomorphic to an open ball. The theory can be extended to the topologically non-trivial case, compare \cite{hiptmair2007}, but we do not delve into this. We shall regard the boundary $\Gamma$ as a Lipschitz manifold embedded into $\overline{\Omega}$ by $\iota:\Gamma\rightarrow\overline{\Omega}:\iota\Gamma=\partial\Omega$.\par
\subsection{Sobolev Spaces on the Domain}
\marginpar{$L^2\pcomp{p}(\Omega)$, \\weak derivative, \\weak coderivative}Let $\mathcal{D}\pcomp{p}(\overline{\Omega})$ be the restriction of $\mathcal{D}\pcomp{p}(M)$ to ${\Omega}$, and
\[
\mathcal{D}\pcomp{p}(\Omega)=\{\phi\in\mathcal{D}\pcomp{p}(M)|\supp{\phi}\subset\Omega\},
\]
respectively. The inner product of $p$-forms on $\Omega$ is defined by
\begin{equation*}
\ltwo{\boldsymbol{\omega}}{\boldsymbol{\eta}}{p}{\Omega}=\int_{\Omega}\boldsymbol{\omega}\wedge *\overline{\boldsymbol{\eta}},
\end{equation*}
and $\ltwonorm{\cdot}{p}{\Omega}$ denotes the corresponding norm. $L^2\pcomp{p}(\Omega)$ is the completion of $\mathcal{D}\pcomp{p}(\overline{\Omega})$ with respect to this norm. We call $\eta\in L^2\pcomp{p+1}(\Omega)$ the weak derivative of $\omega\in L^2\pcomp{p}(\Omega)$ iff
\[
\ltwo{\eta}{\phi}{p+1}{\Omega}=\ltwo{\omega}{\delta\phi}{p}{\Omega},
\]
for all $\phi\in\mathcal{D}\pcomp{p+1}(\Omega)$. $\pi\in L^2\pcomp{p-1}(\Omega)$ is called the weak coderivative of $\omega\in L^2\pcomp{p}(\Omega)$ iff
\[
\ltwo{\pi}{\psi}{p-1}{\Omega}=\ltwo{\omega}{\mathrm{d}\psi}{p}{\Omega},
\]
for all $\psi\in\mathcal{D}\pcomp{p-1}(\Omega)$.
\marginpar{$\hzero \pcomp{p}(\mathrm{d},\Omega)$}We can now define Sobolev spaces of differential $p$-forms on $\Omega$. Let 
\[\hzero \pcomp{p}(\Omega)=L^2\pcomp{p}(\Omega).
\] 
Define
\[
\boxed{\hzero \pcomp{p}(\mathrm{d},\Omega)=\{\boldsymbol{\omega}\in \hzero \pcomp{p}(\Omega)\,|\,\mathrm{d}\boldsymbol{\omega}\in \hzero \pcomp{p+1}(\Omega)\},}
\]
where $\mathrm{d}$ has to be understood in the weak sense.\par\noindent
We denote by 
\[
|\boldsymbol{\omega}|_{\hzero \pcomp{p}(\mathrm{d},\Omega)}=\ltwonorm{\boldsymbol{\omega}}{p}{\Omega}+\ltwonorm{\mathrm{d}\boldsymbol{\omega}}{p+1}{\Omega}
\]
the graph norm associated with $\hzero \pcomp{p}(\mathrm{d},\Omega)$. The following sequence is exact\,\textsuperscript{\ref{footnote:sequence}}
\begin{equation}\label{exactsequence1}
\begin{CD}
\cdots @>\mathrm{d}>>
\hzero \pcomp{p}  (\mathrm{d},\Omega) @>\mathrm{d}>>
\hzero \pcomp{p+1}(\mathrm{d},\Omega) @>\mathrm{d}>>
\cdots
\end{CD}\quad.
\end{equation}
The notations are to be read as follows: The Sobolev space $H^k\pcomp{p}(\Omega)$ contains $p$-forms over $\Omega$ that have component functions in $H^k(\Omega)$, $k\ge 0$. For $H^k\pcomp{p}(\mathrm{D},\Omega)$, the forms, as well as the derivation $\mathrm{D}$ of the forms in the weak sense, are in the respective $H^k\pcomp{p}(\Omega)$ space. The index $k=0$ is omitted.\par
\marginpar{$\hzero \pcomp{p}(\delta,\Omega)$}The Hodge star operator naturally extends to $L^2\pcomp{p}(\Omega)$ and we define the image spaces 
\[
\hzero \pcomp{p}(\delta,\Omega)=*\,\hzero \pcomp{q}(\mathrm{d},\Omega),\] 
with the norm 
\[
|\boldsymbol{\omega}|_{\hzero \pcomp{p}(\delta,\Omega)}=|*\boldsymbol{\omega}|_{\hzero \pcomp{q}(\mathrm{d},\Omega)}.
\]
 It is easy to see that these spaces are characterized by
\[\boxed{
\hzero \pcomp{p}(\delta,\Omega)=\{\boldsymbol{\omega}\in \hzero \pcomp{p}(\Omega)\,|\,\delta\boldsymbol{\omega}\in \hzero \pcomp{p-1}(\Omega)\},}
\]
where $\delta$ has to be understood in the weak sense. The sequence of these Sobolev spaces is, again, exact,
\begin{equation*}
\begin{CD}
\cdots @>\delta>>
\hzero \pcomp{p}  (\delta,\Omega) @>\delta>>
\hzero \pcomp{p-1}(\delta,\Omega) @>\delta>>
\cdots
\end{CD}\quad.
\end{equation*}
Note that the maps
\begin{empheq}[box=\fbox]{alignat*=2}
*:\;&\hzero \pcomp{p}(\mathrm{d},\Omega)&&\rightarrow \hzero \pcomp{q}(\delta,\Omega),\\
*:\;&\hzero \pcomp{p}(\delta,\Omega)&&\rightarrow \hzero \pcomp{q}(\mathrm{d},\Omega)
\end{empheq}
are isometric isomorphisms.\par
Eventually, for use in Section~\ref{sec:represent}, we introduce the space
\[
\hzero \pcomp{p}(\delta\mathrm{d},\Omega)=\{\boldsymbol{\omega}\in \hzero \pcomp{p}(\mathrm{d},\Omega)\,|\,\delta\mathrm{d}\boldsymbol{\omega}\in \hzero \pcomp{p}(\Omega)\},
\]
equipped with the graph norm, as well as the subspaces of closed and coclosed $p$-forms,
\begin{alignat*}{3}
\hzero \pcomp{p}(\mathrm{d}0,\Omega)&=\{\boldsymbol{\omega}\in \hzero \pcomp{p}(\mathrm{d},\Omega)&&|\mathrm{d}\boldsymbol{\omega}=0\}&&\subset \hzero \pcomp{p}(\mathrm{d},\Omega),\\
\hzero \pcomp{p}(\delta 0,\Omega)&=\{\boldsymbol{\omega}\in \hzero \pcomp{p}(\delta,\Omega)&&|\delta\boldsymbol{\omega}=0\}&&\subset \hzero \pcomp{p}(\delta,\Omega),\\
\hzero \pcomp{p}(\delta\mathrm{d} 0,\Omega)&=\{\boldsymbol{\omega}\in \hzero \pcomp{p}(\delta\mathrm{d},\Omega)&&|\delta\mathrm{d}\boldsymbol{\omega}=0\}&&\subset \hzero \pcomp{p}(\delta\mathrm{d},\Omega).
\end{alignat*}
\begin{framed}
For $n=3$, $p=0,...,n$, $\hzero \pcomp{p}(\mathrm{d},\Omega)$ encompasses a number of well-known spaces,
\begin{align*}
H^1(\Omega)&=\Upsilon^0 \hzero \pcomp{0}(\mathrm{d},\Omega),\\
\mathbf{H}(\textbf{curl},\Omega) &=\Upsilon^1 \hzero \pcomp{1}(\mathrm{d},\Omega),\\
\mathbf{H}(\mathrm{div},\Omega) &= \Upsilon^2 \hzero \pcomp{2}(\mathrm{d},\Omega),\\
L^2(\Omega) &= \Upsilon^3 \hzero \pcomp{3}(\mathrm{d},\Omega).
\end{align*}
Note that the translation isomorphisms \eqref{deftranslation} naturally extend to the Sobolev space setting. We can therefore redefine the differential operators of classical vector analysis 
\begin{alignat*}{4}
\textbf{grad} &= \Upsilon^1\;\mathrm{d}\;(\Upsilon^0)^{-1}&&:H^1(\Omega)\rightarrow\mathbf{H}(\textbf{curl},\Omega),\\
\textbf{curl} &= \Upsilon^2\;\mathrm{d}\;(\Upsilon^1)^{-1}&&:\mathbf{H}(\textbf{curl},\Omega)\rightarrow\mathbf{H}(\mathrm{div},\Omega),\\
\mathrm{div} &= \Upsilon^3\;\mathrm{d}\;(\Upsilon^2)^{-1}&&:\mathbf{H}(\mathrm{div},\Omega)\rightarrow L^2(\Omega).
\end{alignat*}
The sequence \eqref{exactsequence1} translates to
\[
\begin{CD}
H^1(\Omega) @>\textbf{grad}>>
\mathbf{H}(\textbf{curl},\Omega) @>\textbf{curl}>>
\mathbf{H}(\mathrm{div},\Omega) @>\mathrm{div}>>
L^2(\Omega)
\end{CD}.
\]
\end{framed}
In the next section we will proceed to introduce traces of the above Sobolev spaces. In the argument we will need $H^1\pcomp{p}(\Omega)$, the space of $p$-forms $\omega$ with component functions $\omega_I$ in $H^1(\Omega)$, 
\[
H^1\pcomp{p}(\Omega)=\{\boldsymbol{\omega}\in L^2\pcomp{p}(\Omega)\,|\,\omega_I\in H^1(\Omega),I\in\mathcal{J}^n_p\}.
\]
Different coordinates on $\Omega$ are related by smooth transition maps, so that this definition is independent from their choice, and the norms 
\[
|\boldsymbol{\omega}|^2_{H^1\pcomp{p}(\Omega)}=\sum_{I\in\mathcal{J}^n_p}|\omega_I|^2_{H^1(\Omega)}
\]
are equivalent. Note that $\hzero \pcomp{0}(\mathrm{d},\Omega)=H^1\pcomp{0}(\Omega)$. Moreover, we will encounter
\[
H^1\pcomp{p}(\mathrm{d},\Omega)=\{\boldsymbol{\omega}\in H^1\pcomp{p}(\Omega)\,|\,\mathrm{d}\boldsymbol{\omega}\in H^1\pcomp{p+1}(\Omega)\},
\]
and
\[
H^1\pcomp{p}(\delta,\Omega)=\{\boldsymbol{\omega}\in H^1\pcomp{p}(\Omega)\,|\,\delta\boldsymbol{\omega}\in H^1\pcomp{p-1}(\Omega)\},
\]
equipped with their respective graph norms.
\subsection{Sobolev Spaces on the Boundary}\label{sec:dfboundary}
\marginpar{$L^2\pcomp{p}(\Gamma)$}The Lipschitz manifold $\Gamma$ has well-defined tangent spaces almost everywhere. Their orientation is chosen to be consistent with the orientation of $\Omega$. The induced Riemannian metric allows for the definition of a Hodge operator. It is shown in \cite[Remark 1]{weck2004} that the $L^2\pcomp{p}(\Gamma)$ spaces are well defined, with the inner product
\begin{equation*}
\ltwo{\boldsymbol{\omega}}{\boldsymbol{\eta}}{p}{\Gamma}=\int_{\Gamma}\boldsymbol{\omega}\wedge \hhat{*}\overline{\boldsymbol{\eta}}
\end{equation*}
for $\omega,\eta\in L^2\pcomp{p}(\Gamma)$.
\marginpar{$H_{\parallel}^{\pm1/2}\pcomp{p}(\Gamma)$}Let $\hat{\omega}=\mathrm{t}\,\omega$ denote the tangential trace according to \eqref{tangtrac}. It is shown in \cite[Sec.\ 2]{weck2004} that the tangential trace can be extended to $H^1\pcomp{p}(\Omega)$. This extension may be defined even in the case of a Lipschitz boundary. We then define
\[
\boxed{H_{\parallel}^{1/2}\pcomp{p}(\Gamma)=\mathrm{t}\,H^1\pcomp{p}(\Omega)}
\]
equipped with the norm
\[
|\hat{\boldsymbol{\omega}}|_{H_{\parallel}^{1/2}\pcomp{p}(\Gamma)}=\inf_{\hat{\boldsymbol{\omega}}=\mathrm{t}\,\boldsymbol{\omega}}\{|\boldsymbol{\omega}|_{H^1\pcomp{p}(\Omega)}\}.
\]
The infimum is taken over all $\boldsymbol{\omega}\in H^1\pcomp{p}(\Omega)$ satisfying $\hat{\boldsymbol{\omega}}=\mathrm{t}\,\boldsymbol{\omega}$. Denote $H_{\parallel}^{-1/2}\pcomp{p}(\Gamma)$ the topological dual of $H_{\parallel}^{1/2}\pcomp{p}(\Gamma)$, with $L^2\pcomp{p}(\Gamma)$ as pivot space.\par
\marginpar{$H_{\perp}^{\pm 1/2}\pcomp{p}(\Gamma)$}It is shown in \cite[Lemma 5]{weck2004} that the Hodge operator $\hhat{*}$ may be restricted to $H_{\parallel}^{1/2}\pcomp{p}(\Gamma)$ and extended by continuity to $H_{\parallel}^{-1/2}\pcomp{p}(\Gamma)$. This allows us to define the image spaces
\[
\boxed{H_{\perp}^{\pm 1/2}\pcomp{p}(\Gamma)=\hhat{*}\,H_{\parallel}^{\pm 1/2}\pcomp{q-1}(\Gamma),}
\]
with the norm 
\[
|\hat{\boldsymbol{\omega}}|_{H_{\perp}^{1/2}\pcomp{p}(\Gamma)}=|\hhat{*}\,\hat{\boldsymbol{\omega}}|_{H_{\parallel}^{1/2}\pcomp{q-1}(\Gamma)}.
\]
Note that the maps
\begin{empheq}[box=\fbox]{alignat*=2}
\hhat{*}:\;&H_{\parallel}^{\pm 1/2}\pcomp{p}(\Gamma)&&\rightarrow H_{\perp}^{\pm 1/2}\pcomp{q-1}(\Gamma),\\
\hhat{*}:\;&H_{\perp}^{\pm 1/2}\pcomp{p}(\Gamma)&&\rightarrow H_{\parallel}^{\pm 1/2}\pcomp{q-1}(\Gamma)
\end{empheq}
are isometric isomorphisms, and the following inclusions hold \cite[Thm.\ 2]{weck2004},
\begin{alignat*}{3}
H_{\parallel}^{1/2}\pcomp{p}(\Gamma)&\subset L^2\pcomp{p}(\Gamma)&&\subset H_{\parallel}^{-1/2}\pcomp{p}(\Gamma),\\
H_{\perp}^{1/2}\pcomp{p}(\Gamma)&\subset L^2\pcomp{p}(\Gamma)&&\subset H_{\perp}^{-1/2}\pcomp{p}(\Gamma).
\end{alignat*}
\begin{remark}
\begin{enumerate}
\item[]
\item For smooth boundaries, the spaces $H_{\parallel}^{1/2}\pcomp{p}(\Gamma)$ and $H_{\perp}^{1/2}\pcomp{p}(\Gamma)$ are identical.
\item A fully intrinsic characterization of the spaces $H_{\parallel}^{1/2}\pcomp{p}(\Gamma)$ and $H_{\perp}^{1/2}\pcomp{p}(\Gamma)$ is available in the case of polyhedral domains. This has been shown in \cite{theor47a,theor47b} for $n=3$, $p=1$. In this case, the boundary consists of a finite number of smooth faces, and on each face the space $H_{\parallel}^{1/2}\pcomp{1}(\Gamma)$ coincides with 
\[
H^{1/2}\pcomp{1}(\Gamma)=\{\hat{\boldsymbol{\omega}}\in L^2\pcomp{1}(\Gamma)\,|\,\hat{\omega}_I\in H^{1/2}(\Gamma),\,I\in\mathcal{J}^{n-1}_1\}.
\]
The faces meet in edges, where forms in $H_{\parallel}^{1/2}\pcomp{1}(\Gamma)$ exhibit a weak tangential and forms in $H_{\perp}^{1/2}\pcomp{1}(\Gamma)$ a weak normal continuity (related to boundedness of the functionals $\mathcal{N}_{ij}^\parallel$ and $\mathcal{N}_{ij}^\bot$ defined in \cite[Prop.\ 4.3.]{theor47a}). It is expected that this approach could be generalized on polyhedral domains straightforwardly to other values of $n$ and $p$.
\end{enumerate}
\end{remark}
\marginpar{$H_{\perp}^{-1/2}\pcomp{p}(\hhat{\mathrm{d}},\Gamma)$}Since $H^1\pcomp{p}(\mathrm{d},\Omega)$ is a subspace of $H^1\pcomp{p}(\Omega)$, the space $Y=\hhat{*}\,\mathrm{t}\,H^1\pcomp{q-2}(\mathrm{d},\Omega)$ is a well defined subspace of $H_{\perp}^{1/2}\pcomp{p+1}(\Gamma)$, and we equip it with the norm
\[
|\hat{\boldsymbol{\omega}}|_Y=\inf_{\hat{\boldsymbol{\omega}}=\hhat{*}\,\mathrm{t}\,\boldsymbol{\omega}}\{|\boldsymbol{\omega}|_{H^1\pcomp{q-2}(\mathrm{d},\Omega)}\}.
\]
The infimum is taken over all $\boldsymbol{\omega}\in H^1\pcomp{q-2}(\mathrm{d},\Omega)$ satisfying $\hat{\boldsymbol{\omega}}=\hhat{*}\,\mathrm{t}\,\boldsymbol{\omega}$. We are now in the position to define the exterior derivative on the boundary in the weak sense
\[
\hhat{\mathrm{d}}:H_{\perp}^{-1/2}\pcomp{p}(\Gamma)\rightarrow Y^{\prime}:\hat{\boldsymbol{\omega}}\mapsto\hhat{\mathrm{d}}\,\hat{\boldsymbol{\omega}}
\]
by \cite[Lemma 3]{weck2004}
\begin{gather}
\dualprod{\hhat{\mathrm{d}}\,\hat{\boldsymbol{\omega}}}{\hat{\boldsymbol{\eta}}}_Y=(-1)^p\dualprod{\hat{\boldsymbol{\omega}}}{\hhat{*}^{-1}\,\mathrm{t}\,\mathrm{d}\,\boldsymbol{\eta}}_{H_{\perp}^{1/2}\pcomp{p}(\Gamma)},\nonumber\\
\boldsymbol{\eta}\in H^1\pcomp{q-2}(\mathrm{d},\Omega),\quad\hat{\boldsymbol{\eta}}=\hhat{*}\,\mathrm{t}\,\boldsymbol{\eta}\in Y.\label{extderivative1}
\end{gather}
The duality pairing between $Y^{\prime}$ and $Y$ is denoted by $\dualprod{\cdot}{\cdot}_Y$, and between $H_{\perp}^{\mp 1/2}\pcomp{p}(\Gamma)$ by $\dualprod{\cdot}{\cdot}_{H_{\perp}^{1/2}\pcomp{p}(\Gamma)}$, respectively. Note that $H_{\perp}^{-1/2}\pcomp{p+1}(\Gamma)$ is a subspace of $Y^{\prime}$. This motivates the definition
\[\boxed{
H_{\perp}^{-1/2}\pcomp{p}(\hhat{\mathrm{d}},\Gamma)=\{\hat{\boldsymbol{\omega}}\in H_{\perp}^{-1/2}\pcomp{p}(\Gamma)\,|\,\hhat{\mathrm{d}}\,\hat{\boldsymbol{\omega}}\in H_{\perp}^{-1/2}\pcomp{p+1}(\Gamma)\},
}\]
equipped with the graph norm.\par
We may now restrict the definition of the exterior derivative to
\begin{equation*}
\hhat{\mathrm{d}}:H_{\perp}^{-1/2}\pcomp{p}(\hhat{\mathrm{d}},\Gamma)\rightarrow H_{\perp}^{-1/2}\pcomp{p+1}(\Gamma):\hat{\boldsymbol{\omega}}\mapsto\hhat{\mathrm{d}}\,\hat{\boldsymbol{\omega}}.
\end{equation*}
From \eqref{extderivative1} we infer that $\hhat{\mathrm{d}}\hhat{\mathrm{d}}=0$, so that the image of $\hhat{\mathrm{d}}$ lies even in the smaller space $H_{\perp}^{-1/2}\pcomp{p+1}(\hhat{\mathrm{d}},\Gamma)$, and we can refine the definition to read
\begin{equation*}
\boxed{\hhat{\mathrm{d}}:H_{\perp}^{-1/2}\pcomp{p}(\hhat{\mathrm{d}},\Gamma)\rightarrow H_{\perp}^{-1/2}\pcomp{p+1}(\hhat{\mathrm{d}},\Gamma):\hat{\boldsymbol{\omega}}\mapsto\hhat{\mathrm{d}}\,\hat{\boldsymbol{\omega}}.}
\end{equation*}
This final definition lends itself to the definition of the de Rham complex on $\Gamma$.\par
It is shown in \cite[Thm.\ 3]{weck2004} that the extension of the tangential trace $\mathrm{t}$ from $H^1\pcomp{p}(\Omega)$ to
\[
\boxed{\mathrm{t}:\hzero \pcomp{p}(\mathrm{d},\Omega)\rightarrow H_{\perp}^{-1/2}\pcomp{p}(\hhat{\mathrm{d}},\Gamma)}
\]
is well defined, linear and continuous. Moreover \cite[Thm.\ 4]{weck2004}, it is surjective and hence admits a continuous right inverse $\mathrm{t}^{-1}$. The latter may be chosen such that its range lies in $H^1\pcomp{p}(\mathrm{d},\Omega)$.
The definitions and properties of $\hhat{\mathrm{d}}$ and $\hhat{\mathrm{t}}$ that we gave so far are summarized by the following exact sequence diagram:\,\footnote{Since $\Omega$ was required to be homeomorphic to an open ball, the sequences are exact, up to one-dimensional cohomology groups $\mathcal{H}^0(\Omega)$, $\mathcal{H}^0(\Gamma)$, and $\mathcal{H}^{n-1}(\Gamma)$.\label{footnote:sequence}}
\begin{equation}\label{exactsequence3}
\begin{CD}
\cdots @>\mathrm{d}>>
\hzero \pcomp{p}  (\mathrm{d},\Omega) @>\mathrm{d}>>
\hzero \pcomp{p+1}(\mathrm{d},\Omega) @>\mathrm{d}>>
\cdots\\
@. @V\mathrm{t}VV @V\mathrm{t}VV @.\\ 
\cdots @>\hhat{\mathrm{d}}>>
H_{\perp}^{-1/2}\pcomp{p}  (\hhat{\mathrm{d}},\Gamma) @>\hhat{\mathrm{d}}>>
H_{\perp}^{-1/2}\pcomp{p+1}(\hhat{\mathrm{d}},\Gamma) @>\hhat{\mathrm{d}}>>
\cdots
\end{CD}
\end{equation}
\begin{remark}
\begin{enumerate}
\item[]
\item The diagram \eqref{exactsequence3} commutes, which can be proven by density arguments \cite[Remark 2]{weck2004},
\begin{equation}\label{commutetd}
\mathrm{t}\mathrm{d}=\hhat{\mathrm{d}}\mathrm{t}.
\end{equation}
\item From the standard trace theorem for scalar functions in $H^1(\Omega)$ we conclude that $H_{\perp}^{-1/2}\pcomp{0}(\hhat{\mathrm{d}},\Gamma)=H^{1/2}(\Gamma)=H_{\parallel}^{1/2}\pcomp{0}(\Gamma)$.
\end{enumerate}\end{remark}
\marginpar{$H_{\parallel}^{-1/2}\pcomp{p}(\hhat{\delta},\Gamma)$}There is an $L^2$-adjoint version of the diagram \eqref{exactsequence3}. To obtain it, define the spaces 
\[
H_{\parallel}^{-1/2}\pcomp{p}(\hhat{\delta},\Gamma)=\hhat{*}\,H_{\perp}^{-1/2}\pcomp{q-1}(\hhat{\mathrm{d}},\Gamma),
\]
that are characterized by
\[
\boxed{H_{\parallel}^{-1/2}\pcomp{p}(\hhat{\delta},\Gamma)=\{\hat{\boldsymbol{\omega}}\in H_{\parallel}^{-1/2}\pcomp{p}(\Gamma)\,|\,\hhat{\delta}\,\hat{\boldsymbol{\omega}}\in H_{\parallel}^{-1/2}\pcomp{p-1}(\Gamma)\},}
\]
where $\hhat{\delta}=(-1)^p\hhat{*}^{-1}\,\hhat{\mathrm{d}}\,\hhat{*}$ is the coderivative on $p$-forms. The norm is given by 
\[
|\hat{\boldsymbol{\omega}}|_{H_{\parallel}^{-1/2}\pcomp{p}(\hhat{\delta},\Gamma)}=|\hhat{*}\,\hat{\boldsymbol{\omega}}|_{H_{\perp}^{-1/2}\pcomp{q-1}(\hhat{\mathrm{d}},\Gamma)}.
\]
Finally, we extend the definition of the normal trace for smooth forms in \eqref{normtrac} by
\begin{equation}\label{defnormaltrace}
\boxed{\mathrm{n}:\hzero \pcomp{p}(\delta,\Omega)\rightarrow H_{\parallel}^{-1/2}\pcomp{p-1}(\hhat{\delta},\Gamma):{\boldsymbol{\omega}}\mapsto\hat{\omega}=\hhat{*}^{-1}\,\mathrm{t}\,*\boldsymbol{\omega},}
\end{equation}
which inherits its properties from the tangential trace map \cite[Thm.\ 7]{weck2004}. It is well defined, linear, surjective, and admits a right inverse $\mathrm{n}^{-1}$ the range of which lies in $H^1\pcomp{p}(\delta,\Omega)$.\par
With these definitions we obtain the $L^2$-adjoint version of the diagram \eqref{exactsequence3}, that is again an exact sequence:
\begin{equation}\label{exactsequence4}
\begin{CD}
\cdots @>\delta>>
\hzero \pcomp{p}  (\delta,\Omega) @>\delta>>
\hzero \pcomp{p-1}(\delta,\Omega) @>\delta>>
\cdots\\
@. @V\mathrm{n}VV @V-\mathrm{n}VV @.\\ 
\cdots @>\hhat{\delta}>>
H_{\parallel}^{-1/2}\pcomp{p-1}(\hhat{\delta},\Gamma) @>\hhat{\delta}>>
H_{\parallel}^{-1/2}\pcomp{p-2}(\hhat{\delta},\Gamma) @>\hhat{\delta}>>
\cdots
\end{CD}
\end{equation}
\begin{remark}
\begin{enumerate}
\item[]
\item The diagram \eqref{exactsequence4} commutes, since from \eqref{defcoderivative}, \eqref{commutetd} and \eqref{defnormaltrace}
\begin{equation}\label{commutendelta}
\mathrm{n}\delta=-\hhat{\delta}\mathrm{n}.
\end{equation}
\item From the definitions it follows that $H_{\parallel}^{-1/2}\pcomp{0}(\hhat{\delta},\Gamma)=H_{\parallel}^{-1/2}\pcomp{0}(\Gamma)=H^{-1/2}(\Gamma)$.
\item Up to sign, the diagram \eqref{exactsequence4} can be thought of as the image of the diagram \eqref{exactsequence3} under Hodge star operators.
\item There exists a Hodge decomposition \cite[Thm.\ 11]{weck2004} of the spaces $H_{\perp}^{-1/2}\pcomp{p}(\hhat{\mathrm{d}},\Gamma)$ and $H_{\parallel}^{-1/2}\pcomp{p}(\hhat{\delta},\Gamma)$, respectively, which generalizes the results obtained in \cite{theor47c,theor47b}.
\end{enumerate}\end{remark}\par
\marginpar{Sesquilinear form}It has been shown in \cite[Thm.\ 8]{weck2004} that the $L^2$ inner product on the boundary $\Gamma$ can be extended to a sesquilinear form
\[
b(\cdot,\cdot)\,:\,H_{\parallel}^{-1/2}\pcomp{p}(\hhat{\delta},\Gamma)\times H_{\perp}^{-1/2}\pcomp{p}(\hhat{\mathrm{d}},\Gamma)\rightarrow\mathbb{F}.
\]
For $(\boldsymbol{\omega},\boldsymbol{\eta})\in \hzero \pcomp{p+1}(\delta,\Omega)\times \hzero \pcomp{p}(\mathrm{d},\Omega)$ the integration by parts formula
\begin{equation}\label{partialint}
b(\mathrm{n}\boldsymbol{\omega},\mathrm{t}\boldsymbol{\eta})=
\ltwo{\boldsymbol{\omega}}{\mathrm{d}\boldsymbol{\eta}}{p+1}{\Omega}-
\ltwo{\delta\boldsymbol{\omega}}{\boldsymbol{\eta}}{p}{\Omega}
\end{equation}
holds. In fact, in the light of the surjectivity of the traces and the existence of suitable right inverses, \eqref{partialint} provides the definition of $b(\cdot,\cdot)$. A comparison with \eqref{partialintl2} confirms that $b(\mathrm{n}\boldsymbol{\omega},\mathrm{t}\boldsymbol{\eta})$ reduces to $\ltwo{\mathrm{n}\boldsymbol{\omega}}{\mathrm{t}\boldsymbol{\eta}}{p}{\Gamma}$ in the smooth case.
\begin{framed}
Let us now relate the results \eqref{exactsequence3} and \eqref{exactsequence4} to the language of classical vector analysis in $n=3$ dimensions. We denote the outer normal to $\Omega$ by $\vec{n}$, defined almost everywhere on $\partial\Omega$. Note that classical calculus defines the traces on $\partial\Omega=\iota\Gamma$, whereas with differential-forms we defined them on $\Gamma$ by pullback. According to the definitions in \cite{buffa2003b,buffa2002}, we have to consider the following trace spaces and operators,
\begin{alignat*}{4}
\gamma:\quad&H^1(\Omega)&&\rightarrow H^{1/2}(\partial\Omega):\quad&&u &&\mapsto u\big|_{\partial\Omega},\\
\gamma_n:\quad&\mathbf{H}(\mathrm{div},\Omega)&&\rightarrow H^{-1/2}(\partial\Omega):\quad&&\vec{u}&&\mapsto\vec{u}\cdot\vec{n}\big|_{\partial\Omega},\\
\gamma_{\tau}:\quad&\mathbf{H}(\textbf{curl},\Omega)&&\rightarrow \mathbf{H}^{-1/2}(\mathrm{div}_{\Gamma},\partial\Omega):\quad&&\vec{u}&&\mapsto\vec{u}\times\vec{n}\big|_{\partial\Omega},\\
\pi_{\tau}:\quad&\mathbf{H}(\textbf{curl},\Omega)&&\rightarrow \mathbf{H}^{-1/2}(\mathrm{curl}_{\Gamma},\partial\Omega):\quad&&\vec{u}&&\mapsto\vec{u}-\gamma_n(\vec{u})\vec{n}\big|_{\partial\Omega}.
\end{alignat*}
All trace operators are defined for smooth test functions in the first place and then extended to the respective function spaces. They are all well defined, linear, continuous, surjective, and admit continuous right inverses. Be aware that in \cite[Def.\ 1]{buffa2003b} we find $\gamma_{\tau}:\vec{u}\mapsto\vec{n}\times\vec{u}\big|_{\partial\Omega}$, while in \cite[Def.\ 2.1]{buffa2002}, \cite[Def.\  $\gamma_\mathbf{t}^\times$]{hiptmair2003}, \cite[Def.\  $\gamma_\times$]{hiptmair2007} we have $\gamma_{\tau}:\vec{u}\mapsto\vec{u}\times\vec{n}\big|_{\partial\Omega}$. We follow the latter definition.\par\noindent
We define translation isomorphisms on the boundary,
\begin{subequations}\label{deftranslation2d}
\begin{alignat}{4}
\hat{\Upsilon}^0:\quad&H_{\perp}^{-1/2}\pcomp{0}(\hhat{\mathrm{d}},\Gamma)&&\rightarrow H^{1/2}(\partial\Omega):\quad&&\hhat{\boldsymbol{\omega}}&&\mapsto \phantom{-}\gamma\,\mathrm{t}^{-1}\,\hhat{\boldsymbol{\omega}},\\
\hat{\Upsilon}^1:\quad&H_{\perp}^{-1/2}\pcomp{1}(\hhat{\mathrm{d}},\Gamma)&&\rightarrow\mathbf{H}^{-1/2}(\mathrm{curl}_{\Gamma},\partial\Omega):\quad&&\hhat{\boldsymbol{\omega}}&&\mapsto \phantom{-}\pi_{\tau}\,\Upsilon^1\,\mathrm{t}^{-1}\,\hhat{\boldsymbol{\omega}},\\
\hat{\Upsilon}^{\tilde 1}:\quad&H_{\perp}^{-1/2}\pcomp{1}(\hhat{\mathrm{d}},\Gamma)&&\rightarrow\mathbf{H}^{-1/2}(\mathrm{div}_{\Gamma},\partial\Omega):\quad&&\hhat{\boldsymbol{\omega}}&&\mapsto -\gamma_{\tau}\,\Upsilon^1\,\mathrm{t}^{-1}\,\hhat{\boldsymbol{\omega}},\\
\hat{\Upsilon}^2:\quad&H_{\perp}^{-1/2}\pcomp{2}(\hhat{\mathrm{d}},\Gamma)&&\rightarrow H^{-1/2}(\partial\Omega):\quad&&\hhat{\boldsymbol{\omega}}&&\mapsto \phantom{-}\gamma_n\,\Upsilon^2\,\mathrm{t}^{-1}\,\hhat{\boldsymbol{\omega}}.
\end{alignat}
\end{subequations}
\begin{remark} The translation isomorphisms are extensions of the following maps, which are valid for smooth forms and smooth boundaries: $\hat{\Upsilon}^0:\hhat{\boldsymbol{\omega}}\mapsto\iota_*\hhat{\boldsymbol{\omega}}$, $\hat{\Upsilon}^1:\hhat{\boldsymbol{\omega}}\mapsto\iota_*\hhat{\mathrm{g}}^{-1}\hhat{\boldsymbol{\omega}}$, $\hat{\Upsilon}^{\tilde{1}}=\hat{\Upsilon}^1\hhat{*}:\hhat{\boldsymbol{\omega}}\mapsto\iota_*\hhat{\mathrm{p}}\,\hhat{\boldsymbol{\omega}}$, $\hat{\Upsilon}^2:\hhat{\boldsymbol{\omega}}\mapsto\iota_*\hhat{*}\,\hhat{\boldsymbol{\omega}}$.
These translation isomorphisms coincide with the definitions in \cite{df1} and \cite[p.\ 246]{df5}.\end{remark}
We are now in the position to define the surface differential operators of vector analysis on $\Gamma$, in terms of the exterior derivative $\hhat{\mathrm{d}}$,
\begin{alignat*}{4}
\textbf{grad}_{\Gamma} &= && \hat{\Upsilon}^1\;\hhat{\mathrm{d}}\;(\hat{\Upsilon}^0)^{-1}&&:H^{1/2}(\partial\Omega)&&\rightarrow\mathbf{H}^{-1/2}(\mathrm{curl}_{\Gamma},\partial\Omega),\\
\mathrm{curl}_{\Gamma} &= && \hat{\Upsilon}^2\;\hhat{\mathrm{d}}\;(\hat{\Upsilon}^1)^{-1}&&:\mathbf{H}^{-1/2}(\mathrm{curl}_{\Gamma},\partial\Omega)&&\rightarrow H^{-1/2}(\partial\Omega),\\
\textbf{curl}_{\Gamma} &= -&& \hat{\Upsilon}^{\tilde{1}}\;\hhat{\mathrm{d}}\;(\hat{\Upsilon}^0)^{-1}&&:H^{1/2}(\partial\Omega)&&\rightarrow\mathbf{H}^{-1/2}(\mathrm{div}_{\Gamma},\partial\Omega),\\
\mathrm{div}_{\Gamma} &= -&& \hat{\Upsilon}^2\;\hhat{\mathrm{d}}\;(\hat{\Upsilon}^{\tilde{1}})^{-1}&&:\mathbf{H}^{-1/2}(\mathrm{div}_{\Gamma},\partial\Omega)&&\rightarrow H^{-1/2}(\partial\Omega).
\end{alignat*}
From the general theory we know that the following sequences are exact, provided the boundary $\Gamma$ is homeomorphic to a sphere:
\[
\begin{CD}
H^{1/2}(\partial\Omega) @>\textbf{grad}_{\Gamma}>>
\mathbf{H}^{-1/2}(\mathrm{curl}_{\Gamma},\partial\Omega) @>\mathrm{curl}_{\Gamma}>>
H_*^{-1/2}(\partial\Omega)\\
@| @VV-\mathsf{R}V @|\\
H^{1/2}(\partial\Omega) @>\textbf{curl}_{\Gamma}>>
\mathbf{H}^{-1/2}(\mathrm{div}_{\Gamma},\partial\Omega) @>\mathrm{div}_{\Gamma}>>
H_*^{-1/2}(\partial\Omega)
\end{CD}
\]
We used $H_*^{-1/2}(\partial\Omega)=\{u\in H^{-1/2}(\partial\Omega)\,|\,\dualprod{u}{1}_{H^{1/2}(\partial\Omega)}=0\}$ to eliminate the cohomology group $\mathcal{H}^2(\partial\Omega)$. $\dualprod{\cdot}{\cdot}_{H^{1/2}(\partial\Omega)}$ denotes the duality pairing between the $H^{\mp 1/2}(\partial\Omega)$ spaces. The rotation operator
\begin{equation}\label{defrotation}
\mathsf{R}:\mathbf{H}^{-1/2}(\mathrm{curl}_{\Gamma},\partial\Omega)\rightarrow\mathbf{H}^{-1/2}(\mathrm{div}_{\Gamma},\partial\Omega):\gamma_{\tau}=-\mathsf{R}\,\pi_{\tau}
\end{equation}
renders the diagram commutative. Note that $\mathsf{R}\,\hat{\Upsilon}^1=\hat{\Upsilon}^{\tilde{1}}$. The rotation operator extends the vectorial operator $\vec{n}\times\cdot$, compare \cite[Def.\ $r$]{buffa2002}, \cite[Def.\ $R$]{hiptmair2003}. In \cite{hiptmair2007}, however, we find $\mathsf{R}=-\vec{n}\times\cdot$.
\end{framed}
\section{Representation Formula}\label{sec:represent}
The goal of this section is to collect, unify, and generalize existing knowledge about scalar and vectorial representation formulae in electromagnetics. We refer to the definition of $\Omega$ in Section~\ref{sec:sobolevspacesdf}, where we now consider the Euclidean space $M=\mathbb{R}^n$ with $n\ge 2$. We denote by $\Omega^{\mathrm{c}}$ the complement of $\overline{\Omega}$ in $M$, such that $\partial\Omega=-\partial\Omega^{\mathrm{c}}$, and $\Omega\cup\partial\Omega\cup\Omega^{\mathrm{c}}=M$. On $\Omega^{\mathrm{c}}$ we denote $L^2_{\mathrm{loc}}\pcomp{p}(\Omega^{\mathrm{c}})$ the space of $p$-forms that are square-integrable on each compact subdomain of $\Omega^{\mathrm{c}}$, and define the derived Sobolev spaces accordingly.\par
In Section~\ref{sec:helm} we introduce Maxwell-type problems and the space of solutions thereof. Section~\ref{sec:radiation} proceeds to define radiation and decay conditions. In Section~\ref{sec:secrep} we prove the status of a representation formula that represents elements of the solution space in terms of integral transformations of their boundary data. Section~\ref{sec:interfac} proves the so-called jump relations of the respective terms of the representation formula. The properties and proofs carry over to classical calculus by means of the translation isomorphisms \eqref{deftranslation} and \eqref{deftranslation2d}.
\pagebreak 
\subsection{Maxwell-Type Problems, Solution Spaces, and Trace Operators}\label{sec:helm}
\marginpar{Maxwell-type equation}We call Maxwell-type equation the second-order equation
\begin{equation}\label{defproblem}
\boxed{(\delta\mathrm{d}-k^2)\boldsymbol{\omega}=0\quad\text{in}\quad\Omega\cup\Omega^{\mathrm{c}},}
\end{equation}
compare Fig.\ \ref{fig:bemdf_setting}, for a $p$-form $\omega$, $0\le p < n$. We require that the constant $k\in\mathbb{C}$ fulfills either $k=0$ or $0\le\arg k<\pi,k\ne 0$.\par
We could have chosen the Helmholtz-type equation $(\delta\mathrm{d}+\mathrm{d}\delta-k^2)\boldsymbol{\omega}=0$ as starting point as well. Since we will work with the gauge condition
\begin{align}
\label{defgauge}\delta\omega&=0\quad\text{in}\quad\Omega\cup\Omega^\mathrm{c}
\end{align}
both equations are equivalent. For a more general account see \cite{jawerth1995}.\par
Domains $\Omega$, $\Omega^\mathrm{c}$, $\Omega^R$ and manifolds $\Gamma$, $\Gamma^R$ are defined in Fig.\ \ref{fig:bemdf_setting}. The terms "interior" and "exterior" refer to the domains $\Omega$ and $\Omega^\mathrm{c}$, respectively.\par
Every form that is eligible to be a solution of a boundary-value problem of Maxwell type must be an element of the space
\begin{gather}
 Y^p(\Omega\cup\Omega^\mathrm{c})=\{\boldsymbol{\omega}\in \hzero _{\mathrm{loc}}\pcomp{p}(\delta\mathrm{d},\Omega\cup\Omega^\mathrm{c})\cap \hzero _{\mathrm{loc}}\pcomp{p}(\delta0,\Omega\cup\Omega^\mathrm{c})\nonumber\\
 \,|\,(\delta\mathrm{d}-k^2)\boldsymbol{\omega}=0\}.\label{defsolutionspace}
\end{gather}
The definition implies \eqref{defproblem} as well as \eqref{defgauge}.
\marginpar{Maxwell solutions $X^p(\Omega\cup\Omega^\mathrm{c})$}We call Maxwell solutions those elements of $Y^p(\Omega\cup\Omega^\mathrm{c})$ that fulfill a radiation or decay condition in $\Omega^\mathrm{c}$, see Section~\ref{sec:radiation}. The space of Maxwell solutions is denoted by $X^p(\Omega\cup\Omega^\mathrm{c})\subset Y^p(\Omega\cup\Omega^\mathrm{c})$, and the restriction of this space to the domain of the interior problem reads $X^p(\Omega)=Y^p(\Omega)$.
\begin{remark}
\begin{enumerate}
\item[]
\item For $\boldsymbol{\omega}\in \hzero _{\mathrm{loc}}\pcomp{p}(\delta\mathrm{d},\Omega\cup\Omega^\mathrm{c})\cap \hzero _{\mathrm{loc}}\pcomp{p}(\delta,\Omega\cup\Omega^\mathrm{c})$, the operators in \eqref{defproblem} and \eqref{defgauge} are well defined. This motivates the above definition of a solution space for Maxwell-type problems.
\item The gauge condition \eqref{defgauge} for $p>0$ is a consequence of \eqref{defproblem} for $k\ne 0$, while for $k=0$ it is an additional requirement that we impose. Additional gauge conditions on the periods of the trace $\mathrm{t}*\omega$ would apply on topologically non-trivial domains.
\end{enumerate}\end{remark}
\begin{figure}
\sidecaption
\includegraphics[width=0.6\textwidth]{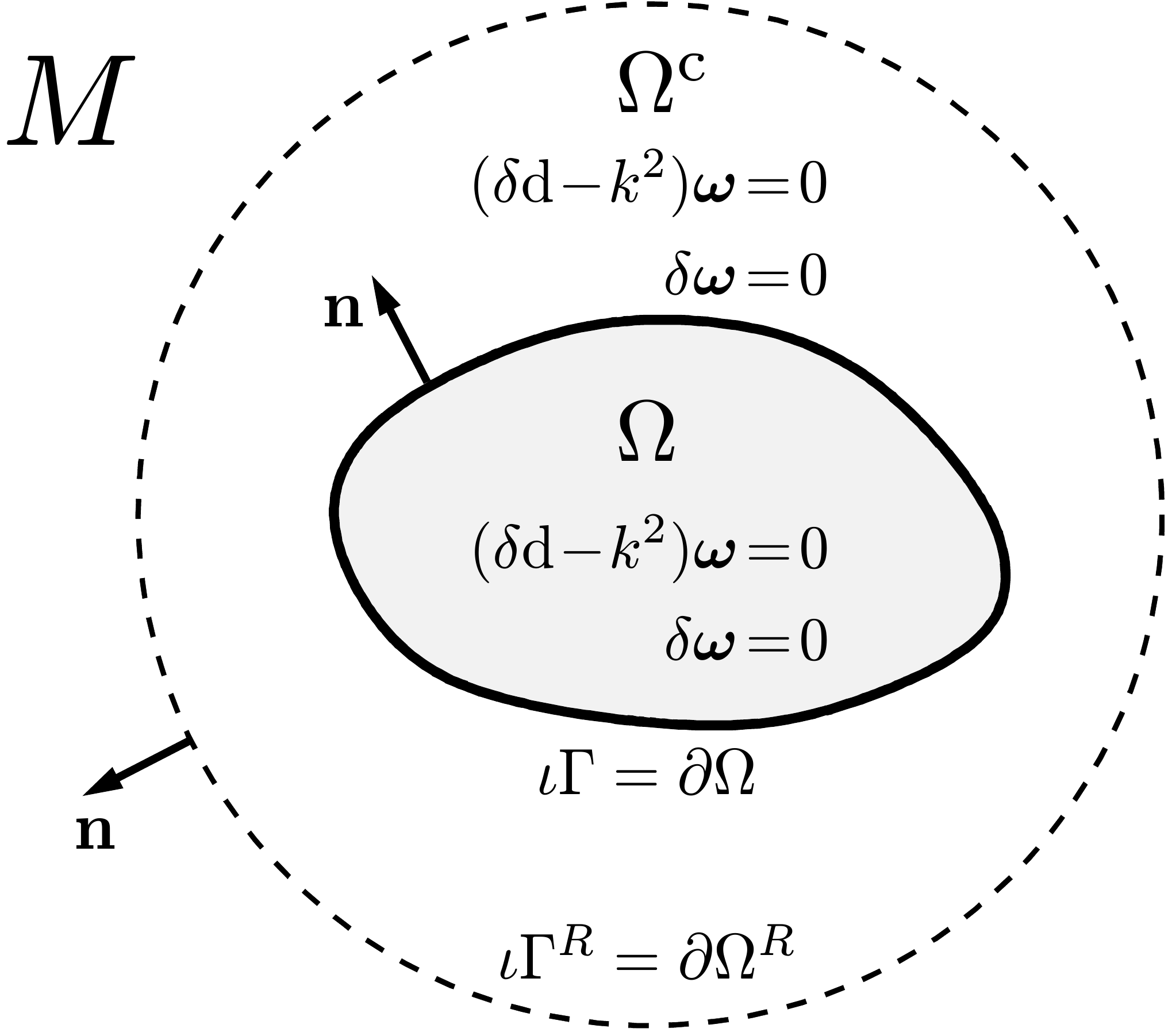}
\caption{Setting of the problem. $\Omega$ denotes a bounded subset of the Euclidean space $M$, homeomorphic to an open ball. Its Lipschitz boundary $\partial\Omega$ is represented by an embedded Lipschitz manifold $\Gamma$. $\Omega^\mathrm{c}$ is the complement of $\overline{\Omega}$ in $M$. $\Omega^R\subset M$, homeomorphic to an open ball, is chosen such that $\Omega^R\Supset\Omega$. Its smooth boundary $\partial\Omega^R$ ("far boundary") is represented by an embedded smooth manifold $\Gamma^R$. The normal-vector field $\vec{n}$ defines outer normals to both boundaries.}\label{fig:bemdf_setting}
\end{figure}
\marginpar{Dirichlet and Neumann traces}The interior Dirichlet, Neumann, and normal traces of Maxwell solutions are defined by
\begin{subequations}\label{defboundary}
\begin{empheq}[box=\fbox]{alignat=5}
&\gamma_{\mathrm{D}}:\;&&\hzero \pcomp{p}(\mathrm{d},\Omega)&&\rightarrow H_{\perp}^{-1/2}\pcomp{p}(\hhat{\mathrm{d}},\Gamma)&&:\;\omega\mapsto\mathrm{t}\,\omega,&&\quad\text{Dirichlet trace},\\
&\gamma_{\mathrm{N}}:\;&&\hzero \pcomp{p}(\delta\mathrm{d},\Omega)&&\rightarrow H_{\parallel}^{-1/2}\pcomp{p}(\hhat{\delta},\Gamma)&&:\;\omega\mapsto\mathrm{n}\,\mathrm{d}\,\omega,&&\quad\text{Neumann trace},\\
&\mathrm{n}:\;&&\hzero \pcomp{p}(\delta,\Omega)&&\rightarrow H_{\parallel}^{-1/2}\pcomp{p-1}(\hhat{\delta},\Gamma)&&:\;\omega\mapsto\mathrm{n}\,\omega,&&\quad\text{normal trace},
\end{empheq}
\end{subequations}
where the definition of the normal trace is given in \eqref{defnormaltrace}.
\begin{remark}
\begin{enumerate}
\item[]
\item The interior trace operators $\gamma_{\mathrm{D}}$ and $\gamma_{\mathrm{N}}$ are well defined, linear, continuous, and surjective, as a consequence of the properties of $\mathrm{t}$ and $\mathrm{n}$ laid out in Section~\ref{sec:dfboundary}.
\item The exterior Dirichlet, Neumann, and normal traces $\gamma_{\mathrm{D}}^\mathrm{c}$, $\gamma_{\mathrm{N}}^\mathrm{c}$, and $\mathrm{n}^\mathrm{c}$, as well as the trace operators on $\Gamma^R$, $\gamma_{\mathrm{D}}^R$, $\gamma_{\mathrm{N}}^R$, and $\mathrm{n}^R$, are defined in the same way, and the same mapping properties apply. 
\end{enumerate}
\end{remark}
\subsection{Asymptotic Conditions}\label{sec:radiation}
Any Maxwell solution of \eqref{defproblem} is required to fulfill an asymptotic condition at infinity. For $k\ne 0$ the condition is called a radiation condition, for $k=0$ it is called a decay condition. Let $\vec{n}$ denote an outward directed unit normal-vector field to a large sphere $\Gamma^R$ with radius $R$, compare Fig.\ \ref{fig:bemdf_setting}. $\ppoint$ denotes an arbitrary point on $\Gamma^R$.\noindent
\begin{enumerate}
\item\marginpar{Radiation condition} {\em Case $k\ne 0$}
The radiation condition reads 
\begin{equation}\label{condradiation3}
\boxed{\left.\begin{alignedat}{2}
\ltwonorm{(\gamma_\mathrm{N}^R-ik\gamma_\mathrm{D}^R)\boldsymbol{\omega}}{p}{\Gamma^R}&=o(1),\\
\ltwonorm{\mathrm{n}^R\boldsymbol{\omega}}{p-1}{\Gamma^R}&=o(1).
\end{alignedat}\quad\right\}}
\end{equation}
We present the radiation condition in an $L^2$ sense because it directly relates to the energy arguments that are used to establish uniqueness of solutions \cite[Thm.\ 2.4]{kress1998}. Lemma~\ref{lemma:radiation} provides strong versions of the radiation condition.
\begin{lemma}\label{lemma:radiation}
The following radiation conditions are equivalent to \eqref{condradiation3}, under the assumption of uniformity for $R\to\infty$:
\begin{equation}\label{condradiation2}
\left.\begin{alignedat}{2}
|(\gamma_\mathrm{N}^R-ik\gamma_\mathrm{D}^R)\boldsymbol{\omega}|_\ppoint&=o(R^{-(n-1)/2}),\\
|\mathrm{n}^R\boldsymbol{\omega}|_\ppoint&=o(R^{-(n-1)/2}),
\end{alignedat}\quad\right\}
\end{equation}
\begin{equation}\label{condradiation1}
|\mathrm{i}_{\vec{n}}\mathrm{d}\boldsymbol{\omega}-ik\boldsymbol{\omega}|_{\iota \ppoint}=o(R^{-(n-1)/2}).
\end{equation}
For the latter condition see also \cite[eq.\ (1)]{jawerth1995}.
\end{lemma}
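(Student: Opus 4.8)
The plan is to reduce the whole statement to a single pointwise algebraic identity on the far sphere $\Gamma^R$, and then to move between the $L^2$ form \eqref{condradiation3} and the pointwise forms \eqref{condradiation2}, \eqref{condradiation1} by integration. Concretely, I would first establish that at each $\ppoint\in\Gamma^R$, with outer unit normal $\vec{n}$ and associated $1$-form $\nu=\mathrm{g}\vec{n}$, the identity
\begin{equation*}
|\mathrm{i}_{\vec{n}}\mathrm{d}\boldsymbol{\omega}-ik\boldsymbol{\omega}|^2_{\iota\ppoint}
=\ltwonorm{(\gamma_{\mathrm{N}}^R-ik\gamma_{\mathrm{D}}^R)\boldsymbol{\omega}}{p}{\ppoint}^2
+|k|^2\,\ltwonorm{\mathrm{n}^R\boldsymbol{\omega}}{p-1}{\ppoint}^2
\end{equation*}
holds, where the two norms on the right are the pointwise norms at $\ppoint$. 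Once this is in hand, \eqref{condradiation1} $\Leftrightarrow$ \eqref{condradiation2} is immediate: since $k\ne0$ here, the left-hand side is $o(R^{-(n-1)/2})$ exactly when both nonnegative summands on the right are, the normal term after dividing by the fixed constant $|k|^2$.

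To derive the identity I would use the orthogonal normal--tangential splitting of a $p$-form. From the Leibniz rule for the interior product and $\mathrm{i}_{\vec{n}}\nu=\nu(\vec{n})=1$ one gets $\boldsymbol{\omega}=\mathrm{i}_{\vec{n}}(\nu\wedge\boldsymbol{\omega})+\nu\wedge\mathrm{i}_{\vec{n}}\boldsymbol{\omega}$, with the two summands pointwise orthogonal because $\nu$ is a unit covector perpendicular to the tangent space. Writing $\boldsymbol{\Phi}=\mathrm{i}_{\vec{n}}\mathrm{d}\boldsymbol{\omega}-ik\boldsymbol{\omega}$ and using $\mathrm{i}_{\vec{n}}\mathrm{i}_{\vec{n}}=0$ gives $\mathrm{i}_{\vec{n}}\boldsymbol{\Phi}=-ik\,\mathrm{i}_{\vec{n}}\boldsymbol{\omega}$, so the normal part of $\boldsymbol{\Phi}$ is $-ik\,\nu\wedge\mathrm{i}_{\vec{n}}\boldsymbol{\omega}$, of norm $|k|\,|\mathrm{i}_{\vec{n}}\boldsymbol{\omega}|$ (wedging by the unit $\nu$ is an isometry on tangential forms), while its tangential part is $\mathrm{i}_{\vec{n}}\mathrm{d}\boldsymbol{\omega}-ik\,\mathrm{i}_{\vec{n}}(\nu\wedge\boldsymbol{\omega})$. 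The Pythagorean relation for the splitting then yields the displayed formula, modulo identifying the two tangential objects with the traces.

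That identification is the next step. The embedding $\iota$ is isometric onto $\partial\Omega^R$, so $\mathrm{t}=\iota^*$ preserves the pointwise norm of tangential forms; since $\iota^*$ annihilates the normal part, $\gamma_{\mathrm{D}}^R\boldsymbol{\omega}=\mathrm{t}\boldsymbol{\omega}=\iota^*\mathrm{i}_{\vec{n}}(\nu\wedge\boldsymbol{\omega})$. For the normal and Neumann traces one combines $\mathrm{n}=\hhat{*}^{-1}\mathrm{t}*$ with the fact that the ambient Hodge star of a normal-wedge-tangential form is, on $\Gamma^R$, the boundary Hodge star of the contracted tangential form; working in an orthonormal coframe $(e^1,\dots,e^{n-1},\nu)$ shows $\mathrm{n}\boldsymbol{\alpha}=\iota^*\mathrm{i}_{\vec{n}}\boldsymbol{\alpha}$ up to a sign fixed by the orientation conventions of Section~\ref{sec:dfboundary}. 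Taking $\boldsymbol{\alpha}=\boldsymbol{\omega}$ and $\boldsymbol{\alpha}=\mathrm{d}\boldsymbol{\omega}$ then realizes $\gamma_{\mathrm{N}}^R\boldsymbol{\omega}$ and $\mathrm{n}^R\boldsymbol{\omega}$ through the \emph{same} isometry $\iota^*$, so that $(\gamma_{\mathrm{N}}^R-ik\gamma_{\mathrm{D}}^R)\boldsymbol{\omega}$ is the pullback of the tangential part of $\boldsymbol{\Phi}$ and their norms agree, completing the identity. For \eqref{condradiation2} $\Leftrightarrow$ \eqref{condradiation3} I would integrate over $\Gamma^R$, whose area is $S_n(1)R^{n-1}$: a uniform bound $|f|_\ppoint=o(R^{-(n-1)/2})$ forces $|f|_\ppoint^2=o(R^{-(n-1)})$ and hence $\ltwonorm{f}{p}{\Gamma^R}=o(1)$.

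The main obstacle, I expect, is twofold. The harder part is the converse passage, recovering uniform pointwise decay from the $L^2$ bound \eqref{condradiation3}: this fails for generic $L^2$ fields and is precisely where the uniformity hypothesis must be justified, drawing on the regularity of Maxwell solutions $\boldsymbol{\omega}\in Y^p(\Omega\cup\Omega^{\mathrm{c}})$ in the exterior domain and the uniform Sommerfeld-type asymptotics inherited from the fundamental solution \eqref{defgreen}. The more technical, but routine, difficulty is the sign bookkeeping in the trace-versus-contraction identity $\mathrm{n}\boldsymbol{\alpha}=\pm\iota^*\mathrm{i}_{\vec{n}}\boldsymbol{\alpha}$, since an incompatible relative sign between the Neumann and Dirichlet contributions would spoil the clean squared identity; verifying that the conventions of \eqref{defboundary} make this sign positive is what makes the combination $\gamma_{\mathrm{N}}^R-ik\gamma_{\mathrm{D}}^R$ the exact pullback of $\boldsymbol{\Phi}$.
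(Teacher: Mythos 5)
Your proposal is correct and takes essentially the same route as the paper: the paper likewise reduces everything to the identity $\mathrm{t}\,\mathrm{i}_{\vec{n}}\boldsymbol{\omega}=\mathrm{n}\boldsymbol{\omega}$ (its equation \eqref{condaux1}, where the sign you flag works out to $+1$ since $(-1)^{pq}(-1)^{q}(-1)^{(p-1)q}=1$, verified via the cited identity in \cite{kurz2009a}), combines it with the pointwise Pythagoras theorem $|\boldsymbol{\omega}|_{\iota\ppoint}^2=|\mathrm{t}\boldsymbol{\omega}|_\ppoint^2+|\mathrm{n}\boldsymbol{\omega}|_\ppoint^2$ to split $\mathrm{i}_{\vec{n}}\mathrm{d}\boldsymbol{\omega}-ik\boldsymbol{\omega}$ into the Neumann--Dirichlet combination and $-ik\,\mathrm{n}^R\boldsymbol{\omega}$, and handles the passage between \eqref{condradiation3} and \eqref{condradiation2} by the $\mathcal{O}(R^{n-1})$ scaling of the sphere measure. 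Your explicit weighted identity and your caveat about the converse ($L^2$ decay to uniform pointwise decay) are only more detailed renderings of what the paper leaves implicit, since the lemma's phrase ``under the assumption of uniformity'' builds that hypothesis in.
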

\begin{proof}
The Euclidean measure $S_n(R)$ of a sphere of radius $R$ in $n$ dimensions is $\mathcal{O}(R^{n-1})$ for $R\to\infty$. This proves the validity of \eqref{condradiation2}.\par 
For a smooth $p$-form $\boldsymbol{\omega}$ it holds on a smooth submanifold that
\begin{align}
\mathrm{t}\,\mathrm{i}_{\vec{n}}\boldsymbol{\omega}&=(-1)^{pq}\mathrm{t}\,\mathrm{i}_{\vec{n}}**\,\boldsymbol{\omega}=(-1)^{pq}(-1)^q\hhat{*}\,\mathrm{t}*\boldsymbol{\omega}\nonumber\\
&=(-1)^{pq}(-1)^q(-1)^{(p-1)q}\hhat{*}^{-1}\,\mathrm{t}*\boldsymbol{\omega}\nonumber\\
&=\mathrm{n}\boldsymbol{\omega}.\label{condaux1}
\end{align}
For the second equality see \cite[eq.\ (32)]{kurz2009a}. With the decomposition techniques from \cite{kurz2009a} it can easily be shown that the Pythagoras' theorem
$
|\boldsymbol{\omega}|_{\iota \ppoint}^2=|\mathrm{t}\boldsymbol{\omega}|_\ppoint^2+|\mathrm{n}\boldsymbol{\omega}|_\ppoint^2
$
holds pointwise, where the moduli on the right hand side refer to the induced metric, and therefore $|\mathrm{t}\boldsymbol{\omega}|_\ppoint,|\mathrm{n}\boldsymbol{\omega}|_\ppoint\le|\boldsymbol{\omega}|_{\iota \ppoint}$. Condition \eqref{condradiation1} can be inferred from \eqref{condradiation2} with the help of \eqref{condaux1}, and vice versa.\par
\qed\end{proof}
\begin{framed}
Condition \eqref{condradiation1} encompasses the Sommerfeld radiation condition for $p=0$, 
\begin{equation*}\frac{\partial u}{\partial r}-iku=o(R^{-(n-1)/2}),\end{equation*}
and the Silver-M\"uller radiation condition for $p=1, n=3$,
\begin{equation*}
|\textbf{curl}\;\vec{u}\times\vec{n}-ik\vec{u}|=o(R^{-1}).
\end{equation*}
\end{framed}
The following corollary will be useful in Section~\ref{sec:symmsec}.
\begin{corollary}%
Let $\omega\in X^p(\Omega^\mathrm{c}\cap\Omega^R)$. Then for $k\ne 0$ the Dirichlet and Neumann traces fulfill
\begin{equation}\label{boundtraces}
\ltwonorm{\gamma_\mathrm{D}^R\boldsymbol{\omega}}{p}{\Gamma^R}=\mathcal{O}(1),\quad
\ltwonorm{\gamma_\mathrm{N}^R\boldsymbol{\omega}}{p}{\Gamma^R}=\mathcal{O}(1),
\end{equation}
i.e.\ the Dirichlet and Neumann traces on the far boundary are bounded as $R\to\infty$.
\end{corollary}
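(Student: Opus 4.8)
The plan is to derive \eqref{boundtraces} from the $L^2$ radiation condition \eqref{condradiation3} by pairing it with a Green-type energy identity on the annulus $D=\Omega^\mathrm{c}\cap\Omega^R$, the sign condition $0\le\arg k<\pi$ doing the decisive work. Write $z=\ltwo{\gamma_\mathrm{N}^R\boldsymbol{\omega}}{\gamma_\mathrm{D}^R\boldsymbol{\omega}}{p}{\Gamma^R}$ for the flux through the far sphere. Expanding the first line of \eqref{condradiation3} with the sesquilinearity of the $L^2(\Gamma^R)$ inner product (conjugate-linear in the second slot) gives
\begin{equation*}
\ltwonorm{(\gamma_\mathrm{N}^R-ik\gamma_\mathrm{D}^R)\boldsymbol{\omega}}{p}{\Gamma^R}^2=\ltwonorm{\gamma_\mathrm{N}^R\boldsymbol{\omega}}{p}{\Gamma^R}^2+|k|^2\ltwonorm{\gamma_\mathrm{D}^R\boldsymbol{\omega}}{p}{\Gamma^R}^2-2\operatorname{Im}(\overline{k}\,z),
\end{equation*}
so, as the left-hand side is $o(1)$, it suffices to bound $\operatorname{Im}(\overline{k}\,z)$ from above by a constant independent of $R$.

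To that end I would compute $z$ by integration by parts. On the smooth sphere $\Gamma^R$ the sesquilinear form $b$ equals the $L^2$ inner product, so $z=b(\gamma_\mathrm{N}^R\boldsymbol{\omega},\gamma_\mathrm{D}^R\boldsymbol{\omega})$, and I apply \eqref{partialint} on $D$ to the pair $(\mathrm{d}\boldsymbol{\omega},\boldsymbol{\omega})$. Because $\delta\mathrm{d}\boldsymbol{\omega}=k^2\boldsymbol{\omega}$ by \eqref{defproblem}, while $\mathrm{n}\,\mathrm{d}\boldsymbol{\omega}=\gamma_\mathrm{N}\boldsymbol{\omega}$ and $\mathrm{t}\,\boldsymbol{\omega}=\gamma_\mathrm{D}\boldsymbol{\omega}$, the two boundary contributions of $\partial D$ combine into
\begin{equation*}
z=C_\Gamma+\ltwonorm{\mathrm{d}\boldsymbol{\omega}}{p+1}{D}^2-k^2\ltwonorm{\boldsymbol{\omega}}{p}{D}^2,
\end{equation*}
where $C_\Gamma$ collects the contribution of the fixed inner boundary $\Gamma$ and is therefore independent of $R$. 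The outward orientation of $\Gamma^R$ fixes the $+$ sign in front of $\ltwonorm{\mathrm{d}\boldsymbol{\omega}}{p+1}{D}^2$.

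Multiplying by $\overline{k}$ and taking imaginary parts, the real quantity $\ltwonorm{\mathrm{d}\boldsymbol{\omega}}{p+1}{D}^2$ yields $-\operatorname{Im}(k)\ltwonorm{\mathrm{d}\boldsymbol{\omega}}{p+1}{D}^2$ and the term $-k^2\ltwonorm{\boldsymbol{\omega}}{p}{D}^2$ yields $-|k|^2\operatorname{Im}(k)\ltwonorm{\boldsymbol{\omega}}{p}{D}^2$, whence
\begin{equation*}
\operatorname{Im}(\overline{k}\,z)=\operatorname{Im}(\overline{k}\,C_\Gamma)-\operatorname{Im}(k)\bigl(\ltwonorm{\mathrm{d}\boldsymbol{\omega}}{p+1}{D}^2+|k|^2\ltwonorm{\boldsymbol{\omega}}{p}{D}^2\bigr).
\end{equation*}
The hypothesis $0\le\arg k<\pi$ forces $\operatorname{Im}(k)\ge0$, so the $R$-dependent volume terms are non-positive and may be discarded, giving $\operatorname{Im}(\overline{k}\,z)\le\operatorname{Im}(\overline{k}\,C_\Gamma)=\mathcal{O}(1)$. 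Substituting into the first display yields $\ltwonorm{\gamma_\mathrm{N}^R\boldsymbol{\omega}}{p}{\Gamma^R}^2+|k|^2\ltwonorm{\gamma_\mathrm{D}^R\boldsymbol{\omega}}{p}{\Gamma^R}^2=\mathcal{O}(1)$; since both summands are non-negative and $k\ne0$, each trace is $\mathcal{O}(1)$, which is \eqref{boundtraces}.

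The main obstacle is the careful orientation bookkeeping for the two-component boundary $\partial D$ in \eqref{partialint}, together with the verification that $C_\Gamma$ is genuinely $R$-independent: this relies on $\boldsymbol{\omega}$ being one fixed radiating exterior solution, so that its Cauchy data on $\Gamma$ do not move with $R$. A minor point is replacing $b$ by the $L^2(\Gamma^R)$ pairing, which is legitimate since interior elliptic regularity renders $\boldsymbol{\omega}$ smooth near the smooth sphere $\Gamma^R$, as noted after \eqref{partialint}. Neither the second line of \eqref{condradiation3} nor the gauge $\delta\boldsymbol{\omega}=0$ is needed for this estimate.
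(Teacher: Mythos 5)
Your proof is correct and takes essentially the same route as the paper's: both apply the integration-by-parts formula \eqref{partialint} to $(\mathrm{d}\boldsymbol{\omega},\boldsymbol{\omega})$ on $\Omega^\mathrm{c}\cap\Omega^R$, multiply by $\overline{k}$, take imaginary parts using $\Im\text{m}\,k\ge 0$, and combine with the expansion of the first line of \eqref{condradiation3}, with the $L^2$ pairing on $\Gamma^R$ justified by smoothness there exactly as the paper notes. The only difference is presentational — you discard the non-positive volume terms to bound $\Im\text{m}(\overline{k}\,z)$ from above, while the paper rearranges so that four non-negative terms are bounded collectively by the fixed inner-boundary term — and your observation that neither the second line of \eqref{condradiation3} nor the gauge condition is needed is consistent with the paper's argument.
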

\begin{proof}
Substituting $\boldsymbol{\omega}$ by $\mathrm{d}\boldsymbol{\omega}$, $\boldsymbol{\omega}\in \hzero \pcomp{p}(\delta\mathrm{d},\Omega)$ in \eqref{partialint} on $\Omega^\mathrm{c}\cap\Omega^R$ and using the fact that $\omega$ is a Maxwell solution yields 
\[
-b(\gamma_\mathrm{N}^\mathrm{c}\boldsymbol{\omega},\gamma_\mathrm{D}^\mathrm{c}\boldsymbol{\eta})
+\ltwo{\gamma_\mathrm{N}^R\boldsymbol{\omega}}{\gamma_\mathrm{D}^R\boldsymbol{\eta}}{p}{\Gamma^R}
=\ltwo{\mathrm{d}\boldsymbol{\omega}}{\mathrm{d}\boldsymbol{\eta}}{p+1}{\Omega^\mathrm{c}\cap\Omega^R}-
k^2\ltwo{\boldsymbol{\omega}}{\boldsymbol{\eta}}{p}{\Omega^\mathrm{c}\cap\Omega^R}.
\]
Note that on the far boundary $\Gamma^R$ we encounter sufficient regularity so to work with the $L^2$ inner product rather than the sesquilinear form $b(\cdot,\cdot)$. We set $\boldsymbol{\eta}=\boldsymbol{\omega}$, multiply by $\overline{k}$ and take the imaginary part,
\begin{eqnarray*}
\lefteqn{-\Im\text{m}\bigl(\overline{k}b(\gamma_\mathrm{N}^\mathrm{c}\boldsymbol{\omega},\gamma_\mathrm{D}^\mathrm{c}\boldsymbol{\omega})\bigr)
+\Im\text{m}\bigl(\overline{k}\ltwo{\gamma_\mathrm{N}^R\boldsymbol{\omega}}{\gamma_\mathrm{D}^R\boldsymbol{\omega}}{p}{\Gamma^R}\bigr)}\\
&&=-(\Im\text{m}\,k)\bigl(\ltwo{\mathrm{d}\boldsymbol{\omega}}{\mathrm{d}\boldsymbol{\omega}}{p+1}{\Omega^\mathrm{c}\cap\Omega^R}+
|k|^2\ltwo{\boldsymbol{\omega}}{\boldsymbol{\omega}}{p}{\Omega^\mathrm{c}\cap\Omega^R}\bigr).
\end{eqnarray*}
From the radiation condition we obtain
\begin{align*}
o(1)&=\ltwonorm{\gamma_\mathrm{N}^R\boldsymbol{\omega}-ik\gamma_\mathrm{D}^R\boldsymbol{\omega}}{p}{\Gamma^R}^2\\
&=\ltwonorm{\gamma_\mathrm{N}^R\boldsymbol{\omega}}{p}{\Gamma^R}^2+|k|^2\ltwonorm{\gamma_\mathrm{D}^R\boldsymbol{\omega}}{p}{\Gamma^R}^2
-2\Im\text{m}\,\bigl(\overline{k}\ltwo{\gamma_\mathrm{N}^R\boldsymbol{\omega}}{\gamma_\mathrm{D}^R\boldsymbol{\omega}}{p}{\Gamma^R}\bigr).
\end{align*}
Combining both equations yields
\begin{gather}
\Im\text{m}\bigl(\overline{k}b(\gamma_\mathrm{N}^\mathrm{c}\boldsymbol{\omega},\gamma_\mathrm{D}^\mathrm{c}\boldsymbol{\omega})\bigr)=
(\Im\text{m}\,k)\bigl(\ltwonorm{\mathrm{d}\boldsymbol{\omega}}{p+1}{\Omega^\mathrm{c}\cap\Omega^R}^2+
|k|^2\ltwonorm{\boldsymbol{\omega}}{p}{\Omega^\mathrm{c}\cap\Omega^R}^2\bigr)\nonumber\\
+{\textstyle\frac{1}{2}}\bigl(\ltwonorm{\gamma_\mathrm{N}^R\boldsymbol{\omega}}{p}{\Gamma^R}^2+|k|^2\ltwonorm{\gamma_\mathrm{D}^R\boldsymbol{\omega}}{p}{\Gamma^R}^2\bigr)+o(1).\label{boundaux3}
\end{gather}
The left hand side is bounded, due to the continuity of the trace operators and of $b(\cdot,\cdot)$. Since we required $\Im\text{m}\,k\ge 0$, we know that the right hand side consists of four non-negative terms, each of which must be bounded for $R\to\infty$. This proves the claim \eqref{boundtraces}.
\qed\end{proof}
\begin{remark} This result generalizes the case $p=0$ that is discussed in \cite[eq.\ (9.17)-(9.19)]{mclean}.\end{remark}
\item
\marginpar{Decay condition}{\em Case $k=0$.} The decay condition reads
\begin{equation}\label{conddecay1}
\boxed{|\boldsymbol{\omega}|_{\iota \ppoint}\begin{cases}=\mathcal{O}(R^{2-n})&\quad n\ge 3,\\
\le \bigl|b\ln {\textstyle\frac{R}{r_0}}\bigr|+\mathcal{O}(R^{-1})&\quad n=2,
\end{cases}}
\end{equation}
uniformly for $R\to\infty$, where $b\in\mathbb{C}$ and $r_0>0$ are constants. For a motivation, see \cite[Thm.\ 8.9]{mclean}. Condition \eqref{conddecay1} implies the same asymptotic behaviour for the Dirichlet and normal traces $\gamma_\mathrm{D}^R\boldsymbol{\omega}$ and $\mathrm{n}^R\boldsymbol{\omega}$, respectively.\par
It can be shown that $\omega\in X(\Omega^\mathrm{c}\cap\Omega^R)$ subject to \eqref{conddecay1} also fulfills
\begin{equation}\label{conddecay2}
|\mathrm{d}\boldsymbol{\omega}|_{\iota \ppoint}\begin{cases}=\mathcal{O}(R^{1-n})&\quad n\ge 3,\\
\le |b|R^{-1}+\mathcal{O}(R^{-2})&\quad n=2,
\end{cases}
\end{equation}
uniformly for $R\to\infty$. The proof of this assertion is based on the theory of generalized spherical harmonics. For $p=0$ it can be found in \cite{mclean}, as part of the proof of Thm.\ 8.9. We skip the general case, while pointing out that the required background can be found in \cite{weck1994}. Condition \eqref{conddecay2} implies the same asymptotic behaviour for the Neumann trace $\gamma_\mathrm{N}^R\boldsymbol{\omega}$.\par
For $p=0$, the above conditions \eqref{conddecay1} and \eqref{conddecay2} are sharp. For $p>0$ it can even be shown that by condition $\delta\boldsymbol{\omega}=0$ we gain one order of decay, i.e.\ $|\boldsymbol{\omega}|_{\iota \ppoint}=\mathcal{O}(R^{1-n})$ and $|\mathrm{d}\boldsymbol{\omega}|_{\iota \ppoint}=\mathcal{O}(R^{-n})$, $n\ge 2$.\footnote{For $n=3,p=1$ see \cite{ammari2000}, proof of Proposition 3.1.} This implies $b=0$ for $p=1$.
\end{enumerate}
\begin{remark}
The asymptotic conditions guarantee that a representation formula for Maxwell solutions exists without extra terms due to the far boundary. It must be noted that the conditions do not generally ensure that the forms decay rapidly enough for $X^p(\Omega^\mathrm{c})\subset L^2\pcomp{p}(\Omega^\mathrm{c})$ to hold. For the scope of this paper, we circumvent the problem by using the spaces $L^2_\mathrm{loc}\pcomp{p}(\Omega^\mathrm{c})$. However, if one is aiming at variational formulations in the domain $\Omega^\mathrm{c}$, the issue generally has to be remedied by replacing $L^2\pcomp{p}(\Omega^\mathrm{c})$ with Beppo-Levi-type weighted spaces $L^2_s\pcomp{p}(\Omega^\mathrm{c})$ \cite{pauly2006}, and building the theory in terms of weighted Sobolev spaces.\,\footnote{For the vectorial case $n=3$, $p=1$, $s=-1$ see \cite{hiptmair2003,hiptmair2007}.}
\end{remark}
\subsection{Representation Formula for Maxwell Solutions}\label{sec:secrep}
We define the jump of some trace $\gamma_\mathrm{X}$ of a form $\omega$ as
\[
\jump{\gamma_\mathrm{X}}\boldsymbol{\omega}=\gamma_\mathrm{X}^\mathrm{c}\boldsymbol{\omega}-\gamma_\mathrm{X}\boldsymbol{\omega},
\]
and refer to the definitions of the single- and double-layer potentials in Section~\ref{sec:SLDL}.\par
\marginpar{Boundary data}The boundary data $(\beta,\gamma,\varphi)$ of forms $\boldsymbol{\omega}\in X^p(\Omega\cup\Omega^\mathrm{c})$ are defined as
\begin{subequations}\label{defboundary1}
\begin{empheq}[box=\fbox]{alignat=2}
\beta&=\jump{\gamma_\mathrm{D}}\,\omega&&\in H_\perp^{-1/2}\pcomp{p}(\hhat{\mathrm{d}},\Gamma),\\
\gamma&=\jump{\gamma_\mathrm{N}}\,\omega&&\in H_{\parallel}^{-1/2}\pcomp{p}(\hhat{\delta},\Gamma),\\
\varphi&=\jump{\mathrm{n}}\,\omega&&\in H_{\parallel}^{-1/2}\pcomp{p-1}(\hhat{\delta}0,\Gamma).
\end{empheq}
\end{subequations}
\begin{theorem}\label{thm:repthm}
\marginpar{Representation formula}
A Maxwell solution $\omega\in X^p(\Omega\cup\Omega^\mathrm{c})$ can be represented in terms of integral transformations of its boundary data by
\begin{equation}\label{represent}
\boxed{\boldsymbol{\omega}=-\psl{p}\,\boldsymbol{\gamma}
+\pdl{p}\,\boldsymbol{\beta}
-\mathrm{d}\,\psl{p-1}\,\boldsymbol{\varphi}.}
\end{equation}
\end{theorem}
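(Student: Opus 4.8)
The plan is to derive \eqref{represent} from a Green's representation formula, established separately in the interior $\Omega$ and in the truncated exterior $\Omega^\mathrm{c}\cap\Omega^R$, and then to combine the two by means of the jumps in \eqref{defboundary1}. The starting observation is that a Maxwell solution satisfies the gauge $\delta\boldsymbol{\omega}=0$ of \eqref{defgauge}, so that $\boldsymbol{\Delta}\boldsymbol{\omega}=(\mathrm{d}\delta+\delta\mathrm{d})\boldsymbol{\omega}=\delta\mathrm{d}\boldsymbol{\omega}=k^2\boldsymbol{\omega}$; hence $\boldsymbol{\omega}$ also solves the Helmholtz-type equation $(\boldsymbol{\Delta}-k^2)\boldsymbol{\omega}=0$, and I may test it against the Green kernel $\boldsymbol{G}_p$, which reproduces point values through its fundamental-solution property \eqref{greenprop1}.

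The core step is a second Green identity for the Laplace--Beltrami operator on $p$-forms, applied to the pair $(\boldsymbol{\omega},\boldsymbol{G}_p)$ over $\Omega$. I would obtain it by applying the integration-by-parts formula \eqref{partialint} twice and subtracting, so that the mixed volume term $\ltwo{\mathrm{d}\boldsymbol{\omega}}{\mathrm{d}\boldsymbol{G}_p}{p+1}{\Omega}$ cancels. Using $(\boldsymbol{\Delta}-k^2)\boldsymbol{\omega}=0$, the relation \eqref{greenprop1}, and the identity $\overline{\overline{k}^2}=k^2$, the volume remainder collapses to the Dirac contribution, which reproduces $\boldsymbol{\omega}(\ppoint^\prime)$ for $\ppoint^\prime\in\Omega$. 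Three boundary terms survive. The $\delta\mathrm{d}$-part of $\boldsymbol{\Delta}$ produces a term $b(\gamma_\mathrm{N}\boldsymbol{\omega},\mathrm{t}\,\boldsymbol{G}_p)$ and a term $b(\mathrm{n}\,\mathrm{d}\boldsymbol{G}_p,\mathrm{t}\,\boldsymbol{\omega})$, which I identify with the single-layer potential of the Neumann trace and the double-layer potential of the Dirichlet trace by comparison with \eqref{deflayerpotentials} and the alternative form \eqref{dldef2}. The $\mathrm{d}\delta$-part, whose contribution on the $\boldsymbol{\omega}$-side vanishes because of the gauge $\delta\boldsymbol{\omega}=0$, leaves a term $b(\mathrm{n}\boldsymbol{\omega},\mathrm{t}\,\delta\boldsymbol{G}_p)$ that assembles, via \eqref{ipprop6}, \eqref{commutetd}, and the definition of $\psl{p-1}$, into $\mathrm{d}\,\psl{p-1}$ of the normal trace. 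This yields the interior representation $\boldsymbol{\omega}=-\psl{p}\gamma_\mathrm{N}\boldsymbol{\omega}+\pdl{p}\gamma_\mathrm{D}\boldsymbol{\omega}-\mathrm{d}\,\psl{p-1}\mathrm{n}\boldsymbol{\omega}$ in $\Omega$, with interior traces.

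Next I would repeat the computation on $\Omega^\mathrm{c}\cap\Omega^R$ for a fixed $\ppoint^\prime\in\Omega$. Since the observation point now lies outside the integration domain, $\boldsymbol{G}_p(\cdot,\ppoint^\prime)$ is smooth there, no point value is reproduced, and the left-hand side is zero. The boundary of $\Omega^\mathrm{c}\cap\Omega^R$ splits into $\Gamma$, carried with the reversed orientation $\partial\Omega^\mathrm{c}=-\partial\Omega$, and the far boundary $\Gamma^R$. The $\Gamma^R$ contribution is shown to vanish as $R\to\infty$: for $k\ne0$ this follows from the radiation condition \eqref{condradiation1} together with the boundedness of the far-boundary traces (the corollary in Section~\ref{sec:radiation}) and the $\mathcal{O}(R^{n-1})$ growth of the spherical measure; for $k=0$ it follows from the decay estimates \eqref{conddecay1}--\eqref{conddecay2}. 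What remains is the statement that the exterior-trace layer potentials over $\Gamma$ sum to zero for $\ppoint^\prime\in\Omega$.

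Finally, subtracting the exterior identity from the interior one and absorbing the orientation reversal on $\Gamma$, the differences $\gamma_\mathrm{N}^\mathrm{c}-\gamma_\mathrm{N}$, $\gamma_\mathrm{D}^\mathrm{c}-\gamma_\mathrm{D}$, and $\mathrm{n}^\mathrm{c}-\mathrm{n}$ turn into the jumps $\boldsymbol{\gamma},\boldsymbol{\beta},\boldsymbol{\varphi}$ of \eqref{defboundary1}, giving \eqref{represent} for $\ppoint^\prime\in\Omega$. Interchanging the roles of $\Omega$ and $\Omega^\mathrm{c}$ yields the same formula for $\ppoint^\prime\in\Omega^\mathrm{c}$, so \eqref{represent} holds throughout $\Omega\cup\Omega^\mathrm{c}$. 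I expect the main obstacle to lie in the second paragraph: deriving the Green identity for the double-form kernel $\boldsymbol{G}_p$ with all Hodge operators, conjugations, and normal/tangential traces tracked correctly, and in particular showing that the $\mathrm{d}\delta$-boundary term collapses exactly to $\mathrm{d}\,\psl{p-1}\boldsymbol{\varphi}$ with the sign dictated by the convention $\partial\Omega=-\partial\Omega^\mathrm{c}$.
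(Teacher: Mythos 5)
Your outline reproduces the paper's own three-step strategy almost exactly: Green's second identities of $\textbf{curl}\,\textbf{curl}$ and $\textbf{grad}\,\mathrm{div}$ type tested against the kernel $\boldsymbol{G}_p$, first on $\Omega$ and then on the truncated exterior $\Omega^\mathrm{c}\cap\Omega^R$ (this is Lemma~\ref{lemma:lemmarep}, with the gauge $\delta\boldsymbol{\omega}=0$ killing the domain term and the three boundary pairings assembling into $\pslnot$, $\pdlnot$, and $\mathrm{d}\,\pslnot$), followed by the vanishing of the far-boundary contribution (Lemma~\ref{lemma:lemmafarbound}) and the assembly of the jumps; your "subtract the exterior identity and absorb the orientation reversal" is the paper's addition of \eqref{representcompactinterior} and \eqref{representcompactexterior}. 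However, two steps that the paper needs are genuinely missing from your proposal. First, the regularity step: your entire computation is carried out for smooth forms, whereas the theorem concerns $\boldsymbol{\omega}\in X^p(\Omega\cup\Omega^\mathrm{c})$, i.e.\ forms merely in $H_{\mathrm{loc}}\pcomp{p}(\delta\mathrm{d},\Omega\cup\Omega^\mathrm{c})\cap H_{\mathrm{loc}}\pcomp{p}(\delta 0,\Omega\cup\Omega^\mathrm{c})$ over a \emph{Lipschitz} boundary, with data only in the $H^{-1/2}$-type trace spaces \eqref{defboundary1}. The paper closes this with Lemma~\ref{lemma:repok}: continuity of $\pslnot$ from $H_{\parallel}^{-1/2}\pcomp{p}(\hhat{\delta},\Gamma)$ into $H^1_{\mathrm{loc}}\pcomp{p}(M)$ via \eqref{defpsislsob1}--\eqref{defpsislsob2}, the mapping property \eqref{defpsislsob4} of the exact potential on $H_{\parallel}^{-1/2}\pcomp{p-1}(\hhat{\delta}0,\Gamma)$, and \eqref{defpsidlsob2} for $\pdlnot$. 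Without this, your argument establishes \eqref{represent} only for smooth Maxwell solutions, not for the stated class.

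Second, your far-boundary argument is too coarse as stated. Applied term by term to $M\boldsymbol{\omega}$ in the form \eqref{defm}, "radiation condition plus bounded traces plus measure growth" fails: the pairing $\ltwo{\gamma_\mathrm{D}^R\boldsymbol{\omega}}{\gamma_{\mathrm{N}}^R\boldsymbol{G}_p}{p}{\Gamma^R}$ combines two quantities that are only $\mathcal{O}(1)$, yielding $\mathcal{O}(1)$ rather than $o(1)$. The paper must first rewrite $M\boldsymbol{\omega}$ as in \eqref{maux1} --- using $\mathrm{t}\,\delta=-\hhat{*}^{-1}\gamma_\mathrm{N}*$ and the gauge, i.e.\ $\gamma_\mathrm{N}^R*\boldsymbol{\omega}=0$, to absorb the third trace term --- so that every pairing carries a radiating combination $(\gamma_\mathrm{N}^R-ik\gamma_\mathrm{D}^R)$ on one side. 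Moreover, the kernel side requires its own estimates (Corollary~\ref{corollary:normgreenkernel}): unlike the scalar case, the Neumann-type trace of the double form $\boldsymbol{G}_p$ produces an additional tangential-derivative term $\hhat{\mathrm{d}}\gamma_\mathrm{D}^R\boldsymbol{G}_0$, and its required decay $o(R^{-(n-1)/2})$ is not automatic --- it is gained from the spherical symmetry of $g_n$ by choosing $\Gamma^R$ to be a sphere, see \eqref{greenradiation1b}. These are precisely the $p$-form-specific difficulties you flagged as "the main obstacle" but did not resolve; they constitute the actual content of Corollary~\ref{corollary:normgreenkernel} and Lemma~\ref{lemma:lemmafarbound}, including the separate case analysis for $k=0$, $n=2$, $p=1$, where $b=0$ in \eqref{conddecay1} must be invoked.
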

\begin{remark} We note that the Neumann data obey the additional restriction
\begin{equation}\label{neumannsolution}
\hhat{\delta}\boldsymbol{\gamma}=-k^2\boldsymbol{\varphi},\quad\hhat{\delta}\boldsymbol{\varphi}=0.
\end{equation}\end{remark}
\begin{framed}
Consider the surface current density $\vec{k}\in\mathbf{H}^{-1/2}(\mathrm{div}_\Gamma,\partial\Omega)$ and the surface charge density $\sigma\in H^{-1/2}(\partial\Omega)$. For $p=1$, $(\vec{k},\sigma)$ can be defined as vector and scalar proxies of the differential forms $(\boldsymbol{\gamma},\boldsymbol{\varphi})$, respectively. The first equation in \eqref{neumannsolution} then translates into
\[
\mathrm{div}_\Gamma\vec{k}+i\omega\sigma=0,
\]
the continuity equation, where $\omega$ is the angular frequency. Therefore, \eqref{neumannsolution} is called generalized continuity equation.
\end{framed}\noindent
For the proof we proceed in three steps: First, we derive in Lemma~\ref{lemma:lemmarep} representation formulae for smooth forms on $\Omega$, $\Omega^\mathrm{c}\cap\Omega^R$, and $(\Omega\cup\Omega^\mathrm{c})\cap\Omega^R$, respectively. Second, we show in Lemma~\ref{lemma:lemmafarbound} that the terms related to the far boundary vanish for $R\rightarrow \infty$, provided that the smooth forms fulfill the asymptotic condition. Third, Lemma~\ref{lemma:repok} demonstrates that the layer potentials are well-defined for boundary data in the relevant Sobolev spaces.
\begin{lemma}\label{lemma:lemmarep}
Forms $\omega\in Y^p(\Omega)\cap\dfs{p}(\Omega)$ can be represented by
\begin{equation}\label{representcompactinterior}
\boldsymbol{\omega}=\psl{p}\,(\gamma_\mathrm{N}\boldsymbol{\omega})
-\pdl{p}\,(\gamma_\mathrm{D}\boldsymbol{\omega})
+\mathrm{d}\,\psl{p-1}\,(\mathrm{n}\boldsymbol{\omega}).
\end{equation}
Forms $\omega\in Y^p(\Omega^\mathrm{c}\cap\Omega^R)\cap\dfs{p}(\Omega^\mathrm{c}\cap\Omega^R)$ can be represented by 
\begin{equation}\label{representcompactexterior}
\boldsymbol{\omega}=-\psl{p}\,(\gamma_\mathrm{N}^\mathrm{c}\boldsymbol{\omega})
+\pdl{p}\,(\gamma_\mathrm{D}^\mathrm{c}\boldsymbol{\omega})
-\mathrm{d}\,\psl{p-1}\,(\mathrm{n}^\mathrm{c}\boldsymbol{\omega})+M\boldsymbol{\omega},
\end{equation}
where the term $M\omega$ contains integral transformations of Dirichlet and Neumann data on the far boundary $\Gamma^R$. Eventually, forms $\omega\in Y^p((\Omega\cup\Omega^\mathrm{c})\cap\Omega^R)\cap\dfs{p}((\Omega\cup\Omega^\mathrm{c})\cap\Omega^R)$ can be represented by
\begin{equation}\label{representcompact}
\boldsymbol{\omega}=-\psl{p}\,\boldsymbol{\gamma}
+\pdl{p}\,\boldsymbol{\beta}
-\mathrm{d}\psl{p-1}\,\boldsymbol{\varphi}+M\boldsymbol{\omega}.
\end{equation}
\end{lemma}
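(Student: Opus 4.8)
The plan is to derive a differential-form analogue of the third Green identity and read the three layer-potential terms off the resulting boundary integrals. The two facts that drive the argument are, first, that membership $\omega\in Y^p(\Omega)$ supplies both the gauge $\delta\omega=0$ and the equation $\delta\mathrm{d}\omega=k^2\omega$, so that $\boldsymbol{\Delta}\omega=(\mathrm{d}\delta+\delta\mathrm{d})\omega=\delta\mathrm{d}\omega=k^2\omega$; a Maxwell solution is therefore also a solution of the Helmholtz equation $(\boldsymbol{\Delta}-k^2)\omega=0$. Second, the Green kernel satisfies the fundamental-solution property $(\boldsymbol{\Delta}-\overline{k}^2)\boldsymbol{G}_p=\delta(\ppoint,\ppoint^\prime)\boldsymbol{I}_p$ from \eqref{greenprop1}. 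The conjugate exponent $\overline{k}^2$ is exactly what is needed so that, after pairing against $\omega$ in the conjugate-linear $L^2$ product, the two zeroth-order terms cancel. Since the forms here are smooth, the sesquilinear form $b(\cdot,\cdot)$ reduces to the $L^2$ boundary product throughout.

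First I would fix an observation point $\ppoint^\prime\in\Omega$, excise a small ball $B_\epsilon(\ppoint^\prime)$, and work on $\Omega_\epsilon=\Omega\setminus\overline{B_\epsilon(\ppoint^\prime)}$, where both $\omega$ and $\boldsymbol{G}_p(\cdot,\ppoint^\prime)$ are smooth so that the integration-by-parts identity \eqref{partialint} applies despite the kernel singularity. Starting from $\ltwo{\boldsymbol{\Delta}\omega}{\boldsymbol{G}_p}{p}{\Omega_\epsilon}=\ltwo{\delta\mathrm{d}\omega}{\boldsymbol{G}_p}{p}{\Omega_\epsilon}$, I apply \eqref{partialint} twice, moving first $\delta$ and then the remaining $\mathrm{d}$ onto $\boldsymbol{G}_p$, to reach $\ltwo{\omega}{\delta\mathrm{d}\boldsymbol{G}_p}{p}{\Omega_\epsilon}$ plus boundary pairings on $\Gamma$ and on $\partial B_\epsilon$. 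Writing $\delta\mathrm{d}\boldsymbol{G}_p=\boldsymbol{\Delta}\boldsymbol{G}_p-\mathrm{d}\delta\boldsymbol{G}_p$ and inserting the fundamental-solution property together with $\boldsymbol{\Delta}\omega=k^2\omega$, the $k^2$-terms cancel and the distributional term reproduces $\omega(\ppoint^\prime)$ in the limit $\epsilon\to0$ (the $\partial B_\epsilon$-integral converging to the point value, the weakly singular $\Gamma$-integrals staying finite). The residual term $\ltwo{\omega}{\mathrm{d}\delta\boldsymbol{G}_p}{p}{\Omega}$ is integrated by parts once more; because $\delta\omega=0$ it collapses to the pure boundary term $b(\mathrm{n}\omega,\mathrm{t}\,\delta\boldsymbol{G}_p)$, which is precisely the origin of the normal-trace contribution.

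It then remains to identify the three $\Gamma$-boundary terms with the layer potentials. Here I would use the kernel identities $\mathrm{d}\boldsymbol{G}_p=\delta^\prime\boldsymbol{G}_{p+1}$ and $\delta\boldsymbol{G}_p=\mathrm{d}^\prime\boldsymbol{G}_{p-1}$ (equations \eqref{ipprop4} and \eqref{ipprop6} with $f=\overline{g}_n$) and commute the unprimed traces $\mathrm{t},\mathrm{n}$ with the primed observation-side operators $\mathrm{d}^\prime,\delta^\prime$. The term $b(\mathrm{n}\,\mathrm{d}\omega,\mathrm{t}\,\boldsymbol{G}_p)=b(\gamma_\mathrm{N}\omega,\mathrm{t}\,\boldsymbol{G}_p)$ is the single-layer potential $\psl{p}(\gamma_\mathrm{N}\omega)$ by \eqref{defsl}; the term $b(\mathrm{n}\omega,\mathrm{t}\,\delta\boldsymbol{G}_p)=\mathrm{d}^\prime\,b(\mathrm{n}\omega,\mathrm{t}\,\boldsymbol{G}_{p-1})$ becomes $\mathrm{d}\,\psl{p-1}(\mathrm{n}\omega)$; and the term carrying $\mathrm{n}\,\mathrm{d}\boldsymbol{G}_p=\delta^\prime\mathrm{n}\,\boldsymbol{G}_{p+1}$ matches $-\pdl{p}(\gamma_\mathrm{D}\omega)$ through the alternative double-layer representation \eqref{dldef2}. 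This establishes \eqref{representcompactinterior}.

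For the exterior identity \eqref{representcompactexterior}, the identical computation is carried out on $\Omega^\mathrm{c}\cap\Omega^R$, whose boundary now comprises $\Gamma$ traversed with the opposite (exterior) orientation together with the far boundary $\Gamma^R$. The orientation reversal on $\Gamma$ flips the signs of all three layer-potential terms, while the contributions on $\Gamma^R$ are collected into the remainder $M\omega$. Finally, \eqref{representcompact} follows by superposition: for $\ppoint^\prime\in\Omega$ the interior identity reproduces $\omega(\ppoint^\prime)$, whereas the exterior identity evaluates to $0$ there since no singularity of $\boldsymbol{G}_p(\cdot,\ppoint^\prime)$ lies in $\Omega^\mathrm{c}\cap\Omega^R$; adding the two and using $\jump{\gamma_\mathrm{X}}\omega=\gamma_\mathrm{X}^\mathrm{c}\omega-\gamma_\mathrm{X}\omega$ merges the interior and exterior $\Gamma$-traces into the jumps $\boldsymbol{\gamma},\boldsymbol{\beta},\boldsymbol{\varphi}$ of \eqref{defboundary1}, with the signs quoted in the statement. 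I expect the main obstacle to be the bookkeeping of signs and complex conjugations through the two integration-by-parts steps, together with the rigorous $\epsilon\to0$ passage that converts the distributional identity \eqref{greenprop1} into the pointwise reproduction of $\omega(\ppoint^\prime)$ while keeping the weakly singular surface integrals under control.
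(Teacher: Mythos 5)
Your proposal is correct and follows essentially the same route as the paper's proof: Green's second identity of $\textbf{curl}\,\textbf{curl}$ type with the Green kernel $\boldsymbol{G}_p$ (whose conjugate $\overline{k}^2$ cancels against $k^2$ in the sesquilinear pairing), elimination of the residual $\ltwo{\boldsymbol{\omega}}{\mathrm{d}\delta\boldsymbol{G}_p}{p}{\Omega}$ term via one more integration by parts and the gauge $\delta\boldsymbol{\omega}=0$, identification of the three boundary terms through \eqref{ipprop4}, \eqref{ipprop6} and \eqref{dldef2}, a sign flip from the reversed orientation on the exterior domain with the $\Gamma^R$ terms collected into $M\boldsymbol{\omega}$, and superposition of the interior and exterior identities. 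The only departure is presentational: you make the $\epsilon$-ball excision and the limit $\epsilon\to 0$ explicit where the paper invokes the distributional identity \eqref{greenprop1} directly, which is a slightly more careful rendering of the same step.
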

\begin{proof}
\subparagraph{Representation Formula on $\Omega$}
For smooth forms $\psi\in\dfs{p+1}(\Omega),\eta\in\dfs{p}(\Omega)$, we start from the integration by parts formula
\begin{equation}\label{partialintl2}
\ltwo{\delta\boldsymbol{\psi}}{\boldsymbol{\eta}}{p}{\Omega}-\ltwo{\boldsymbol{\psi}}{\mathrm{d}\boldsymbol{\eta}}{p+1}{\Omega}=-\ltwo{\mathrm{n}\boldsymbol{\psi}}{\mathrm{t}\boldsymbol{\eta}}{p}{\Gamma}.
\end{equation}
\marginpar{Green's first identity}Substituting $\boldsymbol{\psi}$ by $\mathrm{d}\boldsymbol{\omega}$, $\omega\in\dfs{p}(\Omega)$, yields Green's first identity
\begin{equation*}
\ltwo{\delta\mathrm{d}\boldsymbol{\omega}}{\boldsymbol{\eta}}{p}{\Omega}-\ltwo{\mathrm{d}\boldsymbol{\omega}}{\mathrm{d}\boldsymbol{\eta}}{p+1}{\Omega}=-\ltwo{\mathrm{n}\mathrm{d}\boldsymbol{\omega}}{\mathrm{t}\boldsymbol{\eta}}{p}{\Gamma}.
\end{equation*}
\marginpar{Green's second identities}We insert the $k^2$ term, rewrite the equality with arguments swapped, and then subtract both equations. We thus find Green's second identity of the so-called $\textbf{curl}\,\textbf{curl}$ type
\begin{gather}
\ltwo{(\delta\mathrm{d}-k^2)\boldsymbol{\omega}}{\boldsymbol{\eta}}{p}{\Omega}-
\ltwo{\boldsymbol{\omega}}{(\delta\mathrm{d}-\overline{k}^2)\boldsymbol{\eta}}{p}{\Omega}\nonumber\\[1mm]
=\ltwo{\gamma_{\mathrm{D}}\boldsymbol{\omega}}{\gamma_{\mathrm{N}}\boldsymbol{\eta}}{p}{\Gamma}
-\ltwo{\gamma_{\mathrm{N}}\boldsymbol{\omega}}{\gamma_{\mathrm{D}}\boldsymbol{\eta}}{p}{\Gamma}.
\label{greenid2}
\end{gather}
For completeness and further reference we also give Green's second identity of the so-called $\textbf{grad}\,\mathrm{div}$ type,
\begin{gather}
\ltwo{\mathrm{d}\delta\boldsymbol{\omega}}{\boldsymbol{\eta}}{p}{\Omega}-
\ltwo{\boldsymbol{\omega}}{\mathrm{d}\delta\boldsymbol{\eta}}{p}{\Omega}
=\ltwo{\mathrm{t}\delta\boldsymbol{\omega}}{\mathrm{n}\boldsymbol{\eta}}{p-1}{\Gamma}
-\ltwo{\mathrm{n}\boldsymbol{\omega}}{\mathrm{t}\delta\boldsymbol{\eta}}{p-1}{\Gamma}.
\label{greenid3}
\end{gather}
Now, replacing $\boldsymbol{\eta}$ by the fundamental solution \eqref{deffundamentalp}, equation \eqref{greenid2} becomes an integral transformation between $p$-forms. Moreover, we require that $\boldsymbol{\omega}$ is a solution of \eqref{defproblem} and obtain
\begin{equation}\label{representaux1}
\boldsymbol{\omega}-\ltwo{\boldsymbol{\omega}}{\mathrm{d}\delta\boldsymbol{G}_p}{p}{\Omega}
=\ltwo{\gamma_{\mathrm{N}}\boldsymbol{\omega}}{\gamma_{\mathrm{D}}\boldsymbol{G}_p}{p}{\Gamma}
-\ltwo{\gamma_{\mathrm{D}}\boldsymbol{\omega}}{\gamma_{\mathrm{N}}\boldsymbol{G}_p}{p}{\Gamma}.
\end{equation}
The second term can be decomposed via integration by parts \eqref{partialintl2},
\begin{equation}\label{representaux2}
\ltwo{\boldsymbol{\omega}}{\mathrm{d}\delta\boldsymbol{G}_p}{p}{\Omega}=
\ltwo{\delta\boldsymbol{\omega}}{\delta\boldsymbol{G}_p}{{p-1}}{\Omega}+
\ltwo{\mathrm{n}\boldsymbol{\omega}}{\mathrm{t}\delta\boldsymbol{G}_p}{{p-1}}{\Gamma}.
\end{equation}
The domain integral on the right-hand side vanishes due to the gauge condition \eqref{defgauge}.\par
With \eqref{representaux1} and \eqref{representaux2} we can write down a first version of the representation formula for $\boldsymbol{\omega}\in Y^p(\Omega)\cap \dfs{p}(\Omega)$,
\[
\boldsymbol{\omega}
=\ltwo{\gamma_\mathrm{N}\boldsymbol{\omega}}{\gamma_{\mathrm{D}}\boldsymbol{G}_p}{p}{\Gamma}
-\ltwo{\gamma_\mathrm{D}\boldsymbol{\omega}}{\gamma_{\mathrm{N}}\boldsymbol{G}_p}{p}{\Gamma}
+\ltwo{\mathrm{n}\boldsymbol{\omega}}{\mathrm{t}\delta\boldsymbol{G}_p}{{p-1}}{\Gamma}.
\]
The $p$-form $\omega$ is not defined for observation points $\ppoint^{\prime}$ on the boundary $\partial\Omega$.\par
With the help of (\ref{ipprop}c, d) we obtain a second version of the representation formula,
\begin{equation}\label{representaux3}
{\boldsymbol{\omega}
=\ltwo{\gamma_\mathrm{N}\boldsymbol{\omega}}{\mathrm{t}\boldsymbol{G}_p}{p}{\Gamma}
-\delta\ltwo{\gamma_\mathrm{D}\boldsymbol{\omega}}{\mathrm{n}\boldsymbol{G}_{p+1}}{p}{\Gamma}
+\mathrm{d}\,\ltwo{\mathrm{n}\boldsymbol{\omega}}{\mathrm{t}\boldsymbol{G}_{p-1}}{{p-1}}{\Gamma},}
\end{equation}
which effectively constitutes a Hodge decomposition of $\boldsymbol{\omega}$ for $k=0$. Finally, with the definitions \eqref{deflayerpotentials}, and with \eqref{dldef2}, the representation formula \eqref{representaux3} can be cast into the compact form \eqref{representcompactinterior},
\begin{equation*}
\boldsymbol{\omega}=\psl{p}\,(\gamma_\mathrm{N}\boldsymbol{\omega})
-\pdl{p}\,(\gamma_\mathrm{D}\boldsymbol{\omega})
+\mathrm{d}\,\psl{p-1}\,(\mathrm{n}\boldsymbol{\omega}).
\end{equation*}
The definition of the layer potentials can be extended to $\Omega\cup\Omega^\mathrm{c}$, admitting observation points $\ppoint^{\prime}\in\Omega^\mathrm{c}$ as well. By construction, $\boldsymbol{\omega}=0$ for $\ppoint^{\prime}\in\Omega^\mathrm{c}$.\par
\subparagraph{Representation Formula on $\Omega^\mathrm{c}\cap\Omega^R$}
The same line of reasoning can be applied to the bounded exterior domain $\Omega^\mathrm{c}\cap\Omega^R$, which yields \eqref{representcompactexterior} for $\ppoint^{\prime}\in\Omega^\mathrm{c}\cap\Omega^R$
\begin{equation*}
\boldsymbol{\omega}=-\psl{p}\,(\gamma_\mathrm{N}^\mathrm{c}\boldsymbol{\omega})
+\pdl{p}\,(\gamma_\mathrm{D}^\mathrm{c}\boldsymbol{\omega})
-\mathrm{d}\,\psl{p-1}\,(\mathrm{n}^\mathrm{c}\boldsymbol{\omega})+M\boldsymbol{\omega}.
\end{equation*}
The extra term $M\boldsymbol{\omega}$ collects the boundary integrals on the far boundary $\Gamma^R$,
\begin{equation}\label{defm}
M\boldsymbol{\omega}=\ltwo{\gamma_\mathrm{N}^R\boldsymbol{\omega}}{\gamma_{\mathrm{D}}^R\boldsymbol{G}_p}{p}{\Gamma^R}
-\ltwo{\gamma_\mathrm{D}^R\boldsymbol{\omega}}{\gamma_{\mathrm{N}}^R\boldsymbol{G}_p}{p}{\Gamma^R}
+\ltwo{\mathrm{n}^R\boldsymbol{\omega}}{\mathrm{t}^R\delta\boldsymbol{G}_p}{p-1}{\Gamma^R}.
\end{equation}
By construction, $\boldsymbol{\omega}=0$ for $\ppoint^{\prime}\in\Omega$. 
\subparagraph{Representation Formula on $(\Omega\cup\Omega^\mathrm{c})\cap\Omega^R$}
Adding up \eqref{representcompactinterior} and \eqref{representcompactexterior}, we obtain the representation formula \eqref{representcompact} which is valid for $\ppoint^{\prime}\in(\Omega\cup\Omega^\mathrm{c})\cap\Omega^R$,
\begin{equation*}
\boldsymbol{\omega}=-\psl{p}\,\boldsymbol{\gamma}
+\pdl{p}\,\boldsymbol{\beta}
-\mathrm{d}\,\psl{p-1}\,\boldsymbol{\varphi}+M\boldsymbol{\omega}.
\end{equation*}
\qed\end{proof}
\begin{remark}
If $\boldsymbol{\omega}\in Y^p(\Omega\cup\Omega^\mathrm{c})\cap \dfs{p}(\Omega\cup\Omega^\mathrm{c})$, then $M\boldsymbol{\omega}$ is independent of the exact location and shape of $\Gamma^R$. This can be seen by introducing another $\Omega^{\tilde{R}}\Supset\Omega^R$, and applying Green's identities on the annulus $\Omega^{\tilde{R}}\setminus\overline{\Omega}^R$.
\end{remark}
\begin{lemma}\label{lemma:lemmafarbound}
If $\boldsymbol{\omega}\in X^p(\Omega\cup\Omega^\mathrm{c})\cap\dfs{p}(\Omega\cup\Omega^\mathrm{c})$, i.e., $\omega$ satisifies the radiation condition \eqref{condradiation1} for $k\ne 0$ or the decay condition \eqref{conddecay1} for $k=0$, then $|M\omega|_{\ppoint^{\prime}}=o(1)$ for all $\ppoint^{\prime}\in\Omega^\mathrm{c}\cap\Omega^R$ as $R\rightarrow\infty$. Consequently, the far-boundary terms in \eqref{representcompactexterior} and \eqref{representcompact} vanish.
\end{lemma}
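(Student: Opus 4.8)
The plan is to bound each of the three boundary integrals in \eqref{defm} separately by Cauchy--Schwarz on $\Gamma^R$, pitting the decay of the boundary data of $\boldsymbol{\omega}$ against the asymptotics of the traces of the Green kernel and the growth $S_n(R)=\mathcal{O}(R^{n-1})$ of the far sphere. With $\ppoint^{\prime}$ held fixed and the source point $\ppoint$ ranging over $\Gamma^R$, the distance is $r=d(\ppoint,\ppoint^{\prime})=R+\mathcal{O}(1)$, so from \eqref{defgreen} together with the covariant constancy \eqref{ipprop2} of $\boldsymbol{I}_p$ the traces $\gamma_\mathrm{D}^R\boldsymbol{G}_p$, $\gamma_\mathrm{N}^R\boldsymbol{G}_p$ and $\mathrm{t}^R\delta\boldsymbol{G}_p$ inherit the scalar estimates for $\overline{g}_n$ and its first derivatives: $\mathcal{O}(R^{-(n-1)/2})$ when $k\ne 0$, and $\mathcal{O}(R^{2-n})$ resp.\ $\mathcal{O}(R^{1-n})$ when $k=0$, $n\ge 3$. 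Since $|\mathrm{t}^R\boldsymbol{\eta}|_\ppoint,|\mathrm{n}^R\boldsymbol{\eta}|_\ppoint\le|\boldsymbol{\eta}|_{\iota\ppoint}$ (the pointwise Pythagoras bound from Lemma~\ref{lemma:radiation}), the data traces are controlled pointwise by $|\boldsymbol{\omega}|_{\iota\ppoint}$ and $|\mathrm{d}\boldsymbol{\omega}|_{\iota\ppoint}$.

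Case $k\ne 0$. A crude product bound only gives $\mathcal{O}(1)$, so the radiation condition must enter through a cancellation. I would rewrite the first two terms as
\begin{align*}
\ltwo{\gamma_\mathrm{N}^R\boldsymbol{\omega}}{\gamma_\mathrm{D}^R\boldsymbol{G}_p}{p}{\Gamma^R}
-\ltwo{\gamma_\mathrm{D}^R\boldsymbol{\omega}}{\gamma_\mathrm{N}^R\boldsymbol{G}_p}{p}{\Gamma^R}
&=\ltwo{(\gamma_\mathrm{N}^R-ik\gamma_\mathrm{D}^R)\boldsymbol{\omega}}{\gamma_\mathrm{D}^R\boldsymbol{G}_p}{p}{\Gamma^R}\\
&\quad-\ltwo{\gamma_\mathrm{D}^R\boldsymbol{\omega}}{(\gamma_\mathrm{N}^R-\overline{ik}\,\gamma_\mathrm{D}^R)\boldsymbol{G}_p}{p}{\Gamma^R},
\end{align*}
which is legitimate because the two spurious $ik$-terms cancel: the conjugation of the scalar in the second slot turns $-\overline{ik}$ into $+ik$. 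The first factor of the first term is $o(R^{-(n-1)/2})$ pointwise by the strong radiation condition \eqref{condradiation2}, hence its $L^2(\Gamma^R)$ norm is $o(1)$ after multiplying by $\sqrt{S_n(R)}$, while $\gamma_\mathrm{D}^R\boldsymbol{G}_p$ has $L^2(\Gamma^R)$ norm $\mathcal{O}(1)$; so that term is $o(1)$. In the second term $\gamma_\mathrm{D}^R\boldsymbol{\omega}$ has bounded $L^2(\Gamma^R)$ norm by the corollary \eqref{boundtraces}, whereas $(\gamma_\mathrm{N}^R-\overline{ik}\,\gamma_\mathrm{D}^R)\boldsymbol{G}_p$ is $o(R^{-(n-1)/2})$ because $\boldsymbol{G}_p=\overline{g}_n\boldsymbol{I}_p$ inherits from $\overline{g}_n$ the conjugated Sommerfeld condition; so it is $o(1)$ too. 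The third term of \eqref{defm} needs no trick: $|\mathrm{n}^R\boldsymbol{\omega}|_\ppoint=o(R^{-(n-1)/2})$ by the second line of \eqref{condradiation2}, while $\mathrm{t}^R\delta\boldsymbol{G}_p$ has bounded $L^2(\Gamma^R)$ norm, so Cauchy--Schwarz gives $o(1)$.

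Case $k=0$. The kernel now decays algebraically and, for $n\ge 3$, no cancellation is needed: inserting the rates \eqref{conddecay1}, \eqref{conddecay2} for $|\boldsymbol{\omega}|$ and $|\mathrm{d}\boldsymbol{\omega}|$ (which govern $\gamma_\mathrm{D}^R,\mathrm{n}^R$ and $\gamma_\mathrm{N}^R$ respectively) together with the kernel estimates above and $S_n(R)=\mathcal{O}(R^{n-1})$ makes every term $\mathcal{O}(R^{2-n})=o(1)$. For $n=2$ the same bookkeeping closes as long as $p\ge 1$, where the gauge-improved rates $|\boldsymbol{\omega}|=\mathcal{O}(R^{-1})$, $|\mathrm{d}\boldsymbol{\omega}|=\mathcal{O}(R^{-2})$ dominate the logarithmic growth of $g_2$, and where the normal-trace term is present.

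The hard part is the scalar two-dimensional potential case $n=2$, $p=0$. Here the logarithmic fundamental solution makes each of the first two terms grow individually like $\mathcal{O}(\ln R)$, and the integral converges only after the leading logarithms cancel; the crude majorants are useless and one must invoke the explicit far-field expansion through generalized spherical harmonics. This is precisely the content that underlies the proof of \cite[Thm.\ 8.9]{mclean}, to which I would defer for $p=0$, the general case resting on the machinery of \cite{weck1994}. I also expect the most careful step in the $k\ne 0$ case to be the rigorous justification that $\boldsymbol{G}_p$ satisfies the radiation condition uniformly in $\ppoint\in\Gamma^R$ — i.e.\ that taking far traces of $\overline{g}_n\boldsymbol{I}_p$ reproduces the Sommerfeld behavior with the tangential derivatives genuinely of lower order — since this is what powers the cancellation in the displayed identity.
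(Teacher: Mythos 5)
Your proposal follows essentially the same route as the paper's proof: the same insertion of the mutually cancelling $\pm ik\gamma_\mathrm{D}$ terms (legitimate, as you note, because the conjugation in the second slot of the inner product turns $-\overline{ik}$ into $+ik$; this is exactly the paper's rearrangement \eqref{maux1}), the same pairing of the $o(1)$ data norms \eqref{condradiation3} and the $\mathcal{O}(1)$ bound \eqref{boundtraces} against the kernel estimates via Cauchy--Schwarz as in \eqref{maux2}, the same bookkeeping for $k=0$, $n\ge 3$ and for the gauge-improved rates when $n=2$, $p=1$, and the same deferral to \cite[Thm.\ 8.9]{mclean} for $n=2$, $p=0$. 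Two remarks. First, the step you flag but leave as an expectation --- that $(\gamma_\mathrm{N}^R-\overline{ik}\gamma_\mathrm{D}^R)\boldsymbol{G}_p=o(R^{-(n-1)/2})$ with the tangential derivatives genuinely of lower order --- is precisely where the paper invests its one nontrivial idea, Corollary~\ref{corollary:normgreenkernel}: testing against covariantly constant unit $p$-forms $\boldsymbol{\eta}$ yields $\gamma_\mathrm{N}^R(\overline{g}_n\boldsymbol{\eta})=(\gamma_\mathrm{N}^R\overline{g}_n)(\gamma_\mathrm{D}^R\boldsymbol{\eta})-\hhat{\mathrm{d}}\gamma_\mathrm{D}^R\overline{g}_n\wedge\mathrm{n}^R\boldsymbol{\eta}$, so the Neumann trace of $\boldsymbol{G}_p$ does \emph{not} purely inherit the scalar behaviour but picks up the tangential term \eqref{greentraces2}; this term gains one order of decay because $|\hhat{\mathrm{d}}\gamma_\mathrm{D}^R\boldsymbol{G}_0|_\ppoint=|\mathrm{d}R\wedge\mathrm{d}g_n|=\mathcal{O}(R^{-1})\,\mathcal{O}(R^{-(n-1)/2})$ by the spherical symmetry of $g_n$ \eqref{greenradiation1b} --- an argument that requires choosing $\Gamma^R$ to be a large sphere centered at a fixed point, which is admissible only because $M\boldsymbol{\omega}$ is independent of the location and shape of $\Gamma^R$ (the Remark after Lemma~\ref{lemma:lemmarep}). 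So your skeleton is correct, but as it stands this lemma is asserted rather than proved. Second, a harmless divergence: you estimate the third term of \eqref{defm} directly, pairing $\mathrm{n}^R\boldsymbol{\omega}=o(R^{-(n-1)/2})$ against $\mathrm{t}^R\delta\boldsymbol{G}_p$, whose $L^2(\Gamma^R)$ norm is $\mathcal{O}(1)$ since by \eqref{ipprop6} it involves only first derivatives of $g_n$; the paper instead uses $\mathrm{t}\,\delta=-\hhat{*}^{-1}\gamma_\mathrm{N}*$ together with the gauge $\gamma_\mathrm{N}^R*\boldsymbol{\omega}=0$ to fold this term into two further terms in $*\boldsymbol{\omega}$ and $\boldsymbol{G}_q$, so that only the two kernel estimates of Corollary~\ref{corollary:normgreenkernel} are ever needed. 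Your direct variant is valid and slightly shorter; the paper's rewriting buys uniformity of the toolkit across all four terms.
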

To prove Lemma~\ref{lemma:lemmafarbound} we need the following corollary about norms of the Green kernel. Consider some trace of the form $\gamma^R_\mathrm{X}\boldsymbol{G}_p$. For a fixed observation point $\ppoint^\prime$ this expression gives a covector-valued form on $\Gamma^R$. To define a norm, we first take the point-wise norm of the covector anchored in $\ppoint^{\prime}$. This leaves us with an ordinary form on $\Gamma^R$, whose $L^2$ norm we will then consider. We therefore define
\[
\bigltwonorm{\gamma^R_\mathrm{X}\boldsymbol{G}_p}{p}{\Gamma^R}^{\ppoint^\prime}=
\bigltwonorm{|\gamma^R_\mathrm{X}\boldsymbol{G}_p|_{\ppoint^\prime}}{p}{\Gamma^R}.
\]
Equivalently,
\[
\bigltwonorm{\gamma^R_\mathrm{X}\boldsymbol{G}_p}{p}{\Gamma^R}^{\ppoint^\prime}
=\sup
\bigltwonorm{\gamma^R_\mathrm{X}\metric{\boldsymbol{G}_p}{\boldsymbol{\eta}}_{\ppoint^{\prime}}}{p}{\Gamma^R}
=\sup
\bigltwonorm{\gamma^R_\mathrm{X}(\overline{g}_n\boldsymbol{\eta})}{p}{\Gamma^R},
\]
where the supremum is taken here and subsequently over all covariantly constant $p$-forms $\boldsymbol{\eta}\in\dfs{\text{const},p}(\Omega\cup\Omega^\mathrm{c})$ subject to $|\boldsymbol{\eta}|_{\ppoint^{\prime}}=1$. The last equation follows with \eqref{defip} and \eqref{deffundamentalp}.
\begin{corollary}\label{corollary:normgreenkernel}
The norms of traces of the Green kernel on the far boundary are bounded by the norms of traces of the scalar fundamental solution,
\begin{subequations}\label{greentraces}
\begin{align}
\ltwonorm{\gamma^R_\mathrm{D}\boldsymbol{G}_p}{p}{\Gamma^R}^{\ppoint^\prime}&\le
\ltwonorm{\gamma_\mathrm{D}^R\boldsymbol{G}_0}{0}{\Gamma^R}^{\ppoint^\prime},\label{greentraces1}\\
\ltwonorm{(\gamma_\mathrm{N}^R-\overline{ik}\gamma_\mathrm{D}^R)\boldsymbol{G}_p}{p}{\Gamma^R}^{\ppoint^\prime}&\le
\ltwonorm{
(\gamma_\mathrm{N}^R-\overline{ik}\gamma_\mathrm{D}^R)\boldsymbol{G}_0
}{0}{\Gamma^R}^{\ppoint^\prime}+
\ltwonorm{
\hhat{\mathrm{d}}\gamma_\mathrm{D}^R\boldsymbol{G}_0}{1}{\Gamma^R}^{\ppoint^\prime}.\label{greentraces2}
\end{align}
\end{subequations}
The inequality for the Neumann trace features an additional term, which involves tangential derivatives.
\end{corollary}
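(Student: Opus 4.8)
The plan is to base everything on the equivalent characterisation recorded just above the statement, $\bigltwonorm{\gamma^R_\mathrm{X}\boldsymbol{G}_p}{p}{\Gamma^R}^{\ppoint^\prime}=\sup\bigltwonorm{\gamma^R_\mathrm{X}(\overline{g}_n\boldsymbol{\eta})}{p}{\Gamma^R}$, the supremum running over covariantly constant $\boldsymbol{\eta}$ with $|\boldsymbol{\eta}|_{\ppoint^\prime}=1$. Since $\boldsymbol{\eta}$ is covariantly constant, $|\boldsymbol{\eta}|$ equals $1$ at every point, so the task reduces to a pointwise-in-source estimate on $\Gamma^R$ of the traces of the scalar-times-constant field $\overline{g}_n\boldsymbol{\eta}$ against the scalar traces of $\boldsymbol{G}_0=\overline{g}_n\boldsymbol{I}_0=\overline{g}_n$, followed by integration in $L^2(\Gamma^R)$ and passage to the supremum.

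For \eqref{greentraces1} I would use that the Dirichlet trace is the pullback $\mathrm{t}=\iota^*$, hence multiplicative, giving $\gamma^R_\mathrm{D}(\overline{g}_n\boldsymbol{\eta})=(\gamma^R_\mathrm{D}\boldsymbol{G}_0)\,\mathrm{t}\boldsymbol{\eta}$. The Pythagoras identity $|\boldsymbol{\eta}|_{\iota\ppoint}^2=|\mathrm{t}\boldsymbol{\eta}|_\ppoint^2+|\mathrm{n}\boldsymbol{\eta}|_\ppoint^2$ from the proof of Lemma~\ref{lemma:radiation} yields $|\mathrm{t}\boldsymbol{\eta}|\le|\boldsymbol{\eta}|=1$, so pointwise $|\gamma^R_\mathrm{D}(\overline{g}_n\boldsymbol{\eta})|\le|\gamma^R_\mathrm{D}\boldsymbol{G}_0|$; squaring, integrating over $\Gamma^R$, and taking the supremum over $\boldsymbol{\eta}$ gives the claim.

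The Neumann estimate \eqref{greentraces2} is the substantive part. I would write $\gamma^R_\mathrm{N}=\mathrm{n}\,\mathrm{d}=\mathrm{t}\,\mathrm{i}_{\vec n}\,\mathrm{d}$ by \eqref{condaux1}, and use covariant constancy of $\boldsymbol{\eta}$ to get $\mathrm{d}(\overline{g}_n\boldsymbol{\eta})=\mathrm{d}\overline{g}_n\wedge\boldsymbol{\eta}$. The contraction Leibniz rule (property (iii) of the interior and exterior products) applied to $\mathrm{i}_{\vec n}(\mathrm{d}\overline{g}_n\wedge\boldsymbol{\eta})$ then splits $(\gamma^R_\mathrm{N}-\overline{ik}\gamma^R_\mathrm{D})(\overline{g}_n\boldsymbol{\eta})$ into a scalar part $\bigl[(\gamma^R_\mathrm{N}-\overline{ik}\gamma^R_\mathrm{D})\boldsymbol{G}_0\bigr]\,\mathrm{t}\boldsymbol{\eta}$, whose bracket collects the normal derivative $\mathrm{i}_{\vec n}\mathrm{d}\overline{g}_n$ together with the term $\overline{ik}\,\overline{g}_n$, and a remainder $-\,\mathrm{t}(\mathrm{d}\overline{g}_n)\wedge\mathrm{t}(\mathrm{i}_{\vec n}\boldsymbol{\eta})$. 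The commutation $\mathrm{t}\mathrm{d}=\hhat{\mathrm{d}}\mathrm{t}$ of \eqref{commutetd} identifies $\mathrm{t}(\mathrm{d}\overline{g}_n)=\hhat{\mathrm{d}}\gamma^R_\mathrm{D}\boldsymbol{G}_0$, which is precisely the tangential-derivative factor appearing in \eqref{greentraces2}.

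Taking fibre-wise norms of this decomposition and applying the triangle inequality, the bound follows from three operator estimates, each with constant one: the trace does not increase the norm (Pythagoras, as above), the contraction $\mathrm{i}_{\vec n}$ by the unit normal does not increase the norm, and exterior multiplication by the $1$-form $\hhat{\mathrm{d}}\gamma^R_\mathrm{D}\boldsymbol{G}_0$ satisfies $|\alpha\wedge\beta|\le|\alpha|\,|\beta|$. One obtains pointwise $|(\gamma^R_\mathrm{N}-\overline{ik}\gamma^R_\mathrm{D})(\overline{g}_n\boldsymbol{\eta})|\le|(\gamma^R_\mathrm{N}-\overline{ik}\gamma^R_\mathrm{D})\boldsymbol{G}_0|+|\hhat{\mathrm{d}}\gamma^R_\mathrm{D}\boldsymbol{G}_0|$, after which a Minkowski inequality in $L^2(\Gamma^R)$ and the supremum over $\boldsymbol{\eta}$ deliver \eqref{greentraces2}. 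I expect the main obstacle to be this last step rather than the algebraic decomposition: the one inequality that is not pure bookkeeping is the wedge bound $|\alpha\wedge\beta|\le|\alpha|\,|\beta|$ for a $1$-form $\alpha$ and a $(p-1)$-form $\beta$, which I would justify from the Clifford-type anticommutation relation between exterior multiplication by $\alpha$ and its adjoint contraction, whose sum equals $|\alpha|^2\,\mathrm{Id}$ and therefore forces the operator norm of $\alpha\wedge\,\cdot\,$ to be $|\alpha|$.
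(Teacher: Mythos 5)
Your proposal is correct and takes essentially the same route as the paper's own proof: the same supremum characterization over covariantly constant $\boldsymbol{\eta}$ with $|\boldsymbol{\eta}|_{\ppoint^\prime}=1$, multiplicativity of the pullback plus the Pythagoras bounds $|\mathrm{t}\boldsymbol{\eta}|_\ppoint,|\mathrm{n}\boldsymbol{\eta}|_\ppoint\le|\boldsymbol{\eta}|_{\iota\ppoint}=1$ for \eqref{greentraces1}, and for \eqref{greentraces2} the identical Leibniz-rule decomposition $(\gamma_\mathrm{N}^R-\overline{ik}\gamma_\mathrm{D}^R)(\overline{g}_n\boldsymbol{\eta})=\bigl((\gamma_\mathrm{N}^R-\overline{ik}\gamma_\mathrm{D}^R)\overline{g}_n\bigr)(\gamma_\mathrm{D}^R\boldsymbol{\eta})-\hhat{\mathrm{d}}\gamma_\mathrm{D}^R\overline{g}_n\wedge\mathrm{n}^R\boldsymbol{\eta}$, followed by the triangle inequality and the fibre-wise wedge estimate $|\boldsymbol{\nu}\wedge\boldsymbol{\omega}|_\ppoint\le|\boldsymbol{\nu}|_\ppoint|\boldsymbol{\omega}|_\ppoint$. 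The only (harmless) difference is that you supply a proof of that wedge bound via the Clifford-type anticommutation relation, which the paper merely asserts.
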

\begin{proof}
We denote the unit volume form on $\Gamma^R$ by $\hhat{\boldsymbol{\mu}}$ and consider
\begin{align}
\bigl(\ltwonorm{\gamma^R_\mathrm{D}\boldsymbol{G}_p}{p}{\Gamma^R}^{\ppoint^\prime}\bigr)^2&=\sup
\ltwonorm{\gamma^R_\mathrm{D}(\overline{g}_n\boldsymbol{\eta})}{p}{\Gamma^R}^2
=\sup
\ltwonorm{(\gamma^R_\mathrm{D}\overline{g}_n)(\gamma^R_\mathrm{D}\boldsymbol{\eta})}{p}{\Gamma^R}^2
\nonumber\\
&=\sup
\int_{\Gamma^R}|\gamma_\mathrm{D}^R\overline{g}_n|^2\,|\gamma_\mathrm{D}^R\boldsymbol{\eta}|^2\,\hhat{\boldsymbol{\mu}}\nonumber\\
&\le\int_{\Gamma^R}|\gamma_\mathrm{D}^R\overline{g}_n|^2\,\hhat{\boldsymbol{\mu}}=\bigr(\ltwonorm{\gamma_\mathrm{D}^R\boldsymbol{G}_0}{0}{\Gamma^R}^{\ppoint^\prime}\bigl)^2,\label{maux4}
\end{align}
where we used $|\gamma_\mathrm{D}^R\boldsymbol{\eta}|_\ppoint\le|\boldsymbol{\eta}|_{\iota \ppoint}=1$. This proves \eqref{greentraces1}.\par
To derive an estimate for the Neumann trace of the Green kernel, we first have to consider
\begin{align*}
\gamma_\mathrm{N}^R(\overline{g}_n\boldsymbol{\eta})&=\gamma_\mathrm{D}^R\mathrm{i}_{\vec{n}}\mathrm{d}(\overline{g}_n\boldsymbol{\eta})
=\gamma_\mathrm{D}^R\mathrm{i}_{\vec{n}}(\mathrm{d}\overline{g}_n\wedge\boldsymbol{\eta})\\
&=\gamma_\mathrm{D}^R(\mathrm{i}_{\vec{n}}\mathrm{d}\overline{g}_n\wedge\boldsymbol{\eta}-\mathrm{d}\overline{g}_n\wedge\mathrm{i}_{\vec{n}}\boldsymbol{\eta})\\
&=(\gamma_\mathrm{N}^R\overline{g}_n)(\gamma_\mathrm{D}^R\boldsymbol{\eta})-\hhat{\mathrm{d}}\gamma_\mathrm{D}^R\overline{g}_n\wedge\mathrm{n}^R\boldsymbol{\eta},
\end{align*}
where we used \eqref{condaux1}, and therefore
\[
(\gamma_\mathrm{N}^R-\overline{ik}\gamma_\mathrm{D}^R)(\overline{g}_n\boldsymbol{\eta})=\bigl((\gamma_\mathrm{N}^R-\overline{ik}\gamma_\mathrm{D}^R)\overline{g}_n\bigr)(\gamma_\mathrm{D}^R\boldsymbol{\eta})-\hhat{\mathrm{d}}\gamma_\mathrm{D}^R\overline{g}_n\wedge\mathrm{n}^R\boldsymbol{\eta}.
\]
We now tackle
\begin{align*}
\ltwonorm{(\gamma_\mathrm{N}^R-\overline{ik}\gamma_\mathrm{D}^R)\boldsymbol{G}_p}{p}{\Gamma^R}^{\ppoint^\prime}&=\sup
\ltwonorm{
(\gamma_\mathrm{N}^R-\overline{ik}\gamma_\mathrm{D}^R)(\overline{g}_n\boldsymbol{\eta})}{p}{\Gamma^R}\\
&=\sup
\ltwonorm{
\bigl((\gamma_\mathrm{N}^R-\overline{ik}\gamma_\mathrm{D}^R)\overline{g}_n\bigr)(\gamma_\mathrm{D}^R\boldsymbol{\eta})-\hhat{\mathrm{d}}\gamma_\mathrm{D}^R\overline{g}_n\wedge\mathrm{n}^R\boldsymbol{\eta}
}{p}{\Gamma^R}\\
&
\begin{aligned}
\le &\sup
\ltwonorm{
\bigl((\gamma_\mathrm{N}^R-\overline{ik}\gamma_\mathrm{D}^R)\overline{g}_n\bigr)(\gamma_\mathrm{D}^R\boldsymbol{\eta})
}{p}{\Gamma^R}\\
&+\sup
\ltwonorm{
\hhat{\mathrm{d}}\gamma_\mathrm{D}^R\overline{g}_n\wedge\mathrm{n}^R\boldsymbol{\eta}
}{p}{\Gamma^R}.
\end{aligned}
\end{align*}
Recall that the supremum is taken over all covariantly constant $p$-forms $\boldsymbol{\eta}\in\dfs{\text{const},p}(\Omega\cup\Omega^\mathrm{c})$ subject to $|\boldsymbol{\eta}|_{\ppoint^{\prime}}=1$. Along the same line as in \eqref{maux4} we obtain for the first term
\begin{align*}
\sup
\ltwonorm{
\bigl((\gamma_\mathrm{N}^R-\overline{ik}\gamma_\mathrm{D}^R)\overline{g}_n\bigr)(\gamma_\mathrm{D}^R\boldsymbol{\eta})
}{p}{\Gamma^R}^2\le
\bigr(\ltwonorm{
(\gamma_\mathrm{N}^R-\overline{ik}\gamma_\mathrm{D}^R)\boldsymbol{G}_0
}{0}{\Gamma^R}^{\ppoint^\prime}\bigl)^2.
\end{align*}
For the second term,
\begin{align*}
\sup
\ltwonorm{
\hhat{\mathrm{d}}\gamma_\mathrm{D}^R\overline{g}_n\wedge\mathrm{n}^R\boldsymbol{\eta}
}{p}{\Gamma^R}^2
&=\sup
\int_{\Gamma^R}|
\hhat{\mathrm{d}}\gamma_\mathrm{D}^R\overline{g}_n\wedge\mathrm{n}^R\boldsymbol{\eta}
|^2_\ppoint\,\hhat{\boldsymbol{\mu}}\\
&\le\sup
\int_{\Gamma^R}|
\hhat{\mathrm{d}}\gamma_\mathrm{D}^R\overline{g}_n|^2_\ppoint\,|\mathrm{n}^R\boldsymbol{\eta}
|^2_\ppoint\,\hhat{\boldsymbol{\mu}}\\
&\le\int_{\Gamma^R}|
\hhat{\mathrm{d}}\gamma_\mathrm{D}^R\overline{g}_n|^2_\ppoint\,\hhat{\boldsymbol{\mu}}
=\bigl(\ltwonorm{
\hhat{\mathrm{d}}\gamma_\mathrm{D}^R\boldsymbol{G}_0}{1}{\Gamma^R}^{\ppoint^\prime}\bigr)^2,
\end{align*}
where we used that for a 1-form $\boldsymbol{\nu}$ it holds that $|\boldsymbol{\nu}\wedge\boldsymbol{\omega}|_\ppoint\le|\boldsymbol{\nu}|_\ppoint|\boldsymbol{\omega}|_\ppoint$, and
$|\mathrm{n}^R\boldsymbol{\eta}|_\ppoint\le|\boldsymbol{\eta}|_{\iota \ppoint}=1$. This proves \eqref{greentraces2}.
\qed\end{proof}
\begin{proof}[Lemma~\ref{lemma:lemmafarbound}]
It has been shown in \cite[Lemma 7.11, Thms.\ 8.9 and 9.6]{mclean} for the case $p=0$ that the asymptotic conditions \eqref{condradiation1} and \eqref{conddecay1} imply $M\boldsymbol{\omega}=0$ for $R\rightarrow\infty$, and vice versa. In what follows we generalize the proof of the first implication to arbitrary $p$, i.e., we demonstrate that the representation formula works without extra terms due to the far boundary. For the converse implication see Remark~\ref{remark:decaylp}.\par
To this end, we write \eqref{defm} in a particular form. From \eqref{defcoderivative} and (\ref{defboundary}b) we infer $\mathrm{t}\,\delta=-\hhat{*}^{-1}\gamma_\mathrm{N}*$. Taking this into account yields
\begin{align}
M\boldsymbol{\omega}=&\bigltwo{(\gamma_\mathrm{N}^R-ik\gamma_\mathrm{D}^R)\boldsymbol{\omega}}{\gamma_{\mathrm{D}}^R\boldsymbol{G}_p}{p}{\Gamma^R}
-\bigltwo{\gamma_\mathrm{D}^R\boldsymbol{\omega}}{(\gamma_\mathrm{N}^R-\overline{ik}\gamma_\mathrm{D}^R)\boldsymbol{G}_p}{p}{\Gamma^R}\nonumber\\
&+(*^{-1})^{\prime}\bigltwo{(\gamma_\mathrm{N}^R-ik\gamma_\mathrm{D}^R)*\boldsymbol{\omega}}{\gamma_{\mathrm{D}}^R\boldsymbol{G}_q}{q}{\Gamma^R}\nonumber\\
&-(*^{-1})^{\prime}\bigltwo{\gamma_\mathrm{D}^R*\boldsymbol{\omega}}{(\gamma_\mathrm{N}^R-\overline{ik}\gamma_\mathrm{D}^R)\boldsymbol{G}_q}{q}{\Gamma^R},\label{maux1}
\end{align}
where we used \eqref{ipprop1}, \eqref{defnormaltrace} and \eqref{defgauge}, i.e.\ $\gamma_\mathrm{N}^R*\boldsymbol{\omega}=0$.\par
Consider an integral transformation of the form $\boldsymbol{\omega}=\ltwo{\hat{\boldsymbol{\omega}}}{\gamma^R_\mathrm{X}\boldsymbol{G}_p}{p}{\Gamma^R}$, and a covariantly constant $p$-form $\boldsymbol{\eta}\in\dfs{\text{const},p}(\Omega\cup\Omega^\mathrm{c})$, subject to $|\boldsymbol{\eta}|_{\ppoint^{\prime}}=1$. Then it holds that
\begin{align*}
|\boldsymbol{\omega}|_{\ppoint^{\prime}}&=\sup
\metric{\boldsymbol{\omega}}{\boldsymbol{\eta}}_{\ppoint^{\prime}}=\sup
\bigltwo{\hat{\boldsymbol{\omega}}}{\gamma^R_\mathrm{X}\metric{\boldsymbol{G}_p}{\boldsymbol{\eta}}_{\ppoint^{\prime}}}{p}{\Gamma^R}\\
&\le\ltwonorm{\hat{\boldsymbol{\omega}}}{p}{\Gamma^R}\sup
\bigltwonorm{\gamma^R_\mathrm{X}\metric{\boldsymbol{G}_p}{\boldsymbol{\eta}}_{\ppoint^{\prime}}}{p}{\Gamma^R}
=\ltwonorm{\hat{\boldsymbol{\omega}}}{p}{\Gamma^R}\,\bigltwonorm{\gamma^R_\mathrm{X}\boldsymbol{G}_p}{p}{\Gamma^R}^{\ppoint^\prime},
\end{align*}
where we used the Cauchy-Schwarz inequality. The supremum is taken over all admissible $\eta$, compare Corollary~\ref{corollary:normgreenkernel}. Then, from \eqref{maux1} with the triangle inequality it follows that
\begin{align}
|M\boldsymbol{\omega}|_{\ppoint^{\prime}}\le\, &\bigltwonorm{(\gamma_\mathrm{N}^R-ik\gamma_\mathrm{D}^R)\boldsymbol{\omega}}{p}{\Gamma^R}
\,\bigltwonorm{\gamma_{\mathrm{D}}^R\boldsymbol{G}_p}{p}{\Gamma^R}^{\ppoint^\prime}\nonumber\\
&+\bigltwonorm{\gamma_\mathrm{D}^R\boldsymbol{\omega}}{p}{\Gamma^R}
\,\bigltwonorm{(\gamma_\mathrm{N}^R-\overline{ik}\gamma_\mathrm{D}^R)\boldsymbol{G}_p}{p}{\Gamma^R}^{\ppoint^\prime}\nonumber\\
&+\bigltwonorm{ik\mathrm{n}^R\boldsymbol{\omega}}{p-1}{\Gamma^R}
\,\bigltwonorm{\gamma_{\mathrm{D}}^R\boldsymbol{G}_q}{p}{\Gamma^R}^{\ppoint^\prime}\nonumber\\
&+\bigltwonorm{\mathrm{n}^R\boldsymbol{\omega}}{p-1}{\Gamma^R}
\,\bigltwonorm{(\gamma_\mathrm{N}^R-\overline{ik}\gamma_\mathrm{D}^R)\boldsymbol{G}_q}{p}{\Gamma^R}^{\ppoint^\prime},\label{maux2}
\end{align}
where we used $\mathrm{t}\,*=\hhat{*}\mathrm{n}$, see \eqref{defnormaltrace}.\par
We will demonstrate that $|M\boldsymbol{\omega}|_{\ppoint^{\prime}}=o(1)$ for all $\ppoint^{\prime}\in\Omega^\mathrm{c}\cap\Omega^R$ as $R\to\infty$. Since $M\boldsymbol{\omega}$ is actually independent of the exact location and shape of $\Gamma^R$, this implies $M\boldsymbol{\omega}=0$. We leverage this freedom and pick a large ball with radius $R$ centered at a fixed point in $\Omega$. Three cases need to be distinguished:
\begin{enumerate}
\item {\em Case $k\ne 0$.} It holds that
\begin{equation}\label{maux3}
\left.
\begin{aligned}
\bigltwonorm{(\gamma_\mathrm{N}^R-ik\gamma_\mathrm{D}^R)\boldsymbol{\omega}}{p}{\Gamma^R}&=o(1),\\
\bigltwonorm{\gamma_\mathrm{D}^R\boldsymbol{\omega}}{p}{\Gamma^R}&=\mathcal{O}(1),\\
\bigltwonorm{\mathrm{n}^R\boldsymbol{\omega}}{p-1}{\Gamma^R}&=o(1),
\end{aligned}\quad\right\}
\end{equation}
for $R\to\infty$, see \eqref{condradiation3} and \eqref{boundtraces}, respectively. 
The scalar fundamental solution $g_n$ enjoys the properties
\begin{subequations}\label{greenradiation1}
\begin{equation}
\left.
\begin{aligned}
g_n&=\mathcal{O}(r^{-(n-1)/2}),\\
\bigl({\textstyle\frac{\partial}{\partial r}}-ik\bigr)g_n&=o(r^{-(n-1)/2}),\\
{\textstyle\frac{\partial}{\partial r}}g_n&=\mathcal{O}(r^{-(n-1)/2}),
\end{aligned}
\quad\right\}
\end{equation}
see \cite[eq.\ (9.10), (9.13)]{mclean}. Note that for $\ppoint\in\Gamma^R$ and fixed $\ppoint^{\prime}\in\Omega^\mathrm{c}\cap\Omega^R$
\begin{align}
\bigl|\hhat{\mathrm{d}}\gamma_\mathrm{D}^R\boldsymbol{G}_0\bigr|_\ppoint&=\bigl|\mathrm{d}R\wedge\mathrm{d}g_n\bigr|_{\iota \ppoint}
=\bigl|\mathrm{d}R\wedge\mathrm{d}r\bigr|_{\iota \ppoint}\bigl|{\textstyle\frac{\partial}{\partial r}}g_n\bigr|\nonumber\\
&=\mathcal{O}(r^{-1})\mathcal{O}(r^{-(n-1)/2})=o(r^{-(n-1)/2}).\label{greenradiation1b}
\end{align}
\end{subequations}
We win one order of decay for the tangential derivatives by the spherical symmetry of the fundamental solution. In fact, if the ball $\Omega^R$ happened to be centered in $\ppoint^{\prime}$ we would have $r=R$ and $\mathrm{d}R\wedge\mathrm{d}r=0$.\,\footnote{
For $n=3$, $p=1$, the expression $\mathrm{d}R\wedge\mathrm{d}g_n$ corresponds to $\vec{n}\times\textbf{grad}\,g_3$, see \cite[p.\ 161]{kress1998}.}\par
By standard asymptotics for Hankel functions it can be confirmed that the asymptotic orders in \eqref{greenradiation1} stay the same if we replace $r$ by $R$. It follows then from \eqref{greentraces} and \eqref{greenradiation1} that
\begin{equation}\label{greenradiation2}
\left.\begin{aligned}
\bigltwonorm{\gamma^R_\mathrm{D}\boldsymbol{G}_p}{p}{\Gamma^R}^{\ppoint^\prime}&=\mathcal{O}(1),\\
\bigltwonorm{(\gamma_\mathrm{N}^R-\overline{ik}\gamma_\mathrm{D}^R)\boldsymbol{G}_p}{p}{\Gamma^R}^{\ppoint^\prime}&=o(1),
\end{aligned}\quad\right\}
\end{equation}
for $R\to\infty$. Finally, \eqref{maux3} and \eqref{greenradiation2} in connection with \eqref{maux2} yield the assertion.
\item {\em Case $k=0$, $n\ge 3$.} The asymptotic conditions for the traces of $\boldsymbol{\omega}$ are given by \eqref{conddecay1} and \eqref{conddecay2}, respectively. The decay behaviour of the fundamental solution with $k=0$ is obvious from \eqref{defgreen}. While the third term on the right hand side of \eqref{maux2} vanishes, it is easy to see that the three remaining terms are of order $\mathcal{O}(R^{2-n})$ for $R\to\infty$.
\item {\em Case $k=0$, $n=2$.} For $p=0$ see \cite[Thm.\ 8.9]{mclean}. For $p=1$, there holds $b=0$ in \eqref{conddecay1} and \eqref{conddecay2}, respectively. We can argue like in the previous case. The first term in \eqref{maux2} is of order $\mathcal{O}(R^{-1}\ln R)$, the other two non-zero terms are of order $\mathcal{O}(R^{-1})$, for $R\to\infty$.
\end{enumerate}
\qed\end{proof}
\begin{lemma}\label{lemma:repok}
The representation formula 
\[
\boldsymbol{\omega}=-\psl{p}\,\boldsymbol{\gamma}
+\pdl{p}\,\boldsymbol{\beta}
-\mathrm{d}\,\psl{p-1}\,\boldsymbol{\varphi}
\]
is well defined for boundary data according to
\begin{alignat*}{2}
\beta&=\jump{\gamma_\mathrm{D}}\,\omega&&\in H_\perp^{-1/2}\pcomp{p}(\hhat{\mathrm{d}},\Gamma),\\
\gamma&=\jump{\gamma_\mathrm{N}}\,\omega&&\in H_{\parallel}^{-1/2}\pcomp{p}(\hhat{\delta},\Gamma),\\
\varphi&=\jump{\mathrm{n}}\,\omega&&\in H_{\parallel}^{-1/2}\pcomp{p-1}(\hhat{\delta}0,\Gamma),
\end{alignat*}
where $\boldsymbol{\omega}\in X^p(\Omega\cup\Omega^\mathrm{c})$ is a Maxwell solution.
\end{lemma}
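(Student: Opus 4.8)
The plan is to reinterpret each of the three layer potentials in the representation formula, originally defined in \eqref{deflayerpotentials} through $L^2$ inner products with traces of the Green kernel $\boldsymbol{G}_p$, as a duality pairing against the distributional boundary data. The essential observation is that, for any fixed observation point $\ppoint^\prime\in\Omega\cup\Omega^\mathrm{c}$, the kernel $\boldsymbol{G}_p(\cdot,\ppoint^\prime)$ and all of its boundary traces are smooth forms on $\Gamma$, because $g_n(\ppoint,\ppoint^\prime)$ is singular only on the diagonal $\ppoint=\ppoint^\prime\notin\Gamma$. In particular $\mathrm{t}\,\boldsymbol{G}_m(\cdot,\ppoint^\prime)$ is a smooth $m$-form on $\Gamma$ and hence belongs to $H_\perp^{-1/2}\pcomp{m}(\hhat{\mathrm{d}},\Gamma)$ for every degree $m$. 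Thus the $L^2$ products in \eqref{deflayerpotentials} extend, term by term, to the sesquilinear form $b(\cdot,\cdot)$ of \eqref{partialint}, whose continuity then yields boundedness of each potential; the output is form-valued in $\ppoint^\prime$, as it carries the second, primed slot of the double $p$-form.

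For the single-layer contributions I would argue as follows. The term $\psl{p}\,\boldsymbol{\gamma}$ is defined pointwise in $\ppoint^\prime$ by $b(\boldsymbol{\gamma},\mathrm{t}\,\boldsymbol{G}_p)$, which is legitimate precisely because $\boldsymbol{\gamma}\in H_\parallel^{-1/2}\pcomp{p}(\hhat{\delta},\Gamma)$ sits in the first slot of $b$ while the smooth trace $\mathrm{t}\,\boldsymbol{G}_p$ sits in the second. The same reasoning applies to $\psl{p-1}\,\boldsymbol{\varphi}$, using $\boldsymbol{\varphi}\in H_\parallel^{-1/2}\pcomp{p-1}(\hhat{\delta}0,\Gamma)\subset H_\parallel^{-1/2}\pcomp{p-1}(\hhat{\delta},\Gamma)$ paired with $\mathrm{t}\,\boldsymbol{G}_{p-1}$. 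Since the resulting forms are smooth away from $\Gamma$, the outer exterior derivative in $-\mathrm{d}\,\psl{p-1}\,\boldsymbol{\varphi}$, and the $*\mathrm{d}$ in the double-layer term below, are unproblematic, and the differential identities collected in Lemma~\ref{lemma:lpotprop} carry over to the extended potentials.

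For the double-layer term $\pdl{p}\,\boldsymbol{\beta}$ I would not pair directly, but exploit the defining identity \eqref{defdl}, $\pdlnot=-*\mathrm{d}\,\psl{q-1}\hhat{*}^{-1}$. The boundary Hodge operator is an isometric isomorphism carrying $H_\perp^{-1/2}\pcomp{p}(\hhat{\mathrm{d}},\Gamma)$ onto $H_\parallel^{-1/2}\pcomp{q-1}(\hhat{\delta},\Gamma)$, so $\hhat{*}^{-1}\boldsymbol{\beta}$ lies in exactly the space for which the single-layer potential $\psl{q-1}$ has just been shown to be well defined. Applying $-*\mathrm{d}$ to the resulting smooth form then completes the argument; the degree bookkeeping closes because, with $q=n-p$, the target $H_\parallel^{-1/2}\pcomp{q-1}(\hhat{\delta},\Gamma)$ is precisely the image of $H_\perp^{-1/2}\pcomp{p}(\hhat{\mathrm{d}},\Gamma)$ under $\hhat{*}$.

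It remains to confirm that the boundary data belong to the asserted spaces. The memberships $\boldsymbol{\beta}\in H_\perp^{-1/2}\pcomp{p}(\hhat{\mathrm{d}},\Gamma)$ and $\boldsymbol{\gamma}\in H_\parallel^{-1/2}\pcomp{p}(\hhat{\delta},\Gamma)$ are immediate from the mapping properties of the interior and exterior Dirichlet and Neumann traces recorded in \eqref{defboundary}, applied to $\boldsymbol{\omega}\in X^p(\Omega\cup\Omega^\mathrm{c})$. The only point requiring the structure of the problem is the coclosedness of $\boldsymbol{\varphi}=\jump{\mathrm{n}}\,\boldsymbol{\omega}$: from the gauge condition $\delta\boldsymbol{\omega}=0$ in \eqref{defgauge} and the commuting relation $\mathrm{n}\delta=-\hhat{\delta}\,\mathrm{n}$ of \eqref{commutendelta}, applied on both sides of $\Gamma$, one gets $\hhat{\delta}\,\mathrm{n}\boldsymbol{\omega}=\hhat{\delta}\,\mathrm{n}^\mathrm{c}\boldsymbol{\omega}=0$ and hence $\hhat{\delta}\boldsymbol{\varphi}=0$, placing $\boldsymbol{\varphi}$ in $H_\parallel^{-1/2}\pcomp{p-1}(\hhat{\delta}0,\Gamma)$ as claimed. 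I expect the main obstacle to be not any single estimate but the careful tracking of the trace spaces — their $\parallel$/$\perp$ and $\hhat{\mathrm{d}}$/$\hhat{\delta}$ variants together with the degree shift $p\mapsto q-1$ — so that every invocation of the pairing $b(\cdot,\cdot)$ and of the Hodge isometry is matched to the correct slot and degree.
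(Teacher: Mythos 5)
Your proposal is correct, but it reaches the conclusion by a genuinely different route than the paper. You argue pointwise: for each fixed observation point $\ppoint^\prime\notin\Gamma$ the kernel traces $\mathrm{t}\,\boldsymbol{G}_m(\cdot,\ppoint^\prime)$ are traces of forms smooth near $\Gamma$ (a cutoff localizing away from the singularity at $\ppoint^\prime$ makes this precise even when $\ppoint^\prime\in\Omega$), so each layer potential is a continuous pairing $b(\cdot,\cdot)$ of the distributional density against an admissible second argument, the double layer being reduced to the single layer via $\pdlnot=-*\mathrm{d}\,\psl{q-1}\hhat{*}^{-1}$ and the Hodge isometry; your verification that $\hhat{\delta}\boldsymbol{\varphi}=0$ via \eqref{defgauge} and \eqref{commutendelta} is correct and in fact makes explicit something the paper leaves implicit. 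The paper instead proves operator-level mapping properties: citing the regularity result of Hiptmair, it extends $\psl{p}$ to a continuous map $H_{\parallel}^{-1/2}\pcomp{p}(\Gamma)\rightarrow H^1_{\mathrm{loc}}\pcomp{p}(M)$, then bootstraps with the commutation identities of Lemma~\ref{lemma:lpotprop} to obtain \eqref{defpsislsob2}, \eqref{defpsislsob4}, and \eqref{defpsidlsob2}. Your argument is more elementary and self-contained, and it fully suffices for the literal statement of the lemma; what it does not deliver is the transmission-type $H^1_{\mathrm{loc}}$ regularity of the potentials \emph{across} $\Gamma$ and their continuity as operators between Sobolev spaces, which a pointwise argument valid only off $\Gamma$ cannot see, and which the paper relies on downstream — the Dirichlet and normal jump relations of the single layer potential in Lemma~\ref{lemma:lemmajump} are proved precisely by appeal to this $H^1$-regularity, and Lemma~\ref{lemma:repiny} and the well-definedness of the boundary integral operators \eqref{defbio} likewise rest on \eqref{defpsislsob2}--\eqref{defpsidlsob2}. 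One small point of rigor you gloss over: applying the outer $\mathrm{d}$ and $*\mathrm{d}$ under the pairing requires that $\ppoint^\prime\mapsto\mathrm{t}\,\boldsymbol{G}_m(\cdot,\ppoint^\prime)$ be differentiable as a map into the trace space, which holds by smoothness of the kernel away from the diagonal but deserves a sentence.
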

\begin{proof}
Taking into account the regularity of the Green operator and the mapping properties of the trace operator, it can be shown along the same lines as in \cite[eq.\ 5.5, Thm. 5.1]{hiptmair2003} that $\psl{p}$ can be extended to a linear continuous operator
\begin{equation}\label{defpsislsob1}
\psl{p}:H_{\parallel}^{-1/2}\pcomp{p}(\Gamma)\rightarrow H^1_{\mathrm{loc}}\pcomp{p}(M).
\end{equation}
If we restrict its domain to $H_{\parallel}^{-1/2}\pcomp{p}(\hhat{\delta},\Gamma)$ then \eqref{lpotprop1} is well defined and yields that $\delta\psl{p}$ is in $H^1_{\mathrm{loc}}\pcomp{p-1}(M)\subset \hzero _{\mathrm{loc}}\pcomp{p-1}(\mathrm{d},M)$. Therefore we know that $\psl{p}$ is then even in $\hzero _{\mathrm{loc}}\pcomp{p}(\mathrm{d}\delta,M)$. From \eqref{lpotprop2}, which holds in $\Omega$ and $\Omega^\mathrm{c}$ separately, we see that
\begin{equation}\label{defpsislsob2}
\psl{p}:H_{\parallel}^{-1/2}\pcomp{p}(\hhat{\delta},\Gamma)\rightarrow H^1_{\mathrm{loc}}\pcomp{p}(M)\cap \hzero _{\mathrm{loc}}\pcomp{p}(\delta\mathrm{d},\Omega\cup\Omega^\mathrm{c})
\end{equation}
is continuous. Moreover, from \eqref{defpsislsob2},
\begin{equation}\label{defpsislsob4}
\mathrm{d}\,\psl{p}:H_{\parallel}^{-1/2}\pcomp{p}(\hhat{\delta}0,\Gamma)\rightarrow \hzero _{\mathrm{loc}}\pcomp{p+1}(\mathrm{d}0,M) \cap \hzero _{\mathrm{loc}}\pcomp{p+1}(\delta,\Omega\cup\Omega^\mathrm{c}).
\end{equation}
The definition (\ref{deflayerpotentials}b) of the double layer potential, or, equivalently, \eqref{lpotprop3}, extends to a linear and continuous operator
\begin{equation*}
\pdl{p}:H_{\bot}^{-1/2}\pcomp{p}(\Gamma)\rightarrow \hzero _{\mathrm{loc}}\pcomp{p}(\delta,M).
\end{equation*}
If we restrict its domain to $H_{\bot}^{-1/2}\pcomp{p}(\hhat{\mathrm{d}},\Gamma)$, then \eqref{lpotprop4} is well defined in $\Omega$ and $\Omega^\mathrm{c}$ separately and yields that $\mathrm{d}\,\pdl{p}$ is in $\hzero _{\mathrm{loc}}\pcomp{p+1}(\delta,M)$. From this we find that
\begin{equation}\label{defpsidlsob2}
\pdl{p}:H_{\bot}^{-1/2}\pcomp{p}(\hhat{\mathrm{d}},\Gamma)\rightarrow \hzero _{\mathrm{loc}}\pcomp{p}(\delta,M)\cap \hzero _{\mathrm{loc}}\pcomp{p}(\delta\mathrm{d},\Omega\cup\Omega^\mathrm{c})
\end{equation}
is continuous. We conclude that all boundary-integral operators in the representation formula are well defined for the respective boundary data.
\qed\end{proof}
\begin{proof}[Theorem~\ref{thm:repthm}] The proof of Theorem~\ref{thm:repthm} follows directly from the Lemmata~\ref{lemma:lemmarep}, \ref{lemma:lemmafarbound}, and \ref{lemma:repok}.
\qed\end{proof}
\begin{remark}
If we dispense with the requirements $(\delta\mathrm{d}-k^2)\boldsymbol{\omega}=0$ and $\delta\boldsymbol{\omega}=0$, and the asymptotic condition, the most complete statement of the representation formula for forms in $\hzero _{\mathrm{loc}}\pcomp{p}(\delta\mathrm{d},\Omega\cup\Omega^\mathrm{c})\cap \hzero _{\mathrm{loc}}\pcomp{p}(\delta,\Omega\cup\Omega^\mathrm{c})$ reads
\begin{eqnarray*}
\boldsymbol{\omega}&=&-\psl{p}\,\boldsymbol{\gamma}
+\pdl{p}\,\boldsymbol{\beta}
-\mathrm{d}\,\psl{p-1}\,\boldsymbol{\varphi}+M\boldsymbol{\omega}\\
&&+\ltwo{(\delta\mathrm{d}-k^2)\boldsymbol{\omega}}{\boldsymbol{G}_p}{p}{\Omega}
+\mathrm{d}\,\ltwo{\delta\boldsymbol{\omega}}{\boldsymbol{G}_{p-1}}{p-1}{\Omega}.
\end{eqnarray*}
Both $(\delta\mathrm{d}-k^2)\boldsymbol{\omega}$ and $\delta\boldsymbol{\omega}$ have to be compactly supported. Then $\Omega$ can always be enlarged such that it contains their supports, which has been assumed in the above representation formula.
\end{remark}
We proceed to show that individual terms of the representation formula lie in $X^p(\Omega\cup\Omega^\mathrm{c})$. This is a prerequisite for indirect methods, where some Maxwell solution is represented in terms of either single or double layer potential rather than combining them into the representation formula. Lemma~\ref{lemma:repiny} shows that the relevant terms lie in $Y^p(\Omega\cup\Omega^\mathrm{c})$, and Lemma~\ref{lemma:repinx} proves the asymptotic properties.
\begin{lemma}\label{lemma:repiny}
Let $(\boldsymbol{\beta},\boldsymbol{\gamma},\boldsymbol{\varphi})$ be boundary data according to \eqref{defboundary1} and fulfilling \eqref{neumannsolution}. For $k\ne 0$ it holds that $(\psl{p}\boldsymbol{\gamma}
+\mathrm{d}\,\psl{p-1}\boldsymbol{\varphi})$ and $\pdl{p}\boldsymbol{\beta}$ are each elements of $Y^p(\Omega\cup\Omega^\mathrm{c})$. For $k=0$ it holds that $\psl{p}\boldsymbol{\gamma}$, $\mathrm{d}\,\psl{p-1}\boldsymbol{\varphi}$, and $\pdl{p}\boldsymbol{\beta}$ are each elements of $Y^p(\Omega\cup\Omega^\mathrm{c})$. 
\end{lemma}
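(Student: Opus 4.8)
The plan is to verify, for each listed term, the three defining requirements of $Y^p(\Omega\cup\Omega^\mathrm{c})$ in \eqref{defsolutionspace}: that the term lies in $\hzero_{\mathrm{loc}}\pcomp{p}(\delta\mathrm{d},\Omega\cup\Omega^\mathrm{c})$, that it is coclosed so that it also lies in $\hzero_{\mathrm{loc}}\pcomp{p}(\delta0,\Omega\cup\Omega^\mathrm{c})$, and that it satisfies the field equation $(\delta\mathrm{d}-k^2)(\cdot)=0$. The requisite local Sobolev regularity and the membership in the $\delta\mathrm{d}$- and $\delta$-spaces is already furnished by the mapping properties \eqref{defpsislsob2}, \eqref{defpsislsob4}, and \eqref{defpsidlsob2} of Lemma~\ref{lemma:repok}, since by \eqref{defboundary1} we have $\boldsymbol{\gamma}\in H_{\parallel}^{-1/2}\pcomp{p}(\hhat{\delta},\Gamma)$, $\boldsymbol{\varphi}\in H_{\parallel}^{-1/2}\pcomp{p-1}(\hhat{\delta}0,\Gamma)$, and $\boldsymbol{\beta}\in H_\perp^{-1/2}\pcomp{p}(\hhat{\mathrm{d}},\Gamma)$. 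The substance of the proof is therefore purely algebraic: I would combine the potential identities of Lemma~\ref{lemma:lpotprop} with the generalized continuity equation \eqref{neumannsolution} to settle coclosedness and the field equation.

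The double-layer term is the trivial case and needs no case distinction: property \eqref{lpotprop5} gives $\delta\pdl{p}\boldsymbol{\beta}=0$ outright, and \eqref{lpotprop6} gives $(\delta\mathrm{d}-k^2)\pdl{p}\boldsymbol{\beta}=0$ directly, so $\pdl{p}\boldsymbol{\beta}\in Y^p(\Omega\cup\Omega^\mathrm{c})$ for every admissible $k$.

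For $k=0$ I would treat the two single-layer terms separately. Using \eqref{lpotprop1} I write $\delta\psl{p}\boldsymbol{\gamma}=\psl{p-1}\hhat{\delta}\boldsymbol{\gamma}$; since \eqref{neumannsolution} gives $\hhat{\delta}\boldsymbol{\gamma}=-k^2\boldsymbol{\varphi}=0$, the term $\psl{p}\boldsymbol{\gamma}$ is coclosed, and then $\boldsymbol{\Delta}\psl{p}\boldsymbol{\gamma}=0$ from \eqref{lpotprop2} reduces to $\delta\mathrm{d}\,\psl{p}\boldsymbol{\gamma}=0$ once the $\mathrm{d}\delta$-part is killed by coclosedness, so it is a Maxwell solution. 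For $\mathrm{d}\,\psl{p-1}\boldsymbol{\varphi}$ the field equation is immediate from $\mathrm{d}\mathrm{d}=0$, while coclosedness follows by computing $\delta\mathrm{d}\,\psl{p-1}\boldsymbol{\varphi}=k^2\psl{p-1}\boldsymbol{\varphi}-\mathrm{d}\delta\,\psl{p-1}\boldsymbol{\varphi}$ from \eqref{lpotprop2} and noting that \eqref{lpotprop1} together with $\hhat{\delta}\boldsymbol{\varphi}=0$ annihilates the last term, leaving $0$ at $k=0$.

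The crux is the combined term $\psl{p}\boldsymbol{\gamma}+\mathrm{d}\,\psl{p-1}\boldsymbol{\varphi}$ for $k\ne0$, where neither summand is individually coclosed nor a solution, and where the continuity equation is exactly what produces the required cancellations. Using \eqref{lpotprop1} with $\hhat{\delta}\boldsymbol{\gamma}=-k^2\boldsymbol{\varphi}$ I obtain $\delta\psl{p}\boldsymbol{\gamma}=-k^2\psl{p-1}\boldsymbol{\varphi}$, while the computation of the previous paragraph now gives $\delta\mathrm{d}\,\psl{p-1}\boldsymbol{\varphi}=k^2\psl{p-1}\boldsymbol{\varphi}$ (the $\mathrm{d}\delta$-term still vanishing by $\hhat{\delta}\boldsymbol{\varphi}=0$), so the two contributions cancel and the sum is coclosed. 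For the field equation I use \eqref{lpotprop2} in the form $(\delta\mathrm{d}-k^2)\psl{p}\boldsymbol{\gamma}=-\mathrm{d}\delta\,\psl{p}\boldsymbol{\gamma}=k^2\mathrm{d}\,\psl{p-1}\boldsymbol{\varphi}$, whereas $(\delta\mathrm{d}-k^2)\mathrm{d}\,\psl{p-1}\boldsymbol{\varphi}=-k^2\mathrm{d}\,\psl{p-1}\boldsymbol{\varphi}$ by $\mathrm{d}\mathrm{d}=0$; adding, these residuals cancel and the sum solves $(\delta\mathrm{d}-k^2)(\cdot)=0$. The main obstacle is simply the sign-bookkeeping and keeping track of the exact degree at which each identity is invoked: it is precisely the gauge/continuity relation \eqref{neumannsolution} that welds the two single-layer pieces into a genuine element of $Y^p(\Omega\cup\Omega^\mathrm{c})$, and the cancellation closes only because \eqref{lpotprop1} and \eqref{lpotprop2} are applied consistently at degrees $p$ and $p-1$.
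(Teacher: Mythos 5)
Your proposal is correct and follows essentially the same route as the paper's own proof: justify well-definedness via the mapping properties \eqref{defpsislsob2}, \eqref{defpsislsob4}, \eqref{defpsidlsob2}, dispose of $\pdl{p}\boldsymbol{\beta}$ directly by \eqref{lpotprop5} and \eqref{lpotprop6}, and for the single-layer terms combine \eqref{lpotprop1}, \eqref{lpotprop2} with the generalized continuity equation \eqref{neumannsolution} so that $\delta\boldsymbol{\omega}$ and $(\delta\mathrm{d}-k^2)\boldsymbol{\omega}$ cancel termwise. The only cosmetic difference is that you spell out the $k=0$ case term by term where the paper simply remarks that it follows from the same formulae; your computations match the paper's displays exactly.
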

\begin{proof}
From \eqref{defpsidlsob2} it follows that (\ref{lpotprop}d, g) are well defined, which shows that $\pdl{p}\beta\in Y^p(\Omega\cup\Omega^\mathrm{c})$. We note in passing that also \eqref{lpotprop7} is well-defined provided that its domain is chosen as $H_{\bot}^{-1/2}\pcomp{p}(\hhat{\mathrm{d}},\Gamma)$.
Consider now
\[
\boldsymbol{\omega}=\psl{p}\boldsymbol{\gamma}
+\mathrm{d}\,\psl{p-1}\boldsymbol{\varphi},
\]
with $(\boldsymbol{\gamma},\boldsymbol{\varphi})\in
H_{\parallel}^{-1/2}\pcomp{p}(\hhat{\delta},\Gamma)\times H_{\parallel}^{-1/2}\pcomp{p-1}(\hhat{\delta}0,\Gamma)$ obeying to \eqref{neumannsolution}. From \eqref{defpsislsob2} and \eqref{defpsislsob4} we know that $\boldsymbol{\omega}\in \hzero _{\mathrm{loc}}\pcomp{p}(\delta,\Omega\cup\Omega^\mathrm{c})\cap \hzero _{\mathrm{loc}}\pcomp{p}(\delta\mathrm{d},\Omega\cup\Omega^\mathrm{c})$. We can therefore calculate
\begin{align*}
(\delta\mathrm{d}-k^2)\boldsymbol{\omega}&=
-\mathrm{d}\,\psl{p-1}\hhat{\delta}\boldsymbol{\gamma}-\mathrm{d}\,\psl{p-1}k^2\boldsymbol{\varphi}\\
&=-\mathrm{d}\,\psl{p-1}(\hhat{\delta}\boldsymbol{\gamma}+k^2\boldsymbol{\varphi})=0,\\
\delta\boldsymbol{\omega}&=\psl{p-1}\hhat{\delta}\boldsymbol{\gamma}+k^2\psl{p-1}\boldsymbol{\varphi}-\mathrm{d}\,\psl{p-1}\hhat{\delta}\boldsymbol{\varphi}\\
&=\psl{p-1}(\hhat{\delta}\boldsymbol{\gamma}+k^2\boldsymbol{\varphi})-\mathrm{d}\,\psl{p-1}\hhat{\delta}\boldsymbol{\varphi}=0,
\end{align*}
where we used (\ref{lpotprop}a, b). We have thus shown that $\psl{p}\boldsymbol{\gamma}
+\mathrm{d}\,\psl{p-1}\boldsymbol{\varphi}\in Y^p(\Omega\cup\Omega^\mathrm{c})$. The case $k=0$ follows from the above formulae and \eqref{lpotprop1}.
\qed\end{proof}
\begin{lemma}\label{lemma:repinx}
Let $(\boldsymbol{\beta},\boldsymbol{\gamma},\boldsymbol{\varphi})$ be boundary data according to \eqref{defboundary1} and fulfilling \eqref{neumannsolution}. For $k\ne 0$ it holds that $(\psl{p}\boldsymbol{\gamma}
+\mathrm{d}\,\psl{p-1}\boldsymbol{\varphi})$ and $\pdl{p}\boldsymbol{\beta}$ each satisfy the radiation condition \eqref{condradiation1}. For $k=0$ it holds that $\psl{p}\boldsymbol{\gamma}$, $\mathrm{d}\,\psl{p-1}\boldsymbol{\varphi}$, and $\pdl{p}\boldsymbol{\beta}$ each satisfy the decay condition \eqref{conddecay1}.
\end{lemma}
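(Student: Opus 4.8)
The plan is to reduce the asymptotic behaviour of each vectorial potential to the scalar asymptotics of the fundamental solution $g_n$ already collected in \eqref{greenradiation1} (for $k\ne 0$) and in \eqref{defgreen} (for $k=0$). By Lemma~\ref{lemma:repiny} every term listed is already a Maxwell solution on $\Omega^\mathrm{c}$, so it only remains to verify the radiation condition \eqref{condradiation1} or the decay condition \eqref{conddecay1}. All three terms are finite combinations of $\mathrm d$ and $\hhat{*}$ applied to single-layer potentials $\psl{p}\hat{\boldsymbol\omega}$: the term $\psl{p}\boldsymbol\gamma$ is itself a single layer, $\mathrm d\,\psl{p-1}\boldsymbol\varphi$ is one $\mathrm d$ of a single layer, and by \eqref{lpotprop3} the double layer $\pdl{p}\boldsymbol\beta$ equals $\pm*\mathrm d$ of a single layer. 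By definition \eqref{defsl} with \eqref{deffundamentalp}, each single-layer potential is, at an observation point $\ppoint^\prime$, the pairing of the fixed boundary datum against the tangential trace of $\overline{g}_n(\cdot,\ppoint^\prime)\boldsymbol I_{p}$; the conjugation inherent in the $L^2$ pairing restores the \emph{outgoing} kernel $g_n$, so that the radiating asymptotics \eqref{greenradiation1} apply with the correct sign. The structural fact that makes the reduction clean is that $\boldsymbol I_p$ is covariantly constant, \eqref{ipprop2}, so in Cartesian coordinates the observation-variable operators $\mathrm d$, $\mathrm i_{\vec n}$, $\hhat{*}$ act only on the scalar $g_n$, the entire vectorial content being carried by constant components.

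For the case $k\ne 0$ I would proceed exactly as in the proof of Lemma~\ref{lemma:lemmafarbound}, writing out $\mathrm i_{\vec n}\mathrm d\boldsymbol\omega-ik\boldsymbol\omega$ with $\ppoint^\prime$ on a large sphere of radius $R$ about a fixed point of $\Omega$. Since $\Gamma$ is compact, $r=d(\ppoint,\ppoint^\prime)=R+\mathcal O(1)$ uniformly over $\ppoint\in\Gamma$, and the Hankel-function asymptotics permit replacing $r$ by $R$ in \eqref{greenradiation1}. The radial part of $\mathrm i_{\vec n}\mathrm d$ acting on the kernel produces $(\partial_r-ik)g_n=o(R^{-(n-1)/2})$, while the tangential part wins one order of decay by \eqref{greenradiation1b} and is likewise $o(R^{-(n-1)/2})$. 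Pairing the fixed datum — whose dual norm is finite and independent of $R$ — against these uniformly small kernels by Cauchy--Schwarz, exactly as in \eqref{maux2} and Corollary~\ref{corollary:normgreenkernel}, yields $|\mathrm i_{\vec n}\mathrm d\boldsymbol\omega-ik\boldsymbol\omega|_{\iota\ppoint}=o(R^{-(n-1)/2})$, i.e.\ \eqref{condradiation1}. The double-layer and $\mathrm d\,\psl{p-1}\boldsymbol\varphi$ terms carry one extra $\mathrm d$; since $\partial_r g_n=\mathcal O(R^{-(n-1)/2})$ keeps the same leading order and differentiation preserves the radiating structure, the same conclusion follows.

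For the case $k=0$ the kernel decays like $r^{2-n}$ for $n\ge 3$ and logarithmically for $n=2$ by \eqref{defgreen}, and each observation-variable derivative lowers the order by one. Integrating the fixed datum over the compact $\Gamma$ therefore gives $|\psl{p}\boldsymbol\gamma|_{\iota\ppoint}=\mathcal O(R^{2-n})$, while $\mathrm d\,\psl{p-1}\boldsymbol\varphi$ and $\pdl{p}\boldsymbol\beta$ decay one order faster, as $\mathcal O(R^{1-n})$, so all three satisfy \eqref{conddecay1} for $n\ge 3$; the logarithmic $n=2$ case is reproduced by the same bookkeeping, the single-layer contributing the admissible $\bigl|b\ln\tfrac{R}{r_0}\bigr|$ term and the differentiated terms the $\mathcal O(R^{-1})$ remainder. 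The continuity constraints \eqref{neumannsolution} are not needed for \eqref{conddecay1} itself — they were already used in Lemma~\ref{lemma:repiny} to make the terms genuine Maxwell solutions — but they are what sharpen the decay to the orders quoted in the remark following \eqref{conddecay2}.

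The one genuinely technical point, which I expect to be the main obstacle, is that the boundary data lie in the negative-order spaces of \eqref{defboundary1}, so ``integrating over $\Gamma$'' is a duality pairing rather than an ordinary integral. The work is to show that the source-variable norms over $\Gamma$ (of type $H^{1/2}\pcomp{p}(\Gamma)$) of $g_n(\cdot,\ppoint^\prime)$ and its observation-derivatives inherit, \emph{uniformly in the direction} of $\ppoint^\prime$, the same decay orders as the pointwise quantities in \eqref{greenradiation1}. Because $\Gamma$ is compact and $\ppoint^\prime$ is far, the kernel is smooth in $\ppoint$ with all tangential derivatives bounded of the same order, so these norms do carry the stated orders and the pairing against the fixed datum preserves them. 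This is precisely the estimate used to control $M\boldsymbol\omega$ in Lemma~\ref{lemma:lemmafarbound}, now with the roles of source and observation point interchanged.
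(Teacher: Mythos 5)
There is a genuine gap in your treatment of the case $k\ne 0$, and it sits exactly where the paper has to invoke \cite[Lemma 3.2]{jawerth1995}. Your reduction of $\mathrm{i}_{\vec{n}}\mathrm{d}\boldsymbol{\omega}-ik\boldsymbol{\omega}$ to ``radial part $(\partial_r-ik)g_n$ plus tangential part'' is only complete for $p=0$. For $p\ge 1$, writing the kernel locally as $g_n\boldsymbol{\eta}$ with $\boldsymbol{\eta}$ covariantly constant, the contraction obeys a Leibniz rule:
\[
\mathrm{i}_{\vec{n}}\mathrm{d}(g_n\boldsymbol{\eta})-ikg_n\boldsymbol{\eta}
=(\mathrm{i}_{\vec{n}}\mathrm{d}g_n-ikg_n)\,\boldsymbol{\eta}-\mathrm{d}g_n\wedge\mathrm{i}_{\vec{n}}\boldsymbol{\eta}.
\]
The first term is $o(R^{-(n-1)/2})$ by \eqref{greenradiation1}, but the cross term carries $\partial_r g_n\sim ik\,g_n=\mathcal{O}(R^{-(n-1)/2})$ against the normal component $\mathrm{i}_{\vec{n}}\boldsymbol{\eta}\ne 0$: it is of the critical order and is \emph{not} small, no matter how fast the tangential derivatives decay. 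Consequently $\psl{p}\boldsymbol{\gamma}$ alone, for a general density, does \emph{not} satisfy the plain radiation condition \eqref{condradiation1}; it satisfies only the modified condition \eqref{condradiation6}, whose extra term $\mathrm{j}_{\mathrm{d}R}\delta\boldsymbol{\omega}$ is precisely the asymptotic image of the cross term above (recall $\delta\psl{p}\boldsymbol{\gamma}=\psl{p-1}\hhat{\delta}\boldsymbol{\gamma}\ne 0$ in general). The plain condition holds only for coclosed combinations — which is why the lemma groups $\psl{p}\boldsymbol{\gamma}+\mathrm{d}\,\psl{p-1}\boldsymbol{\varphi}$ together, and why, contrary to your closing remark, \eqref{neumannsolution} is indispensable in the $k\ne 0$ case: via Lemma~\ref{lemma:repiny} it makes the sum satisfy $\delta\boldsymbol{\omega}=0$, so that the cross terms of the two pieces cancel. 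The paper's proof encodes exactly this: it defines $f(\boldsymbol{\omega})=\mathrm{i}_{\vec{n}}\mathrm{d}\boldsymbol{\omega}-ik\boldsymbol{\omega}$ and $g(\boldsymbol{\omega})=f(\boldsymbol{\omega})+\mathrm{j}_{\mathrm{d}R}\delta\boldsymbol{\omega}$, observes $f=g$ on $Y^p(\Omega\cup\Omega^\mathrm{c})$, cites \cite[Lemma 3.2]{jawerth1995} for the $g$-smallness of the single layer and of its $\mathrm{d}$- and $\delta$-derivatives, and handles the double layer through $g(*\boldsymbol{\omega})=*g(\boldsymbol{\omega})$ and \eqref{defdl}. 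Your direct kernel-asymptotics route cannot be repaired by sharper scalar estimates; the obstruction is algebraic, not analytic.

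Your $k=0$ bookkeeping matches the paper's cases $n\ge 3$ and part of $n=2$, and the duality-pairing concern you raise at the end is real but handled by the continuity \eqref{defpsislsob1}, so that is not where the difficulty lies. One refinement you miss: for $n=2$, $p=1$ the paper does not stop at the admissible logarithmic bound. It shows $b=0$ in \eqref{conddecay1} for all three terms — for $\mathrm{d}\,\psl{p-1}\boldsymbol{\varphi}$ and $\pdl{p}\boldsymbol{\beta}$ by expressing them as exterior derivatives of scalar single layers, and for $\psl{p}\boldsymbol{\gamma}$ by the potential-theoretic identity $\pdl{0}1=0$ on $\Omega^\mathrm{c}$ together with Stokes' theorem on an annulus (equation \eqref{decayaux3}). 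The literal statement of the lemma is met by the log bound, but the sharpened decay \eqref{decayaux2} is what the symmetry and ellipticity arguments of Section~\ref{sec:symmsec} later rely on, so in context that extra step is not optional.
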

\begin{proof}
\begin{enumerate}
\item[]
\item {\em Case $k\ne 0$.} We rely on \cite[Lemma 3.2]{jawerth1995}, which tells us that $\boldsymbol{\omega}=\psl{p}\boldsymbol{\gamma}$ as well as its derivatives $\mathrm{d}$ and $\delta$ satisfy the following radiation condition
\begin{equation}\label{condradiation6}
|\mathrm{i}_{\vec{n}}\mathrm{d}\boldsymbol{\omega}+\mathrm{j}_{\mathrm{d}R}\delta\boldsymbol{\omega}-ik\boldsymbol{\omega}|_{\iota \ppoint^\prime}=o(R^{-(n-1)/2}),
\end{equation}
uniformly for $R\to\infty$. Note the extra term $\mathrm{j}_{\mathrm{d}R}\delta\boldsymbol{\omega}$ that controls the coderivative, which is non-zero, in general. Let us define the linear maps that correspond to the conditions \eqref{condradiation1} and \eqref{condradiation6}, respectively,
\begin{align*}
f(\boldsymbol{\omega})&=\mathrm{i}_{\vec{n}}\mathrm{d}\boldsymbol{\omega}-ik\boldsymbol{\omega},\\
g(\boldsymbol{\omega})&=\mathrm{i}_{\vec{n}}\mathrm{d}\boldsymbol{\omega}+\mathrm{j}_{\mathrm{d}R}\delta\boldsymbol{\omega}-ik\boldsymbol{\omega}.
\end{align*}
Both maps coincide on $Y^p(\Omega\cup\Omega^\mathrm{c})$. From Lemma~\ref{lemma:repiny} we know that $\psl{p}\boldsymbol{\gamma}
+\mathrm{d}\,\psl{p-1}\boldsymbol{\varphi}$ and $\pdl{p}\boldsymbol{\beta}$ lie in $Y^p(\Omega\cup\Omega^\mathrm{c})$. Moreover, it is easy to see that $g(*\boldsymbol{\omega})=*g(\boldsymbol{\omega})$. We can therefore argue
\begin{align*}
|f(\psl{p}\boldsymbol{\gamma}
+\mathrm{d}\,\psl{p-1}\boldsymbol{\varphi})|_{\iota \ppoint^\prime}
&=|g(\psl{p}\boldsymbol{\gamma}
+\mathrm{d}\,\psl{p-1}\boldsymbol{\varphi})|_{\iota \ppoint^\prime}\\
&\le|g(\psl{p}\boldsymbol{\gamma})|_{\iota \ppoint^\prime}+|g(\mathrm{d}\,\psl{p-1}\boldsymbol{\varphi})|_{\iota \ppoint^\prime}\\
&=o(R^{-(n-1)/2}),\\
|f(\pdl{p}\boldsymbol{\beta})|_{\iota \ppoint^\prime}
&=|g(\pdl{p}\boldsymbol{\beta})|_{\iota \ppoint^\prime}
=|*g(\mathrm{d}\,\psl{p}\hhat{*}^{-1}\boldsymbol{\beta})|_{\iota \ppoint^\prime}\\
&=|g(\mathrm{d}\,\psl{p}\hhat{*}^{-1}\boldsymbol{\beta})|_{\iota \ppoint^\prime}=o(R^{-(n-1)/2}),
\end{align*}
where we used \eqref{defdl}. This completes the proof for $k\ne 0$.
\item {\em Case $k=0$, except for $n=2$, $p=1$.} The boundary integral operators decay at least as fast as the fundamental solution, compare \eqref{defgreen} with \eqref{conddecay1}, for $k=0$.
\item {\em Case $k=0$, $n=2$, $p=1$.} Define the abbreviations $A=\psl{p}\boldsymbol{\gamma}$, $B=\mathrm{d}\,\psl{p-1}\boldsymbol{\varphi}$ and $C=\pdl{p}\boldsymbol{\beta}$. All terms obey at least condition \eqref{conddecay1}, with $b\ne 0$, according to the previous case. Terms $B$ and $C$ can be expressed each in terms of the exterior derivative of a scalar single layer potential, which yields $\mathcal{O}(R^{-1})$ decay for $R\to \infty$, i.e.\ $b=0$. It remains to deal with term $A$.\par
As a corollary, we consider the more general case $k=0$, $p=n-1$, and let $\boldsymbol{\omega}=\psl{n-1}\boldsymbol{\gamma}$. $\boldsymbol{\gamma}$ is required to be coclosed, $\delta\boldsymbol{\gamma}=0$, therefore $\hhat{*}\boldsymbol{\gamma}$ is a closed 0-form, and must be constant on each connected component of $\Gamma$. Since we assumed a trivial topology, we can for linearity reasons simply set $\boldsymbol{\gamma}=\hhat{*}^{-1}1$. Now it is a well-known result from potential theory that $\pdl{0}1=0$ on the exterior domain $\Omega^\mathrm{c}$, taking into account \eqref{conddecay1}, which prohibits a constant at infinity. But then with \eqref{defdl}
\begin{align}
0&=-*^{-1}\pdl{0}1
=\mathrm{d}\,\psl{n-1}(\hhat{*}^{-1}1)\nonumber\\
&=\mathrm{d}\,\psl{n-1}\boldsymbol{\gamma}=\mathrm{d}\boldsymbol{\omega}.\label{decayaux3}
\end{align}
Thus, with \eqref{lpotprop1} we have $\mathrm{d}\boldsymbol{\omega}=\delta\boldsymbol{\omega}=0$ in $\Omega^\mathrm{c}$.\par
Now returning to the case $n=2$ we apply Stokes' theorem in the annulus $\Omega^\mathrm{c}\cap\Omega^R$ to $\mathrm{d}\boldsymbol{\omega}$ and $\mathrm{d}*\boldsymbol{\omega}$, respectively. We infer that the contour integrals of the tangential and radial components of $\boldsymbol{\omega}$ along the circle $\Gamma^R$ must be constant, as $R\to\infty$. This proves that neither of the components can involve a logarithmic singularity, nor can the modulus, thus $b=0$ in \eqref{conddecay1} for term $A$ as well.
\end{enumerate}
\qed\end{proof}
\begin{remark}
In 2-D electromagnetics, \eqref{decayaux3} has an intuitive interpretation. If $\boldsymbol{\gamma}$ is seen to model a surface current that flows in circumferential direction along $\Gamma$, then $\boldsymbol{\omega}$ would be its vector potential. Equation \eqref{decayaux3} says that the magnetic flux density in the exterior domain vanishes. In other words, \eqref{decayaux3} states that the magnetostatic field of a densely wound cylindrical coil is entirely confined to its interior.
\end{remark}
\begin{remark}\label{remark:decaylp}%
The converse of Lemma~\ref{lemma:lemmafarbound} states that $M\omega=0$ for $R\rightarrow\infty$ implies that $\omega$ fulfills the asymptotic condition. This can be seen easily, since from $M\boldsymbol{\omega}=0$ it follows from \eqref{representcompact} that $\boldsymbol{\omega}$ can be represented solely in terms of the layer potentials. Then from Lemma~\ref{lemma:repinx} we know that $\boldsymbol{\omega}$ satisfies the asymptotic condition.
\end{remark}
\begin{framed}
Equation \eqref{defproblem} encompasses the Helmholtz equation for $p=0$ and the $\textbf{curl}\,\textbf{curl}$ equation for $p=1$. The following table lists the translation into classical calculus for $n=3$, $p=0,1$.\par
\begin{center}
\begin{tabular}{p{1cm}p{2cm}p{3.5cm}p{1.5cm}p{1.5cm}}
\hline\noalign{\smallskip}
$p$ & Translation & $(\delta\mathrm{d}-k^2)\,\boldsymbol{\omega}=0$ & $\gamma_{\mathrm{D}}\,\boldsymbol{\omega}$ & $\gamma_{\mathrm{N}}\,\boldsymbol{\omega}$\\
\noalign{\smallskip}\svhline\noalign{\smallskip}
0 & $u=\Upsilon^0\boldsymbol{\omega}$ & $(\Delta_\mathrm{s}+k^2)\,u=0$ & $\gamma\,u$ & $\gamma_n\,\textbf{grad}\,u$\\
1 & $\vec{u}=\Upsilon^1\boldsymbol{\omega}$ & $(\textbf{curl}\,\textbf{curl}-k^2)\,\vec{u}=\vec{0}$ & $\pi_{\tau}\,\vec{u}$ & $\gamma_{\tau}\,\textbf{curl}\,\vec{u}$\\
\noalign{\smallskip}\hline\noalign{\smallskip}
\end{tabular}
\end{center}
The representation formula \eqref{representcompactinterior} for $n=3$ encompasses and generalizes the classical Kirchhoff ($p=0$) and Stratton-Chu ($p=1$, \cite[Sec.\ 4.15]{stratton}) representation formulae in three dimensions, which read in vector notation
\begin{align*}
u&=\int_{\partial\Omega}\Bigl(g_3\,\frac{\partial u}{\partial n}-\frac{\partial g_3}{\partial n}\,u\Bigr)\mathrm{d}\Gamma,\\
\vec{u}&=\int_{\partial\Omega}g_3\,\textbf{curl}\,\vec{u}\times\vec{n}\,\mathrm{d}\Gamma+\textbf{curl}^{\prime}\int_{\partial\Omega}g_3\,\vec{u}\times\vec{n}\,\mathrm{d}\Gamma+\textbf{grad}^{\prime}\int_{\partial\Omega}g_3\,\vec{u}\cdot\vec{n}\,\mathrm{d}\Gamma.
\end{align*}
Note that for $p=0$ the last term in \eqref{representcompactinterior} vanishes.
\end{framed}
\marginpar{Maxwell single-layer potential}Equation \eqref{neumannsolution} motivates the definition of the Maxwell single layer potential for $k\ne 0$
\begin{alignat}{2}\label{defmodslpot}
\tpsl{p}:H_{\parallel}^{-1/2}\pcomp{p}(\hhat{\delta},\Gamma)\rightarrow X^p(\Omega\cup\Omega^\mathrm{c}):\gamma\mapsto(\psl{p}-\frac{1}{k^2}\mathrm{d}\,\psl{p-1}\hhat{\delta})\gamma,
\end{alignat}
so that the representation formula \eqref{represent} can be written equivalently
\begin{equation}\label{modrepresent}
{\boldsymbol{\omega}=-\tpsl{p}\,\boldsymbol{\gamma}
+\pdl{p}\,\boldsymbol{\beta}.}
\end{equation}
From \eqref{lpotprop1} and \eqref{lpotprop2} it follows that the Maxwell single layer potential can also be written as $\tpsl{p}=k^{-2}\delta\mathrm{d}\,\psl{p}$.\par
We proceed to show the mapping properties of $\tpsl{p}$. From \eqref{defpsislsob2} and \eqref{defpsislsob4} it can be inferred that 
\begin{equation}\label{defpsislsob5}
\tpsl{p}:H_{\parallel}^{-1/2}\pcomp{p}(\hhat{\delta},\Gamma)\rightarrow \hzero _{\mathrm{loc}}\pcomp{p}(\delta,\Omega\cup\Omega^\mathrm{c})\cap \hzero _{\mathrm{loc}}\pcomp{p}(\delta\mathrm{d},\Omega\cup\Omega^\mathrm{c})
\end{equation}
is continuous. 
The Maxwell single layer potential has the following properties, which are obvious from the proof of Lemma~\ref{lemma:repok},
\begin{subequations}\label{lpotpropmod}
\begin{alignat}{2}
\delta\tpsl{}&=0,\label{lpotprop8}\\
(\delta\mathrm{d}-k^2)\tpsl{}&=0.\label{lpotprop9}
\end{alignat}
\end{subequations}
Moreover, from Lemma~\ref{lemma:repinx} it follows immediately that the Maxwell single layer potential satisfies radiation condition \eqref{condradiation1}. This confirms in connection with \eqref{defpsislsob5} and \eqref{lpotpropmod} the mapping property that is stated in definition \eqref{defmodslpot}.
\begin{remark} Representation formula \eqref{modrepresent} without the extra Neumann data $\boldsymbol{\varphi}$ can also be derived following a different route. For $k\ne 0$, the Maxwell-type operator in \eqref{defproblem} admits a fundamental solution, which is defined by the Maxwell Green kernel double $p$-form
\[
\widetilde{\boldsymbol{G}}_p=\bigl(1-\frac{1}{\overline{k}^2}\mathrm{d}\delta\bigr)\boldsymbol{G}_p.
\] 
Plugging this kernel into Green's second identity \eqref{greenid2} yields, after a few manipulations, directly the modified representation formula \eqref{modrepresent}. The disadvantage of this derivation is that the case $k=0$ is lost.\end{remark}
\subsection{Jump Relations of the Layer Potentials}\label{sec:interfac}
\begin{lemma}\label{lemma:lemmajump}
\marginpar{Jump relations}
All relevant jump relations on the interface between $\Omega$ and $\Omega^\mathrm{c}$ are collected in the subsequent table for data $\gamma\in H_{\parallel}^{-1/2}\pcomp{p}(\hhat{\delta},\Gamma)$, $\beta\in H_{\bot}^{-1/2}\pcomp{p}(\hhat{\mathrm{d}},\Gamma)$, and $\varphi\in H_{\parallel}^{-1/2}\pcomp{p}(\hhat{\delta},\Gamma)$.
\begin{center}
\begin{tabular}{p{17mm}*{3}{@{\hspace{3mm}}c@{\hspace{3mm}}}@{\hspace{3mm}}p{22mm}}
\hline\noalign{\smallskip}
Potential & $\jump{\gamma_\mathrm{N}} \;\cdot $ & $\jump{\gamma_\mathrm{D}}\; \cdot $ & $\jump{\mathrm{n}}\; \cdot $ \\
\noalign{\smallskip}\svhline\noalign{\smallskip}
$\pslnot\gamma$ & $-\gamma$ & $0$ & $0$ \\[0.5ex]
$\pdlnot\beta$ & $0$ & $\beta$ & $0$ \\[0.5ex]
$\mathrm{d}\pslnot\varphi$ & $0$ & $0$ & $-\varphi$ \\[0.5ex]
\noalign{\smallskip}\hline\noalign{\smallskip}
\end{tabular}
\end{center}
\end{lemma}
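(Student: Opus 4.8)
The plan is to reduce the whole $3\times3$ table to a single genuinely analytic fact—the jump of the Neumann trace of the single-layer potential—and to recover the other eight entries from continuity of traces and the algebraic identities of Lemma~\ref{lemma:lpotprop}. Throughout I use $\gamma_\mathrm{D}=\mathrm{t}$, $\gamma_\mathrm{N}=\mathrm{n}\,\mathrm{d}$, $\mathrm{n}=\hhat{*}^{-1}\mathrm{t}\,*$, and $\jump{\gamma_\mathrm{X}}\boldsymbol\omega=\gamma_\mathrm{X}^\mathrm{c}\boldsymbol\omega-\gamma_\mathrm{X}\boldsymbol\omega$.

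First I would dispose of the six off-diagonal (zero) entries. By \eqref{defpsislsob1} the potential $\pslnot\gamma$ lies in $\hzero^1_{\mathrm{loc}}\pcomp{p}(M)$, so both $\pslnot\gamma$ and $*\pslnot\gamma$ have single-valued traces across $\Gamma$; since $\mathrm{t}$ and $\mathrm{n}=\hhat{*}^{-1}\mathrm{t}\,*$ are read off from these, $\jump{\gamma_\mathrm{D}}\pslnot\gamma=0$ and $\jump{\mathrm{n}}\pslnot\gamma=0$. For the row $\mathrm{d}\,\pslnot\varphi$ I invoke $\mathrm{d}\mathrm{d}=0$ to get $\jump{\gamma_\mathrm{N}}\mathrm{d}\,\pslnot\varphi=\jump{\mathrm{n}}\mathrm{d}\mathrm{d}\,\pslnot\varphi=0$, and the commuting relation \eqref{commutetd} with $\jump{\mathrm{t}}\pslnot=0$ to get $\jump{\gamma_\mathrm{D}}\mathrm{d}\,\pslnot\varphi=\hhat{\mathrm{d}}\,\jump{\mathrm{t}}\pslnot\varphi=0$. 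For the row $\pdlnot\beta$ I substitute \eqref{lpotprop3} (and \eqref{lpotprop7} for the Neumann entry), so that $*\pdlnot\beta$ and $*\mathrm{d}\,\pdlnot\beta$ become expressions of the form $\mathrm{d}\,\pslnot(\cdots)+\pslnot(\cdots)$; applying $\mathrm{t}$ and again \eqref{commutetd}, every contribution reduces to $\jump{\mathrm{t}}\pslnot(\cdots)=0$, giving $\jump{\mathrm{n}}\pdlnot\beta=0$ and $\jump{\gamma_\mathrm{N}}\pdlnot\beta=0$.

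Next I reduce the three diagonal entries to one. Because $\gamma_\mathrm{N}=\mathrm{n}\,\mathrm{d}$, the $\mathrm{d}\,\pslnot\varphi$ entry is literally $\jump{\mathrm{n}}\mathrm{d}\,\pslnot\varphi=\jump{\gamma_\mathrm{N}}\pslnot\varphi$, so it coincides with the single-layer Neumann jump. For the double-layer entry I insert \eqref{lpotprop3}, move the Hodge operators past the traces using $\mathrm{t}\,*=\hhat{*}\,\mathrm{n}$, and rewrite $\gamma_\mathrm{D}\pdlnot\beta$ as $\pm\,\hhat{*}\,\gamma_\mathrm{N}\pslnot(\hhat{*}\beta)$; once the fundamental jump is known this yields $\jump{\gamma_\mathrm{D}}\pdlnot\beta=\pm\hhat{*}(-\hhat{*}\beta)=\beta$, the sign collapsing to $+1$ through $\hhat{*}\hhat{*}=(-1)^{p(q-1)}$ on $\Gamma$. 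Hence everything rests on establishing
\[
\jump{\gamma_\mathrm{N}}\pslnot\gamma=-\gamma .
\]

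Finally I would prove this fundamental relation. The conceptual route uses the reproduction property of the representation formula: applying \eqref{representcompactinterior} to a smooth interior Maxwell solution $\boldsymbol\omega$ produces a potential that equals $\boldsymbol\omega$ in $\Omega$ and $0$ in $\Omega^\mathrm{c}$; taking $\jump{\gamma_\mathrm{N}}$ and cancelling the off-diagonal terms already shown to vanish leaves $\jump{\gamma_\mathrm{N}}\psl{p}(\gamma_\mathrm{N}\boldsymbol\omega)=-\gamma_\mathrm{N}\boldsymbol\omega$, which gives the identity on the range of interior Neumann data and extends to all of $H_{\parallel}^{-1/2}\pcomp{p}(\hhat{\delta},\Gamma)$ by continuity of $\pslnot$, the traces, and the jump. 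To avoid the density/solvability input hidden in that last step, the rigorous fallback is reduction to the classical scalar jump: since $\boldsymbol{G}_p=\overline{g}_n\boldsymbol{I}_p$ with $\boldsymbol{I}_p$ covariantly constant (constant Cartesian components, \eqref{ipprop2}), the Cartesian components of $\pslnot\gamma$ are scalar single-layer potentials of the components of $\gamma$, and the well-known discontinuity of their conormal derivative reproduces $-\gamma$ componentwise. This singular-kernel jump is the main obstacle: it is the only step that is not purely algebraic, and both its derivation for smooth densities and its extension to $H^{-1/2}$ data demand care with the metric-dependent normal trace and with the limiting behaviour of $g_n$ as the observation point approaches $\Gamma$.
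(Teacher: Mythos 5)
Your reduction scaffolding coincides with the paper's: all six zero entries and the two derived diagonal entries are obtained there exactly as you propose, from the $H^1$-regularity \eqref{defpsislsob2}, the identities \eqref{lpotprop3}, \eqref{lpotprop7}, \eqref{commutetd}, and the relation \eqref{jumpaux4} (where, note, the density is $\hhat{*}^{-1}\bdryform$, so the Hodge factors cancel exactly and no $\hhat{*}\hhat{*}=(-1)^{p(q-1)}$ bookkeeping is needed). The genuine gap is in your treatment of the one substantive entry, $\jump{\gamma_\mathrm{N}}\pslnot\boldsymbol{\gamma}=-\boldsymbol{\gamma}$. Your primary route proves this identity only on the range of the interior Neumann trace over smooth Maxwell solutions and then extends ``by continuity'', which requires that range to be dense in $H_{\parallel}^{-1/2}\pcomp{p}(\hhat{\delta},\Gamma)$. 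That density is false for $k=0$: by \eqref{commutendelta} and the gauge condition one has $\hhat{\delta}\,\gamma_\mathrm{N}\boldsymbol{\omega}=-k^2\,\mathrm{n}\,\boldsymbol{\omega}$ (compare \eqref{neumannsolution}), so for $k=0$ every attainable Neumann datum lies in the proper closed subspace $H_{\parallel}^{-1/2}\pcomp{p}(\hhat{\delta}0,\Gamma)$ of coclosed forms, and no continuity argument reaches general $\boldsymbol{\gamma}$. For $k\ne0$ the route needs solvability of the interior Neumann problem for a dense set of data, which fails at interior eigenvalues by the Fredholm alternative and is in any case circular: well-posedness of such boundary-value problems is normally established downstream of the jump relations.

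Your fallback — componentwise reduction to the classical scalar jump — is a legitimate alternative (it is essentially the route of \cite{jawerth1995}), but as written it asserts rather than proves the decisive step. The trace $\gamma_\mathrm{N}=\mathrm{n}\,\mathrm{d}$ is not the componentwise conormal derivative: by \eqref{condaux1} it equals $\mathrm{t}\,\mathrm{i}_{\vec{n}}\mathrm{d}$, so one must (i) invoke that the jump of the full gradient of a scalar single-layer potential is purely normal, $\jump{\nabla S\sigma}=-\sigma\,\vec{n}$, so all tangential-derivative contributions are continuous, and (ii) carry out the algebra $\mathrm{t}\,\mathrm{i}_{\vec{n}}(\mathrm{g}\vec{n}\wedge\cdot)$ showing that the normal-derivative parts reassemble precisely $-\boldsymbol{\gamma}$; moreover $\Gamma$ is only Lipschitz, so the ``well-known'' scalar jump for $H^{-1/2}$ densities is itself a nontrivial harmonic-analysis input, not a triviality. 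The paper sidesteps both pointwise singular-integral limits and any solvability input by a purely distributional argument: Green's second identities \eqref{greenid2} and \eqref{greenid3} on $\Omega$ and $\Omega^\mathrm{c}$ against test forms $\boldsymbol{\Phi}\in\mathcal{D}\pcomp{p}(M)$, the reproduction property \eqref{greenprop1} of the Green kernel to identify the residual volume term as $b(\bdryform,\gamma_\mathrm{D}\boldsymbol{\Phi})$, and density of $\mathrm{t}\,\mathcal{D}\pcomp{p}(M)$ in the trace space. This argument works uniformly for all admissible $k$ — including $k=0$ and interior eigenvalues, exactly where your primary route breaks down — and is the piece you would need to adopt (or replace by a fully executed version of your componentwise analysis) to close the proof.
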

\begin{remark}
\begin{enumerate}
\item[]
\item The mapping properties \eqref{defpsislsob2} and \eqref{defpsidlsob2} of the layer potentials together with those of the trace operators \eqref{defboundary} ensure that all combinations displayed in the table are well defined.
\item The top left 2x2 block of the table reveals that these jump relations coincide with those of the standard single and double layer potentials in the scalar case.
\item In the sequel we call $\mathrm{d}\,\pslnot$ the exact potential, to distinguish it from single- and double-layer potentials.
\end{enumerate}\end{remark}
\begin{proof} The nine proofs are given below:
\subparagraph{Dirichlet and normal traces of the single layer potential}
Obvious, due to the $H^1$-regularity of the potential, see \eqref{defpsislsob2}.
\subparagraph{Neumann trace of the single layer potential}
Write Green's second identity \eqref{greenid2} for the domains $\Omega$ and $\Omega^\mathrm{c}$ separately. Recall the extension of the $L^2$ inner product $\ltwo{\cdot}{\cdot}{p}{\Gamma}$ to the sesquilinear form $b(\cdot,\cdot)$ according to \eqref{partialint}. Be aware that all boundary terms carry an additional minus sign in the case of $\Omega^\mathrm{c}$, because the boundary $\Gamma$ was chosen to be consistently oriented with $\Omega$. Assume $\boldsymbol{\eta}=\boldsymbol{\Phi}\in\mathcal{D}\pcomp{p}(M)$, which renders all traces of $\boldsymbol{\Phi}$ continuous. Adding both equations yields
\begin{eqnarray}
\lefteqn{\ltwo{(\delta\mathrm{d}-k^2)\boldsymbol{\omega}}{\boldsymbol{\Phi}}{p}{\Omega\cup\Omega^\mathrm{c}}-
\ltwo{\boldsymbol{\omega}}{(\delta\mathrm{d}-\overline{k}^2)\boldsymbol{\Phi}}{p}{\Omega\cup\Omega^\mathrm{c}}}\nonumber\\
&\qquad=&-\overline{b(\gamma_{\mathrm{N}}\boldsymbol{\Phi},\jump{\gamma_{\mathrm{D}}}\boldsymbol{\omega})}
+b(\jump{\gamma_{\mathrm{N}}}\boldsymbol{\omega},\gamma_{\mathrm{D}}\boldsymbol{\Phi}).
\label{greenid2jump}
\end{eqnarray}
In a similar way we obtain from Green's second identity \eqref{greenid3}
\begin{equation}\label{greenid3jump}
\ltwo{\mathrm{d}\delta\boldsymbol{\omega}}{\boldsymbol{\Phi}}{p}{\Omega\cup\Omega^\mathrm{c}}-
\ltwo{\boldsymbol{\omega}}{\mathrm{d}\delta\boldsymbol{\Phi}}{p}{\Omega\cup\Omega^\mathrm{c}}=-\overline{b(\mathrm{n}\boldsymbol{\Phi},\jump{\mathrm{t}}\delta\boldsymbol{\omega})}
+b(\jump{\mathrm{n}}\boldsymbol{\omega},\mathrm{t}\delta\boldsymbol{\Phi}).
\end{equation}
Add up \eqref{greenid2jump} and \eqref{greenid3jump},
\begin{eqnarray*}
\lefteqn{\ltwo{(\Delta-k^2)\boldsymbol{\omega}}{\boldsymbol{\Phi}}{p}{\Omega\cup\Omega^\mathrm{c}}-
\ltwo{\boldsymbol{\omega}}{(\Delta-\overline{k}^2)\boldsymbol{\Phi}}{p}{\Omega\cup\Omega^\mathrm{c}}}\nonumber\\
&\qquad=&-\overline{b(\gamma_{\mathrm{N}}\boldsymbol{\Phi},\jump{\gamma_{\mathrm{D}}}\boldsymbol{\omega})}
+b(\jump{\gamma_{\mathrm{N}}}\boldsymbol{\omega},\gamma_{\mathrm{D}}\boldsymbol{\Phi})
-\overline{b(\mathrm{n}\boldsymbol{\Phi},\jump{\mathrm{t}}\delta\boldsymbol{\omega})}
+b(\jump{\mathrm{n}}\boldsymbol{\omega},\mathrm{t}\delta\boldsymbol{\Phi}).
\end{eqnarray*}
There are no contributions from the far boundary $\Gamma^R$, since $\Phi$ is compactly supported. Pick $\boldsymbol{\omega}=\pslnot\bdryform$, $\bdryform\in H_{\parallel}^{-1/2}\pcomp{p}(\hhat{\delta},\Gamma)$. We already know that $\jump{\gamma_{\mathrm{D}} }\boldsymbol{\omega}=0$ and $\jump{\mathrm{n} }\boldsymbol{\omega}=0$. From \eqref{lpotprop1} in connection with \eqref{defpsislsob1} it follows that $\delta\boldsymbol{\omega}\in H^1_{\mathrm{loc}}\pcomp{p-1}(M)$. The $H^1$-regularity implies $\jump{\mathrm{t} }\delta\boldsymbol{\omega}=0$. Eventually, \eqref{lpotprop2} eliminates the first domain integral. We are left with
\begin{equation}\label{jumpaux1}
-\ltwo{\boldsymbol{\omega}}{(\Delta-\overline{k}^2)\boldsymbol{\Phi}}{p}{\Omega\cup\Omega^\mathrm{c}}=b(\jump{\gamma_{\mathrm{N}} }\boldsymbol{\omega},\gamma_{\mathrm{D}}\boldsymbol{\Phi}).
\end{equation}
Check that
\[
\ltwo{(\Delta-\overline{k}^2)\boldsymbol{\Phi}}{\overline{\boldsymbol{G}}_p}{p}{\Omega\cup\Omega^\mathrm{c}}=
\ltwo{\boldsymbol{\Phi}}{\overline{(\Delta-\overline{k}^2)\boldsymbol{G}_p}}{p}{\Omega\cup\Omega^\mathrm{c}}=\widetilde{\boldsymbol{\Phi}}.
\]
The first equation is justified by the self adjointness of the Laplace-Beltrami operator, the second by \eqref{greenprop1}. Note that $\widetilde{\boldsymbol{\Phi}}$ is not defined on the boundary $\partial\Omega$, but can be continuously extended to yield $\boldsymbol{\Phi}$. Then
\begin{align*}
\bigltwo{\pslnot\bdryform}{(\Delta-\overline{k}^2)\boldsymbol{\Phi}}{p}{\Omega\cup\Omega^\mathrm{c}}
&=\bigltwo{b(\bdryform,\mathrm{t}\boldsymbol{G}_p)}
{(\Delta-\overline{k}^2)\boldsymbol{\Phi}}{p}{\Omega\cup\Omega^\mathrm{c}}^{\prime}\\
&=b\bigl(\bdryform,\mathrm{t}\ltwo{(\Delta-\overline{k}^2)\boldsymbol{\Phi}}{\overline{\boldsymbol{G}}_p}{p}{\Omega\cup\Omega^\mathrm{c}}^{\prime}\bigr)
\\
&=b(\bdryform,\gamma_{\mathrm{D}}\boldsymbol{\Phi}),
\end{align*}
and with \eqref{jumpaux1}
\[
b(\jump{\gamma_{\mathrm{N}}}\boldsymbol{\omega}+\bdryform,\gamma_{\mathrm{D}}\boldsymbol{\Phi})=0.
\]
Density of $\mathrm{t}\mathcal{D}\pcomp{p}(M)$ in $H_{\parallel}^{-1/2}\pcomp{p}(\hhat{\delta},\Gamma)$ completes the proof,
\[
\jump{\gamma_{\mathrm{N}} }\pslnot\bdryform=-\bdryform.
\]
\subparagraph{Dirichlet trace of the double layer potential}
This jump can be related to the Neumann trace of the single layer potential. Consider
\[
\gamma_{\mathrm{D}}*\mathrm{d}=\mathrm{t}*\mathrm{d}=*\,\mathrm{n}\,\mathrm{d}=\hhat{*}\gamma_{\mathrm{N}},
\]
therefore
\begin{equation}\label{jumpaux4}
\gamma_{\mathrm{D}}\pdlnot\bdryform=-\hhat{*}\gamma_{\mathrm{N}}\pslnot(\hhat{*}^{-1}\bdryform),
\end{equation}
which is well defined for $\bdryform\in H_{\bot}^{-1/2}\pcomp{p}(\hhat{\mathrm{d}},\Gamma)$. The jump relation for the Neumann trace of the single layer potential gives us immediately the desired result,
\[
\jump{\gamma_{\mathrm{D}} }\pdlnot\bdryform=\bdryform.
\]
\subparagraph{Neumann trace of the double layer potential}
Consider
\begin{align}
\gamma_{\mathrm{N}}\pdlnot\bdryform
&=\hhat{*}^{-1}\mathrm{t}*\mathrm{d}\,\pdlnot\bdryform\nonumber\\
&=\hhat{*}^{-1}\mathrm{t}\bigl((-1)^p\mathrm{d}\,\pslnot(\hhat{*}\,\hhat{\mathrm{d}}\bdryform)
+k^2\pslnot(\hhat{*}\bdryform)\bigr)\nonumber\\
&=-\hhat{\delta}\,\hhat{*}^{-1}\,\mathrm{t}\,\pslnot(\hhat{*}\,\hhat{\mathrm{d}}\bdryform)+k^2\hhat{*}^{-1}\mathrm{t}\,\pslnot(\hhat{*}\bdryform),\label{jumpaux2}
\end{align}
where \eqref{lpotprop7} has been used. For $\bdryform\in H_{\bot}^{-1/2}\pcomp{p}(\hhat{\mathrm{d}},\Gamma)$, we know that the tangential traces of the above single layer potentials are continuous. Thus we have shown
\[
\jump{\gamma_{\mathrm{N}} }\pdlnot\bdryform=0.
\]
\subparagraph{Dirichlet trace of the exact potential}
Because of $\mathrm{t}\,\mathrm{d}=\hhat{\mathrm{d}}\,\mathrm{t}$ the jump of the Dirichlet trace of the exact potential is related to the jump of the Dirichlet trace of the single layer potential, which yields
\[
\jump{\gamma_\mathrm{D} }\mathrm{d}\pslnot\bdryform=0
\]
for $\bdryform\in H_{\parallel}^{-1/2}\pcomp{p}(\hhat{\delta},\Gamma)$.
\subparagraph{Normal trace of the double layer potential}
Equation \eqref{lpotprop3} shows that the normal trace of the double layer potential is related to the Dirichlet trace of the exact potential as follows
\begin{align*}
\mathrm{n}\pdlnot\bdryform
&=\hhat{*}^{-1}\mathrm{t}\,*\pdlnot\bdryform\nonumber\\
&=(-1)^{p+1}\,\hhat{*}^{-1}\mathrm{t}\,\mathrm{d}\,\pslnot(\hhat{*}\bdryform).
\end{align*}
We therefore conclude
\[
\jump{\mathrm{n} }\pdlnot\bdryform=0
\]
for $\bdryform\in H_{\bot}^{-1/2}\pcomp{p}(\hhat{\mathrm{d}},\Gamma)$.
\subparagraph{Neumann trace of the exact potential}
Trivial, since this trace vanishes.
\subparagraph{Normal trace of the exact potential}
The normal trace of the exact potential coincides with the Neumann trace of the single layer potential. Therefore
\[
\jump{\mathrm{n} }\,\mathrm{d}\pslnot\bdryform=-\bdryform,
\]
for $\bdryform\in H_{\parallel}^{-1/2}\pcomp{p}(\hhat{\delta},\Gamma)$. This completes the proof of Lemma~\ref{lemma:lemmajump}.
\qed\end{proof}
\section{Boundary Integral Operators}\label{sec:bdryintop}
In this section, we introduce boundary-integral operators, and discuss their properties with respect to the sesquilinear form \eqref{partialint}.\par
\marginpar{Boundary integral operators}We denote the average of some trace $\gamma_\mathrm{X}$ of a form $\omega$ across $\Gamma$ by
\[
\mean{\gamma_\mathrm{X}}\boldsymbol{\omega}={\scriptstyle\frac{1}{2}}(\gamma_\mathrm{X}^\mathrm{c}\boldsymbol{\omega}+\gamma_\mathrm{X}\boldsymbol{\omega}),
\]
and define the following boundary integral operators
\begin{subequations}\label{defbio}
\begin{empheq}[box=\fbox]{alignat=4}
&V&&=\gamma_\mathrm{D}\pslnot:&&\quad H_{\parallel}^{-1/2}\pcomp{p}(\hhat{\delta},\Gamma)&&\rightarrow H_{\bot}^{-1/2}\pcomp{p}(\hhat{\mathrm{d}},\Gamma),\label{defbio1}\\
&\tilde{V}&&=\gamma_\mathrm{D}\tpsl{}:&&\quad H_{\parallel}^{-1/2}\pcomp{p}(\hhat{\delta},\Gamma)&&\rightarrow H_{\bot}^{-1/2}\pcomp{p}(\hhat{\mathrm{d}},\Gamma),\quad k\ne 0,\label{defbio1a}\\
&K^\dagger&&=\mean{\gamma_\mathrm{N}}\pslnot:&&\quad H_{\parallel}^{-1/2}\pcomp{p}(\hhat{\delta},\Gamma)&&\rightarrow H_{\parallel}^{-1/2}\pcomp{p}(\hhat{\delta},\Gamma),\label{defbio2}\\
&K&&=\mean{\gamma_\mathrm{D}}\pdlnot:&&\quad H_{\bot}^{-1/2}\pcomp{p}(\hhat{\mathrm{d}},\Gamma)&&\rightarrow H_{\bot}^{-1/2}\pcomp{p}(\hhat{\mathrm{d}},\Gamma),\label{defbio3}\\
&D&&=-\gamma_\mathrm{N}\pdlnot:&&\quad H_{\bot}^{-1/2}\pcomp{p}(\hhat{\mathrm{d}},\Gamma)&&\rightarrow H_{\parallel}^{-1/2}\pcomp{p}(\hhat{\delta},\Gamma),\label{defbio4}\\
&W&&=\mathrm{n}\pslnot:&&\quad H_{\parallel}^{-1/2}\pcomp{p}(\hhat{\delta},\Gamma)&&\rightarrow H_{\parallel}^{-1/2}\pcomp{p-1}(\hhat{\delta},\Gamma),\label{defbio5}
\end{empheq}
\end{subequations}
where $V$ denotes the single layer operator, $\tilde{V}$ the Maxwell single layer operator, $K$ the double layer operator, $K^\dagger$ the conjugate adjoint double layer operator, and $D$ the hypersingular operator. Inspecting the mapping properties of the trace operators and of the layer potentials shows that the boundary integral operators are well defined and continuous.
\pagebreak 
\begin{lemma}\label{lemma:bioprop}The boundary integral operators have the following properties:
\begin{subequations}\label{bioprop}
\begin{alignat}{3}
\text{(i)}\quad&&\hhat{*}K^\dagger&=-K\hhat{*},\label{bioprop2}\\
\text{(ii)}\quad&&\hhat{\delta}K^\dagger-K^\dagger\hhat{\delta}&=-k^2W,\label{bioprop3}\\
\text{(iii)}\quad&&\hhat{\mathrm{d}}\,K-K\hhat{\mathrm{d}}&=(-1)^{p+1}k^2\hhat{*}^{-1}W\hhat{*},\label{bioprop4}\\
\text{(iv)}\quad&&D&=\hhat{*}^{-1}\hhat{\mathrm{d}}\,V\hhat{\delta}\hhat{*}-k^2\hhat{*}^{-1}V\hhat{*}.\label{bioprop5a}
\intertext{For $k\ne 0$ it also holds that}
\text{(v)}\quad&&\tilde{V}&=V-\frac{1}{k^2}\hhat{\mathrm{d}}\,V\hhat{\delta},\label{bioprop1}\\
\text{(vi)}\quad&&D&=-k^2\hhat{*}^{-1}\tilde{V}\hhat{*}.\label{bioprop5b}
\end{alignat}
\end{subequations}
\end{lemma}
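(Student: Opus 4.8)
The plan is to derive all six identities from the layer-potential properties of Lemma~\ref{lemma:lpotprop}, the commuting relations \eqref{commutetd} and \eqref{commutendelta} between traces and (co)derivatives, and the two auxiliary identities \eqref{jumpaux4} and \eqref{jumpaux2} that were already produced in the proof of the jump relations (Lemma~\ref{lemma:lemmajump}). I would establish (v), (i), (ii), (iv) first and independently, and then obtain (iii) and (vi) by combination. I would dispatch (v) immediately: from the definition \eqref{defmodslpot} of $\tpsl{p}$ and $V=\gamma_\mathrm{D}\pslnot=\mathrm{t}\,\pslnot$, applying $\gamma_\mathrm{D}$ and pushing the trace through $\mathrm{d}$ with $\mathrm{t}\mathrm{d}=\hhat{\mathrm{d}}\mathrm{t}$ from \eqref{commutetd} turns $\mathrm{t}\,\mathrm{d}\,\psl{p-1}\hhat{\delta}$ into $\hhat{\mathrm{d}}\,V\hhat{\delta}$, which is exactly (v).

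For (i) I would use that the identity \eqref{jumpaux4}, namely $\gamma_\mathrm{D}\pdlnot\bdryform=-\hhat{*}\gamma_\mathrm{N}\pslnot(\hhat{*}^{-1}\bdryform)$, rests only on the pointwise relation $\mathrm{t}*=\hhat{*}\,\mathrm{n}$ and hence holds for both the interior and the exterior trace; averaging the two sides gives $K=-\hhat{*}K^\dagger\hhat{*}^{-1}$, which is (i). For (ii) I would compute $\hhat{\delta}K^\dagger$ directly: writing $K^\dagger=\mean{\gamma_\mathrm{N}}\pslnot=\mean{\mathrm{n}}\,\mathrm{d}\,\pslnot$ and applying $\hhat{\delta}\mathrm{n}=-\mathrm{n}\delta$ from \eqref{commutendelta} yields $\hhat{\delta}K^\dagger=-\mean{\mathrm{n}}\,\delta\mathrm{d}\,\pslnot$; then \eqref{lpotprop2} and \eqref{lpotprop1} rewrite $\delta\mathrm{d}\,\pslnot=k^2\pslnot-\mathrm{d}\,\psl{p-1}\hhat{\delta}$, and taking $\mean{\mathrm{n}}$ while recognising $W=\mathrm{n}\pslnot$ (its normal trace is continuous by the $\jump{\mathrm{n}}$-row of Lemma~\ref{lemma:lemmajump}) and $K^\dagger\hhat{\delta}=\mean{\mathrm{n}}\,\mathrm{d}\,\psl{p-1}\hhat{\delta}$ gives $\hhat{\delta}K^\dagger-K^\dagger\hhat{\delta}=-k^2W$. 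Property (iv) I would read off \eqref{jumpaux2}: since $D=-\gamma_\mathrm{N}\pdlnot$ and $\mathrm{t}\,\pslnot=V$, that identity already reads $D=\hhat{\delta}\hhat{*}^{-1}V\hhat{*}\,\hhat{\mathrm{d}}-k^2\hhat{*}^{-1}V\hhat{*}$, and the first term is recast as $\hhat{*}^{-1}\hhat{\mathrm{d}}\,V\hhat{\delta}\hhat{*}$ using the boundary Hodge-duality relations $\hhat{\delta}\hhat{*}^{-1}=\pm\hhat{*}^{-1}\hhat{\mathrm{d}}$ and $\hhat{*}\,\hhat{\mathrm{d}}=\pm\hhat{\delta}\hhat{*}$.

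Finally, (iii) and (vi) are pure combinations. For (iii) the cleanest route conjugates (ii) by $\hhat{*}$: inserting $K^\dagger=-\hhat{*}^{-1}K\hhat{*}$ from (i) into (ii) and commuting $\hhat{\mathrm{d}}\hhat{*}$ and $\hhat{*}\hhat{\delta}$ through the boundary Hodge relations yields $\hhat{\mathrm{d}}K-K\hhat{\mathrm{d}}=(-1)^{p+1}k^2\hhat{*}^{-1}W\hhat{*}$; equivalently one may compute $\hhat{\mathrm{d}}K$ directly from $K=\mean{\gamma_\mathrm{D}}\pdlnot$, $\hhat{\mathrm{d}}\mathrm{t}=\mathrm{t}\mathrm{d}$, and \eqref{lpotprop4}. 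For (vi) I would simply substitute (v) into the right-hand side of (iv): $-k^2\hhat{*}^{-1}\tilde{V}\hhat{*}=-k^2\hhat{*}^{-1}V\hhat{*}+\hhat{*}^{-1}\hhat{\mathrm{d}}\,V\hhat{\delta}\hhat{*}$, which is precisely (iv). The main obstacle throughout is the bookkeeping of the sign factors $(-1)^{\cdots}$ that appear each time the boundary Hodge operator is moved past $\hhat{\mathrm{d}}$ or $\hhat{\delta}$: on $\Gamma$ one has $\dim\Gamma=n-1$ and the degree alternates between $p$ and $q-1$ with $q=n-p$, so the powers of $\hhat{*}$ and $\hhat{*}^{-1}$ carry nontrivial degree-dependent signs, and it is their collapse that produces exactly the stated exponents $(-1)^{p+1}$. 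Everything else is a routine application of the already-established layer-potential, trace, and commuting identities.
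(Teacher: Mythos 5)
Your proposal is correct and follows essentially the same route as the paper: (i) from \eqref{jumpaux4}, (ii) by the direct computation $\hhat{\delta}K^\dagger=-\mean{\mathrm{n}}\,\delta\mathrm{d}\,\pslnot$ using \eqref{commutendelta}, \eqref{lpotprop1}, \eqref{lpotprop2}, (iii) by Hodge conjugation of (ii) via (i), (iv) from \eqref{jumpaux2} with \eqref{defcoderivative}, and (v), (vi) from \eqref{defmodslpot} with \eqref{commutetd}. Your sign bookkeeping for (iv) does work out as claimed, since $\hhat{\delta}\hhat{*}^{-1}=(-1)^{p+1}\hhat{*}^{-1}\hhat{\mathrm{d}}$ on the relevant degree and $\hhat{*}\,\hhat{\mathrm{d}}=(-1)^{p+1}\hhat{\delta}\hhat{*}$ on $p$-forms, so the two factors cancel exactly; and your observation that $\mean{\mathrm{n}}\pslnot=W$ by the continuity of the normal trace is the right justification, matching the paper's implicit use.
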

\begin{remark}
\begin{enumerate}
\item[]
\item Properties (ii) and (iii) show that the double-layer operator and its adjoint respect the respective de Rham sequence for $k=0$.
\item Properties (iv) and (vi) connect the hypersingular operator $D$ to the weakly singular operator $V$, by using integration by parts. For the Laplace case this goes back to \textsc{Maue} \cite[p.\ 604]{maue1949}, and there is a paper by \textsc{N\'ed\'elec} \cite{nedelec1982} on related representations for different second order partial differential equations. This provides a way of avoiding the evaluation of hypersingular integrals in Galerkin boundary element methods. For example, from (iv) we find
\[
b(D\boldsymbol{\omega},\boldsymbol{\eta})=\overline{b(\hhat{\delta}\hhat{*}\boldsymbol{\eta},V\hhat{\delta}\hhat{*}\boldsymbol{\omega})}-k^2\overline{b(\hhat{*}\boldsymbol{\eta},V\hhat{*}\boldsymbol{\omega})},
\]
$\boldsymbol{\omega},\boldsymbol{\eta}\in H_{\bot}^{-1/2}\pcomp{p}(\hhat{\mathrm{d}},\Gamma)$.
\end{enumerate}
\end{remark}
\begin{proof}
Property (i) is an immediate consequence of \eqref{jumpaux4}.
Property (ii) can be seen as follows:
\begin{align*}
\hhat{\delta}K^\dagger
&=\hhat{\delta}\mathrm{n}\mathrm{d}\,\pslnot
=-\mathrm{n}\delta\mathrm{d}\pslnot\\
&=-\mathrm{n}(k^2-\mathrm{d}\delta)\pslnot
=\mathrm{n}\mathrm{d}\pslnot\hhat{\delta}-k^2\mathrm{n}\pslnot\\
&=K^\dagger\hhat{\delta}-k^2W,
\end{align*}
where we used \eqref{lpotprop1}, \eqref{lpotprop2}, \eqref{commutendelta}, \eqref{defbio2}, and \eqref{defbio5}. Equation (iii) can be seen as follows:
\begin{align*}
\hhat{\mathrm{d}}\,K
&=-\hhat{\mathrm{d}}\,\hhat{*}K^\dagger\hhat{*}^{-1}
=(-1)^q\hhat{*}\hhat{\delta}K^\dagger\hhat{*}^{-1}\\
&=(-1)^q\hhat{*}(K^\dagger\hhat{\delta}-k^2W)\hhat{*}^{-1}
=-\hhat{*}K^\dagger\hhat{*}^{-1}\hhat{\mathrm{d}}-(-1)^qk^2\hhat{*}W\hhat{*}^{-1}\\
&=K\hhat{\mathrm{d}}-(-1)^pk^2\hhat{*}^{-1}W\hhat{*},
\end{align*}
where we used \eqref{defcoderivative}, \eqref{bioprop2}, and \eqref{bioprop3}. Eventually, (iv) follows from \eqref{jumpaux2} with \eqref{defcoderivative}, and \eqref{defbio4}. Properties (v) and (vi) follow from \eqref{defmodslpot} with \eqref{commutetd}.
\qed\end{proof}
\begin{framed}
The ordinary scalar and vectorial boundary integral operators in three dimensions follow from \eqref{defbio} by means of the translation isomorphisms. For the vectorial operators we adopt the conventions of \cite[Thm.\ 9]{hiptmair2007}.\par\medskip
\renewcommand{\arraystretch}{1.5}
{\centering\begin{tabular}{|l|lll|rll|}\hline
\eqref{defbio} & \multicolumn{3}{|c}{$p=0$, scalar operators} & \multicolumn{3}{|c|}{$p=1$, vectorial operators}\\ \hline
\,$V$ & \,$\mathsf{V}$: & $H^{-\frac{1}{2}\!}(\partial\Omega)$ & $\rightarrow H^{\frac{1}{2}\!}(\partial\Omega)$
& $\mathsf{A}$: & $\mathbf{H}^{-\frac{1}{2}\!}(\mathrm{div}_\Gamma,\partial\Omega)$ & $\rightarrow\mathbf{H}^{-\frac{1}{2}\!}(\mathrm{curl}_\Gamma,\partial\Omega)$\\
\,$K^\dagger$ & \,$\mathsf{K}^\dagger$: & $H^{-\frac{1}{2}\!}(\partial\Omega)$ & $\rightarrow H^{-\frac{1}{2}\!}(\partial\Omega)$
& $\mathsf{B}$: & $\mathbf{H}^{-\frac{1}{2}\!}(\mathrm{div}_\Gamma,\partial\Omega)$ & $\rightarrow\mathbf{H}^{-\frac{1}{2}\!}(\mathrm{div}_\Gamma,\partial\Omega)$\\
\,$K$ & \,$\mathsf{K}$: & $H^{\frac{1}{2}\!}(\partial\Omega)$ & $\rightarrow H^{\frac{1}{2}\!}(\partial\Omega)$
& $-\mathsf{C}$: & $\mathbf{H}^{-\frac{1}{2}\!}(\mathrm{curl}_\Gamma,\partial\Omega)$ & $\rightarrow\mathbf{H}^{-\frac{1}{2}\!}(\mathrm{curl}_\Gamma,\partial\Omega)$\\
\,$D$ & \,$\mathsf{D}$: & $H^{\frac{1}{2}\!}(\partial\Omega)$ & $\rightarrow H^{-\frac{1}{2}\!}(\partial\Omega)$
& $\mathsf{N}$: & $\mathbf{H}^{-\frac{1}{2}\!}(\mathrm{curl}_\Gamma,\partial\Omega)$ & $\rightarrow\mathbf{H}^{-\frac{1}{2}\!}(\mathrm{div}_\Gamma,\partial\Omega)$\\ \hline
\end{tabular}}\par\bigskip\noindent
Equation \eqref{bioprop5a} yields for $p=0$ (compare \cite[Thm.\ 9.15]{mclean})
\[
\mathsf{D}\boldsymbol{\beta}=\mathrm{curl}_\Gamma \,\mathsf{A}\,\textbf{curl}_\Gamma\boldsymbol{\beta}-k^2\int_{\partial\Omega}\boldsymbol{\beta}(\ppoint)g_3(\ppoint,\ppoint^\prime)\vec{n}(\ppoint)\cdot\vec{n}(\ppoint^\prime)\,\mathrm{d}\Gamma(\ppoint),
\]
where the integral extends to $\boldsymbol{\beta}\in H^{1/2}(\partial\Omega)$. For $p=1$, the same equation yields
\[
\mathsf{N}=\textbf{curl}_\Gamma\,\mathsf{V}\,\mathrm{curl}_\Gamma+k^2\mathsf{R}\,\mathsf{A}\,\mathsf{R},
\]
where the rotation operator $\mathsf{R}$ was defined in \eqref{defrotation}. This generalizes Lemma 6.3 in \cite{hiptmair2003}\footnotemark.\par
We state some useful relations that stem from (i) -- (iii) for $k=0$:
\begin{alignat*}{3}
\text{(i)};p=1:\quad&&\vec{n}\times\mathsf{B}&=\mathsf{C}\,\vec{n}\times\cdot,\\
\text{(i), (ii)};p=1:\quad&&\mathrm{curl}_\Gamma\mathsf{C}&=\mathsf{K}^\dagger\mathrm{curl}_\Gamma,\\
\text{(iii)};p=0:\quad&&\mathrm{grad}_\Gamma\mathsf{K}&=-\mathsf{C}\,\mathrm{grad}_\Gamma,\\
\text{(i), (iii)};p=0:\quad&&\textbf{curl}_\Gamma\mathsf{K}&=-\mathsf{B}\,\textbf{curl}_\Gamma.
\end{alignat*}
\end{framed}
\footnotetext{Be aware that the operators $\mathsf{B}$, $\mathsf{C}$, and $\mathsf{N}$ are defined slightly differently in \cite{hiptmair2003} compared to \cite{hiptmair2007}. In particular, $\mathsf{N}$ bears an additional minus sign.}
%
\subsection{Symmetry Properties}\label{sec:symmsec}
\begin{lemma}\label{lemma:symmetrylemma}\marginpar{Symmetry properties}If $k\ne0$, the boundary integral operators exhibit the following symmetry properties with respect to the sesquilinear form $b(\cdot,\cdot)$:
\begin{subequations}\label{biosym}
\begin{alignat}{3}
\text{(i)}\quad&&\overline{b(\boldsymbol{\gamma},V\boldsymbol{\gamma}^{\,\prime})}&=b(\boldsymbol{\gamma}^{\,\prime},\overline{V}\boldsymbol{\gamma}),\label{biosym1}\\
\text{(ii)}\quad&&\overline{b(\boldsymbol{\gamma},\tilde{V}\boldsymbol{\gamma}^{\,\prime)}}&=b(\boldsymbol{\gamma}^{\,\prime},\overline{\tilde{V}}\boldsymbol{\gamma}),\label{biosym1a}\\
\text{(iii)}\quad&&\overline{b(D\boldsymbol{\beta}^\prime,\boldsymbol{\beta})}&=b(\overline{D}\boldsymbol{\beta},\boldsymbol{\beta}^\prime),\label{biosym2}\\
\text{(iv)}\quad&&b(K^\dagger\boldsymbol{\gamma},\boldsymbol{\beta})&=b(\boldsymbol{\gamma},\overline{K}\boldsymbol{\beta}),\label{biosym3}
\end{alignat}
\end{subequations}
for all $\boldsymbol{\beta},\boldsymbol{\beta}^\prime\in H_{\bot}^{-1/2}\pcomp{p}(\hhat{\mathrm{d}},\Gamma)$ and $\boldsymbol{\gamma},\boldsymbol{\gamma}^{\,\prime}\in H_{\parallel}^{-1/2}\pcomp{p}(\hhat{\delta},\Gamma)$.
The single layer operators $V$, $\tilde{V}$ and the hypersingular operator $D$ are complex symmetric, while $K^\dagger$ is the conjugate adjoint of $K$.\,\footnote{Compare with \cite[Thm.\ 10]{hiptmair2007}. Note that {\sc Hiptmair} defines the rotation operator as $\mathsf{R}=-\vec{n}\times\cdot$ rather than $\mathsf{R}=\vec{n}\times\cdot$, which yields an additional minus sign in the definition of the double layer potential and consequently in {\sc Hiptmair}'s Thm.\ 10.}\par
Relationships (i), (iii), and (iv) hold in the case $k=0$ as well, provided $\boldsymbol{\gamma},\boldsymbol{\gamma}^{\,\prime}$ are restricted to the space $H_{\parallel}^{-1/2}\pcomp{p}(\hhat{\delta}0,\Gamma)$. The conjugations are just irrelevant, so that $V$ and $D$ are symmetric, and $K^\dagger$ is the adjoint of $K$.
\end{lemma}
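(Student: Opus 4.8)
The plan is to obtain all four relations from the two Green's second identities \eqref{greenid2} and \eqref{greenid3}, applied to pairs of layer potentials viewed as forms on the interior domain $\Omega$ and on the exterior annulus $\Omega^\mathrm{c}\cap\Omega^R$, followed by the limit $R\to\infty$. The recurring device is this: if $u$ and $v$ both solve the homogeneous Helmholtz equation $(\Delta-k^2)\cdot=0$ in $\Omega$ and in $\Omega^\mathrm{c}$, then feeding the pair $\boldsymbol{\omega}=u$, $\boldsymbol{\eta}=\overline v$ into the sum of \eqref{greenid2} and \eqref{greenid3} annihilates both domain integrals, since the conjugation inherent to the $L^2$ pairing turns the second integrand into $\overline{(\Delta-k^2)v}=(\Delta-\overline k^2)\overline v$, which vanishes. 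It is exactly this passage to the complex conjugate $\overline v$ that converts the identities into complex-symmetric statements and accounts for the bars on the operators in \eqref{biosym}. The far-boundary contributions on $\Gamma^R$ vanish as $R\to\infty$ by the radiation condition \eqref{condradiation3} and the boundedness \eqref{boundtraces}, exactly as in Lemma~\ref{lemma:lemmafarbound}, once one knows that the single- and double-layer potentials themselves radiate, cf.\ the proof of Lemma~\ref{lemma:repinx}; hence only the traces on $\Gamma$ survive.

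For (i) I would take $u=\pslnot\boldsymbol{\gamma}$ and $v=\pslnot\boldsymbol{\gamma}'$, both solving $(\Delta-k^2)\cdot=0$ by \eqref{lpotprop2}. By Lemma~\ref{lemma:lemmajump} their Dirichlet and normal traces match across $\Gamma$, and so does $\mathrm{t}\,\delta$ applied to them, by the $H^1$-regularity of $\delta\pslnot$ exploited in the proof of that lemma; only the Neumann traces jump, $\jump{\gamma_\mathrm{N}}u=-\boldsymbol{\gamma}$ and $\jump{\gamma_\mathrm{N}}v=-\boldsymbol{\gamma}'$. Summing the combined identity over $\Omega$ and $\Omega^\mathrm{c}$ (whose boundary carries the opposite orientation), the continuous traces cancel between the two domains---in particular all $\mathrm{grad}\,\mathrm{div}$-type terms of \eqref{greenid3}, which only involve $\mathrm{n}$ and $\mathrm{t}\,\delta$---while the surviving Neumann-trace terms, rewritten through $V\boldsymbol{\gamma}=\gamma_\mathrm{D}\pslnot\boldsymbol{\gamma}$ and the form \eqref{partialint}, collapse to $\overline{b(\boldsymbol{\gamma},V\boldsymbol{\gamma}')}-b(\boldsymbol{\gamma}',\overline V\boldsymbol{\gamma})=0$. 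Property (iv) follows from the identical computation with $u=\pslnot\boldsymbol{\gamma}$ paired against the double-layer potential $w=\pdlnot\boldsymbol{\beta}$: here the complementary jumps $\jump{\gamma_\mathrm{N}}\pslnot=-\boldsymbol{\gamma}$ and $\jump{\gamma_\mathrm{D}}\pdlnot=\boldsymbol{\beta}$ make the averaged traces $\mean{\gamma_\mathrm{N}}\pslnot=K^\dagger$ and $\mean{\gamma_\mathrm{D}}\pdlnot=K$ appear, yielding $b(K^\dagger\boldsymbol{\gamma},\boldsymbol{\beta})=b(\boldsymbol{\gamma},\overline K\boldsymbol{\beta})$.

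Properties (ii) and (iii) I would then deduce algebraically rather than re-run Green's identity. By \eqref{bioprop1} one has $\tilde{V}=V-k^{-2}\hhat{\mathrm{d}}\,V\hhat{\delta}$, so it suffices that $\hhat{\mathrm{d}}$ and $\hhat{\delta}$ be formal adjoints with respect to $b$; this is an integration by parts on the closed manifold $\Gamma$ that is already encoded in the definition \eqref{extderivative1} of the boundary exterior derivative. Combining this adjointness with (i) applied to $\hhat{\delta}\boldsymbol{\gamma}$ and $\hhat{\delta}\boldsymbol{\gamma}'$ gives (ii). For (iii) I would invoke \eqref{bioprop5b}, $D=-k^2\hhat{*}^{-1}\tilde{V}\hhat{*}$, together with the fact that the isometric isomorphism $\hhat{*}$ intertwines the pairing $b$ on $H_{\parallel}^{-1/2}$ with the one on $H_{\perp}^{-1/2}$, so that the complex symmetry of $\tilde{V}$ transfers to $D$; alternatively (iii) drops out of \eqref{bioprop5a} and (i) directly. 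Finally, the $k=0$ assertions are the specialization in which the fundamental solution \eqref{defgreen} is real, so that $\overline V=V$, $\overline D=D$, $\overline K=K$ and every bar is vacuous; restricting $\boldsymbol{\gamma},\boldsymbol{\gamma}'$ to $H_{\parallel}^{-1/2}\pcomp{p}(\hhat{\delta}0,\Gamma)$ forces $\delta\pslnot\boldsymbol{\gamma}=\pslnot\hhat{\delta}\boldsymbol{\gamma}=0$ via \eqref{lpotprop1}, so $\pslnot\boldsymbol{\gamma}$ is a genuine decaying harmonic Maxwell solution and the domain integrals close as before.

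The main obstacle will be the disciplined bookkeeping of conjugations and orientation-induced signs: pinning down where the conjugate $\overline v$ enters, confirming that the continuous traces really cancel between $\Omega$ and $\Omega^\mathrm{c}$ with the right sign, and verifying that the $\mathrm{grad}\,\mathrm{div}$-type boundary terms of \eqref{greenid3} drop out entirely. Subordinate but still delicate is the justification that the far-field terms vanish for the single- and double-layer potentials \emph{individually}, which must be drawn from the radiation estimates underlying Lemma~\ref{lemma:repinx} rather than assumed.
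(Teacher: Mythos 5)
Your proposal is correct in substance, but it takes a genuinely different route from the paper. The paper first proves a standalone symmetry identity, \eqref{auxbdrjump}, for arbitrary pairs of Maxwell solutions $\boldsymbol{\omega},\boldsymbol{\eta}\in X^p(\Omega\cup\Omega^\mathrm{c})$; since such solutions are coclosed, only the $\textbf{curl}\,\textbf{curl}$-type structure is needed (the paper substitutes $\mathrm{d}\boldsymbol{\omega}$ into \eqref{partialint}; the $\mathrm{grad}\,\mathrm{div}$ identity \eqref{greenid3} never enters, and no $\mathrm{n}$ or $\mathrm{t}\,\delta$ boundary terms appear). It then instantiates \eqref{auxbdrjump} with solutions built from the full representation formula \eqref{represent}; the extra Neumann datum $\boldsymbol{\varphi}$ therefore appears, and (i) on all of $H_{\parallel}^{-1/2}\pcomp{p}(\hhat{\delta},\Gamma)$ is extracted by a two-step elimination: first restrict to $H_{\parallel}^{-1/2}\pcomp{p}(\hhat{\delta}0,\Gamma)$, where \eqref{neumannsolution} forces $\boldsymbol{\varphi}=\boldsymbol{\varphi}^\prime=0$, then observe that the resulting symmetry on coclosed data makes the $\boldsymbol{\varphi}$ cross terms cancel in general. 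Property (iii) is read off directly with pure double-layer data, (iv) by mixing single- and double-layer data, and (ii) algebraically from \eqref{bioprop1}, exactly as you propose. You instead apply the full-Laplacian identity (the sum of \eqref{greenid2} and \eqref{greenid3}) to individual layer potentials, which solve the Helmholtz equation by \eqref{lpotprop2} but are not gauged; the continuous $\mathrm{n}$ and $\mathrm{t}\,\delta$ traces then cancel the $\mathrm{grad}\,\mathrm{div}$ terms between $\Omega$ and $\Omega^\mathrm{c}$. This is precisely the device the paper uses for the Neumann jump of the single-layer potential (cf.\ \eqref{greenid2jump}--\eqref{jumpaux1}), and it buys you \eqref{biosym1} on the full space in one stroke, with no $\boldsymbol{\varphi}$ bookkeeping. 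Your algebraic derivation of (iii) also works, provided you take the \eqref{bioprop5a} route when $k=0$ (where $\tilde V$ does not exist) and note that $\hhat{\delta}\,\hhat{*}\boldsymbol{\beta}$ is automatically coclosed, so the restricted version of (i) suffices there.

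The one place where your route is materially heavier than the paper's is the far boundary, and your closing caveat understates it. The trace bound \eqref{boundtraces} is proven only for Maxwell solutions --- its proof uses $\delta\mathrm{d}\boldsymbol{\omega}=k^2\boldsymbol{\omega}$, which fails for an ungauged single-layer potential, where $(\delta\mathrm{d}-k^2)\pslnot\boldsymbol{\gamma}=-\mathrm{d}\,\delta\pslnot\boldsymbol{\gamma}\ne 0$ in general --- so you cannot simply cite it; moreover your antisymmetrized far-field expression now contains $\mathrm{grad}\,\mathrm{div}$-type products of $\mathrm{t}^R\delta$ and $\mathrm{n}^R$ traces on $\Gamma^R$ that the radiation condition \eqref{condradiation3} does not control. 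The Jawerth--Mitrea condition \eqref{condradiation6}, which holds for $\pslnot$ together with its $\mathrm{d}$- and $\delta$-derivatives, is indeed the right tool, but to close the argument you must (a) rederive an analogue of \eqref{boundtraces} from the full-Laplacian energy identity (the extra nonnegative domain term involving $\delta\boldsymbol{\omega}$ means the imaginary-part trick still works), and (b) pair the far terms in the $ik$-shifted combinations that \eqref{condradiation6} supplies. Finally, your $k=0$ discussion silently includes the case $n=2$, $p=0$, where the decay condition \eqref{conddecay1} permits a logarithmic term $b\ne 0$, so the far-field terms do not vanish by the generic estimate; the paper excludes this case from \eqref{auxbdrjump} and settles it by appeal to the formally self-adjoint scalar theory in \cite{mclean}, and you should do the same.
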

To prove Lemma~\ref{lemma:symmetrylemma}, we need the following corollary.
\begin{corollary}
We exclude the case $k=0$, $n=2$, $p=0$. In all other cases, for $\boldsymbol{\omega},\boldsymbol{\eta}\in X^p(\Omega\cup\Omega^\mathrm{c})$, we can use the following equality:
\begin{equation}\label{auxbdrjump}
b(\jump{\gamma_\mathrm{N}}\boldsymbol{\omega},\mean{\gamma_\mathrm{D}}\overline{\boldsymbol{\eta}})
+b(\mean{\gamma_\mathrm{N}}\boldsymbol{\omega}, \jump{\gamma_\mathrm{D} }\overline{\boldsymbol{\eta}})
=b(\jump{\gamma_\mathrm{N} }\boldsymbol{\eta},\mean{\gamma_\mathrm{D}}\overline{\boldsymbol{\omega}})
+b(\mean{\gamma_\mathrm{N}}\boldsymbol{\eta}, \jump{\gamma_\mathrm{D} }\overline{\boldsymbol{\omega}}).
\end{equation}
\end{corollary}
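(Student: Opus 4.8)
The plan is to split \eqref{auxbdrjump} into one symmetry relation on the interior domain $\Omega$ and one on the exterior domain $\Omega^\mathrm{c}$, each of which follows from a \emph{bilinear} form of Green's second identity \eqref{greenid2}. Since $\jump{\cdot}$ and $\mean{\cdot}$ are real linear combinations of the interior and exterior traces (the superscript $\mathrm{c}$ marking the exterior one), applying the elementary telescoping identity $\jump{a}\,\mean{b}+\mean{a}\,\jump{b}=a^\mathrm{c}b^\mathrm{c}-a\,b$ in the two slots of $b(\cdot,\cdot)$ gives
\[
b(\jump{\gamma_\mathrm{N}}\boldsymbol{\omega},\mean{\gamma_\mathrm{D}}\overline{\boldsymbol{\eta}})+b(\mean{\gamma_\mathrm{N}}\boldsymbol{\omega},\jump{\gamma_\mathrm{D}}\overline{\boldsymbol{\eta}})=b(\gamma_\mathrm{N}^\mathrm{c}\boldsymbol{\omega},\gamma_\mathrm{D}^\mathrm{c}\overline{\boldsymbol{\eta}})-b(\gamma_\mathrm{N}\boldsymbol{\omega},\gamma_\mathrm{D}\overline{\boldsymbol{\eta}}),
\]
and the mirror identity with $\boldsymbol{\omega},\boldsymbol{\eta}$ interchanged for the right-hand side of \eqref{auxbdrjump}. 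Hence it suffices to establish the two symmetry relations $b(\gamma_\mathrm{N}\boldsymbol{\omega},\gamma_\mathrm{D}\overline{\boldsymbol{\eta}})=b(\gamma_\mathrm{N}\boldsymbol{\eta},\gamma_\mathrm{D}\overline{\boldsymbol{\omega}})$ on the interior, and the same with the exterior traces $\gamma_\mathrm{N}^\mathrm{c},\gamma_\mathrm{D}^\mathrm{c}$. The point of the conjugate $\overline{\boldsymbol{\eta}}$ is to convert the Hermitian boundary form $b$ into a genuinely \emph{symmetric} bilinear expression, which is precisely what makes the $k^2$ and $\overline{k}^2$ contributions cancel.

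The interior relation I would read off directly from the integration-by-parts formula \eqref{partialint} with $\boldsymbol{\psi}=\mathrm{d}\boldsymbol{\omega}$ and test form $\overline{\boldsymbol{\eta}}$, namely $b(\gamma_\mathrm{N}\boldsymbol{\omega},\gamma_\mathrm{D}\overline{\boldsymbol{\eta}})=\ltwo{\mathrm{d}\boldsymbol{\omega}}{\mathrm{d}\overline{\boldsymbol{\eta}}}{p+1}{\Omega}-\ltwo{\delta\mathrm{d}\boldsymbol{\omega}}{\overline{\boldsymbol{\eta}}}{p}{\Omega}$. Carrying the conjugation inside turns the two volume terms into $\int_\Omega\mathrm{d}\boldsymbol{\omega}\wedge*\mathrm{d}\boldsymbol{\eta}$ and $\int_\Omega(\delta\mathrm{d}\boldsymbol{\omega})\wedge*\boldsymbol{\eta}$; the first is manifestly symmetric in $\boldsymbol{\omega},\boldsymbol{\eta}$, and the second equals $k^2\int_\Omega\boldsymbol{\omega}\wedge*\boldsymbol{\eta}$ because $\boldsymbol{\omega}\in X^p$ satisfies $\delta\mathrm{d}\boldsymbol{\omega}=k^2\boldsymbol{\omega}$, hence is symmetric as well. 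Interchanging $\boldsymbol{\omega}$ and $\boldsymbol{\eta}$ leaves the right-hand side invariant, which is the claimed interior symmetry; all traces are well defined since $\boldsymbol{\omega},\boldsymbol{\eta}\in X^p\subset\hzero_{\mathrm{loc}}\pcomp{p}(\delta\mathrm{d},\Omega\cup\Omega^\mathrm{c})\cap\hzero_{\mathrm{loc}}\pcomp{p}(\delta0,\Omega\cup\Omega^\mathrm{c})$.

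For the exterior relation I would run the same computation on the bounded annular domain $\Omega^\mathrm{c}\cap\Omega^R$. Green's identity now yields, besides the exterior-trace term on $\Gamma$, a boundary contribution on the far sphere $\Gamma^R$; the volume terms are again symmetric and drop out upon antisymmetrization. As the exterior-trace term on $\Gamma$ is independent of $R$, it suffices to show that the antisymmetric $\Gamma^R$-contribution $\int_{\Gamma^R}\bigl(\gamma_\mathrm{N}^R\boldsymbol{\omega}\wedge\hhat{*}\gamma_\mathrm{D}^R\boldsymbol{\eta}-\gamma_\mathrm{N}^R\boldsymbol{\eta}\wedge\hhat{*}\gamma_\mathrm{D}^R\boldsymbol{\omega}\bigr)$ tends to zero as $R\to\infty$. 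Writing $\gamma_\mathrm{N}^R=ik\gamma_\mathrm{D}^R+(\gamma_\mathrm{N}^R-ik\gamma_\mathrm{D}^R)$, the leading $ik$-part vanishes pointwise because $\alpha\wedge\hhat{*}\beta=\beta\wedge\hhat{*}\alpha$ for forms of equal degree, and the remainder is controlled by Cauchy--Schwarz through $\ltwonorm{(\gamma_\mathrm{N}^R-ik\gamma_\mathrm{D}^R)\boldsymbol{\omega}}{p}{\Gamma^R}\,\ltwonorm{\gamma_\mathrm{D}^R\boldsymbol{\eta}}{p}{\Gamma^R}$ plus its swap, which is $o(1)\cdot\mathcal{O}(1)$ by the radiation condition \eqref{condradiation3} together with the boundedness \eqref{boundtraces}.

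This last step is the main obstacle, and it is entirely parallel to Lemma~\ref{lemma:lemmafarbound}: for $k=0$ the above cancellation is replaced by the decay estimates \eqref{conddecay1}, \eqref{conddecay2}, under which the $\Gamma^R$-integral, carrying a surface measure of order $R^{n-1}$, is $\mathcal{O}(R^{2-n})$ for $n\ge3$ and $\mathcal{O}(R^{-1})$ (or at worst $\mathcal{O}(R^{-1}\ln R)$, still $o(1)$) for $n=2,p=1$; the single case where this fails is $k=0$, $n=2$, $p=0$, in which the admissible logarithmic growth of $\boldsymbol{\omega}$ defeats the estimate, which is exactly why it must be excluded. Combining the interior and exterior symmetries with the telescoping identity then delivers \eqref{auxbdrjump}.
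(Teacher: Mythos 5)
Your proposal is correct and follows essentially the same route as the paper's proof: the bilinearized Green's second identity \eqref{partialint} with $\boldsymbol{\psi}=\mathrm{d}\boldsymbol{\omega}$ and conjugated second argument, cancellation of the symmetric volume terms under the swap $\boldsymbol{\omega}\leftrightarrow\boldsymbol{\eta}$, and control of the antisymmetrized $\Gamma^R$-term via the splitting $\gamma_\mathrm{N}^R=ik\gamma_\mathrm{D}^R+(\gamma_\mathrm{N}^R-ik\gamma_\mathrm{D}^R)$ together with \eqref{condradiation3} and \eqref{boundtraces} for $k\ne0$, and via \eqref{conddecay1}--\eqref{conddecay2} for $k=0$, with the same case distinction and the same reason for excluding $k=0$, $n=2$, $p=0$. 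Your telescoping reduction to separate interior and exterior symmetry relations is only a cosmetic reorganization of the paper's combine-then-antisymmetrize bookkeeping (the paper adds \eqref{boundaux1} and \eqref{boundaux4} first and rearranges into jump/mean form afterwards), so the two arguments are substantively identical.
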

\begin{proof}
Substituting $\boldsymbol{\omega}$ by $\mathrm{d}\boldsymbol{\omega}$, $\boldsymbol{\omega}\in \hzero \pcomp{p}(\delta\mathrm{d},\Omega)$, in \eqref{partialint} yields
\begin{equation*}
b(\gamma_\mathrm{N}\boldsymbol{\omega},\gamma_\mathrm{D}\boldsymbol{\eta})=
\ltwo{\mathrm{d}\boldsymbol{\omega}}{\mathrm{d}\boldsymbol{\eta}}{p+1}{\Omega}-
\ltwo{\delta\mathrm{d}\boldsymbol{\omega}}{\boldsymbol{\eta}}{p}{\Omega}.
\end{equation*}
We may assume that $\boldsymbol{\omega}\in X^p(\Omega)$ is a Maxwell solution, to obtain
\begin{equation}\label{boundaux1}
b(\gamma_\mathrm{N}\boldsymbol{\omega},\gamma_\mathrm{D}\boldsymbol{\eta})=
\ltwo{\mathrm{d}\boldsymbol{\omega}}{\mathrm{d}\boldsymbol{\eta}}{p+1}{\Omega}-
k^2\ltwo{\boldsymbol{\omega}}{\boldsymbol{\eta}}{p}{\Omega}.
\end{equation}
The same arguments can be applied to the exterior domain $\Omega^\mathrm{c}\cap\Omega^R$, where $\boldsymbol{\omega}\in X^p(\Omega^\mathrm{c}\cap\Omega^R)$, $\eta\in \hzero \pcomp{p}(\mathrm{d},\Omega^\mathrm{c}\cap\Omega^R)$, from which we conclude
\begin{equation}\label{boundaux4}
-b(\gamma_\mathrm{N}^\mathrm{c}\boldsymbol{\omega},\gamma_\mathrm{D}^\mathrm{c}\boldsymbol{\eta})
+\ltwo{\gamma_\mathrm{N}^R\boldsymbol{\omega}}{\gamma_\mathrm{D}^R\boldsymbol{\eta}}{p}{\Gamma^R}
=\ltwo{\mathrm{d}\boldsymbol{\omega}}{\mathrm{d}\boldsymbol{\eta}}{p+1}{\Omega^\mathrm{c}\cap\Omega^R}-
k^2\ltwo{\boldsymbol{\omega}}{\boldsymbol{\eta}}{p}{\Omega^\mathrm{c}\cap\Omega^R}.
\end{equation}
Note that on the far boundary $\Gamma^R$ we encounter sufficient regularity so to work with the $L^2$ inner product rather than the sesquilinear form $b(\cdot,\cdot)$. By combining \eqref{boundaux1} and \eqref{boundaux4} we find for $\boldsymbol{\omega}\in X^p\bigl((\Omega\cup\Omega^\mathrm{c})\cap\Omega^R)\bigr)$
\begin{gather}
b(\gamma_\mathrm{N}\boldsymbol{\omega},\gamma_\mathrm{D}\boldsymbol{\eta})
-b(\gamma_\mathrm{N}^\mathrm{c}\boldsymbol{\omega},\gamma_\mathrm{D}^\mathrm{c}\boldsymbol{\eta})
+\ltwo{\gamma_\mathrm{N}^R\boldsymbol{\omega}}{\gamma_\mathrm{D}^R\boldsymbol{\eta}}{p}{\Gamma^R}\nonumber\\
=\ltwo{\mathrm{d}\boldsymbol{\omega}}{\mathrm{d}\boldsymbol{\eta}}{p+1}{(\Omega\cup\Omega^\mathrm{c})\cap\Omega^R}-
k^2\ltwo{\boldsymbol{\omega}}{\boldsymbol{\eta}}{p}{(\Omega\cup\Omega^\mathrm{c})\cap\Omega^R}.\label{symmaux3}
\end{gather}
Rearranging terms and substituting $\boldsymbol{\eta}$ by $\overline{\boldsymbol{\eta}}$ yields
\begin{gather}
-b(\jump{\gamma_\mathrm{N}}\boldsymbol{\omega},\mean{\gamma_\mathrm{D}}\overline{\boldsymbol{\eta}})
-b(\mean{\gamma_\mathrm{N}}\boldsymbol{\omega}, \jump{\gamma_\mathrm{D} }\overline{\boldsymbol{\eta}})
+\ltwo{\gamma_\mathrm{N}^R\boldsymbol{\omega}}{\gamma_\mathrm{D}^R\overline{\boldsymbol{\eta}}}{p}{\Gamma^R}\nonumber\\
=\ltwo{\mathrm{d}\boldsymbol{\omega}}{\mathrm{d}\overline{\boldsymbol{\eta}}}{p+1}{(\Omega\cup\Omega^\mathrm{c})\cap\Omega^R}-
k^2\ltwo{\boldsymbol{\omega}}{\overline{\boldsymbol{\eta}}}{p}{(\Omega\cup\Omega^\mathrm{c})\cap\Omega^R}.\label{symmaux4}
\end{gather}
We may assume that $\boldsymbol{\eta}\in X^p(\Omega\cup\Omega^\mathrm{c})$ is a Maxwell solution as well, exchange the roles of $\boldsymbol{\omega}$ and $\boldsymbol{\eta}$, and subtract both equations. This eliminates the sesquilinear $L^2$ inner products over the domain. The sesquilinear $L^2$ inner products
\begin{align*}
x&=\ltwo{\gamma_\mathrm{N}^R\boldsymbol{\omega}}{\gamma_\mathrm{D}^R\overline{\boldsymbol{\eta}}}{p}{\Gamma^R}-\ltwo{\gamma_\mathrm{N}^R\boldsymbol{\eta}}{\gamma_\mathrm{D}^R\overline{\boldsymbol{\omega}}}{p}{\Gamma^R}
\end{align*}
over the far boundary $\Gamma^R$ vanish for $R\to\infty$, which we show in the sequel. We need to distinguish three cases:
\begin{enumerate}
\item {\em Case $k\ne 0$.} Rewrite
\begin{align*}
x&=\ltwo{\gamma_\mathrm{N}^R\boldsymbol{\omega}-ik\gamma_\mathrm{D}^R\boldsymbol{\omega}}{\gamma_\mathrm{D}^R\overline{\boldsymbol{\eta}}}{p}{\Gamma^R}-\ltwo{\gamma_\mathrm{N}^R\boldsymbol{\eta}-ik\gamma_\mathrm{D}^R\boldsymbol{\eta}}{\gamma_\mathrm{D}^R\overline{\boldsymbol{\omega}}}{p}{\Gamma^R}.
\end{align*}
From the triangle inequality we obtain
\[
|x|\le\bigl|\ltwo{\gamma_\mathrm{N}^R\boldsymbol{\omega}-ik\gamma_\mathrm{D}^R\boldsymbol{\omega}}{\gamma_\mathrm{D}^R\overline{\boldsymbol{\eta}}}{p}{\Gamma^R}\bigl|+\bigr|\ltwo{\gamma_\mathrm{N}^R\boldsymbol{\eta}-ik\gamma_\mathrm{D}^R\boldsymbol{\eta}}{\gamma_\mathrm{D}^R\overline{\boldsymbol{\omega}}}{p}{\Gamma^R}\bigr|,
\]
and from the Cauchy-Schwarz inequality
\begin{align*}
\bigl|\ltwo{\gamma_\mathrm{N}^R\boldsymbol{\omega}-ik\gamma_\mathrm{D}^R\boldsymbol{\omega}}{\gamma_\mathrm{D}^R\overline{\boldsymbol{\eta}}}{p}{\Gamma^R}\bigr|
&\le
\bigltwonorm{\gamma_\mathrm{N}^R\boldsymbol{\omega}-ik\gamma_\mathrm{D}^R\boldsymbol{\omega}}{p}{\Gamma^R}\bigltwonorm{\gamma_\mathrm{D}^R\overline{\boldsymbol{\eta}}}{p}{\Gamma^R}
=o(1),
\end{align*}
where we used \eqref{condradiation3} and \eqref{boundtraces}. Therefore, $|x|=o(1)$ for $R\to\infty$.
\item {\em Case $k=0$, $n\ge 3$.} From \eqref{conddecay1} and \eqref{conddecay2} we obtain
\begin{equation}\label{decayaux1}
\bigl|\ltwo{\gamma_\mathrm{N}^R\boldsymbol{\omega}}{\gamma_\mathrm{D}^R\overline{\boldsymbol{\eta}}}{p}{\Gamma^R}\bigr|
=\mathcal{O}(R^{2-n}).
\end{equation}
Together with the triangle inequality this yields $|x|=o(1)$ for $R\to\infty$.\par\noindent
\item {\em Case $k=0$, $n=2$, $p=1$.} Taking into account $b=0$, \eqref{conddecay1} and \eqref{conddecay2} yield
\begin{equation}\label{decayaux2}
\bigl|\ltwo{\gamma_\mathrm{N}^R\boldsymbol{\omega}}{\gamma_\mathrm{D}^R\overline{\boldsymbol{\eta}}}{p}{\Gamma^R}\bigr|
=\mathcal{O}(R^{-2}),
\end{equation}
for $R\to\infty$, and we can proceed like in the previous case.
\end{enumerate}
We conclude that for $\boldsymbol{\omega},\boldsymbol{\eta}\in X^p(\Omega\cup\Omega^\mathrm{c})$
\begin{gather*}
b(\jump{\gamma_\mathrm{N}}\boldsymbol{\omega},\mean{\gamma_\mathrm{D}}\overline{\boldsymbol{\eta}})
+b(\mean{\gamma_\mathrm{N}}\boldsymbol{\omega}, \jump{\gamma_\mathrm{D} }\overline{\boldsymbol{\eta}})
=b(\jump{\gamma_\mathrm{N} }\boldsymbol{\eta},\mean{\gamma_\mathrm{D}}\overline{\boldsymbol{\omega}})
+b(\mean{\gamma_\mathrm{N}}\boldsymbol{\eta}, \jump{\gamma_\mathrm{D} }\overline{\boldsymbol{\omega}}).
\end{gather*}
\qed\end{proof}
\begin{proof}[Lemma~\ref{lemma:symmetrylemma}]
The case $k=0$, $n=2$, $p=0$ is covered by the formally self-adjoint case in \cite[eq.\ (7.3), (7.4)]{mclean}. In all other cases we choose $\boldsymbol{\omega},\boldsymbol{\eta}$ in \eqref{auxbdrjump} according to the representation formula \eqref{represent} as
\begin{alignat*}{4}
\boldsymbol{\omega}&=-\psl{p}\,\boldsymbol{\gamma}
&&+\pdl{p}\,\boldsymbol{\beta}
&&-\mathrm{d}\,\psl{p-1}\,\boldsymbol{\varphi},\\
\boldsymbol{\eta}&=-\psl{p}\,\boldsymbol{\gamma}^{\,\prime}
&&+\pdl{p}\,\boldsymbol{\beta}^\prime
&&-\mathrm{d}\,\psl{p-1}\,\boldsymbol{\varphi}^\prime.
\end{alignat*}
To prove \eqref{biosym1}, we pick $\boldsymbol{\beta}=\boldsymbol{\beta}^\prime=0$. With Lemma~\ref{lemma:lemmajump}, \eqref{neumannsolution}, and definition \eqref{defbio1} we obtain
\begin{gather*}
b(\boldsymbol{\gamma},\overline{V\boldsymbol{\gamma}^{\,\prime}+\hhat{\mathrm{d}}V\boldsymbol{\varphi}^\prime})=b(\boldsymbol{\gamma}^{\,\prime},\overline{V\boldsymbol{\gamma}+\hhat{\mathrm{d}}V\boldsymbol{\varphi}}),\\
b(\boldsymbol{\gamma},\overline{V\boldsymbol{\gamma}^{\,\prime}})+
b(\hhat{\delta}\boldsymbol{\gamma},\overline{V\boldsymbol{\varphi}^\prime})
=
b(\boldsymbol{\gamma}^{\,\prime},\overline{V\boldsymbol{\gamma}})+
b(\hhat{\delta}\boldsymbol{\gamma}^{\,\prime},\overline{V\boldsymbol{\varphi}}),\\
b(\boldsymbol{\gamma},\overline{V\boldsymbol{\gamma}^{\,\prime}})
-k^2b(\boldsymbol{\varphi},\overline{V\boldsymbol{\varphi}^\prime})
=
b(\boldsymbol{\gamma}^{\,\prime},\overline{V\boldsymbol{\gamma}})
-k^2b(\boldsymbol{\varphi}^\prime,\overline{V\boldsymbol{\varphi}}),
\end{gather*}
which proves \eqref{biosym1} for $k=0$. Otherwise restrict $\boldsymbol{\gamma},\boldsymbol{\gamma}^{\prime}$ to $H_{\parallel}^{-1/2}\pcomp{p}(\hhat{\delta}0,\Gamma)$. This eliminates the second and forth terms, since \eqref{neumannsolution} then implies $\boldsymbol{\varphi}=\boldsymbol{\varphi}^\prime=0$. From the remaining terms we can conclude that \eqref{biosym1} holds at least on $H_{\parallel}^{-1/2}\pcomp{p}(\hhat{\delta}0,\Gamma)$. From this we infer with $\boldsymbol{\varphi},\boldsymbol{\varphi}^\prime\in H_{\parallel}^{-1/2}\pcomp{p-1}(\hhat{\delta}0,\Gamma)$ that the second and forth terms always cancel, which means that the first and third terms imply \eqref{biosym1} even for $\boldsymbol{\gamma},\boldsymbol{\gamma}^{\,\prime}\in H_{\parallel}^{-1/2}\pcomp{p}(\hhat{\delta},\Gamma)$.\par
Property \eqref{biosym2} is seen setting $\boldsymbol{\gamma}=\boldsymbol{\gamma}^{\,\prime}=0$, $\boldsymbol{\varphi}=\boldsymbol{\varphi}^\prime=0$. With Lemma~\ref{lemma:lemmajump} and definition \eqref{defbio4} we obtain
\[
b(D\boldsymbol{\beta},\overline{\boldsymbol{\beta}^\prime})=b(D\boldsymbol{\beta}^\prime,\overline{\boldsymbol{\beta}}),
\]
which proves \eqref{biosym2}.\par
For \eqref{biosym3}, we pick $\boldsymbol{\beta}=0$, $\boldsymbol{\gamma}^{\,\prime}=0$, $\boldsymbol{\varphi}^\prime=0$. With Lemma~\ref{lemma:lemmajump} and the definitions \eqref{defbio2}, \eqref{defbio3} we obtain
\[
b(\boldsymbol{\gamma},\overline{K\boldsymbol{\beta}^\prime})-b(K^\dagger\boldsymbol{\gamma},\overline{\boldsymbol{\beta}^\prime})=0,
\]
which proves \eqref{biosym3}.\par
Finally, we have
\begin{align*}
\overline{b(\boldsymbol{\gamma},{\textstyle\frac{1}{k^2}}\hhat{\mathrm{d}}V\hhat{\delta}\boldsymbol{\gamma}^{\,\prime})}
&={\textstyle\frac{1}{k^2}}\overline{b(\hhat{\delta}\boldsymbol{\gamma},V\hhat{\delta}\boldsymbol{\gamma}^{\,\prime})}\\
&={\textstyle\frac{1}{k^2}}b(\hhat{\delta}\boldsymbol{\gamma}^{\,\prime},\overline{V}\hhat{\delta}\boldsymbol{\gamma})
=b(\boldsymbol{\gamma}^{\,\prime},\overline{{\textstyle\frac{1}{k^2}}\hhat{\mathrm{d}}V\hhat{\delta}}\boldsymbol{\gamma}),
\end{align*}
where we used \eqref{biosym1}. Together with \eqref{bioprop1} this proves \eqref{biosym1a}.
\qed\end{proof}
\subsection{Ellipticity Properties}
\begin{lemma}\marginpar{Ellipticity properties}
The boundary integral operators in connection with the sesquilinear form $b(\cdot,\cdot)$ exhibit the following ellipticity properties for $\Im\text{m}\,k>0$:
\begin{subequations}\label{bioellip}
\begin{alignat}{4}
&|b(\boldsymbol{\gamma},V\boldsymbol{\gamma})|&&\ge c\,\frac{\Im\text{m}\,k}{|k|}\min\bigl(1,\frac{1}{|k|^2}\bigr)
&&|\boldsymbol{\gamma}|_{H_{\parallel}^{-1/2}\pcomp{p}(\hhat{\delta},\Gamma)}^2\quad&&\forall\boldsymbol{\gamma}\in H_{\parallel}^{-1/2}\pcomp{p}(\hhat{\delta}0,\Gamma),\label{bioellip1}\\
&|b(\boldsymbol{\gamma},\tilde{V}\boldsymbol{\gamma})|&&\ge c\,\frac{\Im\text{m}\,k}{|k|}\min\bigl(1,\frac{1}{|k|^2}\bigr)
&&|\boldsymbol{\gamma}|_{H_{\parallel}^{-1/2}\pcomp{p}(\hhat{\delta},\Gamma)}^2\quad&&\forall\boldsymbol{\gamma}\in H_{\parallel}^{-1/2}\pcomp{p}(\hhat{\delta},\Gamma),\label{bioellip1a}\\
&|b(D\boldsymbol{\beta},\boldsymbol{\beta})|&&\ge c\,\frac{\Im\text{m}\,k}{|k|}\min(1,|k|^2)
&&|\boldsymbol{\beta}|_{H_{\bot}^{-1/2}\pcomp{p}(\hhat{\mathrm{d}},\Gamma)}^2\quad&&\forall\boldsymbol{\beta}\in H_{\bot}^{-1/2}\pcomp{p}(\hhat{\mathrm{d}},\Gamma),\label{bioellip2}
\end{alignat}
with positive generic constants $c$, that depend only on the boundary.\footnote{Estimates for the real part instead of the modulus can be derived as well \cite{hiptmair2007}, under the prerequisite $\Re\text{e}\,k^2\le 0$. When comparing with \cite{hiptmair2007}, $\kappa^2=-k^2$ has to be taken into account.}	
For $k=0$ it holds that
\begin{alignat}{4}
&b(\boldsymbol{\gamma},V\boldsymbol{\gamma})&&\ge c\,
&&|\boldsymbol{\gamma}|_{H_{\parallel}^{-1/2}\pcomp{p}(\hhat{\delta},\Gamma)}^2\quad&&\forall\boldsymbol{\gamma}\in H_{\parallel}^{-1/2}\pcomp{p}(\hhat{\delta}0,\Gamma),\label{bioellip1b}
\end{alignat}
\end{subequations}
while $b(D\boldsymbol{\beta},\boldsymbol{\beta})\ge0$ on $H_{\bot}^{-1/2}\pcomp{p}(\hhat{\mathrm{d}},\Gamma)$. In the case $n=2$, $p=0$ the parameter $r_0$ in the fundamental solution \eqref{defgreen} has to be large enough, see \cite[Thm.\ 8.16]{mclean}.
\end{lemma}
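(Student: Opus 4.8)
The plan is to reduce every estimate to a single volume energy identity on $\Omega\cup\Omega^\mathrm{c}$ and then to track the dependence of the trace constants on $k$. The starting point is the quadratic Green identity \eqref{symmaux3} established in the proof of the preceding corollary. Setting $\boldsymbol{\eta}=\boldsymbol{\omega}$ there and letting $R\to\infty$ — the far-boundary term $\ltwo{\gamma_\mathrm{N}^R\boldsymbol{\omega}}{\gamma_\mathrm{D}^R\boldsymbol{\omega}}{p}{\Gamma^R}$ vanishing by the exponential decay of the Hankel-function kernel when $\Im\text{m}\,k>0$, and by the decay conditions of Section~\ref{sec:radiation} when $k=0$, exactly as in Lemma~\ref{lemma:lemmafarbound} — yields, for any Maxwell solution $\boldsymbol{\omega}\in X^p(\Omega\cup\Omega^\mathrm{c})$,
\[
b(\gamma_\mathrm{N}\boldsymbol{\omega},\gamma_\mathrm{D}\boldsymbol{\omega})-b(\gamma_\mathrm{N}^\mathrm{c}\boldsymbol{\omega},\gamma_\mathrm{D}^\mathrm{c}\boldsymbol{\omega})=\ltwonorm{\mathrm{d}\boldsymbol{\omega}}{p+1}{\Omega\cup\Omega^\mathrm{c}}^2-k^2\ltwonorm{\boldsymbol{\omega}}{p}{\Omega\cup\Omega^\mathrm{c}}^2.
\]
Into this identity I would substitute each of the three potentials in turn, using the jump relations of Lemma~\ref{lemma:lemmajump} to collapse the left-hand side onto the desired boundary-integral operator.

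For \eqref{bioellip1} I take $\boldsymbol{\omega}=\pslnot\boldsymbol{\gamma}$ with $\boldsymbol{\gamma}\in H_{\parallel}^{-1/2}\pcomp{p}(\hhat{\delta}0,\Gamma)$, which lies in $X^p$ by Lemma~\ref{lemma:repiny}. Continuity of the Dirichlet trace merges the two first arguments, while $\jump{\gamma_\mathrm{D}}\boldsymbol{\omega}=0$ and $\jump{\gamma_\mathrm{N}}\boldsymbol{\omega}=-\boldsymbol{\gamma}$ turn the left-hand side into $b(\boldsymbol{\gamma},V\boldsymbol{\gamma})$ via \eqref{defbio1}. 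Multiplying by $\overline{k}$ and taking imaginary parts gives $\Im\text{m}(\overline{k}\,b(\boldsymbol{\gamma},V\boldsymbol{\gamma}))=-\Im\text{m}\,k\,(\ltwonorm{\mathrm{d}\boldsymbol{\omega}}{p+1}{\Omega\cup\Omega^\mathrm{c}}^2+|k|^2\ltwonorm{\boldsymbol{\omega}}{p}{\Omega\cup\Omega^\mathrm{c}}^2)$, so that $|k|\,|b(\boldsymbol{\gamma},V\boldsymbol{\gamma})|\ge\Im\text{m}\,k\,(\ltwonorm{\mathrm{d}\boldsymbol{\omega}}{p+1}{\Omega\cup\Omega^\mathrm{c}}^2+|k|^2\ltwonorm{\boldsymbol{\omega}}{p}{\Omega\cup\Omega^\mathrm{c}}^2)$. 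It remains to dominate the density norm: since $\boldsymbol{\gamma}=-\jump{\mathrm{n}}\mathrm{d}\boldsymbol{\omega}$ and $\delta\mathrm{d}\boldsymbol{\omega}=k^2\boldsymbol{\omega}$, continuity of the normal trace on $\hzero\pcomp{p}(\delta,\Omega)$ (Section~\ref{sec:dfboundary}) gives $|\boldsymbol{\gamma}|_{H_{\parallel}^{-1/2}\pcomp{p}(\hhat{\delta},\Gamma)}^2\le C(\ltwonorm{\mathrm{d}\boldsymbol{\omega}}{p+1}{\Omega\cup\Omega^\mathrm{c}}^2+|k|^4\ltwonorm{\boldsymbol{\omega}}{p}{\Omega\cup\Omega^\mathrm{c}}^2)\le C\max(1,|k|^2)(\ltwonorm{\mathrm{d}\boldsymbol{\omega}}{p+1}{\Omega\cup\Omega^\mathrm{c}}^2+|k|^2\ltwonorm{\boldsymbol{\omega}}{p}{\Omega\cup\Omega^\mathrm{c}}^2)$. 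Dividing produces the factor $\tfrac{\Im\text{m}\,k}{|k|}\min(1,|k|^{-2})$.

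Estimate \eqref{bioellip1a} is identical but with $\boldsymbol{\omega}=\tpsl{p}\boldsymbol{\gamma}$: by \eqref{lpotpropmod} this is a Maxwell solution for \emph{every} $\boldsymbol{\gamma}\in H_{\parallel}^{-1/2}\pcomp{p}(\hhat{\delta},\Gamma)$, and its jumps agree with those of $\pslnot\boldsymbol{\gamma}$ because the exact part $\mathrm{d}\pslnot$ has vanishing Dirichlet and Neumann jumps (Lemma~\ref{lemma:lemmajump}); hence no coclosedness restriction is needed, and the extra graph-norm term is controlled through $\hhat{\delta}\boldsymbol{\gamma}=k^2\jump{\mathrm{n}}\boldsymbol{\omega}$ via \eqref{commutendelta}. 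For the hypersingular estimate \eqref{bioellip2} I set $\boldsymbol{\omega}=\pdlnot\boldsymbol{\beta}$ (in $X^p$ by Lemma~\ref{lemma:repinx}), use $\jump{\gamma_\mathrm{N}}\boldsymbol{\omega}=0$ and $\jump{\gamma_\mathrm{D}}\boldsymbol{\omega}=\boldsymbol{\beta}$ to identify the left-hand side with $b(D\boldsymbol{\beta},\boldsymbol{\beta})$ through \eqref{defbio4}, and bound $|\boldsymbol{\beta}|_{H_{\bot}^{-1/2}\pcomp{p}(\hhat{\mathrm{d}},\Gamma)}$ by continuity of the Dirichlet trace together with $\hhat{\mathrm{d}}\boldsymbol{\beta}=\jump{\gamma_\mathrm{D}}\mathrm{d}\boldsymbol{\omega}$ (commutativity \eqref{commutetd}) and $\mathrm{d}\mathrm{d}=0$; this yields $|\boldsymbol{\beta}|^2\le C\max(1,|k|^{-2})(\ltwonorm{\mathrm{d}\boldsymbol{\omega}}{p+1}{\Omega\cup\Omega^\mathrm{c}}^2+|k|^2\ltwonorm{\boldsymbol{\omega}}{p}{\Omega\cup\Omega^\mathrm{c}}^2)$, i.e.\ the reciprocal factor $\min(1,|k|^2)$.

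For $k=0$ the $k^2$-term disappears, so each bilinear form equals $\ltwonorm{\mathrm{d}\boldsymbol{\omega}}{p+1}{\Omega\cup\Omega^\mathrm{c}}^2\ge0$, which immediately gives $b(D\boldsymbol{\beta},\boldsymbol{\beta})\ge0$. The coercivity \eqref{bioellip1b} of $V$ requires the sharper statement $\ltwonorm{\mathrm{d}\boldsymbol{\omega}}{p+1}{\Omega\cup\Omega^\mathrm{c}}^2\ge c\,|\boldsymbol{\gamma}|^2$, which I would obtain from a Poincar\'e inequality on the Beppo--Levi--type space of the exterior domain together with injectivity of $V$ on $H_{\parallel}^{-1/2}\pcomp{p}(\hhat{\delta}0,\Gamma)$ and the open mapping theorem. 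I expect this $k=0$ definiteness to be the main obstacle: the form controls only $\mathrm{d}\boldsymbol{\omega}$ and not $\boldsymbol{\omega}$, so one must exclude a nontrivial density with $\mathrm{d}\boldsymbol{\omega}=0$, and in the borderline scalar case $n=2$, $p=0$ the logarithmic kernel forces the normalization $r_0$ to be large enough for positivity, which is why that case is quoted separately from \cite[Thm.\ 8.16]{mclean}. A secondary technical point, underlying the whole argument, is the rigorous justification that the far-boundary term vanishes and that $\boldsymbol{\omega}$ and $\mathrm{d}\boldsymbol{\omega}$ are genuinely square-integrable over the unbounded exterior — resting for $\Im\text{m}\,k>0$ on the exponential kernel decay and for $k=0$ on \eqref{conddecay1}--\eqref{conddecay2}.
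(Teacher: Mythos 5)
Your treatment of the case $\Im\text{m}\,k>0$ is sound and close in spirit to the paper's own proof: both start from the Green identity \eqref{symmaux3} with $\boldsymbol{\eta}=\boldsymbol{\omega}$, insert layer potentials, collapse the boundary terms with the jump relations of Lemma~\ref{lemma:lemmajump}, multiply by $\overline{k}$ and take imaginary parts, and close with trace continuity; your constant-tracking reproduces the factors $\frac{\Im\text{m}\,k}{|k|}\min(1,|k|^{\mp2})$ correctly. Two harmless deviations: you prove all three estimates directly from the energy identity, whereas the paper proves \eqref{bioellip2} first and then obtains \eqref{bioellip1a} and \eqref{bioellip1} algebraically from $D=-k^2\hhat{*}^{-1}\tilde{V}\hhat{*}$, i.e.\ \eqref{bioprop5b}; and you discard the far-boundary term outright by exponential kernel decay for $\Im\text{m}\,k>0$, whereas the paper retains it and exploits that, after the $\Im\text{m}(\overline{k}\,\cdot)$ manipulation as in \eqref{boundaux3}, it contributes a nonnegative quantity plus $o(1)$ --- an argument needing only the $L^2$ radiation condition, though in the regime $\Im\text{m}\,k>0$ your shortcut is legitimate.

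The genuine gap is your proof of the $k=0$ coercivity \eqref{bioellip1b}. The route you sketch --- positive semidefiniteness plus injectivity of $V$ on $H_{\parallel}^{-1/2}\pcomp{p}(\hhat{\delta}0,\Gamma)$ plus the open mapping theorem --- cannot deliver a lower bound: an injective nonnegative operator need not be bounded below in infinite dimensions (a positive definite compact operator is the standard counterexample), so injectivity plus open mapping does not produce $b(\boldsymbol{\gamma},V\boldsymbol{\gamma})\ge c\,|\boldsymbol{\gamma}|^2$, and the Beppo--Levi Poincar\'e inequality you invoke is left entirely undeveloped. What you overlooked is that the mechanism you yourself used at $k\ne0$ closes this case at once: with $\boldsymbol{\omega}=\pslnot\boldsymbol{\gamma}$ one has $\boldsymbol{\gamma}=-\jump{\mathrm{n}}\,\mathrm{d}\boldsymbol{\omega}$, and continuity of the normal trace gives $|\boldsymbol{\gamma}|_{H_{\parallel}^{-1/2}\pcomp{p}(\hhat{\delta},\Gamma)}\le c\,|\mathrm{d}\boldsymbol{\omega}|_{\hzero\pcomp{p+1}(\delta,\Omega\cup\Omega^\mathrm{c})}$; since $\delta\mathrm{d}\boldsymbol{\omega}=0$ this graph norm collapses to $\ltwonorm{\mathrm{d}\boldsymbol{\omega}}{p+1}{\Omega\cup\Omega^\mathrm{c}}$, which combined with the $k=0$ identity $b(\boldsymbol{\gamma},V\boldsymbol{\gamma})=\ltwonorm{\mathrm{d}\boldsymbol{\omega}}{p+1}{\Omega\cup\Omega^\mathrm{c}}^2$ is exactly \eqref{bioellip1b} --- this is the paper's argument, see \eqref{ellipaux4}--\eqref{ellipaux6}. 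Your worry that the form ``controls only $\mathrm{d}\boldsymbol{\omega}$ and not $\boldsymbol{\omega}$'' is therefore beside the point: the target norm is that of $\boldsymbol{\gamma}$, a trace of $\mathrm{d}\boldsymbol{\omega}$, not of $\boldsymbol{\omega}$. Only the borderline case $n=2$, $p=0$ requires the normalization of $r_0$, and there the paper defers to McLean, as you do.
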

\begin{remark}
\begin{enumerate}
\item[]
\item The estimates \eqref{bioellip} are useful to establish well-posedness of boundary integral equations based on the above operators, by leveraging the Lax-Milgram theorem. This carries over to conforming Galerkin discretizations, and quasi-optimal error estimates in the norms of the trace spaces are available from Cea's lemma \cite{hiptmair2003,hiptmair2007}.
\item For $k\in\mathbb{R}^+$, the estimates \eqref{bioellip} no longer hold true. Still, the well-posedness of boundary integral equations based on the above operators can be established, based on a generalized G{\aa}rding inequality and Hodge decompositions \cite{buffa2005,buffa2002a,buffa2003,christiansen2002}.
\end{enumerate} \end{remark}
\begin{proof}
To show the ellipticity properties \eqref{bioellip}, we need to distinguish three cases:
\begin{enumerate}
\item {\em Case $\Im\text{m}\,k>0$.} We proceed like in the derivation of \eqref{boundaux3}, but consider the domain $(\Omega\cup\Omega^\mathrm{c})\cap\Omega^R$ rather than $\Omega^\mathrm{c}\cap\Omega^R$. We find for $\boldsymbol{\omega}\in X^p\bigl((\Omega\cup\Omega^\mathrm{c})\cap\Omega^R)\bigr)$
\begin{gather*}
\Im\text{m}\bigl(\overline{k}
b(\jump{\gamma_\mathrm{N}}\boldsymbol{\omega},\mean{\gamma_\mathrm{D}}\boldsymbol{\omega})+\overline{k}
b(\mean{\gamma_\mathrm{N}}\boldsymbol{\omega},\jump{\gamma_\mathrm{D}}\boldsymbol{\omega})
\bigr)\\[1mm]
\begin{aligned}
=&
(\Im\text{m}\,k)\bigl(\ltwonorm{\mathrm{d}\boldsymbol{\omega}}{p+1}{(\Omega\cup\Omega^\mathrm{c})\cap\Omega^R}^2+
|k|^2\ltwonorm{\boldsymbol{\omega}}{p}{(\Omega\cup\Omega^\mathrm{c})\cap\Omega^R}^2\bigr)\\[1mm]
&+{\textstyle\frac{1}{2}}\bigl(\ltwonorm{\gamma_\mathrm{N}^R\boldsymbol{\omega}}{p}{\Gamma^R}^2+|k|^2\ltwonorm{\gamma_\mathrm{D}^R\boldsymbol{\omega}}{p}{\Gamma^R}^2\bigr)+o(1).\end{aligned}
\end{gather*}
Pick $\boldsymbol{\omega}=\pdlnot\boldsymbol{\beta}$ and let $R\to\infty$, which yields
\[
-\Im\text{m}\bigl(\overline{k}b(D\boldsymbol{\beta},\boldsymbol{\beta})\bigr)\ge
(\Im\text{m}\,k)\bigl(\ltwonorm{\mathrm{d}\boldsymbol{\omega}}{p+1}{\Omega\cup\Omega^\mathrm{c}}^2+
|k|^2\ltwonorm{\boldsymbol{\omega}}{p}{\Omega\cup\Omega^\mathrm{c}}^2\bigr)\ge0,
\]
where we used Lemma~\ref{lemma:lemmajump} and definition \eqref{defbio4}. Next, consider
\[
|k|^2|b(D\boldsymbol{\beta},\boldsymbol{\beta})|^2=|\overline{k}b(D\boldsymbol{\beta},\boldsymbol{\beta})|^2\ge\Im\text{m}\,\bigl(\overline{k}b(D\boldsymbol{\beta},\boldsymbol{\beta})\bigr)^2,
\]
and therefore
\begin{align*}
|k||b(D\boldsymbol{\beta},\boldsymbol{\beta})|&\ge-\Im\text{m}\bigl(\overline{k}b(D\boldsymbol{\beta},\boldsymbol{\beta})\bigr)\\
&\ge
(\Im\text{m}\,k)\bigl(\ltwonorm{\mathrm{d}\boldsymbol{\omega}}{p+1}{\Omega\cup\Omega^\mathrm{c}}^2+
|k|^2\ltwonorm{\boldsymbol{\omega}}{p}{\Omega\cup\Omega^\mathrm{c}}^2\bigr).
\end{align*}
Finally,
\begin{align}
|b(D\boldsymbol{\beta},\boldsymbol{\beta})|&\ge\frac{\Im\text{m}\,k}{|k|}\min(1,|k|^2)\bigl(\ltwonorm{\mathrm{d}\boldsymbol{\omega}}{p+1}{\Omega\cup\Omega^\mathrm{c}}^2+
\ltwonorm{\boldsymbol{\omega}}{p}{\Omega\cup\Omega^\mathrm{c}}^2\bigr)\nonumber\\
&\ge \frac{1}{2}\frac{\Im\text{m}\,k}{|k|}\min(1,|k|^2)|\boldsymbol{\omega}|^2_{\hzero \pcomp{p}(\mathrm{d},\Omega\cup\Omega^\mathrm{c})},\label{ellipaux2}
\end{align}
where we used $a^2+b^2\ge (a+b)^2/2$.\par
On the other hand, from the continuity of the tangential trace operator we have
\begin{equation}\label{ellipaux3}
|\boldsymbol{\beta}|_{H_{\bot}^{-1/2}\pcomp{p}(\hhat{\mathrm{d}},\Gamma)}=
|\jump{\mathrm{t}}\boldsymbol{\omega}|_{H_{\bot}^{-1/2}\pcomp{p}(\hhat{\mathrm{d}},\Gamma)}
\le c|\boldsymbol{\omega}|_{\hzero \pcomp{p}(\mathrm{d},\Omega\cup\Omega^\mathrm{c})},
\end{equation}
with $c>0$ that depends only on $\Gamma$. Combining \eqref{ellipaux2} with \eqref{ellipaux3} proves \eqref{bioellip2}.\par
Equation \eqref{bioellip1a} follows immediately with \eqref{bioprop5b}, if we let $\boldsymbol{\gamma}=\hhat{*}\boldsymbol{\beta}$. For $k\ne 0$ \eqref{bioellip1} is obvious, since $V$ coincides with $\tilde{V}$ on $H_{\parallel}^{-1/2}\pcomp{p}(\hhat{\delta}0,\Gamma)$.
\item {\em Case $k=0$, $n=2$, $p=0$.} See \cite[Thm.\ 8.16]{mclean} for $V$ and \cite[Thm.\ 8.21]{mclean} for $D$.
\item {\em Case $k=0$, all other $n$ and $p$.} Consider the same domain as in the case $k\ne 0$, but with \eqref{symmaux3} as starting point. We rearrange as in \eqref{symmaux4}, with $\boldsymbol{\eta}=\boldsymbol{\omega}$, $k=0$, let $R\to\infty$ and take into account \eqref{decayaux1} or \eqref{decayaux2}, respectively,
\begin{equation}\label{ellipaux4}
-b(\jump{\gamma_\mathrm{N}}\boldsymbol{\omega},\mean{\gamma_\mathrm{D}}\boldsymbol{\omega})
-b(\mean{\gamma_\mathrm{N}}\boldsymbol{\omega}, \jump{\gamma_\mathrm{D}}\boldsymbol{\omega})
=\ltwo{\mathrm{d}\boldsymbol{\omega}}{\mathrm{d}\boldsymbol{\omega}}{p+1}{\Omega\cup\Omega^\mathrm{c}}.
\end{equation}
Pick $\boldsymbol{\omega}=\psl{p}\,\boldsymbol{\gamma}$, $\boldsymbol{\gamma}\in H_{\parallel}^{-1/2}\pcomp{p}(\hhat{\delta}0,\Gamma)$, which yields
\begin{equation}\label{ellipaux5}
b(\boldsymbol{\gamma},V\boldsymbol{\gamma})
=\ltwonorm{\mathrm{d}\boldsymbol{\omega}}{p+1}{\Omega\cup\Omega^\mathrm{c}}^2,
\end{equation}
where we used Lemma~\ref{lemma:lemmajump} and definition \eqref{defbio1}. On the other hand, from the continuity of the normal trace operator we have
\begin{equation}\label{ellipaux6}
|\boldsymbol{\gamma}|_{H_{\parallel}^{-1/2}\pcomp{p}(\hhat{\delta},\Gamma)}=
|[\mathrm{n}]\mathrm{d}\boldsymbol{\omega}|_{H_{\parallel}^{-1/2}\pcomp{p}(\hhat{\delta},\Gamma)}
\le c|\mathrm{d}\boldsymbol{\omega}|_{\hzero \pcomp{p+1}(\delta,\Omega\cup\Omega^\mathrm{c})},
\end{equation}
with $c>0$ that depends only on $\Gamma$. Since $\delta\mathrm{d}\boldsymbol{\omega}=0$ in $\Omega\cup\Omega^\mathrm{c}$ it holds that $|\mathrm{d}\boldsymbol{\omega}|_{\hzero \pcomp{p+1}(\delta,\Omega\cup\Omega^\mathrm{c})}=\ltwonorm{\mathrm{d}\boldsymbol{\omega}}{p+1}{\Omega\cup\Omega^\mathrm{c}}$, and combining \eqref{ellipaux5} and \eqref{ellipaux6} yields \eqref{bioellip1b}.\par
Finally, if we pick $\boldsymbol{\omega}=\pdl{p}\,\boldsymbol{\beta}$, \eqref{ellipaux4} yields the positive semidefiniteness
\[
b(D\boldsymbol{\beta},\boldsymbol{\beta})
=\ltwonorm{\mathrm{d}\boldsymbol{\omega}}{p+1}{\Omega\cup\Omega^\mathrm{c}}^2\ge 0,
\]
where we used Lemma~\ref{lemma:lemmajump} and definition \eqref{defbio4}.
\begin{remark}
The proof of \eqref{bioellip1b} critically hinges on the fact that $\delta\mathrm{d}\boldsymbol{\omega}=0$. In the light of \eqref{lpotprop1} and \eqref{lpotprop2}, this requires $\boldsymbol{\gamma}\in H_{\parallel}^{-1/2}\pcomp{p}(\hhat{\delta}0,\Gamma)$ rather than $\boldsymbol{\gamma}\in H_{\parallel}^{-1/2}\pcomp{p}(\hhat{\delta},\Gamma)$.
\end{remark}
\end{enumerate}
\qed\end{proof}
\section{Boundary Integral Equations}\label{sec:BIE}
In this section we discuss Calder\'on projectors and their properties under dual transformations. To study the case $k=0$ we introduce quotient spaces with respect to exact forms on the space of Maxwell solutions and the space of Dirichlet data, respectively. In a slight abuse of notation, we work with representatives of the equivalence classes as if they were actual elements of the quotient spaces.
\subsection{Calder\'on Projector for Interior and Exterior Problems}
Up to now, we have considered the representation formula for the domain $\Omega\cup\Omega^\mathrm{c}$. The boundary data have been defined in terms of the jumps on $\Gamma$ of Dirichlet, Neumann, and normal traces, respectively. In this section, we treat the interior domain $\Omega$ and the exterior domain $\Omega^\mathrm{c}$ separately.\par
\marginpar{Interior and exterior problems}An interior problem is a Maxwell-type problem with the additional requirement that $\omega = 0$ in $\Omega^\mathrm{c}$. The solution space effectively reduces to $X(\Omega)$. For an exterior problem we require that $\omega=0$ in $\Omega$, and its solutions therefore lie in $X(\Omega^\mathrm{c})$. Hence the boundary data acquire status of Cauchy data for the respective problem.\par
\marginpar{Calder\'on projector}Applying the Dirichlet trace $\gamma_\mathrm{D}$ and the Neumann trace $\gamma_\mathrm{N}$, respectively, to the representation formula \eqref{represent} yields the Calder\'on equations
\begin{equation}\label{calderon1}
\begin{pmatrix}\gamma_\mathrm{D}\\ \gamma_\mathrm{N}\end{pmatrix}\boldsymbol{\omega}
=P\begin{pmatrix}\gamma_\mathrm{D}\\ \gamma_\mathrm{N}\\ \mathrm{n}\end{pmatrix}\boldsymbol{\omega}
\end{equation}
for the interior problem, where
\[
P={\scriptstyle\frac{1}{2}}I+A.
\]
For the exterior problem, all traces in \eqref{calderon1} have to be replaced by the exterior traces, and $P$ reads
\[
P={\scriptstyle\frac{1}{2}}I-A.
\]
We have
\begin{align*}
I&=\begin{pmatrix}\mathrm{Id} & 0 & 0\\ 0 & \mathrm{Id} & 0\end{pmatrix},\\
A&=\begin{pmatrix}\mean{\gamma_\mathrm{D}}\\ \mean{\gamma_\mathrm{N}}\end{pmatrix}
\begin{pmatrix} -\pdlnot & \pslnot & \mathrm{d}\,\pslnot\end{pmatrix}
=\begin{pmatrix}-K & V & \hhat{\mathrm{d}}V\\
D & K^\dagger & 0
\end{pmatrix}.
\end{align*}
To arrive at the usual square form of $P$, the extra Neumann data $\mathrm{n}\boldsymbol{\omega}$ needs to be eliminated. $P$ is then called the Calder\'on projector. We distinguish two cases:
\begin{enumerate}
\item {\em Case $k\ne 0$.} The goal is achieved by leveraging \eqref{neumannsolution} and using the Maxwell single layer potential. With $Z=H_{\perp}^{-1/2}\pcomp{p}(\hhat{\mathrm{d}},\Gamma)\times H_{\parallel}^{-1/2}\pcomp{p}(\hhat{\delta},\Gamma)$ we obtain
\begin{equation}\label{calderon1a}
A=\begin{pmatrix}-K & \tilde{V}\\
D & K^\dagger
\end{pmatrix}:
Z\rightarrow Z.
\end{equation}
\item {\em Case $k=0$.} We eliminate the extra Neumann data by projecting the first Calder\'on equation onto the quotient space
\begin{equation}\label{defquotienth}
[H_{\perp}^{-1/2}\pcomp{p}(\hhat{\mathrm{d}},\Gamma)]=H_{\perp}^{-1/2}\pcomp{p}(\hhat{\mathrm{d}},\Gamma)/\hhat{\mathrm{d}}\,H_{\perp}^{-1/2}\pcomp{p-1}(\hhat{\mathrm{d}},\Gamma).
\end{equation}
Exact forms are mapped to zero, therefore the term containing the extra Neumann data vanishes. Inspection of \eqref{bioprop4} and \eqref{bioprop5a} reveals that $K$ and $D$ are well defined on the quotient space. Moreover, from \eqref{neumannsolution} we know that $\boldsymbol{\gamma}\in H_{\parallel}^{-1/2}\pcomp{p}(\hhat{\delta}0,\Gamma)$. Let $Z^0=[H_{\perp}^{-1/2}\pcomp{p}(\hhat{\mathrm{d}},\Gamma)]\times H_{\parallel}^{-1/2}\pcomp{p}(\hhat{\delta}0,\Gamma)$, to obtain
\begin{equation}\label{calderon1b}
A^0=\begin{pmatrix}-K & V\\
D & K^\dagger
\end{pmatrix}:
Z^0\rightarrow Z^0.
\end{equation}
The mapping property of the second equation can be seen from \eqref{bioprop3} and \eqref{bioprop5a}.\par
\end{enumerate}
Galerkin boundary element methods are based on a variational formulation of the Calder\'on projector. Again, we distinguish the two cases:
\begin{enumerate}
\item {\em Case $k\ne 0$.} Following \cite[Sec.\ 3.4]{buffa2003}\footnote{When comparing with \cite{buffa2003}, the following identifications hold: $\gamma_\mathrm{D}\to *\gamma_\mathrm{D}$, $k\gamma_\mathrm{N}\to -\gamma_\mathrm{N}$, $b(\vec{v},\vec{w})\to-b(\boldsymbol{\nu},*\overline{\boldsymbol{\omega}})$, $M\to K^\dagger$, $C\to-k*\tilde{V}$, $\vec{m}\to*\boldsymbol{\beta}$, $k\vec{j}\to-\gamma$.}, we introduce the {\em antisymmetric bilinear} form 
\[
B:Z\times Z\rightarrow\mathbb{C}:\left(\begin{pmatrix}\boldsymbol{\beta}\\\boldsymbol{\gamma}\end{pmatrix},\begin{pmatrix}\boldsymbol{\beta}^\prime\\\boldsymbol{\gamma}^{\,\prime}\end{pmatrix}\right)\mapsto\frac{1}{k}\bigl(
b(\boldsymbol{\gamma},\overline{\boldsymbol{\beta}}^\prime)
-b(\boldsymbol{\gamma}^{\,\prime},\overline{\boldsymbol{\beta}})
\bigr),
\]
where the factor $1/k$ is conventional. The related variational form of the boundary integral operator $A$ is
\[
\mathcal{A}:Z\times Z\rightarrow\mathbb{C}:(\boldsymbol{\alpha},\boldsymbol{\alpha}^\prime)\mapsto B(A\boldsymbol{\alpha},\boldsymbol{\alpha}^\prime).
\]
As a consequence of \eqref{biosym} we note that
\[
\mathcal{A}(\boldsymbol{\alpha},\boldsymbol{\alpha}^\prime)=\mathcal{A}(\boldsymbol{\alpha}^\prime,\boldsymbol{\alpha})
\]
holds, which generalizes \cite[Thm.\ 3.9]{buffa2003}.
\item {\em Case $k=0$.} We introduce the symmetric form 
\[
B^0:Z^0\times Z^0\rightarrow\mathbb{C}:\left(\begin{pmatrix}\boldsymbol{\beta}\\\boldsymbol{\gamma}\end{pmatrix},\begin{pmatrix}\boldsymbol{\beta}^\prime\\\boldsymbol{\gamma}^{\,\prime}\end{pmatrix}\right)\mapsto
b(\boldsymbol{\gamma},\boldsymbol{\beta}^\prime)
+b(\boldsymbol{\gamma}^{\,\prime},\boldsymbol{\beta}),
\]
which is non degenerate, since $[H_{\perp}^{-1/2}\pcomp{p}(\hhat{\mathrm{d}},\Gamma)]$ and $H_{\parallel}^{-1/2}\pcomp{p}(\hhat{\delta}0,\Gamma)$ are dual spaces with respect to the sesqui\-linear form $b(\cdot,\cdot)$. The related variational form of the boundary integral operator $A^0$ is
\[
\mathcal{A}^0:Z^0\times Z^0\rightarrow\mathbb{C}:(\boldsymbol{\alpha},\boldsymbol{\alpha}^\prime)\mapsto B^0(A^0\boldsymbol{\alpha},\boldsymbol{\alpha}^\prime).
\]
It holds that
\[
\mathcal{A}^0(\boldsymbol{\alpha},\boldsymbol{\alpha})\ge c\bigl(
|\hhat{\mathrm{d}}\boldsymbol{\beta}|_{H_{\bot}^{-1/2}\pcomp{p+1}(\hhat{\mathrm{d}},\Gamma)}^2+
|\boldsymbol{\gamma}|_{H_{\parallel}^{-1/2}\pcomp{p}(\hhat{\delta},\Gamma)}^2
\bigr)\quad\forall\boldsymbol{\alpha}=\begin{pmatrix}\boldsymbol{\beta}\\ \boldsymbol{\gamma}\end{pmatrix}\in Z^0,
\]
as a consequence of \eqref{biosym} and \eqref{bioellip}, which is useful for establishing coercivity.\par
The boundary integral operator in its variational form $\mathcal{A}^0$ is a major building block for the Galerkin boundary element methods presented in \cite{hiptmair2003,hiptmair2007}.
\end{enumerate}
\subsection{Equivalent Maxwell-Type Problems, Dual Transformations}
There is an inherent symmetry in the solution set $X^p(\Omega\cup\Omega^\mathrm{c})$ that can be exploited by so-called dual transformations. To show this, we need to distinguish the cases $k\ne 0$ and $k=0$:
\begin{enumerate}
\item {\em Case $k\ne 0$.}\marginpar{Dual transformation, $k\ne 0$}The dual transformation is defined by
\begin{equation}\label{defdualtrafo}
\Phi:\boldsymbol{\omega}\mapsto{\textstyle\frac{1}{ik}}*\mathrm{d}\boldsymbol{\omega}.
\end{equation}
\begin{lemma}
The dual transformation \eqref{defdualtrafo} constitutes an isomorphism $X^p(\Omega\cup\Omega^\mathrm{c})\rightarrow X^{q-1}(\Omega\cup\Omega^\mathrm{c})$.
\end{lemma}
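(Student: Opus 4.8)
The plan is to establish the two ingredients that make $\Phi$ an isomorphism: that $\Phi$ maps $X^p(\Omega\cup\Omega^\mathrm{c})$ into $X^{q-1}(\Omega\cup\Omega^\mathrm{c})$, and that $\Phi$ composed with itself is $\pm\mathrm{Id}$. Together with the evident continuity of $\Phi$ (it is built from the continuous operators $*$ and $\mathrm{d}$) and of the inverse found below, this yields the claim. Throughout I write $\boldsymbol\omega\in X^p$, so $\delta\boldsymbol\omega=0$ and $(\delta\mathrm{d}-k^2)\boldsymbol\omega=0$, and I use $q=n-p$.

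For the mapping property I first note the degree count: $\mathrm{d}\boldsymbol\omega$ is a $(p+1)$-form and $*\mathrm{d}\boldsymbol\omega$ an $(n-p-1)=(q-1)$-form, so $\Phi\boldsymbol\omega\in\dfs{q-1}$. The gauge condition is automatic: applying $\delta*=(-1)^{p+1}*\mathrm{d}$ (for $p$-forms) to the $(p+1)$-form $\mathrm{d}\boldsymbol\omega$ and using $\mathrm{d}\mathrm{d}=0$ gives $\delta\Phi\boldsymbol\omega=\tfrac{1}{ik}\delta*\mathrm{d}\boldsymbol\omega=0$. For the Maxwell equation, $\delta\Phi\boldsymbol\omega=0$ forces $\boldsymbol\Delta\Phi\boldsymbol\omega=\delta\mathrm{d}\Phi\boldsymbol\omega$; since the Laplace--Beltrami operator commutes with $*$ and $\mathrm{d}$, and since $\delta\boldsymbol\omega=0$ together with $(\delta\mathrm{d}-k^2)\boldsymbol\omega=0$ gives $\boldsymbol\Delta\boldsymbol\omega=k^2\boldsymbol\omega$, one obtains $\boldsymbol\Delta\Phi\boldsymbol\omega=\tfrac{1}{ik}*\mathrm{d}\,\boldsymbol\Delta\boldsymbol\omega=k^2\Phi\boldsymbol\omega$, i.e.\ $(\delta\mathrm{d}-k^2)\Phi\boldsymbol\omega=0$. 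The local Sobolev regularity follows because $\delta\mathrm{d}\boldsymbol\omega=k^2\boldsymbol\omega\in L^2_\mathrm{loc}$ places $\mathrm{d}\boldsymbol\omega$ in $\hzero_\mathrm{loc}\pcomp{p+1}(\delta,\Omega\cup\Omega^\mathrm{c})$, whence the isometry $*:\hzero\pcomp{p+1}(\delta)\to\hzero\pcomp{q-1}(\mathrm{d})$ puts $\Phi\boldsymbol\omega$ in $\hzero_\mathrm{loc}\pcomp{q-1}(\delta\mathrm{d},\Omega\cup\Omega^\mathrm{c})\cap\hzero_\mathrm{loc}\pcomp{q-1}(\delta0,\Omega\cup\Omega^\mathrm{c})$. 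Thus $\Phi\boldsymbol\omega\in Y^{q-1}$.

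The single computation that drives everything is $\mathrm{d}\Phi\boldsymbol\omega=(-1)^p ik*\boldsymbol\omega$, which follows from $\mathrm{d}*\mathrm{d}\boldsymbol\omega=(-1)^{p+1}*\delta\mathrm{d}\boldsymbol\omega=(-1)^{p+1}k^2*\boldsymbol\omega$ and $\tfrac{k^2}{ik}=-ik$. Invertibility is then immediate: applying $\Phi$ once more and using $**=(-1)^{pq}$, $\Phi^2\boldsymbol\omega=\tfrac{1}{ik}*\mathrm{d}\Phi\boldsymbol\omega=(-1)^p**\boldsymbol\omega=(-1)^{p(q+1)}\boldsymbol\omega$. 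Hence $\Phi^2=(-1)^{p(q+1)}\mathrm{Id}$, so $\Phi$ is a bijection with $\Phi^{-1}=(-1)^{p(q+1)}\Phi$ read from $X^{q-1}$ back to $X^p$.

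The remaining and genuinely delicate point is that $\Phi\boldsymbol\omega$ inherits the radiation condition, so that $\Phi\boldsymbol\omega\in X^{q-1}$ and not merely $Y^{q-1}$. I would verify the strong form \eqref{condradiation1}, i.e.\ $|\mathrm{i}_{\vec n}\mathrm{d}\Phi\boldsymbol\omega-ik\Phi\boldsymbol\omega|=o(R^{-(n-1)/2})$. Using $\mathrm{d}\Phi\boldsymbol\omega=(-1)^p ik*\boldsymbol\omega$, the standard identity $\mathrm{i}_{\vec n}*\boldsymbol\omega=(-1)^p*(\mathrm{g}\vec n\wedge\boldsymbol\omega)$, and that $*$ is a pointwise isometry with $\mathrm{g}\vec n=\mathrm{d}R$ on $\Gamma^R$, this quantity equals (in modulus) $|\mathrm{d}\boldsymbol\omega-ik\,\mathrm{d}R\wedge\boldsymbol\omega|$. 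The tangential--normal split $\mathrm{d}\boldsymbol\omega=\mathrm{d}R\wedge\mathrm{i}_{\vec n}\mathrm{d}\boldsymbol\omega+\mathrm{i}_{\vec n}(\mathrm{d}R\wedge\mathrm{d}\boldsymbol\omega)$ then decomposes it as $\mathrm{d}R\wedge(\mathrm{i}_{\vec n}\mathrm{d}\boldsymbol\omega-ik\boldsymbol\omega)+\mathrm{i}_{\vec n}(\mathrm{d}R\wedge\mathrm{d}\boldsymbol\omega)$, whose first term is $o(R^{-(n-1)/2})$ by the radiation condition for $\boldsymbol\omega$. Controlling the second term, the purely tangential part of $\mathrm{d}\boldsymbol\omega$, is the main obstacle: exactly as in \eqref{greenradiation1b}, tangential derivatives on $\Gamma^R$ gain one order of decay through the spherical geometry, so that $\mathrm{i}_{\vec n}(\mathrm{d}R\wedge\mathrm{d}\boldsymbol\omega)=o(R^{-(n-1)/2})$. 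The rigorous justification rests on the asymptotic expansion of radiating Maxwell solutions (generalized spherical harmonics, cf.\ \cite{weck1994}); alternatively it can be bypassed by representing $\boldsymbol\omega$ through layer potentials via Theorem~\ref{thm:repthm}, pushing $\tfrac{1}{ik}*\mathrm{d}$ through them with the identities \eqref{lpotprop}, and invoking the radiation property already established for layer potentials in Lemma~\ref{lemma:repinx}. Once this far-field term is shown to be $o(R^{-(n-1)/2})$, $\Phi\boldsymbol\omega$ satisfies \eqref{condradiation1}, and $\Phi:X^p(\Omega\cup\Omega^\mathrm{c})\to X^{q-1}(\Omega\cup\Omega^\mathrm{c})$ is an isomorphism.
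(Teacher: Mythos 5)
Your proposal is correct, and on the one genuinely delicate point---the radiation condition---it takes a different route from the paper. The algebraic part is the same: your key identity $\mathrm{d}\,\Phi\boldsymbol{\omega}=(-1)^p ik*\boldsymbol{\omega}$ is the computation the paper writes as $(-1)^{p+1}*^{-1}\mathrm{d}\tilde{\boldsymbol{\omega}}=-ik\boldsymbol{\omega}$, and your sign $\Phi^2=(-1)^{p(q+1)}\mathrm{Id}$ agrees with the paper's $(-1)^{p(q-1)}$ since the exponents differ by $2p$; your detour through commutation of $\boldsymbol{\Delta}$ with $*$ and $\mathrm{d}$ is avoidable ($\delta\mathrm{d}\,\Phi\boldsymbol{\omega}=(-1)^p ik\,\delta*\boldsymbol{\omega}=k^2\Phi\boldsymbol{\omega}$ follows in one line from the key identity) but not wrong. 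For the asymptotics, the paper imports Pauly's symmetric two-field condition \eqref{condradiation4}, observes its invariance under $\Phi$, and proves it equivalent to \eqref{condradiation1} plus the tangential-derivative condition \eqref{condradiation5}; you instead verify \eqref{condradiation1} for $\Phi\boldsymbol{\omega}$ directly, and your splitting $\mathrm{d}\boldsymbol{\omega}-ik\,\mathrm{d}R\wedge\boldsymbol{\omega}=\mathrm{d}R\wedge(\mathrm{i}_{\vec{n}}\mathrm{d}\boldsymbol{\omega}-ik\boldsymbol{\omega})+\mathrm{i}_{\vec{n}}(\mathrm{d}R\wedge\mathrm{d}\boldsymbol{\omega})$ isolates exactly the same residual fact, namely that the purely tangential part of $\mathrm{d}\boldsymbol{\omega}$ gains one extra order of decay, which is precisely \eqref{condradiation5}. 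Both arguments therefore bottom out at the identical claim, which the paper also leaves at the level of ``it can be shown'' (pointing to \eqref{greenradiation1b} and, for $n=3$, $p=1$, to Kress), so you are at no disadvantage of rigor there. Your proposed bypass via Theorem~\ref{thm:repthm} is in fact the cleanest complete argument available inside the paper: by \eqref{defdl} and \eqref{lpotprop7} one has $ik\,\Phi\boldsymbol{\omega}=*\mathrm{d}\boldsymbol{\omega}=\pdl{q-1}(\hhat{*}\boldsymbol{\gamma})+(-1)^p\mathrm{d}\,\psl{q-2}(\hhat{*}\,\hhat{\mathrm{d}}\boldsymbol{\beta})+k^2\psl{q-1}(\hhat{*}\boldsymbol{\beta})$, the transformed data are checked to satisfy \eqref{neumannsolution}, and Lemma~\ref{lemma:repinx} applies term by term, sidestepping \eqref{condradiation5} altogether; what the paper's route buys instead is the structural observation that the radiation condition itself is $\Phi$-symmetric, in the same spirit as the later symmetry of the Calder\'on equation \eqref{calderon2} under dual transformations.
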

\begin{proof}
We first show that $\Phi:X^p(\Omega\cup\Omega^\mathrm{c})\rightarrow Y^{q-1}(\Omega\cup\Omega^\mathrm{c})$ is injective, and then, that the image of $\Phi$ fulfills the radiation condition and hence lies in $X^{q-1}(\Omega\cup\Omega^\mathrm{c})$.\par
Let $\tilde{\boldsymbol{\omega}}=\Phi\boldsymbol{\omega}$. Since $\boldsymbol{\omega}\in X^p(\Omega\cup\Omega^\mathrm{c})$ we know at least that $\tilde{\boldsymbol{\omega}}\in \hzero _\mathrm{loc}\pcomp{q-1}(\mathrm{d},\Omega\cup\Omega^\mathrm{c})$ and can compute
\begin{equation}\label{dualaux1}
(-1)^{p+1}*^{-1}\mathrm{d}\tilde{\boldsymbol{\omega}}={\textstyle\frac{1}{ik}}\delta\mathrm{d}\boldsymbol{\omega}=-ik\boldsymbol{\omega}\in \hzero _\mathrm{loc}\pcomp{p}(\delta\mathrm{d},\Omega\cup\Omega^\mathrm{c}).
\end{equation}
We are therefore entitled to apply $*\mathrm{d}$ to find
\[
\delta\mathrm{d}\tilde{\boldsymbol{\omega}}=k^2\tilde{\boldsymbol{\omega}},
\]
which implies $\tilde{\boldsymbol{\omega}}\in Y^{q-1}(\Omega\cup\Omega^\mathrm{c})$. From \eqref{dualaux1} we read off
\[
\boldsymbol{\omega}=(-1)^{p(q-1)}{\textstyle\frac{1}{ik}}*\mathrm{d}\tilde{\boldsymbol{\omega}},
\]
which shows that 
\[
\Phi^2\boldsymbol{\omega}=(-1)^{p(q-1)}\boldsymbol{\omega},\quad\boldsymbol{\omega}\in X^p(\Omega\cup\Omega^\mathrm{c}),
\]
that is, $\Phi$ is injective.\par
It is of advantage to introduce a slightly different form of the radiation condition in connection with the above dual transformation. The condition is stated in terms of both, $\boldsymbol{\omega}$ and $\tilde{\boldsymbol{\omega}}$, and reads \cite[Def.\ 3.2 (iii)]{pauly2006}\footnote{When comparing with \cite{pauly2006}, the correct identifications are $(\boldsymbol{\omega},\tilde{\boldsymbol{\omega}})=(E,-*H)$. Moreover, $\boldsymbol{\omega}\in L^{2,p}_{>-\frac{1}{2}}(\Omega)$ is a weak version of $|\boldsymbol{\omega}|_{\iota \ppoint}=o(R^{-(n-1)/2})$.}
\begin{equation}\label{condradiation4}
\left.
\begin{aligned}
|\mathrm{i}_{\vec{n}}*^{-1}\tilde{\boldsymbol{\omega}}-\boldsymbol{\omega}|_{\iota \ppoint}&=o(R^{-(n-1)/2}),\\
|\mathrm{i}_{\vec{n}}*\boldsymbol{\omega}-(-1)^p\tilde{\boldsymbol{\omega}}|_{\iota \ppoint}&=o(R^{-(n-1)/2}).
\end{aligned}
\quad\right\}
\end{equation}
$\ppoint$ denotes an arbitrary point on $\Gamma^R$. The condition is invariant under dual transformations. Indeed, if we replace $(\boldsymbol{\omega},\tilde{\boldsymbol{\omega}})$ by $\Phi(\boldsymbol{\omega},\tilde{\boldsymbol{\omega}})=\bigl(\tilde{\boldsymbol{\omega}},(-1)^{p(q-1)}\boldsymbol{\omega}\bigr)$, and $p$ by $q-1$, then the equations \eqref{condradiation4} just exchange their roles.\par
The first equation in connection with \eqref{defdualtrafo} is identical to \eqref{condradiation1}. To investigate the second equation let us study its tangential and normal traces on $\Gamma^R$, more specifically
\begin{alignat*}{5}
(-1)^{p+1}& ik&&\hhat{*}^{-1}\mathrm{t}^R && \bigl(\mathrm{i}_{\vec{n}}*\boldsymbol{\omega}-(-1)^p\tilde{\boldsymbol{\omega}}\bigr)
&&= (\gamma_\mathrm{N}^R-ik\gamma_\mathrm{D}^R)\boldsymbol{\omega},\quad\text{and}\quad\\
&ik&&\hhat{*}^{-1}\mathrm{n}^R && \bigl(\mathrm{i}_{\vec{n}}*\boldsymbol{\omega}-(-1)^p\tilde{\boldsymbol{\omega}}\bigr)
&&= \hhat{\mathrm{d}}\gamma_\mathrm{D}^R\boldsymbol{\omega}.
\end{alignat*}
While the tangential part is equivalent to the first equation of \eqref{condradiation2}, the normal part gives an additional condition. We have thus shown that \eqref{condradiation4} is equivalent to \eqref{condradiation2} plus
\begin{equation}\label{condradiation5}
|\hhat{\mathrm{d}}\gamma_\mathrm{D}^R\boldsymbol{\omega}|_\ppoint=o(R^{-(n-1)/2}),
\end{equation}
which fixes the asymptotic behaviour of the tangential derivatives. However, it can be shown that solutions of the Maxwell-type equation subject to radiation condition \eqref{condradiation1} necessarily fulfill \eqref{condradiation5}, so that we see that no additional constraint has been introduced. This is related to the asympotic behavior \eqref{greenradiation1b} of the fundamental solution.\,\footnote{Compare also \cite[p.\ 163]{kress1998}, which says that both equations \eqref{condradiation4} are equivalent for $n=3$, $p=1$.}\par
It follows from the equivalence of \eqref{condradiation4} and \eqref{condradiation1} and from the invariance of \eqref{condradiation4} under dual transformations, that $\Phi$ maps to $X^{q-1}(\Omega\cup\Omega^\mathrm{c})$.
\qed\end{proof}
\begin{remark}
For $n=3$, $p=1$, the dual transformation \eqref{defdualtrafo} describes the symmetry between electric and magnetic fields encountered in the propagation of harmonic waves in simple media. 
\end{remark}
\begin{framed}
Let $(\vec{E},\vec{H})$ denote the vector proxies of the 1-forms $(\boldsymbol{\omega},\tilde{\boldsymbol{\omega}})$. Then \eqref{defdualtrafo} and \eqref{condradiation4} read
\begin{gather*}
\vec{H}=\frac{1}{ik}\textbf{curl}\,\vec{E}\\[-\baselineskip]
\intertext{and}\\[-2\baselineskip]
\begin{aligned}
|\vec{H}\times\vec{n}-\vec{E}|_{\iota \ppoint}&=o(R^{-1}),\\
|\vec{E}\times\vec{n}+\vec{H}|_{\iota \ppoint}&=o(R^{-1}),
\end{aligned}
\end{gather*}
respectively, compare \cite[Thm.\ 6.4, Def.\ 6.5]{kress1998}. This is yet another way to state the Silver-M{\"u}ller radiation conditions.
\end{framed}\noindent
We introduce an adapted set of trace operators
\begin{alignat*}{3}
&\traceadapt_{\mathrm{D}}=ik\gamma_{\mathrm{D}}:\;&&\hzero \pcomp{p}(\mathrm{d},\Omega)&&\rightarrow H_{\perp}^{-1/2}\pcomp{p}(\hhat{\mathrm{d}},\Gamma),\\
&\traceadapt_{\mathrm{N}}=\hhat{*}\gamma_{\mathrm{N}}:\;&&\hzero \pcomp{p}(\delta\mathrm{d},\Omega)&&\rightarrow H_{\perp}^{-1/2}\pcomp{q-1}(\hhat{\mathrm{d}},\Gamma).
\end{alignat*}
It is easy to see from the definitions that these trace operators fulfill
\begin{subequations}\label{dualtraceprop}
\begin{align}
\traceadapt_\mathrm{D}&=(-1)^{p(q-1)}\traceadapt_\mathrm{N}\Phi,\\
\traceadapt_\mathrm{N}&=\traceadapt_\mathrm{D}\Phi.
\end{align}
\end{subequations}
In sum, $\Phi$ maps Maxwell solutions onto Maxwell solutions and interchanges the roles of the adapted boundary data, up to sign.\par
This symmetry can be exploited to render \eqref{calderon1a} in the following symmetric form
\begin{equation}\label{calderon2}
{\textstyle\frac{1}{2}}\begin{pmatrix}\traceadapt_\mathrm{D}\boldsymbol{\omega}\\ \traceadapt_\mathrm{N}\boldsymbol{\omega}\end{pmatrix}
=\begin{pmatrix} -K & (-1)^{p(q-1)}C\\
C & -K
\end{pmatrix}
\begin{pmatrix}\traceadapt_\mathrm{D}\boldsymbol{\omega}\\ \traceadapt_\mathrm{N}\boldsymbol{\omega}\end{pmatrix},
\end{equation}
where we introduced a boundary integral operator $C$ according to
\[
C=ik\tilde{V}\hhat{*}:H_{\perp}^{-1/2}\pcomp{p}(\hhat{\mathrm{d}},\Gamma)\rightarrow H_{\perp}^{-1/2}\pcomp{q-1}(\hhat{\mathrm{d}},\Gamma),\quad k\ne 0,
\]
and used \eqref{bioprop2} and \eqref{bioprop5b}.\par
From \eqref{dualtraceprop} it can be inferred that the Calder\'on equation \eqref{calderon2} is invariant under dual transformations \eqref{defdualtrafo}, up to sign.\par\noindent
\begin{remark}
\begin{enumerate}
\item[]
\item[1.] This result implies that electric field and magnetic field based formulations share the same Calder\'on equation \eqref{calderon2}, up to sign of the off-diagonal elements.
\item[2.] The analysis of Maxwell transmission problems in Lipschitz domains in \cite[p.\ 468]{buffa2003} is essentially based on the Calder\'on projector in \eqref{calderon2}.\,\footnote{The formulation in \cite{buffa2003} can be stated in $H_{\parallel}^{-1/2}\pcomp{1}(\hhat{\delta},\Gamma)$ with the help of the translation isomorphisms. The following identifications hold: $ik\gamma_\mathrm{D}\to\hhat{*}\traceadapt_\mathrm{D}$, $k\gamma_\mathrm{N}\to\hhat{*}\traceadapt_\mathrm{N}$, $C\to i\hhat{*}C\hhat{*}^{-1}$, $M\to-\hhat{*}^{-1}K\hhat{*}$.}
\end{enumerate}
\end{remark}
\item {\em Case $k=0$.} We first define a quotient space on $X^p(\Omega\cup\Omega^\mathrm{c})$ by
\begin{align}
X^p_0(\Omega\cup\Omega^\mathrm{c})&=X^p(\Omega\cup\Omega^\mathrm{c})\cap\mathrm{d}\hzero \pcomp{p-1}(\mathrm{d},\Omega\cup\Omega^\mathrm{c}),\nonumber\\[0.2\baselineskip]
[X^p(\Omega\cup\Omega^\mathrm{c})]&=X^p(\Omega\cup\Omega^\mathrm{c})/X^p_0(\Omega\cup\Omega^\mathrm{c}).\label{defquotientx}
\end{align}
\marginpar{Dual transformation, $k=0$}
The dual transformation 
\begin{equation*}
\Phi^0:[X^p(\Omega\cup\Omega^\mathrm{c})]\rightarrow [X^{q-2}(\Omega\cup\Omega^\mathrm{c})]:\boldsymbol{\omega}\mapsto\tilde{\boldsymbol{\omega}}
\end{equation*}
can be defined implicitly, in two equivalent ways:
\begin{enumerate} 
\item[1.] The dual transform fulfills
\begin{equation}\label{dt0}
\mathrm{d}\tilde{\boldsymbol{\omega}}=*\mathrm{d}\boldsymbol{\omega}.
\end{equation}
Extend the definition \eqref{defboundary1} of the boundary data $(\boldsymbol{\beta},\boldsymbol{\gamma})$ to the quotient spaces \eqref{defquotienth} and \eqref{defquotientx}. Applying $\jump{\mathrm{t}}$ and $\jump{\mathrm{n}}$, and using \eqref{commutetd} and \eqref{defnormaltrace}, it can be seen that \eqref{dt0} implies a transformation
\begin{gather*}
[H_{\perp}^{-1/2}\pcomp{p}(\mathrm{d},\Gamma)]\times H_{\parallel}^{-1/2}\pcomp{p}(\delta 0,\Gamma)\\
\rightarrow [H_{\perp}^{-1/2}\pcomp{q-2}(\mathrm{d},\Gamma)]\times H_{\parallel}^{-1/2}\pcomp{q-2}(\delta 0,\Gamma):
\\[1mm]
(\boldsymbol{\beta},\boldsymbol{\gamma})\mapsto
(\tilde{\boldsymbol{\beta}},\tilde{\boldsymbol{\gamma}})
\end{gather*}
so that
\begin{equation}\label{dt0ab}
\left.\begin{aligned}
\hhat{\mathrm{d}}\tilde{\boldsymbol{\beta}}&=\hhat{*}\boldsymbol{\gamma},\\
\tilde{\boldsymbol{\gamma}}&=(-1)^{p+1}\hhat{*}\hhat{\mathrm{d}}\beta.
\end{aligned}\quad\right\}
\end{equation}
\item[2.]Building on \eqref{dt0ab}, we may also define the dual transformation by
\begin{equation}
\tilde{\boldsymbol{\omega}}=\psl{q-2}\tilde{\boldsymbol{\gamma}}-\pdl{q-2}\tilde{\boldsymbol{\beta}}.\label{dualaux3}
\end{equation}
\end{enumerate}
\begin{lemma}The two definitions \eqref{dt0} and \eqref{dualaux3} of the dual transformation are well defined; they are equivalent and constitute an isomorphism.
\end{lemma}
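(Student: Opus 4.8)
The plan is to regard \eqref{dualaux3} as the constructive definition and \eqref{dt0} as a characterizing relation, and to show that the form produced by \eqref{dualaux3} is the unique Maxwell solution, modulo exact solutions, satisfying \eqref{dt0}. First I would check that the boundary-data map \eqref{dt0ab} is well defined on the quotient spaces \eqref{defquotienth} and \eqref{defquotientx}. Because $\boldsymbol{\gamma}\in H_{\parallel}^{-1/2}\pcomp{p}(\hhat{\delta}0,\Gamma)$ is coclosed, $\hhat{*}\boldsymbol{\gamma}$ is $\hhat{\mathrm{d}}$-closed, so by exactness of the boundary de Rham complex on the topologically trivial $\Gamma$ there is a $\tilde{\boldsymbol{\beta}}$ with $\hhat{\mathrm{d}}\tilde{\boldsymbol{\beta}}=\hhat{*}\boldsymbol{\gamma}$, determined modulo $\hhat{\mathrm{d}}H_{\perp}^{-1/2}\pcomp{q-3}(\hhat{\mathrm{d}},\Gamma)$ and hence well defined in $[H_{\perp}^{-1/2}\pcomp{q-2}(\hhat{\mathrm{d}},\Gamma)]$; while $\tilde{\boldsymbol{\gamma}}=(-1)^{p+1}\hhat{*}\hhat{\mathrm{d}}\boldsymbol{\beta}$ is independent of the representative of $\boldsymbol{\beta}$ since $\hhat{\mathrm{d}}\hhat{\mathrm{d}}=0$, and is coclosed because $\hhat{\delta}\hhat{*}\hhat{\mathrm{d}}\boldsymbol{\beta}\propto\hhat{*}\hhat{\mathrm{d}}\hhat{\mathrm{d}}\boldsymbol{\beta}=0$.

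Next I would show that \eqref{dualaux3} yields a well-defined element of $[X^{q-2}(\Omega\cup\Omega^\mathrm{c})]$. Since $\tilde{\boldsymbol{\gamma}}$ is coclosed, the data $(\tilde{\boldsymbol{\beta}},\tilde{\boldsymbol{\gamma}},0)$ satisfy the hypothesis \eqref{neumannsolution} at $k=0$, so Lemmata~\ref{lemma:repiny} and~\ref{lemma:repinx} guarantee that $\psl{q-2}\tilde{\boldsymbol{\gamma}}$ and $\pdl{q-2}\tilde{\boldsymbol{\beta}}$ are Maxwell solutions obeying the decay condition; thus $\tilde{\boldsymbol{\omega}}\in X^{q-2}(\Omega\cup\Omega^\mathrm{c})$. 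Independence of the representative of $\tilde{\boldsymbol{\beta}}$ follows because replacing $\tilde{\boldsymbol{\beta}}$ by $\tilde{\boldsymbol{\beta}}+\hhat{\mathrm{d}}\boldsymbol{\xi}$ changes $\pdl{q-2}\tilde{\boldsymbol{\beta}}$ by $\pdl{q-2}\hhat{\mathrm{d}}\boldsymbol{\xi}=\mathrm{d}\,\pdl{q-3}\boldsymbol{\xi}$, via property \eqref{lpotprop4} at $k=0$, an exact Maxwell solution lying in $X^{q-2}_0$; similarly, replacing the representative $\boldsymbol{\omega}$ of $[X^p]$ by $\boldsymbol{\omega}+\mathrm{d}\boldsymbol{\chi}$ alters $\boldsymbol{\beta}$ only by the exact form $\hhat{\mathrm{d}}\jump{\mathrm{t}}\boldsymbol{\chi}$ through \eqref{commutetd} and leaves $\boldsymbol{\gamma}$ untouched.

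For the equivalence I would compute $\mathrm{d}\tilde{\boldsymbol{\omega}}$ from \eqref{dualaux3} and match it to the right-hand side of \eqref{dt0}. Property \eqref{lpotprop4} at $k=0$ gives $\mathrm{d}\,\pdl{q-2}\tilde{\boldsymbol{\beta}}=\pdl{q-1}\hhat{\mathrm{d}}\tilde{\boldsymbol{\beta}}=\pdl{q-1}\hhat{*}\boldsymbol{\gamma}$, which by the defining identity \eqref{defdl} equals $-*\mathrm{d}\,\psl{p}\boldsymbol{\gamma}$; and \eqref{lpotprop3} applied in degree $p+1$, combined with $\hhat{*}^{-1}\tilde{\boldsymbol{\gamma}}=(-1)^{p+1}\hhat{\mathrm{d}}\boldsymbol{\beta}$ and once more \eqref{lpotprop4}, turns $\mathrm{d}\,\psl{q-2}\tilde{\boldsymbol{\gamma}}$ into $-*\mathrm{d}\,\pdl{p}\boldsymbol{\beta}$. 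Collecting the two contributions and comparing with $*\mathrm{d}\boldsymbol{\omega}=*\mathrm{d}(-\psl{p}\boldsymbol{\gamma}+\pdl{p}\boldsymbol{\beta})$, the term $\mathrm{d}\,\psl{p-1}\boldsymbol{\varphi}$ in \eqref{represent} dropping out under $\mathrm{d}$, reproduces \eqref{dt0}. For the converse I note that any Maxwell solution $\tilde{\boldsymbol{\omega}}^{\prime}$ with $\mathrm{d}\tilde{\boldsymbol{\omega}}^{\prime}=*\mathrm{d}\boldsymbol{\omega}$ differs from $\tilde{\boldsymbol{\omega}}$ by a closed Maxwell solution, which by topological triviality is exact and therefore lies in $X^{q-2}_0$; hence the two definitions coincide in $[X^{q-2}(\Omega\cup\Omega^\mathrm{c})]$. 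Finally, iterating \eqref{dt0} gives $\mathrm{d}(\Phi^0)^2\boldsymbol{\omega}={*}{*}\mathrm{d}\boldsymbol{\omega}=(-1)^{(q-1)(p+1)}\mathrm{d}\boldsymbol{\omega}$, so $(\Phi^0)^2=(-1)^{(q-1)(p+1)}\mathrm{Id}$ on the quotient, which furnishes the two-sided inverse and proves that $\Phi^0$ is an isomorphism. I expect the principal obstacle to be the consistent sign-and-degree bookkeeping in this last computation, since the potentials $\psl{}$, $\pdl{}$ are polymorphic in degree and the three identities \eqref{lpotprop3}, \eqref{lpotprop4}, \eqref{defdl} must be invoked at precisely the right degrees.
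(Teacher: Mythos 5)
Your architecture is essentially the paper's: well-definedness of the boundary map \eqref{dt0ab} via the exact sequence \eqref{exactsequence3} on the topologically trivial $\Gamma$; well-definedness of \eqref{dualaux3} on the quotient spaces, with changes of representative absorbed through \eqref{lpotprop4} at $k=0$; equivalence by applying $\mathrm{d}$ respectively $*\mathrm{d}$ and invoking \eqref{defdl} together with \eqref{lpotprop3}, \eqref{lpotprop4} (the paper packages the latter two as \eqref{lpotprop7}); and finally $(\Phi^0)^2=(-1)^{(p+1)(q-1)}\,\mathrm{Id}$. Two of your variations are genuine improvements in economy: iterating \eqref{dt0} directly and using $**=(-1)^{(p+1)(q-1)}$ on $(p+1)$-forms is cleaner than the paper's double application of \eqref{dt0ab} to the boundary data, and your explicit check that replacing $\boldsymbol{\omega}$ by $\boldsymbol{\omega}+\mathrm{d}\boldsymbol{\chi}$ moves $\boldsymbol{\beta}$ only by the exact form $\hhat{\mathrm{d}}\jump{\mathrm{t}}\boldsymbol{\chi}$ and leaves $\boldsymbol{\gamma}$ unchanged is a detail the paper leaves implicit.

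Two steps need repair. First, uniqueness: you argue the difference of two solutions of \eqref{dt0} is a closed Maxwell solution, \emph{``which by topological triviality is exact''}. But $\Omega^{\mathrm{c}}$ is not topologically trivial --- it has the homotopy type of $S^{n-1}$, with nontrivial cohomology in degrees $0$ and $n-1$. Degree $n-1$ never equals $q-2$ for admissible $p$, but degree $0$ does ($p=n-2$), where closed means locally constant, not exact; since $X^0_0=\{0\}$, constants on $\Omega$ even survive in the quotient, so your argument degenerates precisely there. The robust route is the paper's: \eqref{dt0} forces the boundary data \eqref{dt0ab} (a fact you already established), and Theorem~\ref{thm:repthm} then pins down the element of $[X^{q-2}(\Omega\cup\Omega^\mathrm{c})]$ by its data; the residual degree-zero caveat is the cohomology exception already flagged in footnote~\ref{footnote:sequence}. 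Second, the sign bookkeeping you rightly feared: your two identities $\mathrm{d}\,\pdl{q-2}\tilde{\boldsymbol{\beta}}=-*\mathrm{d}\,\psl{p}\boldsymbol{\gamma}$ and $\mathrm{d}\,\psl{q-2}\tilde{\boldsymbol{\gamma}}=-*\mathrm{d}\,\pdl{p}\boldsymbol{\beta}$ are each correct, but assembling them against $\boldsymbol{\omega}\equiv-\psl{p}\boldsymbol{\gamma}+\pdl{p}\boldsymbol{\beta}$, as \eqref{represent} and \eqref{defboundary1} literally dictate, yields $\mathrm{d}\tilde{\boldsymbol{\omega}}=-*\mathrm{d}\boldsymbol{\omega}$, i.e.\ \eqref{dt0} with the wrong sign, so your ``reproduces \eqref{dt0}'' does not follow as stated. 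This tension is latent in the paper itself: its proof tacitly represents $\boldsymbol{\omega}\equiv\psl{p}\boldsymbol{\gamma}-\pdl{p}\boldsymbol{\beta}$ (``in analogy to \eqref{dualaux3}''), i.e.\ with the sign opposite to \eqref{represent}. The fix is to adopt one convention consistently for both $\boldsymbol{\omega}$ and $\tilde{\boldsymbol{\omega}}$ (or, equivalently, to flip the signs in \eqref{dualaux3}); with that convention made explicit, your computation closes.
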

\begin{proof}
We first show that \eqref{dualaux3} is well defined and implies \eqref{dt0}. Check that for $\boldsymbol{\omega}\in[X^p(\Omega\cup\Omega^\mathrm{c})]$ the boundary data $(\boldsymbol{\beta},\boldsymbol{\gamma})$ in \eqref{dt0ab} are well defined. Since $*\boldsymbol{\gamma}$ is closed, it can be represented by a potential $\tilde{\boldsymbol{\beta}}$. In general, there might be a contribution from cohomology as well, which can be discarded, since we restricted ourselves to trivial topology. Existence of the potential is guaranteed from the exact sequence property \eqref{exactsequence3}. The potential is defined up to exact forms, so it corresponds to exactly one element of the quotient space $[H_{\perp}^{-1/2}\pcomp{q-2}(\mathrm{d},\Gamma)]$.\footnote{We consider \eqref{dt0ab} as a mere theoretical device, but it has also profound practical implications. For $n=3$, $p=1$ $\tilde{\boldsymbol{\beta}}$ is a scalar potential. In \cite[Ch.\ 8]{hiptmair2003}, a scalar stream function $\tilde{\boldsymbol{\beta}}_h$ is employed on the discrete level to span solenoidal surface Raviart-Thomas covector fields $\boldsymbol{\gamma}_h$. This provides a discrete subspace of $H_{\parallel}^{-1/2}\pcomp{1}(\delta 0,\Gamma)$.} The last equation in \eqref{dt0ab} poses no further difficulties.\par
Equation \eqref{dualaux3} results from the representation formula \eqref{represent}, when applied to $(\tilde{\boldsymbol{\beta}},\tilde{\boldsymbol{\gamma}})$ and projected onto the quotient spaces. The exact potential $\mathrm{d}\,\psl{p-1}\,\boldsymbol{\varphi}$ lies in the zero class and therefore does not appear in \eqref{dualaux3}. In the light of \eqref{lpotprop4}, a different choice of the representative $\tilde{\boldsymbol{\beta}}$ in \eqref{dt0ab} leads to a different representative $\tilde{\boldsymbol{\omega}}$ in \eqref{dualaux3}. However, the map is well defined in terms of the quotient spaces. The definition via the representation formula ensures that $\tilde{\boldsymbol{\omega}}\in[X^{q-2}(\Omega\cup\Omega^\mathrm{c})]$. We have thus shown that the dual transformation $\Phi^0$ based on \eqref{dt0ab} and \eqref{dualaux3} is well defined.\par
To show that $\tilde{\boldsymbol{\omega}}$ fulfills \eqref{dt0} we first represent $\boldsymbol{\omega}$ in terms of its boundary data $(\boldsymbol{\beta},\boldsymbol{\gamma})$, in analogy to \eqref{dualaux3}, and apply $*\mathrm{d}$. We obtain
\begin{align*}
*\mathrm{d}\omega&=*\mathrm{d}\,\psl{}\gamma-*\mathrm{d}\,\pdl{}\beta\\
&=-\pdl{}\hhat{*}\gamma+(-1)^{p+1}\mathrm{d}\,\psl{}\hhat{*}\hhat{\mathrm{d}}\beta,
\end{align*}
where we used \eqref{defdl} and \eqref{lpotprop7}. When we plug in \eqref{dt0ab} this yields
\[
*\mathrm{d}\omega=\mathrm{d}(\psl{}\tilde{\boldsymbol{\gamma}}-\pdl{}\tilde{\boldsymbol{\beta}})
=\mathrm{d}\tilde{\boldsymbol{\omega}},
\]
which proves the assertion.\par
Next we show that \eqref{dt0} is well defined and implies \eqref{dualaux3}. Existence of $\tilde{\boldsymbol{\omega}}$ is guaranteed from the first part of the proof. Uniqueness can be seen as follows: We know that \eqref{dt0} implies \eqref{dt0ab}. If there were several solutions for $\tilde{\boldsymbol{\omega}}$, their boundary data $(\tilde{\boldsymbol{\beta}},\tilde{\boldsymbol{\gamma}})$ had to coincide, because we know from the first part of the proof that \eqref{dt0ab} fixes $(\tilde{\boldsymbol{\beta}},\tilde{\boldsymbol{\gamma}})$ for given $(\boldsymbol{\beta},\boldsymbol{\gamma})$. But from Theorem~\ref{thm:repthm} we know that elements of $[X^{q-2}(\Omega\cup\Omega^\mathrm{c})]$ can be uniquely represented in terms of their boundary data. Therefore $\tilde{\boldsymbol{\omega}}$ is unique and coincides with the solution obtained from \eqref{dualaux3}.\par
It remains to show that $\Phi^0$ constitutes an isomorphism. To that end we apply the map \eqref{dt0ab} to the boundary data twice, which yields 
\[
(\tilde{\tilde{\boldsymbol{\beta}}},\tilde{\tilde{\boldsymbol{\gamma}}})=(-1)^{(p+1)(q-1)}(\boldsymbol{\beta},\boldsymbol{\gamma}).
\]
Arguing like in the second part reveals
\begin{eqnarray}\label{phi0iso}
(\Phi^0)^2\boldsymbol{\omega}=(-1)^{(p+1)(q-1)}\boldsymbol{\omega},\quad\boldsymbol{\omega}\in [X^p(\Omega\cup\Omega^\mathrm{c})],
\end{eqnarray}
which concludes the proof.
\qed\end{proof}
We introduce yet another set of trace operators
\begin{alignat*}{3}
&\traceadapt_{\mathrm{D}}^{\,0}=\hhat{\mathrm{d}}\gamma_{\mathrm{D}}:\;&&\hzero \pcomp{p}(\mathrm{d},\Omega)&&\rightarrow H_{\perp}^{-1/2}\pcomp{p+1}(\hhat{\mathrm{d}},\Gamma),\\
&\traceadapt_{\mathrm{N}}^{\,0}=\traceadapt_{\mathrm{N}}=\hhat{*}\gamma_{\mathrm{N}}:\;&&\hzero \pcomp{p}(\delta\mathrm{d},\Omega)&&\rightarrow H_{\perp}^{-1/2}\pcomp{q-1}(\hhat{\mathrm{d}},\Gamma).
\end{alignat*}
Note that $\traceadapt_{\mathrm{D}}^{\,0}$ and $\traceadapt_{\mathrm{N}}^{\,0}$ are well defined on $[X^p(\Omega]$. From \eqref{dt0} and \eqref{phi0iso} it is immediate that these trace operators fulfill
\begin{subequations}\label{dualtraceprop0}
\begin{align}
\traceadapt_\mathrm{D}^{\,0}&=(-1)^{(p+1)(q-1)}\traceadapt_\mathrm{N}^{\,0}\Phi^0,\\
\traceadapt_\mathrm{N}^{\,0}&=\traceadapt_\mathrm{D}^{\,0}\Phi^0.
\end{align}
\end{subequations}
Equation \eqref{calderon1b} can be rearranged in the following symmetric form
\begin{equation}\label{calderon3}
{\textstyle\frac{1}{2}}\begin{pmatrix}\traceadapt_\mathrm{D}^{\,0}\boldsymbol{\omega}\\ \traceadapt_\mathrm{N}^{\,0}\boldsymbol{\omega}\end{pmatrix}
=\begin{pmatrix} -K & (-1)^{(p+1)(q-1)}B\\
B & -K
\end{pmatrix}
\begin{pmatrix}\traceadapt_\mathrm{D}^{\,0}\boldsymbol{\omega}\\ \traceadapt_\mathrm{N}^{\,0}\boldsymbol{\omega}\end{pmatrix},
\end{equation}
$p=\mathrm{deg}\,\boldsymbol{\omega}$, where we introduced a boundary integral operator $B$ according to
\[
B=(-1)^p\hhat{\mathrm{d}}\,V\hhat{*}:H_{\perp}^{-1/2}\pcomp{p}(\hhat{\mathrm{d}},\Gamma)\rightarrow H_{\perp}^{-1/2}\pcomp{q}(\hhat{\mathrm{d}},\Gamma),
\]
and used \eqref{bioprop2}, \eqref{bioprop4} and \eqref{bioprop5a}. From \eqref{dualtraceprop0} it can be inferred that Calder\'on equation \eqref{calderon3} is invariant under dual transformations \eqref{dt0}, up to sign.\par\noindent
\begin{framed}
For $n=3$, $p=1$ this invariance means that magnetostatic vector-potential and scalar-potential formulations share basically the same Calder\'on equation \eqref{calderon3}. This confirms the observation in \cite[Sec.\ 9]{hiptmair2003} which states that the boundary integral operators associated with the $\textbf{curl}\,\textbf{curl}$ equation are structurally equal to those of scalar second order elliptic problems.\par
Let $\gamma_n\vec{B}\in H^{-1/2}(\partial\Omega)$ be the normal trace of the magnetic flux density, and $\gamma_\tau\vec{H}\in\mathbf{H}^{-1/2}(\mathrm{div}_\Gamma0,\partial\Omega)$ the tangential trace of the magnetic field. Equation \eqref{calderon3} translates for $n=3$, $p=1$ into
\begin{equation}\label{calderon3vec}
{\textstyle\frac{1}{2}}\begin{pmatrix}\gamma_n\vec{B}\\ \gamma_\tau\vec{H}\end{pmatrix}
=\begin{pmatrix} \mathsf{K}^\dagger & \mathrm{curl}_\Gamma\mathsf{A}\\
\textbf{curl}_\Gamma\mathsf{V} & \mathsf{B}
\end{pmatrix}
\begin{pmatrix}\gamma_n\vec{B}\\ \gamma_\tau\vec{H}\end{pmatrix}.
\end{equation}
The flux density can be represented by a magnetic vector potential $\vec{A}$, where $\pi_\tau\vec{A}\in\mathbf{H}^{-1/2}(\mathrm{curl}_\Gamma,\partial\Omega)$, $\gamma_n\vec{B}=\gamma_n\textbf{curl}\vec{A}=\mathrm{curl}_\Gamma\pi_\tau\vec{A}$. Moreover, for a trivial topology, the magnetic field can be represented by a magnetic scalar potential $\psi$, where $\gamma\psi\in H^{1/2}(\partial\Omega)$, $\gamma_\tau\vec{H}=\gamma_\tau\textbf{grad}\psi=\textbf{curl}_\Gamma\gamma\psi$. Equation \eqref{calderon3vec} may be expressed in terms of these potentials in different ways. The operators $\mathrm{curl}_\Gamma$ and $\textbf{curl}_\Gamma$ inherit $L^2$ adjointness from $\mathrm{d}$ and $\delta$, which is useful to proceed to variational forms of the equations.\par
All the formulations have in common that they are built upon the scalar and vectorial single and double layer operators, plus straightforward surface curl operators. Hence only this set of boundary-integral operators needs to be implemented to cover both, scalar and vector potential formulations.
\end{framed}
\end{enumerate}
\section{Conclusions}
We have shown that the theory of boundary-integral equations for Maxwell-type problems can be formulated entirely in the framework of differential-form calculus. The basic toolkit is given by Sobolev spaces on the problem domain and their traces on the domain's Lipschitz boundary, as well as by integral transformations and fundamental solutions of Maxwell-type equations. The presented proofs concerning the representation formula, layer potentials, and boundary-integral operators follow the strategies of classical vector-analysis literature. Their scope, however, extends beyond scalar- or vector-potential formulations in dimensions two and three. Nonetheless, it is the authors' opinion that the main advantage of employing differential-form calculus in this field of applied mathematics does not lie in the prospect of extending the theory beyond three dimensions; it lies in the emphasis that is put on structural considerations and abstract viewpoints. For instance consider the properties of layer potentials and boundary integral operators that are presented in Lemmata~\ref{lemma:lpotprop} and \ref{lemma:bioprop}, respectively. As a further example, we have presented the invariance of the Calder\'on projectors under dual transformations, which not only strikes us by its elegance, but also has a direct impact on our implementation of the boundary-element method for magnetostatics: in three dimensions, a single set of boundary integral operators covers both, the scalar- and the vector-potential case. 
%
%

\end{document}